\tikzset{w/.style={circle, draw,inner sep=1pt},b/.style={circle,draw,fill,inner sep=2pt}, s/.style={rectangle, draw,inner sep=3pt}}
\newtheorem{thm}{Theorem}[section]
\crefname{thm}{Theorem}{Theorems}
\newtheorem{prop}[thm]{Proposition}
\crefname{prop}{Proposition}{Propositions}
\newtheorem{lm}[thm]{Lemma}
\crefname{lm}{Lemma}{Lemmas}
\newtheorem{cor}[thm]{Corollary}
\crefname{cor}{Corollary}{Corollaries}
\newtheorem{conjecture}[thm]{Conjecture}
\crefname{conjecture}{Conjecture}{Conjectures}
\crefname{question}{Question}{Questions}
\newtheorem*{theorem}{Theorem}
\theoremstyle{definition}
\newtheorem{defn}[thm]{Definition}
\crefname{defn}{Definition}{Definition}
\theoremstyle{remark}
\newtheorem{remark}[thm]{Remark}
\crefname{remark}{Remark}{Remark}
\newtheorem{example}[thm]{Example}
\crefname{example}{Example}{Example}
\newcommand{\B}{\mathrm{B}}
\newcommand{\C}{\mathrm{C}}
\renewcommand{\d}{\mathrm{d}}
\renewcommand{\H}{\mathrm{H}}
\newcommand{\N}{\mathrm{N}}
\newcommand{\T}{\mathrm{T}}
\newcommand{\U}{\mathrm{U}}
\newcommand{\rZ}{\mathrm{Z}}
\newcommand{\cA}{\mathcal{A}}
\newcommand{\cB}{\mathcal{B}}
\newcommand{\cC}{\mathcal{C}}
\newcommand{\cE}{\mathcal{E}}
\newcommand{\cG}{\mathcal{G}}
\newcommand{\cH}{\mathcal{H}}
\newcommand{\cL}{\mathcal{L}}
\newcommand{\cM}{\mathcal{M}}
\newcommand{\cO}{\mathcal{O}}
\newcommand{\cV}{\mathcal{V}}
\newcommand{\bD}{\mathbb{D}}
\newcommand{\bE}{\mathbb{E}}
\newcommand{\bL}{\mathbb{L}}
\newcommand{\bP}{\mathbb{P}}
\newcommand{\bT}{\mathbb{T}}
\newcommand{\Z}{\mathbb{Z}}
\newcommand{\hG}{\widehat{G}}
\renewcommand{\b}{\mathfrak{b}}
\newcommand{\fd}{\mathfrak{d}}
\newcommand{\g}{\mathfrak{g}}
\newcommand{\h}{\mathfrak{h}}
\newcommand{\m}{\mathfrak{m}}
\newcommand{\n}{\mathfrak{n}}
\newcommand{\p}{\mathfrak{p}}
\newcommand{\ad}{\mathrm{ad}}
\newcommand{\Ad}{\mathrm{Ad}}
\newcommand{\Alt}{\mathrm{Alt}}
\newcommand{\Arr}{\mathrm{Arr}}
\newcommand{\Bun}{\mathrm{Bun}}
\newcommand{\Cois}{\mathrm{Cois}}
\newcommand{\CYBE}{\mathrm{CYBE}}
\newcommand{\colim}{\mathrm{colim}}
\newcommand{\dR}{\mathrm{dR}}
\newcommand{\fib}{\mathrm{fib}}
\newcommand{\Hom}{\mathrm{Hom}}
\newcommand{\id}{\mathrm{id}}
\newcommand{\Lagr}{\mathrm{Lagr}}
\newcommand{\Map}{\mathrm{Map}}
\newcommand{\MC}{\underline{\mathrm{MC}}}
\newcommand{\MultBivec}{\mathrm{MultBivec}}
\newcommand{\Mod}{\mathrm{Mod}}
\newcommand{\Perf}{\mathrm{Perf}}
\newcommand{\Pois}{\mathrm{Pois}}
\newcommand{\Pol}{\mathrm{Pol}}
\newcommand{\pt}{\mathrm{pt}}
\newcommand{\QCoh}{\mathrm{QCoh}}
\newcommand{\red}{\mathrm{red}}
\newcommand{\SSet}{\mathrm{SSet}}
\newcommand{\Symp}{\mathrm{Symp}}
\newcommand{\Tot}{\mathrm{Tot}}
\newcommand{\Vect}{\mathrm{Vect}}
\newcommand{\dCE}{\mathrm{d}_{\mathrm{CE}}}
\newcommand{\ddr}{\mathrm{d}_{\mathrm{dR}}}
\newcommand{\QLie}{\mathrm{QLieBialg}}
\newcommand{\QLieAlgd}{\mathrm{QLieBialgd}}
\newcommand{\QPois}{\mathrm{QPois}}
\newcommand{\QPoisGpd}{\mathrm{QPoisGpd}}
\newcommand{\Comm}{\mathrm{Comm}}
\newcommand{\alg}{\mathrm{Alg}}
\newcommand{\balg}{\mathbf{Alg}}
\newcommand{\aug}{\mathrm{aug}}
\newcommand{\kos}{W_{\mathrm{Kos}}}
\newcommand{\Lie}{\mathrm{Lie}}
\newcommand{\coLie}{\mathrm{coLie}}
\newcommand{\coP}{\mathrm{co}\mathbb{P}}
\newcommand{\te}{\tilde{e}}
\renewcommand{\to}{\longrightarrow}
\newcommand{\defterm}[1]{\textbf{\emph{#1}}}
\newcommand{\dquot}[3]{#1\backslash #2 / #3}
\newcommand{\cosimp}[2]{
\xymatrix{
#1 \ar@<.5ex>[r] \ar@<-.5ex>[r] & #2 \ar@<.8ex>[r] \ar[r] \ar@<-.8ex>[r] & \ldots
}
}
\DeclareMathOperator{\Rep}{Rep}
\DeclareMathOperator{\Spec}{Spec}
\DeclareMathOperator{\Sym}{Sym}
\begin{document}
\title{Poisson-Lie structures as shifted Poisson structures}
\address{Department of Mathematics, University of Geneva, 2-4 rue du Lievre, 1211 Geneva, Switzerland}
\curraddr{Institut f\"{u}r Mathematik, Winterthurerstrasse 190, 8057 Z\"{u}rich, Switzerland}
\email{pavel.safronov@math.uzh.ch}
\author{Pavel Safronov}
\begin{abstract}
Classical limits of quantum groups give rise to multiplicative Poisson structures such as Poisson-Lie and quasi-Poisson structures. We relate them to the notion of a shifted Poisson structure which gives a conceptual framework for understanding classical (dynamical) $r$-matrices, quasi-Poisson groupoids and so on. We also propose a notion of a symplectic realization of shifted Poisson structures and show that Manin pairs and Manin triples give examples of such.
\end{abstract}
\maketitle

\tableofcontents
\addtocontents{toc}{\protect\setcounter{tocdepth}{1}}

\section*{Introduction}

\subsection*{Shifted Poisson geometry}

Shifted symplectic and Poisson structures introduced in \cite{PTVV} and \cite{CPTVV} respectively give a way to define higher symplectic and Poisson structures on algebraic stacks. That is, if $X$ is an algebraic stack or a derived scheme, its cotangent bundle $\T^*_X$ can be naturally enhanced to a cotangent complex $\bL_X$. In this way we can talk about differential forms and polyvectors of a nontrivial cohomological degree.

Another way to view the ``shift'' in a shifted Poisson structure is through the prism of higher deformation quantization. Given an ordinary Poisson structure on a smooth variety $X$ one can consider its deformation quantization which gives rise to a deformation of the algebra of global functions $\cO(X)$ as an associative algebra or the category of quasi-coherent sheaves $\QCoh(X)$ as a plain category (i.e. the deformation no longer has a monoidal structure). Similarly, given a $1$-shifted Poisson structure on a derived algebraic stack $X$, its deformation quantization (see \cite[Section 3.5]{CPTVV}) gives rise to a deformation of $\QCoh(X)$ as a monoidal category. More generally, a deformation quantization of an $n$-shifted Poisson stack $X$ gives rise to a deformation of $\QCoh(X)$ as an $\bE_n$-monoidal category, where $\bE_n$ is the operad of little $n$-disks.

One can also talk about Lagrangian (coisotropic) morphisms $f\colon L\rightarrow X$ if $X$ has an $n$-shifted symplectic (Poisson) structure. In contrast to the classical setting, $f$ is not required to be a closed immersion and being a Lagrangian is no longer a condition, but an extra structure. Similar to the classical setting, deformation quantization of shifted coisotropic structures (see \cite[Section 5]{MS2}) gives rise to a deformation of modules. For instance, given a morphism $f\colon L\rightarrow X$, the category $\QCoh(L)$ becomes a module category over $\QCoh(X)$. A deformation quantization of a 1-shifted coisotropic structure on $f$ gives rise to a deformation of $(\QCoh(X), \QCoh(L))$ as a pair (monoidal category, module category).

Many examples of shifted symplectic and Lagrangian structures are known. One of the goals of the present paper is to provide interesting and nontrivial examples of shifted Poisson and coisotropic structures.

\subsection*{Poisson-Lie structures}

Recall that a Poisson-Lie structure on a group $G$ is given by a multiplicative Poisson bivector $\pi\in\wedge^2 \T_G$. A more general notion is that of a quasi-Poisson group where one is given in addition a trivector $\phi\in\wedge^3(\g)$ which measures the failure of the Jacobi identity for $\pi$. In addition, given an element $\lambda\in\wedge^2(\g)$ we can twist quasi-Poisson structures, so they form a groupoid.

It is well-known that multiplicative objects on a group $G$ give rise to an object on the classifying stack $\B G = [\pt / G]$. For instance, a multiplicative line bundle on $G$ (i.e. a central extension of $G$) gives rise to a gerbe on $\B G$. Thus, it is natural to ask whether a multiplicative Poisson structure on $G$ gives rise to some structure on $\B G$.

Let $\Pois(X, n)$ be the space (i.e. an $\infty$-groupoid) of $n$-shifted Poisson structures on a derived algebraic stack $X$. The following result is given by \cref{prop:2shiftedBG} and \cref{thm:1shiftedBG}.

\begin{theorem}
Let $G$ be an algebraic group. We have the following classification results:
\begin{itemize}
\item The space $\Pois(\B G, n)$ for $n > 2$ is trivial. That is, every $n$-shifted Poisson structure on $\B G$ for $n>2$ is canonically zero.

\item The space $\Pois(\B G, 2)$ is equivalent to the set $\Sym^2(\g)^G$.

\item The space $\Pois(\B G, 1)$ is equivalent to the groupoid of quasi-Poisson structures on $G$.
\end{itemize}
\end{theorem}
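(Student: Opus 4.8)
The plan is to compute $\Pois(\B G,n)$ directly from its definition in \cite{CPTVV} as the space of Maurer--Cartan elements of the shifted polyvector algebra. Recall that $\Pois(\B G,n)=\Map\big(k(2)[-1],\Pol(\B G,n)[n+1]\big)$ in graded dg Lie algebras; concretely, after the shift $\Pol(\B G,n)[n+1]$ is a dg Lie algebra and an $n$-shifted Poisson structure is a weight-$\ge 2$ Maurer--Cartan element of cohomological degree $1$. So the first step is to identify $\Pol(\B G,n)$ explicitly. Using $\mathbb{T}_{\B G}\simeq\g[1]$ (equivalently $\bL_{\B G}\simeq\g^*[-1]$) together with the fact that $\mathrm{R}\Gamma(\B G,V)$ for a $G$-representation $V$ is computed by the Chevalley--Eilenberg complex $C^\bullet(\g,V)$, I would compute the weight-$p$ component
\[
\Pol(\B G,n)^{(p)}\;\simeq\;\mathrm{R}\Gamma\big(\B G,\Sym^p(\g[-n])\big)\;\simeq\;C^\bullet\big(\g,S^p_n\big)[-np],
\]
where the Koszul sign rule forces $S^p_n=\Sym^p\g$ when $n$ is even and $S^p_n=\wedge^p\g$ when $n$ is odd. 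Since the only differential present is $\dCE$, the weight-$p$ part of a Maurer--Cartan element, which sits in cohomological degree $n+2$ of $\Pol(\B G,n)$, lives in $\H^{\,n+2-np}(\g,S^p_n)$, and more generally the homotopy groups of the Maurer--Cartan space at a basepoint are governed by $\H^{\,n+2-i-np}(\g,S^p_n)$.

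With this formula the top two cases reduce to degree counts. For $n>2$ the exponent $n+2-i-np\le 2-n-i<0$ for all $i\ge 0$ and $p\ge 2$, so every such group vanishes and $\Pois(\B G,n)$ is contractible, giving the first claim. For $n=2$ the exponent is $4-i-2p$: it is negative once $i\ge 1$, so the space is $0$-truncated, and it is negative for $p\ge 3$ when $i=0$; the sole surviving contribution is $p=2,\,i=0$, namely $\H^0(\g,\Sym^2\g)=\Sym^2(\g)^G$. Since there is no room for a Maurer--Cartan obstruction (the relevant weight-$3$ group vanishes) nor for gauge symmetry, we obtain $\Pois(\B G,2)\simeq\Sym^2(\g)^G$ as a set.

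The case $n=1$ carries the real content and is where I expect the main obstacle. Here $S^p_1=\wedge^p\g$ and exactly two components survive: the bivector, contributing $\H^1(\g,\wedge^2\g)$ in degree $1$, and the trivector $\phi\in(\wedge^3\g)^G$ in degree $0$; all homotopy above degree $1$ vanishes, so the space is a $1$-groupoid. I would then unwind the Maurer--Cartan equation weight by weight: weight $2$ gives $\dCE\pi=0$, weight $3$ gives $\dCE\phi+\tfrac12[\pi,\pi]=0$, weight $4$ gives $[\pi,\phi]=0$, and weight $5$ gives $[\phi,\phi]=0$. These are precisely the axioms of a quasi-Poisson structure, with $\phi$ controlling the failure of the Jacobi identity. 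The degree-$0$ gauge parameters form the cochains $C^0(\g,\wedge^2\g)=\wedge^2\g$, and I would verify that the gauge action of $\lambda\in\wedge^2\g$ on $(\pi,\phi)$ reproduces the twisting of quasi-Poisson structures, so that the Maurer--Cartan groupoid is equivalent to the groupoid of quasi-Poisson structures. The hard part is this dictionary: on the shifted side $\pi$ is a cocycle $\g\to\wedge^2\g$, whereas classically one starts from a multiplicative bivector field on $G$, so I must invoke the identification of multiplicative polyvector fields on $G$ with $\g$-cochains valued in $\wedge^\bullet\g$ (a van Est--type statement) and then match, term by term and including all nonlinear corrections, the gauge action with the classical twist. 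Establishing this as an equivalence of groupoids, rather than merely a bijection on isomorphism classes, is the crux of the argument.
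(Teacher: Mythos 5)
Your plan founders on one identification, and it is not a technicality. You compute $\mathrm{R}\Gamma(\B G, \Sym^p(\g[-n]))$ by the Chevalley--Eilenberg complex $\C^\bullet(\g,-)$. That computes cohomology over the \emph{formal} classifying stack $\B\g=\B\hG$, not over $\B G$: since $\QCoh(\B G)\simeq \Rep G$, global sections over $\B G$ are algebraic-group cohomology $\C^\bullet(G,-)$, which for $G$ reductive is just invariants in degree $0$, whereas $\C^\bullet(\g,k)$ has nonzero $\H^3$ for $\g$ semisimple. The identification the paper actually uses (\cref{prop:2shiftedBG}) is $\Pol(\B G,n)\cong \C^\bullet(G,\Sym(\g[-n]))$, obtained as $G_{\dR}$-invariants of $\Pol(\B\g,n)\cong \C^\bullet(\g,\Sym(\g[-n]))$; your formula computes the latter. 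The $n>2$ bullet survives your substitution, because the degree count (weight $p$ sits in degrees $\geq np$) is the same in both complexes. The $n=2$ bullet already comes out wrong in general: your argument yields $(\Sym^2\g)^{\g}$, while the correct answer is $(\Sym^2\g)^{G}$, and these differ for disconnected $G$.

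The $n=1$ case is where the approach breaks down irreparably rather than just losing $\pi_0(G)$. Your Maurer--Cartan computation --- which is correct as a computation on $\B\g$ --- produces pairs $(\delta,\phi)$ with $\delta\colon\g\to\wedge^2(\g)$ and $\phi\in\wedge^3(\g)$ satisfying \eqref{eq:qlie1}--\eqref{eq:qlie3}, i.e.\ the groupoid $\QLie(\g)$ of quasi-Lie bialgebra structures; this is precisely \cref{thm:1shiftedBg}, the classification of $\Pois(\B\g,1)$, not of $\Pois(\B G,1)$. And $\QLie(\g)$ is \emph{not} equivalent to $\QPois(G)$: the differentiation functor $\QPois(G)\to\QLie(\g)$ is fully faithful but in general not essentially surjective. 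For instance, if $G$ is a torus of dimension $\geq 2$, any nonzero linear map $\delta\colon\g\to\wedge^2(\g)$ satisfying co-Jacobi is a Lie bialgebra cobracket (the cocycle condition is vacuous since $\g$ is abelian), yet a multiplicative bivector on $G$ is a homomorphism of algebraic groups from a torus to a vector group, hence zero, so this $\delta$ integrates to no quasi-Poisson structure on $G$. Consequently the ``van Est--type identification of multiplicative polyvector fields on $G$ with $\g$-cochains'' that your final step invokes is not a bijection, and no term-by-term matching can repair it: the multiplicative bivector on $G$ is genuinely extra data subject to an integrability constraint, and this is exactly the difference between $\B G$ and $\B\g$. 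The paper's proof of \cref{thm:1shiftedBG} supplies the missing layer: it writes $\Pol(\B G,1)\cong \Pol(\B\g,1)^{G_{\dR}}$, presents the invariants as a cosimplicial totalization, commutes $\MC(-)$ past the totalization (\cref{prop:MClimits}), and computes the resulting totalization of groupoids (\cref{lm:groupoidTot}); the gluing datum that appears there is a bivector $\pi\in\cO(G)\otimes\wedge^2(\g)$ whose multiplicativity and compatibility equations carve out $\QPois(G)$ inside $\QLie(\g)$. Any correct proof must contain some such globalization step; your proposal as written stops at the formal-group answer.
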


These computations rely on the following trick introduced in \cite[Section 3.6.2]{CPTVV}. Let $\hG$ be the formal completion of $G$ at the unit and $G_{\dR} = G/\hG$. Then we can identify $\B G\cong (\B\hG)/G_{\dR}$. Since the cotangent complex to $G_{\dR}$ is trivial (i.e. $\B \hG\rightarrow \B G$ is formally \'{e}tale), we can compute polyvectors on $\B G$ as $G_{\dR}$-invariant polyvectors on $\B\hG$. The latter stack is an example of a formal affine stack and its polyvectors are straightforward to compute.

This classification reflects the well-known expectation that to consider a deformation quantization of $\Rep G\cong \QCoh(\B G)$ as a monoidal category one has to endow $G$ with a quasi-Poisson structure and to deformation quantize it as a braided monoidal category this structure has to be quasi-triangular, i.e. it should come from a Casimir element $c\in\Sym^2(\g)^G$.

We also have a relative analog of the previous statement. Given a morphism of derived algebraic stacks $f\colon L\rightarrow X$ we denote by $\Cois(f, n)$ the space of pairs of an $n$-shifted Poisson structure on $X$ and an $n$-shifted coisotropic structure on $L\rightarrow X$. The following is \cref{prop:coisotropicsubgroup}.

\begin{theorem}
Let $H\subset G$ be a closed subgroup. We have the following classification results:
\begin{itemize}
\item The space $\Cois(\B H\rightarrow \B G, 2)$ is equivalent to the set $\ker\left(\Sym^2(\g)^G\rightarrow \Sym^2(\g/\h)^H\right)$.

\item The space $\Cois(\B H\rightarrow \B G, 1)$ is equivalent to the groupoid of quasi-Poisson structures on $G$ for which $H$ is coisotropic.
\end{itemize}
\end{theorem}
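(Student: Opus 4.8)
The plan is to imitate the formal-\'etale reduction used in the absolute case and then run a relative version of the polyvector computation. First I would present the morphism $f\colon \B H\to\B G$ through its formal models: writing $\widehat{H}$ and $\widehat{G}$ for the formal completions at the units and $H_{\dR}=H/\widehat{H}$, $G_{\dR}=G/\widehat{G}$, one has a commutative square $\B\widehat{H}\to\B\widehat{G}$ covering $\B H\to\B G$ modulo the action of $H_{\dR}\to G_{\dR}$. Since both $\B\widehat{H}\to\B H$ and $\B\widehat{G}\to\B G$ are formally \'etale (the relative cotangent complexes of $H_{\dR}$ and $G_{\dR}$ vanish), the relative cotangent complex of $f$ is computed on the formal models, and so $\Cois(\B H\to\B G, n)$ should be identified with the space of $H_{\dR}$-invariant coisotropic structures on the formal affine pair $\B\widehat{H}\to\B\widehat{G}$. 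This is the relative analog of the key trick, and its verification --- that passing to invariants commutes with forming the coisotropic space in the relative setting --- is the first point to check carefully.

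Next I would recall the description of coisotropic structures as Maurer--Cartan elements. The forgetful map $\Cois(f,n)\to\Pois(\B G,n)$ has, over a fixed $n$-shifted Poisson structure, a fiber controlled by a relative polyvector complex that assembles $\Pol(\B G,n)$ together with the polyvectors on $\B H$ along the relative tangent complex $\T_{\B H/\B G}$. On tangent complexes $f$ induces the inclusion $\h[1]\to\g[1]$, with cofiber $(\g/\h)[1]$, so the conormal directions of $f$ are governed by $(\g/\h)^*$ and the relative complex is computed by a relative Lie algebra cohomology comparing the $G$-invariant theory of $\g$ with the $H$-invariant theory of $\g/\h$. The absolute computations of the preceding theorem already pin down the $\Pois(\B G,n)$ factor as $\Sym^2(\g)^G$ for $n=2$ and as quasi-Poisson structures on $G$ for $n=1$; the new content is precisely the coisotropic compatibility term supplied by the conormal contribution.

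For $n=2$ I would argue that a $2$-shifted Poisson structure is a Casimir $c\in\Sym^2(\g)^G$, and that the coisotropic datum forces $c$ to be annihilated in the conormal directions of $\B H$, i.e. that its image under the restriction-and-projection map $\Sym^2(\g)^G\to\Sym^2(\g/\h)^H$ vanishes. Because the relative complex carrying deformations and automorphisms of the coisotropic structure leaves no room for higher homotopies in this degree, the space is discrete and equals the kernel, giving the first bullet. For $n=1$ the same relative complex now carries a genuine groupoid of choices: the Poisson part is a quasi-Poisson structure on $G$, and unwinding the relative differential should show that the compatibility datum is exactly the classical coisotropy of $H$ (the multiplicative bivector being tangent to $H$ modulo $\h$, together with the matching condition on the trivector $\phi$), compatibly with the twisting by $\lambda\in\wedge^2(\g)$ that produces the groupoid structure. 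Hence the second bullet.

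The hard part will be this last step: making the relative Poisson/coisotropic complex explicit on the formal affine pair and checking that its differentials reproduce the classical coisotropy condition, while simultaneously verifying the truncation statements (discreteness for $n=2$, $1$-truncation for $n=1$) that identify the abstract Maurer--Cartan space with the stated kernel and groupoid. In particular one must confirm that no additional homotopies arise from the conormal contribution $(\g/\h)^*$ beyond the expected condition, which is exactly where the invariant-theoretic input --- $G$- versus $H$-invariance, and the interaction of $\h\subset\g$ with the symmetric and exterior powers --- does the real work.
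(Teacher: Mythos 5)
Your proposal is correct and takes essentially the same route as the paper: the paper likewise works with the formal models and $H_{\dR}$-invariance (via $\Pol(f,n)\cong\Pol(\hat{f},n)^{H_{\dR}}$), identifies the relative polyvector algebra with the kernel of $\C^\bullet(G,\Sym(\g[-n]))\to \C^\bullet(H,\Sym(\g/\h[-n]))$ (using that this map is surjective, so the homotopy fiber is the strict kernel with the induced bracket), and then reads off both bullets from the Maurer--Cartan computations already performed in the absolute case. The verification you defer as the ``hard part'' is exactly what the paper treats as immediate, since the weight/degree truncation arguments and the unwinding of the Maurer--Cartan equation were done in the absolute statements and restrict verbatim to the kernel.
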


Given a morphism $f\colon X_1\rightarrow X_2$ of algebraic stacks we denote by $\Pois(X_1\rightarrow X_2, n)$ the space of triples of $n$-shifted Poisson structures on $X_i$ and a compatibility between them making $f$ into a Poisson morphism. Then the previous statement for $H=\pt$ implies (see \cref{cor:poissongroups}) that $\Pois(\pt\rightarrow \B G, 1)$ is equivalent to the set of Poisson-Lie structures on $G$. Similar statements arise if one replaces $\B G$ by its formal completion $\B\hG$ and one replaces quasi-Poisson structures on $G$ by quasi-Lie bialgebra structures on $\g$.

Given an $n$-shifted Poisson structure on $X$ there is a natural way to extract an $(n-1)$-shifted Poisson structure on $X$. This is a classical shadow of the natural forgetful functor from $\bE_n$-monoidal categories to $\bE_{n-1}$-monoidal categories. Thus, one can ask how the above statements behave under this forgetful map.

Let us recall that if $G$ is a Poisson-Lie group, there is a dual Poisson-Lie group $G^*$ whose completion at the unit is the formal group associated with Lie algebra $\g^*$. We say it is formally linearizable if there is a Poisson isomorphism between the formal completion of $\g^*$ at the origin with the Kirillov--Kostant--Souriau Poisson structure and the formal completion of $G^*$ at the unit with its Poisson-Lie structure.

The following statement combines \cref{prop:forget21BG} and \cref{prop:forget10BG}.

\begin{theorem} $ $
\begin{itemize}
\item Suppose $c\in\Sym^2(\g)^G$ defines a $2$-shifted Poisson structure on $\B G$. Its image under $\Pois(\B G, 2)\rightarrow \Pois(\B G, 1)$ is given by the quasi-Poisson structure $(\pi=0,\phi)$ on $G$, where
\[\phi = -\frac{1}{6}[c_{12}, c_{23}].\]

\item Suppose $G$ carries a Poisson-Lie structure defining a $1$-shifted Poisson morphism $\pt\rightarrow \B\hG$. Its image under $\Pois(\pt\rightarrow \B\hG, 1)\rightarrow \Pois(\pt\rightarrow\B\hG, 0)$ is trivial iff the dual Poisson-Lie group $G^*$ is formally linearizable.
\end{itemize}
\end{theorem}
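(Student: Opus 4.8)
The plan is to prove the two bullet points separately, since the first is a direct computation in the polyvector model and the second is a comparison of two moduli descriptions. For the first statement I would work entirely inside the $G_{\dR}$-invariant polyvector model already used to establish $\Pois(\B G,2)\simeq\Sym^2(\g)^G$ and $\Pois(\B G,1)\simeq\{$quasi-Poisson structures$\}$. There the controlling complexes are built from Lie algebra cochains of $\g$ with coefficients in (anti)symmetric powers of $\g$, and the $2$-shifted Poisson structure attached to $c$ is simply $c$ placed in weight $2$, with all higher weights vanishing and the Maurer--Cartan equation holding for degree reasons. First I would write the forgetful map $\Pois(\B G,2)\to\Pois(\B G,1)$ as an explicit (homotopy) morphism of the two controlling dg Lie algebras, induced by $\bE_1\to\bE_2$. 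Its linear part sends the weight-$2$ tensor $c$, which is symmetric, into the weight-$2$ (bivector) part of the $1$-shifted complex, which is antisymmetric; this map vanishes for parity reasons, so the multiplicative bivector of the image is $\pi=0$. The weight-$3$ component is then produced by the quadratic part of the morphism evaluated on $c$, and equals a multiple of the invariant tensor $[c_{12},c_{23}]$, which for symmetric invariant $c$ is totally antisymmetric and hence a genuine trivector $\phi\in\wedge^3(\g)^G$. The main obstacle here is purely normalization: extracting the precise constant $-\tfrac16$ requires careful bookkeeping of the Koszul signs together with the combinatorial factors (a $\tfrac12$ from the quadratic Maurer--Cartan term and a $\tfrac13$ from cyclic symmetrization into $\wedge^3\g$).

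For the second bullet point the crux is to describe the target $\Pois(\pt\to\B\hG,0)$ concretely. I would prove that the dg Lie algebra controlling $0$-shifted Poisson structures on $\B\hG$, namely $\Pol(\B\hG,0)$ with its internal $\dCE$ differential and Schouten bracket, is quasi-isomorphic, as a dg Lie algebra, to the Lichnerowicz--Poisson complex of the formal neighbourhood $(\widehat{\g^*},\pi_{\mathrm{KKS}})$ of the origin in $\g^*$ with its Kirillov--Kostant--Souriau structure. Concretely, $\Pol(\B\hG,0)=\C^\bullet(\g,\cO(\g^*))=\wedge^\bullet\g^*\otimes\Sym(\g)$ is the complex of (formal) polyvector fields on $\g^*$, and after the appropriate identification of the weight and cohomological gradings its $\dCE$ differential coincides with the Poisson differential $[\pi_{\mathrm{KKS}},-]$, this being the standard identification of KKS Poisson cohomology with $\H^\bullet(\g,\cO(\g^*))$; equivalently, by Koszul duality this is the deformation complex of $\U(\g)$, the canonical quantization of $(\g^*,\pi_{\mathrm{KKS}})$. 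Under this identification Maurer--Cartan elements correspond to formal Poisson structures $\pi_{\mathrm{KKS}}+\alpha$ on $\widehat{\g^*}$, the zero element corresponds to $\pi_{\mathrm{KKS}}$ itself, and gauge equivalences correspond to formal Poisson diffeomorphisms; the coisotropic-point condition at $\pt\to\B\hG$ (the point mapping to $0\in\g^*$) singles out the Poisson structures vanishing at the origin. Thus $\Pois(\pt\to\B\hG,0)$ is the space of formal Poisson structures on $\widehat{\g^*}$ vanishing at $0$, with the KKS structure as basepoint.

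It then remains to identify the forgetful image. I would show that $\Pois(\pt\to\B\hG,1)\to\Pois(\pt\to\B\hG,0)$ sends the $1$-shifted structure --- the Lie bialgebra $(\g,\delta)$, i.e.\ the Poisson-Lie structure --- to the dual Poisson-Lie structure on $\widehat{G^{*}}$ transported to $\widehat{\g^*}$; its linearization at the origin is the cobracket of the dual Lie bialgebra on $\g^*$, which is dual to the bracket of $\g$ and hence equal to $\pi_{\mathrm{KKS}}$, consistently with landing near the basepoint. Granting this, the image is equivalent to the basepoint of the $\infty$-groupoid $\Pois(\pt\to\B\hG,0)$ if and only if the Poisson-Lie structure on $\widehat{G^{*}}$ is formally Poisson-isomorphic to $\pi_{\mathrm{KKS}}$, which is exactly the formal linearizability of $G^{*}$. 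The hard part will be this last identification: matching the abstract $\bP_2\to\bP_1$ forgetful map simultaneously with the quasi-Lie bialgebra description of the source and the Lichnerowicz description of the target, and checking that it reproduces the classical assignment sending a Lie bialgebra to its dual Poisson-Lie group. This is precisely where the Baker--Campbell--Hausdorff-type higher corrections to $\pi_{\mathrm{KKS}}$, which encode the possibly non-linearizable part of $G^{*}$, must be shown to arise from the higher terms of the forgetful morphism.
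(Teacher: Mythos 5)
Your plan names the right objects, but in both bullets the step you defer or treat as bookkeeping is exactly where the content of the proof lies. For the first bullet, the forgetful map $\Pois(\B G,2)\to\Pois(\B G,1)$ is not restriction along any operad morphism: the only operad map $\bP_2\to\bP_3$ kills the bracket for degree reasons, and restricting along it would give $\phi=0$. The map instead arises from Poisson additivity $\balg_{\bP_{n+1}}\simeq\balg(\balg_{\bP_n})$, and additivity does not hand you an explicit $L_\infty$-morphism $\Pol(\B G,2)^{\geq 2}[3]\to\Pol(\B G,1)^{\geq 2}[2]$ whose Taylor coefficients you can simply read off; producing a computable model of this map is the actual problem, and your proposal assumes it. The paper solves it by a reformulation you do not mention: $\Pois(\B G,2)\simeq\Cois(\id_{\B G},2)$ by \cite[Proposition 4.16]{MS1}, after which forgetting the shift is literally projection onto the $\bP_2$-side of a $\bP_{[3,2]}$-algebra; the work is then to construct that $\bP_{[3,2]}$-structure, which \cref{prop:coisotropicqlie} does explicitly by exhibiting a morphism of $\bP_3$-algebras $F\colon\C^\bullet(\g,k)\to\rZ(\C^\bullet(\h,k))$, $F(e^i)=e^i+\tfrac{1}{2}P^{ij}e_j$, $F(\te^i)=Q^{ji}e_j$, and checking compatibility with the differentials via the invariance identities of $c$; specializing $\h=\g$ in \eqref{eq:forgetqlie1}--\eqref{eq:forgetqlie2} then gives $\pi=0$ and $\phi=-\tfrac{1}{6}[c_{12},c_{23}]$. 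Your parity argument for $\pi=0$ and the invariant-theory argument pinning $\phi$ to a multiple of $[c_{12},c_{23}]$ are sound heuristics and could likely be made rigorous by naturality in $(\g,c)$ with the constant fixed on one example, but ``only normalization remains'' is not accurate: what remains is the construction of the morphism itself.

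For the second bullet, your identification of the target is correct and genuinely different from the paper's: $\Pol(\B\g,0)\cong\C^\bullet(\g,\Sym(\g))$ with $\dCE=[\pi_{\mathrm{KKS}},-]$ is the formal Lichnerowicz complex of $(\widehat{\g^*},\pi_{\mathrm{KKS}})$, so that the relevant $0$-shifted structures are formal Poisson deformations of $\pi_{\mathrm{KKS}}$ modulo formal diffeomorphisms inducing the identity on the tangent space at $0$; the paper instead works with Koszul-dual coalgebras of distributions. However, you then explicitly set aside ``the hard part'': proving that the forgetful map carries the Lie bialgebra $(\g,\delta)$ to the dual Poisson--Lie structure of $\widehat{G^*}$ under an identification $\widehat{G^*}\simeq\widehat{\g^*}$. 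That statement is essentially the whole theorem, and your proposal gives no mechanism to establish it. The paper's route through Koszul duality exists precisely to make this step computable: under $\balg^{\aug}_{\bP_n}\simeq\alg_{\coP_n}[\kos^{-1}]$ the additivity functor sends the augmented $\bP_2$-algebra $\C^\bullet(\g,k)$ to the associative algebra object in Poisson coalgebras $\U(\g^*)$, i.e.\ distributions on $\widehat{G^*}$, with no BCH-type corrections left to track; triviality of the image then immediately becomes an isomorphism of $\bP_1$-coalgebras $\U(\g^*)\cong\Sym(\g^*)$ which is the identity on primitives, i.e.\ formal linearizability of $G^*$. Without this input, or an equivalent direct computation of the forgetful map in your Lichnerowicz model, the second half of your argument does not close.
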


We also give an interpretation of classical (dynamical) $r$-matrices as follows. For a group $H$ the stack $[\h^*/H]\cong \T^*[1](\B H)$ has a natural 1-shifted Poisson structure. The following is \cref{prop:dynamicalrmatrix}.

\begin{theorem}
Let $H\subset G$ be a closed subgroup and $U\subset \h^*$ an $H$-invariant open subscheme. Then the space of pairs of
\begin{itemize}
\item A 2-shifted Poisson structure on $\B G$,

\item A 1-shifted Poisson structure on the composite $[U/H]\rightarrow \B H\rightarrow \B G$ compatible with the given 2-shifted Poisson structure on $\B G$ and the 1-shifted Poisson structure on $[U/H]\subset [\h^*/H]$
\end{itemize}
is equivalent to the set of quasi-triangular classical dynamical $r$-matrices with base $U$.
\end{theorem}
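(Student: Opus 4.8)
The plan is to reduce the computation to an explicit one on formal affine stacks via the de Rham trick of \cite[Section 3.6.2]{CPTVV} used throughout the paper, and then to match the resulting Maurer--Cartan equation with the classical dynamical Yang--Baxter equation (CDYBE) term by term. First I would replace $G$ and $H$ by their formal completions $\hG$ and $\widehat{H}$ at the unit. Since $\B\widehat{H}\to\B H$ and $\B\hG\to\B G$ are formally \'{e}tale, shifted Poisson and coisotropic structures may be computed on the formal models and then recovered by passing to the appropriate $H$- and $G_{\dR}$-equivariant pieces; this is exactly the mechanism already exploited in \cref{prop:2shiftedBG} and \cref{thm:1shiftedBG}. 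In particular, by \cref{prop:2shiftedBG} the chosen $2$-shifted Poisson structure on $\B G$ is an element $c\in\Sym^2(\g)^G$, and this $c$ is what will fix the quasi-triangular (symmetric) part of the $r$-matrix.

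Next I would identify the deformation complex controlling the relative datum. The composite $f\colon[U/H]\to\B G$ is a coisotropic-type morphism into the $2$-shifted Poisson stack $\B G$, so a compatibility datum as in the statement is a Maurer--Cartan element in a relative polyvector complex built from $\Pol([U/H])$ and $\Pol(\B G)$, whose differential and bracket combine three ingredients: (i) the canonical $1$-shifted Poisson structure on $[U/H]\subset[\h^*/H]\cong\T^*[1](\B H)$, (ii) the $2$-shifted structure $c$ on $\B G$ pulled back along $f$, and (iii) the relative tangent data of $f$, controlled by $\h$ together with the normal directions $\g/\h$ coming from $\B H\to\B G$. After the reduction of the first step, this complex computes $H$-equivariant polyvector fields on $U$ valued in exterior powers of $\g$, and a degree count identifies the underlying datum with an $H$-equivariant map $r\colon U\to\wedge^2\g$ (equivalently a map to $\g^{\otimes 2}$ once the symmetric part is pinned to $c$).

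Finally I would write out the Maurer--Cartan equation and match it to the CDYBE. The self-bracket of $r$ in the $\bP_3$-structure produces the Yang--Baxter terms $[r_{12},r_{13}]+[r_{12},r_{23}]+[r_{13},r_{23}]$; the coupling of $r$ to the canonical Poisson bivector on $\T^*[1](\B H)$, which pairs $\h$ against the coordinates $\lambda\in\h^*$ on $U$, produces precisely the dynamical derivative terms $\sum_i x_i\,\partial r/\partial\lambda_i$; compatibility with the fixed $c$ on $\B G$ forces the symmetric part $r+r^{21}=c$, i.e.\ quasi-triangularity; and the requirement that $r$ arise from an $H$-equivariant datum on $[U/H]$ is the zero-weight/equivariance condition. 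Assembling these, the Maurer--Cartan condition becomes exactly the CDYBE for a quasi-triangular dynamical $r$-matrix with base $U$. One then checks the space is discrete, i.e.\ that the relevant deformation complex contributes no higher homotopy, so that the space of pairs is the \emph{set} of such $r$-matrices.

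The hard part will be the second and third steps: correctly pinning down the relative polyvector complex for $f$ and, within it, tracing how the canonical symplectic pairing on $\T^*[1](\B H)$ feeds into the Maurer--Cartan equation to generate the dynamical derivative terms with the right signs and normalization, while simultaneously enforcing $r+r^{21}=c$. In particular, arranging for the interplay between the $1$-shifted structure on the source and the $2$-shifted structure on the target to reproduce the \emph{dynamical} terms, rather than a naive non-dynamical classical Yang--Baxter equation, is the genuinely delicate point.
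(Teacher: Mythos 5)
Your overall skeleton --- reduction to formal completions, a Maurer--Cartan computation in polyvectors, and term-by-term matching with the CDYBE --- is the same as the paper's, and you correctly locate the source of the dynamical term (the canonical bivector $\id_\h\in\h^*\otimes\h$ on $\T^*[1](\B H)$ pairing $\h$ against the $\cO(U)$-dependence of $\lambda$). But there is a genuine gap at your second step, and it is exactly where the proof has to do real work. The datum in the statement is a $1$-shifted \emph{Poisson morphism} structure on $f\colon [U/H]\rightarrow \B G$, not a coisotropic structure on $f$, and these are controlled by different complexes: Maurer--Cartan elements in the relative polyvector algebra $\Pol(f, n)$ compute $\Cois(f, n)$, which is the wrong space here. (Sanity check at $H=\{e\}$, $U=\pt$: by \cref{prop:coisotropicsubgroup} one has $\Cois(\pt\rightarrow\B G, 2)\cong \Sym^2(\g)^G$, whereas the space in the statement must be the set of quasi-triangular $r$-matrices.) The mechanism that turns ``Poisson morphism'' into a Maurer--Cartan problem is the graph trick of \cref{thm:graphcoisotropic}: the paper forms the graph $g\colon [U/\h]\rightarrow [U/\h]\times\B\g$ and computes the homotopy pullback of $\Cois(g,1)\rightarrow \Pois([U/\h]\times\B\g,1)\leftarrow \Pois(\B\g,2)$, where the right-hand map sends $c$ to the difference of the canonical $1$-shifted structure on $[U/\h]$ and the image of $c$ under the forgetful map of \cref{prop:forget21BG}, i.e.\ the quasi-Poisson structure $(\pi=0,\phi)$ with $\phi=-\tfrac{1}{6}[c_{12},c_{23}]$. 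Your sketch never invokes this forgetful map, yet the resulting $\phi$ is precisely what appears on the right-hand side of the twisted Maurer--Cartan equation $\tfrac{1}{2}\llbracket\lambda,\lambda\rrbracket+\Alt(\ddr\lambda)=\phi$ and turns the homogeneous equation for the antisymmetric part $\lambda$ into the CDYBE for $r=2\lambda+c$; without the graph-plus-pullback step there is no complex whose Maurer--Cartan elements are the pairs in the statement.

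A second, smaller point: discreteness is not a degree count. The ambient groupoids $\Cois(g,1)$ and $\Pois([U/\h]\times\B\g,1)$ have many nontrivial morphisms (the various $\lambda^i$ in the paper's notation), so it is false that ``the deformation complex contributes no higher homotopy.'' Discreteness is a property of the homotopy fiber product: in the paper one takes the standard model for the pullback of groupoids, passes to the full subgroupoid where the comparison data has $\lambda^2=\lambda^3=0$, and checks that this subgroupoid is essentially surjective and that its objects have no automorphisms. That argument is only available once the fiber-product description from the graph trick is in place, which is further evidence that the graph step is the essential missing ingredient in your plan.
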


For instance, let us consider the case $H=\pt$ in which case we recover ordinary classical quasi-triangular $r$-matrices. Then the above statement recovers the well-known prescription that a quantization of a quasi-triangular classical $r$-matrix gives rise to a braided monoidal deformation of $\Rep G$ together with a monoidal deformation of the forgetful functor $\Rep G\rightarrow \Vect$.

To explain the general case, let us first observe that $\QCoh([\h^*/H])\cong \Mod_{\Sym(\h)}(\Rep H)$. Its monoidal deformation quantization is given by the monoidal category
\[\cH_H = \Mod_{\U(\h)}(\Rep H)\]
equivalent to the so-called category of Harish-Chandra bimodules, i.e. $\U(\h)$-bimodules whose diagonal $\h$-action is $H$-integrable. Then a deformation quantization of a classical dynamical $r$-matrix with base $\h^*$ is given by a braided monoidal deformation of $\Rep G$ together with a monoidal functor $\Rep G\rightarrow \cH_H$. If the dynamical $r$-matrix has poles (i.e. $U\neq \h^*$), then this will be a lax monoidal functor which is only generically (i.e. over $U$) monoidal.

This theorem explains the seemingly different geometric interpretations of dynamical $r$-matrices in terms of quasi-Poisson $G$-spaces (see \cite[Section 2.2]{EE}) and dynamical Poisson groupoids (see \cite{EV1}). Indeed, consider the $G$-space $Y = U\times G$. Then $[Y/G]\rightarrow \B G$ is equivalent to $U\rightarrow \B G$. We may also consider the groupoid $\cG = U\times G\times U\rightrightarrows U$. Then $U\rightarrow [U/\cG]$ is also equivalent to $U\rightarrow \B G$.

We show how some standard (dynamical) $r$-matrices can be constructed from Lagrangian correspondences in \cref{sect:rmatrixconstruction}.

\subsection*{Quasi-Poisson groupoids}

The description of quasi-Poisson structures on a group in terms of 1-shifted Poisson structures on its classifying stack can be generalized to groupoids. Namely, let us consider a source-connected smooth affine groupoid $\cG\rightrightarrows X$ over a smooth affine scheme. The following is \cref{thm:quasipoissongroupoids}.

\begin{theorem}
The space of 1-shifted Poisson structures on $[X/\cG]$ is equivalent to the groupoid of quasi-Poisson structures on $\cG$ with morphisms given by twists.
\end{theorem}

Let us note that in \cite{BCLGX} quasi-Poisson groupoids are also described in terms of Maurer--Cartan elements. However, the two approaches are opposite. In \cite{BCLGX} the authors show that the notion of a quasi-Poisson groupoid is Morita-invariant and so it defines a geometric structure on the stack $[X/\cG]$ independent of the presentation. In this paper we start with a manifestly Morita-invariant notion of a 1-shifted Poisson structure on $[X/\cG]$ and show that it is equivalent to a quasi-Poisson structure on $\cG$ thus showing that the latter is Morita-invariant.

A much easier computation can be performed for 1-shifted symplectic structures (see \cref{prop:quasisymplecticgroupoid}).

\begin{theorem}
The space of 1-shifted symplectic structures on $[X/\cG]$ is equivalent to the groupoid of quasi-symplectic structures on $\cG$ with morphisms given by twists.
\end{theorem}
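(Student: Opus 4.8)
The plan is to compute the space $\Symp([X/\cG], 1)$ directly from the definition of a $1$-shifted symplectic structure as a closed $2$-form of degree $1$ whose underlying $2$-form is nondegenerate, exploiting the fact that, unlike polyvectors, differential forms descend cleanly along the nerve. Write $N_\bullet$ for the nerve of the groupoid, so that $N_0 = X$, $N_1 = \cG$, $N_2 = \cG\times_X\cG$, and $[X/\cG]\simeq |N_\bullet|$. Each $N_q$ is a smooth affine scheme, so its (weight-graded mixed) de Rham complex is the ordinary complex of algebraic forms, and $\mathrm{DR}([X/\cG])$ is computed as the totalization $\Tot_q \Omega^\bullet(N_q)$, equipped with the simplicial differential $\delta=\sum_i(-1)^i d_i^*$ and the internal de Rham differential $\ddr$. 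The first step is to spell out this double complex in low weight and degree; because the $N_q$ are affine, higher cohomology vanishes and only global forms survive, which is precisely what makes this computation much easier than its quasi-Poisson counterpart.

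First I would extract the underlying $2$-form. In the totalization a class of total degree $1$ and weight $2$ is carried by $\Omega^2(N_1)=\Omega^2(\cG)$, and the cocycle condition $\delta\omega=0$ in $\Omega^2(N_2)$ reads $m^*\omega=\mathrm{pr}_1^*\omega+\mathrm{pr}_2^*\omega$, i.e. multiplicativity of $\omega$ on $\cG$. The only available homotopies are the coboundaries $\delta\lambda = s^*\lambda - t^*\lambda$ for $\lambda\in\Omega^2(X)=\Omega^2(N_0)$, which is exactly the operation of twisting by a $2$-form on $X$. This already reproduces the groupoid whose objects are multiplicative $2$-forms and whose morphisms are twists.

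Next I would incorporate closedness by passing to the full Hodge tower, bringing in weight $3$. The affine truncation forces a closed $2$-form of degree $1$ to reduce to a pair $(\omega,\phi)$ with $\omega\in\Omega^2(\cG)$ as above and $\phi\in\Omega^3(X)$, subject to $\ddr\omega = s^*\phi - t^*\phi$ and $\ddr\phi = 0$, all higher components vanishing for degree reasons. This is precisely the multiplicative, $\phi$-twisted closed $2$-form underlying a quasi-symplectic structure on $\cG$, and under a twist by $\lambda$ the $3$-form transforms as $\phi\mapsto\phi+\ddr\lambda$, matching the gauge action on quasi-symplectic data.

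Finally I would match nondegeneracy. Restricting the cotangent complex along the atlas $X\to[X/\cG]$ gives $\bL_{[X/\cG]}|_X\simeq[\Omega^1_X\to A^*]$ in degrees $0,1$, where $A$ is the Lie algebroid of $\cG$ with anchor $\rho$, so the underlying $2$-form induces a map $\T_{[X/\cG]}\to\bL_{[X/\cG]}[1]$ whose being an equivalence amounts to the induced pairings giving isomorphisms $A\cong\Omega^1_X$ and $\T_X\cong A^*$. The step I expect to be the main obstacle is verifying that this complex-level nondegeneracy along the identity section coincides with the classical pointwise nondegeneracy condition for quasi-symplectic groupoids, namely the vanishing of $\ker\omega\cap\ker ds\cap\ker dt$ at the units together with the dimension count $\dim\cG=2\dim X$. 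Here the affineness of the nerve terms guarantees that the space in question is a $1$-groupoid, while source-connectedness of $\cG$ is what propagates nondegeneracy at the units to all of $\cG$; assembling the three matches — objects $(\omega,\phi)$, morphisms given by twists, and nondegeneracy — yields the claimed equivalence of groupoids.
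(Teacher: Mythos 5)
Your computation of the space of closed two-forms coincides with the paper's own argument: the paper likewise computes $\cA^{2,cl}([X/\cG],1)$ by descent along the nerve, using affineness of each simplicial level to reduce it to a one-groupoid (closed three-forms as objects, two-forms as homotopies) and then applying its totalization lemma for cosimplicial groupoids; the resulting description --- objects $(\omega,\phi)$ with $\omega$ multiplicative, $\ddr\omega$ equal to the \v{C}ech differential of $\phi$, $\ddr\phi=0$, and morphisms $B\in\Omega^2(X)$ acting by $\omega\mapsto\omega+s^*B-t^*B$, $\phi\mapsto\phi+\ddr B$ --- agrees with yours.

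The genuine gap is in the nondegeneracy step, and it is twofold. First, your assertion that $\omega^\sharp\colon p^*\bT_{[X/\cG]}\to p^*\bL_{[X/\cG]}[1]$ being an equivalence ``amounts to'' isomorphisms $\cL\cong\Omega^1_X$ and $\T_X\cong\cL^*$ is false: a map of two-term complexes can be a quasi-isomorphism without being an isomorphism in each degree. A concrete counterexample is the pair groupoid $X\times X\rightrightarrows X$ with $\omega=0$, $\phi=0$: this is a quasi-symplectic groupoid (one has $\ker ds\cap\ker dt=0$ at units and $\dim\cG=2\dim X$), its quotient stack is a point carrying the nondegenerate zero $1$-shifted symplectic structure, and here both induced maps $\cL\to\T^*_X$ and $\T_X\to\cL^*$ vanish, yet $\omega^\sharp$ is a quasi-isomorphism because both two-term complexes $[\cL\to\T_X]$ and $[\T^*_X\to\cL^*]$ are acyclic. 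The correct translation, which is what the paper uses, is that $\omega^\sharp$ is a quasi-isomorphism iff its total complex
\[0\longrightarrow\cL\longrightarrow\T_X\oplus\T^*_X\longrightarrow\cL^*\longrightarrow 0\]
is exact, i.e. iff $(\rho,\sigma)\colon\cL\to\T_X\oplus\T^*_X$ (anchor together with the map induced by $\omega$) is a Lagrangian embedding. Second, the identification of this Lagrangian condition with the classical pointwise nondegeneracy ($\ker\omega\cap\ker ds\cap\ker dt=0$ at units together with $\dim\cG=2\dim X$) is precisely the step you defer as ``the main obstacle''; it is the nontrivial content of this part of the proof, and the paper closes it by citing the theorem of Bursztyn--Crainic--Weinstein--Zhu on twisted presymplectic groupoids. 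Finally, source-connectedness of $\cG$ plays no role in this statement (it is needed for the quasi-Poisson groupoid theorem, not here): the classical nondegeneracy of a quasi-symplectic groupoid is a condition imposed only at the units, so there is nothing to ``propagate'' to all of $\cG$.
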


One may identify 1-shifted symplectic structures on a derived stack with the space of \emph{nondegenerate} 1-shifted Poisson structures (see \cite[Theorem 3.2.4]{CPTVV} and \cite[Theorem 3.33]{Pri1}). Thus, the groupoid of quasi-symplectic structures on $\cG$ is identified with the groupoid of nondegenerate quasi-Poisson structures on $\cG$. Explicitly, it means the following. A quasi-Poisson structure on $\cG$ gives rise to a quasi-Lie bialgebroid structure on its underlying Lie algebroid $\cL$. In turn, a quasi-Lie bialgebroid $\cL$ gives rise to a Courant algebroid $\cL\oplus \cL^*$ (see \cite{Ro}). The 1-shifted Poisson structure on $[X/\cG]$ is nondegenerate iff the Courant algebroid $\cL\oplus \cL^*$ is exact.

\subsection*{Symplectic realizations}

Given a Poisson manifold $X$ the cotangent bundle $\T^*_X$ becomes a Lie algebroid with respect to the so-called Koszul bracket; moreover, this Lie algebroid has a compatible symplectic structure. Thus, one might ask if it integrates to a symplectic groupoid $\cG\rightrightarrows X$ which is a groupoid equipped with a multiplicative symplectic structure on $\cG$. Conversely, given such a symplectic groupoid we get an induced Poisson structure on $X$. Symplectic groupoids give symplectic realizations \cite{We} of Poisson manifolds, thus one might ask for a similar notion in the setting of shifted symplectic structures. We warn the reader that what we only restrict to symplectic realizations which are symplectic groupoids (see \cref{remark:symplecticrealization}).

Given a symplectic groupoid $\cG\rightrightarrows X$ we have an induced $1$-shifted symplectic structure on $[X/\cG]$ together with a Lagrangian structure on the projection $X\rightarrow [X/\cG]$ (see \cref{prop:symplecticgroupoid}). Now, given any $n$-shifted Lagrangian $L\rightarrow Y$ by the results of \cite[Section 4]{MS2} we get an induced $(n-1)$-shifted Poisson structure on $L$. In the case of the 1-shifted Lagrangian $X\rightarrow [X/\cG]$ we get an unshifted Poisson structure on $X$ coming from the symplectic groupoid.

Thus, we define symplectic realizations of $n$-shifted Poisson stacks $X$ to be lifts of those to $(n+1)$-shifted Lagrangians $X\rightarrow Y$. It is expected that any $n$-shifted Poisson stack $X$ has a unique formal symplectic realization, i.e. a symplectic realization $X\rightarrow Y$ which is an equivalence of reduced stacks (a nil-isomorphism). We refer to \cite[Section 3]{Cal2} for a discussion of this. The work \cite{Sp} in fact uses this as a definition of $n$-shifted Poisson structures.

We illustrate symplectic realizations by showing that the Feigin--Odesskii \cite{FO} Poisson structure on $\Bun_P(E)$, the moduli space of $P$-bundles on an elliptic curve $E$ for $P$ a parabolic subgroup of a simple group $G$, admits a symplectic realization given by the 1-shifted Lagrangian $\Bun_P(E)\rightarrow \Bun_M(E)\times \Bun_G(E)$, where $M$ is the Levi factor of $P$. In particular, by taking the \v{C}ech nerve of this map we recover a symplectic groupoid integrating the Feigin--Odesskii Poisson structure.

Recall that a Manin pair is a pair $\g\subset \fd$ of Lie algebras where $\fd$ is equipped with an  invariant nondegenerate pairing and $\g\subset \fd$ is Lagrangian. Suppose that the Manin pair $\g\subset \fd$ integrates to a group pair $G\subset D$. It is known that it induces a quasi-Poisson structure on $G$. Given a quasi-Poisson group $G$ we get an induced $1$-shifted Poisson structure on $\B G$ and we show that a Manin pair gives its symplectic realization which is a 2-shifted Lagrangian $\B G\rightarrow \B D$.

Similarly, given a shifted coisotropic morphism $C\rightarrow X$ we propose a notion of symplectic realizations for those which are given by $(n+1)$-shifted Lagrangian correspondences
\[
\xymatrix{
& C \ar[dl] \ar[dr] & \\
\tilde{X} \ar[dr] && X \ar[dl] \\
& Y &
}
\]
where $Y$ carries an $(n+1)$-shifted symplectic structure, $X\rightarrow Y$ and $\tilde{X}\rightarrow Y$ are Lagrangian and so is $C\rightarrow \tilde{X}\times_Y X$. Assuming a certain conjecture (\cref{conj:intersections}) on a compatibility between Lagrangian and coisotropic intersections, we see that $C\rightarrow X$ inherits an $n$-shifted coisotropic structure.

Recall that a Manin triple is a triple $(\fd, \g, \g^*)$ where $\g\subset \fd$ and $\g^*\subset \fd$ are Manin pairs and $\g$ and $\g^*$ intersect transversely. Again suppose the Manin triple integrates to a triple of groups $(D, G, G^*)$. It is known that it induces a Poisson-Lie structure on $G$.

Given a Manin triple $(D, G, G^*)$ we obtain a $2$-shifted Lagrangian correspondence
\[
\xymatrix{
& \pt \ar[dl] \ar[dr] & \\
\B G^* \ar[dr] && \B G \ar[dl] \\
& \B D
}
\]

In particular, $\pt\rightarrow \B G$ and $\pt\rightarrow \B G^*$ carry a $1$-shifted coisotropic structure and hence $G$ and $G^*$ become Poisson-Lie groups, thus Manin triples give symplectic realizations of Poisson-Lie structures.

\subsection*{Acknowledgments}

The author would like to thank A. Brochier, D. Calaque, D. Jordan, C. Laurent-Gengoux and B. Pym for useful conversations. This research was supported by the NCCR SwissMAP grant of the Swiss National Science Foundation.

\addtocontents{toc}{\protect\setcounter{tocdepth}{2}}

\section{Poisson and coisotropic structures on stacks}

In this section we remind the reader the necessary basics of shifted Poisson and shifted coisotropic structures on derived stacks as defined in \cite{CPTVV}, \cite{MS1} and \cite{MS2}.

\subsection{Formal geometry}

Recall the notion of a graded mixed cdga from \cite[Section 1.1, Section 1.5]{CPTVV} and the ind-object $k(\infty)$. Given a graded object $A$ we denote $A(\infty) = A\otimes k(\infty)$. Explicitly, a graded mixed cdga is a cdga $A$ together with an extra \defterm{weight} grading and a square-zero derivation $\epsilon$ of degree $1$ and weight $1$. Given a graded mixed cdga $A$ its realization is
\[|A|\cong \prod_{n\geq 0} A(n)\]
with the differential $\d_A+\epsilon$ and its Tate realization is
\[|A|^t\cong \underset{m\rightarrow \infty}{\colim}\ \prod_{n\geq -m} A(n)\]
with the same differential. We can also identify
\[|A|^t\cong |A\otimes k(\infty)|.\]

Here are two important examples that we will use in this paper:
\begin{itemize}
\item Let $X$ be a derived Artin stack. Then one has the graded mixed cdga $\Omega^\epsilon(X)$ of differential forms on $X$ (see \cite{PTVV} where it is denoted by $\mathbf{DR}(X)$). As a graded cdga it can be identified with
\[\Omega^\epsilon(X)\cong \Gamma(X, \Sym(\bL_X[-1]))\]
where the weight of $\bL_X$ is $1$. One can think of the mixed structure as the de Rham differential $\ddr$. We denote by $\Omega^\bullet(X)$ its realization.

\item Let $\g$ be a Lie algebra and $A$ a commutative algebra with a $\g$-action. Then one can define
\[\C^\epsilon(\g, A)\cong \Hom(\Sym(\g[1]), A)\]
as a graded cdga with the mixed structure given by the Chevalley--Eilenberg differential $\dCE$. We denote by $\C^\bullet(\g, A)$ its realization.
\end{itemize}

Let $\g$ be a Lie algebra and $\hG$ the corresponding formal group. Throughout the paper we will be interested in the stack
\[\B\g = \B\hG.\]
It is easy to see that $\cO(\B\g)$ coincides with the Lie algebra cohomology of $\g$ which follows for instance from \cite[Theorem 2.4.1]{DAGX}. Even though $\B\g$ is not affine, many of its properties are essentially determined by its algebra of functions $\cO(\B\g)$, more precisely by its variant $\bD(\B\g)$ that we will construct shortly.

For an affine scheme $S$ we define the graded mixed cdga $\bD(S)$ to be
\[\bD(S) = \Omega^\epsilon(S_{\red}/S).\]
For a general stack $X$ we define
\[\bD(X) = \lim_{S\rightarrow X}\bD(S),\]
where the limit is over affine schemes $S$ mapping to $X$. We can identify
\[|\bD(X)|\cong \cO(X).\]

Given a morphism $X\rightarrow Y$ of stacks we define the relative de Rham space to be
\[(X/Y)_{\dR} = X_{\dR}\times_{Y_{\dR}} Y.\]

\begin{prop}
Let $X\rightarrow Y$ be a morphism of affine schemes where $X$ is reduced. Then we have an equivalence of graded mixed cdgas
\[\bD((X/Y)_{\dR})\cong \Omega^\epsilon(X/Y).\]
\label{prop:deRhamfunctions}
\end{prop}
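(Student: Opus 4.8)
The plan is to unwind both sides into explicit graded mixed cdgas and match them through the standard infinitesimal presentation of the de Rham space. Write $X = \Spec B$ and $Y = \Spec A$, so that $f$ corresponds to a ring map $A\to B$ with $B$ reduced, and the target is $\Omega^\epsilon(X/Y) \cong \Sym_B(\bL_{B/A}[-1])$ with the relative de Rham differential as mixed structure. By definition $\bD((X/Y)_{\dR}) = \lim_{\Spec R\to (X/Y)_{\dR}} \Omega^\epsilon(R_{\red}/R)$, and a map $\Spec R\to (X/Y)_{\dR} = X_{\dR}\times_{Y_{\dR}} Y$ amounts to a ring map $B\to R_{\red}$, a ring map $A\to R$, and a homotopy identifying the two induced maps $A\to R_{\red}$. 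The first observation I would record is that, since $B$ is reduced, the reduction of $(X/Y)_{\dR}$ is $X$ itself (a reduced test scheme maps in precisely via a map to $X$, the map to $Y$ being the forced composite).

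Rather than handle the entire indexing limit directly, I would present the de Rham space by its infinitesimal \v{C}ech nerve and use that $\bD$, being a limit over affines, sends the resulting colimit of stacks to a totalization. The map $X\to X_{\dR}$ exhibits $X_{\dR}\simeq |X^{\times_{X_{\dR}}\bullet}|$, where $X^{\times_{X_{\dR}}(n+1)} = \widehat{X^{n+1}}$ is the formal completion of $X^{n+1}$ along the small diagonal. Base-changing along $Y\to Y_{\dR}$ presents $(X/Y)_{\dR}$ as the realization of the simplicial formal scheme $\widehat{X^{\bullet+1}}\times_{Y_{\dR}} Y$, so that $\bD((X/Y)_{\dR})$ is the totalization of the cosimplicial graded mixed cdga $n\mapsto \bD(\widehat{X^{n+1}}\times_{Y_{\dR}} Y)$.

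Next I would compute each term. Every term has reduction $X$, hence $\bD(\widehat{X^{n+1}}\times_{Y_{\dR}} Y) = \Sym_B(\bL_{X/\widehat{X^{n+1}}\times_{Y_{\dR}} Y}[-1])$. Using $\bL_{Y_{\dR}}=0$, the cotangent complex of the ambient formal scheme restricted to $X$ is $\bL_X^{\oplus(n+1)}\oplus f^*\bL_Y$, and the cotangent triangle for the inclusion of the reduction identifies the relative cotangent complex with the shift $(\bL_X^{\oplus n}\oplus f^*\bL_Y)[1]$, the structure map to $\bL_X$ being the sum on the $\bL_X$-factors and the cotangent map of $f$ on $f^*\bL_Y$. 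Thus the $n$-th term is $\Sym_B(\bL_X^{\oplus n}\oplus f^*\bL_Y)$, with the $f^*\bL_Y$ direction cosimplicially constant. The totalization is then an HKR-type computation: the $\bL_X^{\oplus\bullet}$ part assembles, as in the absolute case $Y=\pt$, into $\Sym_B(\bL_X[-1])$ with the de Rham differential, while the constant direction $f^*\bL_Y$ couples through $df$ so as to replace $\bL_X$ by $\bL_{X/Y} = \mathrm{cofib}(f^*\bL_Y\to\bL_X)$, yielding $\Sym_B(\bL_{B/A}[-1]) = \Omega^\epsilon(X/Y)$. Conceptually, the factor $\times_{Y_{\dR}} Y$ keeps $Y$ undifferentiated and thereby cuts the absolute de Rham complex of $X$ down to the relative one.

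The step I expect to be the main obstacle is matching the mixed structure through the totalization, not merely the underlying graded cdgas: I would need to verify that the cosimplicial (alternating-sum) differential produced on the totalization agrees with the relative de Rham differential $\ddr$ on $\Omega^\epsilon(X/Y)$, that the weight grading lines up (the cosimplicial degree combining with the intrinsic weight of $\bL_X$ to reproduce form degree), and that the constant $f^*\bL_Y$ direction genuinely trims $\bL_X$ to $\bL_{X/Y}$ with no surviving higher corrections. Subsidiary points requiring care are descent for $\bD$ along the formally \'etale map $X\to X_{\dR}$ together with its base change, and the Tor-independence of the relevant completions, so that the derived fiber products behave as the computation assumes.
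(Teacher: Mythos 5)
Your reduction to the simplicial presentation is set up correctly (the nerve of $X\to X_{\dR}$ base-changed along $Y\to Y_{\dR}$ does present $(X/Y)_{\dR}$, and your identification of each term's underlying graded cdga as $\Sym_B(\bL_X^{\oplus n}\oplus f^*\bL_Y)$ is right), but the proof has a genuine gap exactly where you flag "the main obstacle": the identification of the totalization with $\Omega^\epsilon(X/Y)$ \emph{as graded mixed cdgas} is asserted, not proven, and it is not a routine verification --- it is the graded mixed refinement of the comparison between infinitesimal and de Rham cohomology, i.e.\ it carries essentially the entire content of the proposition. Worse, the statement "the cosimplicial differential agrees with $\ddr$" cannot even be formulated until you have a map relating the two sides, and your argument never constructs one. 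There are also two quieter debts: each term $\widehat{X^{n+1}}\times_{Y_{\dR}}Y$ is a formal, non-affine stack, so computing its $\bD$ as $\Sym$ of a shifted relative cotangent complex already invokes, term by term, the same input (\cite[Proposition 2.2.7]{CPTVV}) that handles the whole proposition in one stroke; and passing $\bD$ through the geometric realization requires descent for $\bD$ (the realization is a colimit of \emph{sheaves}), not merely its definition as a limit over affines.

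The paper's proof inverts your order of operations and thereby avoids the obstacle entirely. It first builds the comparison map: for any $\Spec A\to (X/Y)_{\dR}$ one gets $\Spec A_{\red}\to X$ and $\Spec A\to Y$ with the evident compatibility, hence pullbacks of relative forms $\Omega^\epsilon(X/Y)\to \Omega^\epsilon(A_{\red}/Y)\to \Omega^\epsilon(A_{\red}/A)=\bD(A)$; these are compatible with the indexing diagram, so they assemble into a morphism of graded mixed cdgas $\Omega^\epsilon(X/Y)\to \lim_{\Spec A\to (X/Y)_{\dR}}\bD(A)=\bD((X/Y)_{\dR})$ --- the mixed-structure compatibility is automatic because every arrow in the construction is one of graded mixed cdgas. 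Then, since $((X/Y)_{\dR})_{\red}\cong X$, the underlying morphism of graded cdgas is an equivalence by \cite[Proposition 2.2.7]{CPTVV}, and the proof is finished. Note that this same pullback construction is precisely what you would need to produce the augmentation of your cosimplicial object and make your HKR-style matching meaningful; once you have it, the simplicial apparatus is superfluous, since the cited proposition applies directly to $(X/Y)_{\dR}$. So either adopt the direct route, or, if you keep the nerve, construct the maps $\Omega^\epsilon(X/Y)\to \bD(\widehat{X^{n+1}}\times_{Y_{\dR}}Y)$ by the same pullback procedure and prove the induced map to the totalization is an equivalence of underlying graded cdgas --- which is again the cited result, now applied infinitely many times instead of once.
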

\begin{proof}
Let us begin by constructing a morphism $\Omega^\epsilon(X/Y)\rightarrow \bD((X/Y)_{\dR})$ of graded mixed cdgas. Given $\Spec A\rightarrow (X/Y)_{\dR}$ we have a commutative diagram
\[
\xymatrix{
\Spec A_{\red} \ar[d] \ar[r] & \Spec A \ar[d] & \\
X \ar[r] & (X/Y)_{\dR} \ar[dr] & \\
&& Y
}
\]

Moreover, we have pullbacks
\[\Omega^\epsilon(X/Y)\to \Omega^\epsilon(A_{\red}/Y)\to \Omega^\epsilon(A_{\red}/A)= \bD(A).\]
This is compatible with pullbacks along $A$ and hence we obtain a morphism of graded mixed cdgas
\[\Omega^\epsilon(X/Y)\to \bD((X/Y)_{\dR}) = \lim_{\Spec A\rightarrow (X/Y)_{\dR}} \bD(A).\]

Since $((X/Y)_{\dR})_{\red}\cong X$, the underlying morphism of graded cdgas is an equivalence by \cite[Proposition 2.2.7]{CPTVV}.
\end{proof}

Given a stack $X$ we have its de Rham space $X_{\dR}$ which has the functor of points
\[X_{\dR}(R) = X(\H^0(R)_{red}).\]
For a smooth scheme $X$, its de Rham space can be constructed as the quotient of $X$ by its infinitesimal groupoid. Now let $X$ be a derived Artin stack and consider $p\colon X\rightarrow X_{\dR}$. Then $p_*\cO_X$ is a commutative $\cO_{X_{\dR}}$-algebra. Following \cite[Definition 2.4.11]{CPTVV}, one can enhance the pair $(p_*\cO_X, \cO_{X_{\dR}})$ to prestacks of graded mixed cdgas over $X_{\dR}$ denoted by $\cB_X = \bD_{X/X_{\dR}}$ and $\bD_{X_{\dR}}$ respectively so that $\cB_X$ is a $\bD_{X_{\dR}}$-cdga. Moreover, we have equivalences of prestacks
\[|\cB_X|\cong p_*\cO_X,\qquad |\bD_{X_{\dR}}|\cong \cO_{X_{\dR}}.\]

\subsection{Maurer--Cartan spaces}

Let us recall the necessary results about Maurer--Cartan spaces. Let $\g$ be a nilpotent dg Lie algebra and $\Omega_\bullet=\Omega^\bullet(\Delta^n)$ the simplicial algebra of polynomial differential forms on simplices. We define $\MC(\g)$ to be the simplicial set of Maurer--Cartan elements in $\g\otimes\Omega_\bullet$. More generally, if $\g$ is a pro-nilpotent dg Lie algebra, we define $\MC(\g)$ to be the inverse limit of Maurer--Cartan spaces of the filtration.

We will use the following useful way to compute Maurer--Cartan spaces \cite[Proposition 2.2.3]{Hin}:
\begin{prop}
Suppose $\g$ is a nilpotent dg Lie algebra concentrated in non-negative degrees. Then $\MC(\g)$ is equivalent to the nerve of the following \defterm{Deligne groupoid}:
\begin{itemize}
\item Its objects are Maurer--Cartan elements in $\g$.

\item Its morphisms from $x$ to $y$ are given by elements $\lambda\in\g^0$ and a Maurer--Cartan element $\alpha(t)\in\g\otimes k[t]$ satisfying the following equations:
\begin{align*}
\frac{d \alpha(t)}{dt} + \d\lambda + [\alpha(t), \lambda] &= 0 \\
\alpha(0) &= x \\
\alpha(1) &= y.
\end{align*}
\end{itemize}
\label{prop:MCDeligne}
\end{prop}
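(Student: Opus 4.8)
The plan is to deduce the equivalence from the general homotopy theory of the Maurer--Cartan functor, the crucial point being that the non-negative grading forces $\MC(\g)$ to be $1$-truncated, so that it is automatically the nerve of a groupoid. First I would record that $\MC(\g)$ is a Kan complex: the Maurer--Cartan functor sends surjections of nilpotent (hence pro-nilpotent) dg Lie algebras to Kan fibrations of simplicial sets, and applying this to $\g\to 0$ exhibits $\MC(\g)$ as fibrant. Next, and this is the heart of the argument, I would compute its homotopy groups at a basepoint. Fixing a Maurer--Cartan element $x\in\g^1$ and twisting the differential to $\d_x = \d + [x,-]$ (which still squares to zero and raises degree by one, hence preserves non-negativity of the grading), one has the standard identifications
\[\pi_0(\MC(\g)) = \{\text{gauge classes of Maurer--Cartan elements}\},\qquad \pi_n(\MC(\g),x)\cong \H^{1-n}(\g,\d_x)\ (n\geq 1).\]
Since $(\g,\d_x)$ is concentrated in non-negative degrees, $\H^{1-n}(\g,\d_x)=0$ for every $n\geq 2$, so $\MC(\g)$ has no homotopy above degree $1$.

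A $1$-truncated Kan complex is canonically equivalent to the nerve of its fundamental groupoid $\Pi_1\MC(\g)$, so it remains to identify $\Pi_1\MC(\g)$ with the Deligne groupoid. On objects this is immediate: the $0$-simplices of $\MC(\g)$ are exactly the Maurer--Cartan elements of $\g\otimes\Omega^\bullet(\Delta^0)=\g$. For the morphisms I would unwind a $1$-simplex. Writing $\Omega^\bullet(\Delta^1)=k[t]\oplus k[t]\,\d t$, a degree-one element of $\g\otimes\Omega^\bullet(\Delta^1)$ has the form $\alpha(t)+\lambda(t)\,\d t$ with $\alpha(t)\in(\g\otimes k[t])^1$ and $\lambda(t)\in(\g\otimes k[t])^0$, and the Maurer--Cartan equation splits into the $\d t$-free part $\d\alpha+\tfrac12[\alpha,\alpha]=0$ (so that $\alpha(t)$ is a path of Maurer--Cartan elements, with $\alpha(0),\alpha(1)$ its two faces) and the $\d t$-part $\tfrac{\d\alpha}{\d t}+\d\lambda(t)+[\alpha(t),\lambda(t)]=0$. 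This is precisely the equation in the statement, except that a priori $\lambda$ is allowed to depend on $t$.

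The step I expect to be the main obstacle is reconciling this $t$-dependent $\lambda(t)$ with the constant $\lambda\in\g^0$ of the Deligne groupoid and matching the composition laws. Given a constant $\lambda$ and an initial condition $\alpha(0)=x$, the flow equation is a linear inhomogeneous ODE which, by pro-nilpotence, has the unique polynomial solution $\alpha(t)=e^{t\,\ad_\lambda}x-\tfrac{e^{t\,\ad_\lambda}-1}{\ad_\lambda}\d\lambda$; a direct computation using the Jacobi identity shows that $\alpha(t)$ remains Maurer--Cartan once $x$ is, so that constant-$\lambda$ morphisms are well defined and their target is the usual gauge transform $y=e^{\ad_\lambda}x-\tfrac{e^{\ad_\lambda}-1}{\ad_\lambda}\d\lambda$. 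One then checks that an arbitrary $1$-simplex is homotopic rel endpoints to one with constant $\lambda$, so that morphisms of $\Pi_1\MC(\g)$ are exactly the elements of the prounipotent group $\exp(\g^0)$ acting by gauge transformations; finally, filling the relevant $2$-simplices shows that concatenation of paths corresponds, via the Baker--Campbell--Hausdorff formula, to multiplication in $\exp(\g^0)$, with unit at $\lambda=0$ and inverse at $-\lambda$. This exhibits $\Pi_1\MC(\g)$ as the Deligne groupoid and completes the proof.
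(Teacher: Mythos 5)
A preliminary remark on the comparison: the paper gives no proof of this statement at all --- it is quoted directly from Hinich as \cite[Proposition 2.2.3]{Hin} --- so your argument has to be judged on its own rather than against an in-paper proof. Your overall strategy is sound and is essentially the standard route to this result: $\MC(\g)$ is Kan, the computation $\pi_n(\MC(\g),x)\cong \H^{1-n}(\g,\d_x)$ kills all homotopy in degrees $n\geq 2$ because $\g$ sits in non-negative degrees, and a $1$-truncated Kan complex is the nerve of its fundamental groupoid. These ingredients are correctly invoked, the unwinding of $1$-simplices is right, and your explicit flow $\alpha(t)=e^{t\,\ad_\lambda}x-\tfrac{e^{t\,\ad_\lambda}-1}{\ad_\lambda}\d\lambda$ does solve the gauge ODE.

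The genuine gap is in the final identification $\Pi_1\MC(\g)\cong$ Deligne groupoid, which is where the real content of the proposition lives. You argue (modulo the unproved straightening claim) that constant-$\lambda$ paths hit every homotopy class, but you never address \emph{faithfulness}: why do distinct $\lambda\in\g^0$ joining $x$ to $y$ give non-homotopic paths? The ``so that'' in your last paragraph silently assumes this. Faithfulness is exactly the second place where non-negativity is essential: the flow formula shows that the Deligne automorphism group of $x$ is $\{\lambda\in\g^0 \mid \d_x\lambda=0\}=\rZ^0(\g,\d_x)$, whereas $\pi_1(\MC(\g),x)\cong \H^0(\g,\d_x)=\rZ^0(\g,\d_x)/\d_x(\g^{-1})$; these agree only because $\g^{-1}=0$, and when $\g^{-1}\neq 0$ the proposition is simply false. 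So any proof must visibly use this, and ``one then checks'' / ``filling the relevant $2$-simplices'' is precisely the work that the cited proof does (by induction on the lower central series). Note that you can close the gap with tools you have already invoked, avoiding path-straightening entirely: take the homotopy-group computation in the precise form that the isomorphism $\pi_1(\MC(\g),x)\cong \H^0(\g,\d_x)$ (with the Baker--Campbell--Hausdorff group structure) is induced by the constant loops $\lambda\,\d t$, and take the $\pi_0$ statement in the form that gauge equivalence coincides with homotopy equivalence of Maurer--Cartan elements. Then the comparison functor from the Deligne groupoid is bijective on objects, hits every nonempty hom-set, and is bijective on automorphism groups; since each nonempty hom-set in a groupoid is a torsor over the relevant automorphism group and the functor is equivariant, it is bijective on all hom-sets. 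As written, however, these key identifications are asserted rather than proved, so the proposal is a correct outline rather than a complete proof.
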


If $\g$ is a graded dg Lie algebra with a bracket of weight $-1$, its completion in weights $\geq 2$ denoted by $\g^{\geq 2}$ is a pro-nilpotent dg Lie algebra. Moreover, suppose $\g_\bullet$ is a cosimplicial graded dg Lie algebra. It follows from \cite[Proposition 1.17]{MS1} that the functor $\MC((-)^{\geq 2})\colon \alg_{\Lie}^{gr}\rightarrow \SSet$ preserves homotopy limits, so we obtain the following statement. See also \cite[Theorem 4.1]{Hin} and \cite[Theorem 3.11]{Ban} for a closely related statement.

\begin{prop}
We have an equivalence of spaces
\[\MC(\Tot(\g_\bullet)^{\geq 2})\cong \Tot(\MC(\g_\bullet^{\geq 2})).\]
\label{prop:MClimits}
\end{prop}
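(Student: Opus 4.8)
The plan is to reduce everything to the limit-preservation property of $\MC((-)^{\geq 2})$ recorded immediately above, once both totalizations are identified with homotopy limits over the simplex category $\Delta$.

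First I would observe that for a cosimplicial object the totalization $\Tot$ is a model for the homotopy limit over $\Delta$. On the algebraic side, the conormalization (Dold--Kan) functor is lax symmetric monoidal, so $\Tot(\g_\bullet)$ carries a natural graded dg Lie algebra structure and represents $\mathrm{holim}_{\Delta}\,\g_\bullet$ in $\alg_{\Lie}^{gr}$; on the simplicial side one has the analogous equivalence $\Tot(\MC(\g_\bullet^{\geq 2}))\cong \mathrm{holim}_{\Delta}\,\MC(\g_\bullet^{\geq 2})$ in $\SSet$. This turns the statement into the assertion that $\MC((-)^{\geq 2})$ commutes with the $\Delta$-indexed homotopy limit.

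Next I would simply apply \cite[Proposition 1.17]{MS1}: since the composite functor $\MC((-)^{\geq 2})\colon \alg_{\Lie}^{gr}\rightarrow \SSet$ preserves homotopy limits, it in particular preserves $\mathrm{holim}_{\Delta}$, and applying it to the equivalence $\Tot(\g_\bullet)\cong \mathrm{holim}_{\Delta}\,\g_\bullet$ gives
\[\MC(\Tot(\g_\bullet)^{\geq 2})\cong \mathrm{holim}_{\Delta}\,\MC(\g_\bullet^{\geq 2})\cong \Tot(\MC(\g_\bullet^{\geq 2})),\]
which is exactly the desired equivalence. I want to stress that the compatibility of the weight $\geq 2$ completion with totalization does not have to be argued separately, since it is already packaged inside the limit-preservation of the single functor $\MC((-)^{\geq 2})$.

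The step I expect to be the main obstacle is the first one, namely checking that the chain-level and simplicial $\Tot$ really compute homotopy limits rather than strict ones. On the dg Lie side this requires knowing that the cosimplicial graded dg Lie algebra is suitably Reedy fibrant (equivalently, that conormalization agrees with the derived limit), and on the simplicial side one needs the corresponding fact for $\MC(\g_\bullet^{\geq 2})$ together with the observation that each $\g_n^{\geq 2}$ is pro-nilpotent --- which holds because the bracket has weight $-1$, so iterated brackets of weight $\geq 2$ elements strictly increase the weight --- so that $\MC$ is homotopy invariant and its inverse-limit definition is the correct derived object. Granting these identifications, the conclusion is formal, matching the closely related totalization results of \cite[Theorem 4.1]{Hin} and \cite[Theorem 3.11]{Ban}.
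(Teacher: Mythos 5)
Your proof is correct and follows essentially the same route as the paper: the paper deduces \cref{prop:MClimits} directly from the fact (via \cite[Proposition 1.17]{MS1}) that $\MC((-)^{\geq 2})\colon \alg_{\Lie}^{gr}\rightarrow \SSet$ preserves homotopy limits, with both totalizations understood as homotopy limits over $\Delta$. Your additional remarks on pro-nilpotency and on $\Tot$ computing the derived limit are exactly the implicit hypotheses the paper relies on, so nothing in your argument diverges from the intended proof.
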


We will also use the following lemma to compute totalizations of cosimplicial groupoids (see \cite[Corollary 2.11]{Ho1}):
\begin{lm}
Let $\cG^\bullet$ be a cosimplicial groupoid
\[\cosimp{\cG^0}{\cG^1}\]
Its totalization is equivalent to the following groupoid $\cG$:
\begin{itemize}
\item Objects of $\cG$ are objects $a$ of $\cG^0$ together with an isomorphism $\alpha\colon \d^1(x)\rightarrow \d^0(x)$ in $\cG^1$ satisfying $s^0(\alpha)=\id_a$ and $\d^0(\alpha)\circ \d^2(\alpha)=\d^1(\alpha)$.

\item Morphisms in $\cG$ from $(a, \alpha)$ to $(a', \alpha')$ are given by isomorphisms $\beta\colon a\rightarrow a'$ in $\cG^0$ such that
\[
\xymatrix{
\d^1(a) \ar^{\d^1(\beta)}[r] \ar^{\alpha}[d] & \d^1(a') \ar^{\alpha'}[d] \\
\d^0(a) \ar^{\d^0(\beta)}[r] & \d^0(a')
}
\]
commutes.
\end{itemize}
\label{lm:groupoidTot}
\end{lm}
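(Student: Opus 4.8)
The plan is to identify $\Tot(\cG^\bullet)$ with an explicit descent object by exploiting the fact that $\mathrm{Grpd}$, regarded as a $(2,1)$-category (equivalently, via the nerve, the homotopy theory of $1$-truncated spaces), has $1$-truncated mapping groupoids. Concretely, I would present the totalization as the homotopy limit $\Tot(\cG^\bullet)\simeq \mathrm{holim}_{\Delta}\cG^\bullet$ and compute it as the groupoid of pseudo-cones (homotopy-coherent points) from the terminal groupoid $\pt$ into $\cG^\bullet$. Such a pseudo-cone consists of objects $a_p\in\cG^p$ for each $[p]\in\Delta$ together with, for each $\theta\colon[p]\to[q]$ and its induced functor $\theta_*\colon\cG^p\to\cG^q$, an isomorphism $\eta_\theta\colon\theta_*(a_p)\to a_q$, subject to the normalization $\eta_{\id}=\id$ and the cocycle law $\eta_{\psi\theta}=\eta_\psi\circ\psi_*(\eta_\theta)$ for composable $\theta,\psi$; a morphism of pseudo-cones is a family $\beta_p\colon a_p\to a_p'$ compatible with all the $\eta_\theta$.

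The first step is to eliminate the redundant data. Using the vertex maps $v_i\colon[0]\to[p]$ and the coherences $\eta_{v_i}$, each $a_p$ is canonically isomorphic to an image of $a:=a_0$, so the entire pseudo-cone is governed by $a_0$. The two cofaces $\d^1,\d^0\colon[0]\to[1]$ (sending the point to $0$, respectively to $1$) produce $\d^1(a),\d^0(a)\in\cG^1$, and I would set $\alpha:=\eta_{\d^0}^{-1}\circ\eta_{\d^1}\colon\d^1(a)\to\d^0(a)$. Unwinding the normalization law against the codegeneracy $s^0$ (using $s^0\d^0=s^0\d^1=\id$) gives $s^0(\alpha)=\id_a$, and applying the cocycle law to the three cofaces $\d^0,\d^1,\d^2\colon[1]\to[2]$, together with the cosimplicial identities $\d^1\d^0=\d^0\d^0$, $\d^2\d^0=\d^0\d^1$ and $\d^2\d^1=\d^1\d^1$, shows that $\d^0(\alpha)\circ\d^2(\alpha)$ and $\d^1(\alpha)$ are parallel isomorphisms $\d^1\d^1(a)\to\d^0\d^0(a)$ which the cocycle law forces to agree. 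Dually, a morphism of pseudo-cones reduces to a single $\beta\colon a\to a'$ in $\cG^0$, and its compatibility with $\eta_{\d^0}$ and $\eta_{\d^1}$ is precisely the commuting square in the statement. This matches the groupoid $\cG$ exactly.

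The main obstacle is justifying that the data and relations in cosimplicial degrees $\le 2$ are exhaustive, i.e. that the a priori infinite system of coherences $\eta_\theta$ collapses to the single object $a$, the isomorphism $\alpha$, and the two conditions above, with nothing further imposed by $\cG^{\ge 3}$. I would argue this by truncation: since each $\cG^n$ is $1$-truncated, the Bousfield--Kan totalization tower $\{\Tot_n\cG^\bullet\}$ converges and stabilizes at $n=2$, because the homotopy fibre of $\Tot_n\to\Tot_{n-1}$ is $\Omega^n$ of the $n$-th matching object (the total fibre of $\cG^n$ over its matching object), hence $(1-n)$-truncated; this is contractible for $n\ge 3$ and merely $(-1)$-truncated, i.e. a condition carrying no new data, for $n=2$. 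Consequently $\Tot(\cG^\bullet)\simeq\mathrm{holim}_{\Delta_{\le 2}}\cG^\bullet$, so the pseudo-cone description restricted to $\Delta_{\le 2}$ yields the two relations recorded above and no others.

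A final technical point I would address is that the strict descent formula computes the homotopy limit only after choosing a Reedy-fibrant model of $\cG^\bullet$ in the canonical model structure on $\mathrm{Grpd}$; since every groupoid is both fibrant and cofibrant there and the relevant matching maps can be arranged to be isofibrations, this replacement does not alter the answer up to equivalence. Alternatively, one can bypass model categories entirely by computing the bilimit of $\cG^\bullet\colon\Delta\to\mathrm{Grpd}$ directly in the $(2,1)$-category of groupoids, where the bilimit over $\Delta$ of a diagram with $1$-truncated hom-groupoids agrees with the bilimit over $\Delta_{\le 2}$ by the same truncation reason; this is the most economical route and reproves the content of the cited \cite[Corollary 2.11]{Ho1}.
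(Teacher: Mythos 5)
The paper never actually proves this lemma: it is invoked purely as a citation to \cite[Corollary 2.11]{Ho1}, so your argument is necessarily a different route --- a self-contained reproof --- and it is a correct one. Your strategy (identify $\Tot(\cG^\bullet)$ with the groupoid of pseudo-cones, i.e.\ the bilimit in the $(2,1)$-category of groupoids, which agrees with the homotopy limit because every term is $1$-truncated; cut the cosimplicial direction down to $\Delta_{\leq 2}$ by a convergence argument; then unwind the finitely many coherences) is the standard descent-theoretic proof, and your bookkeeping with the cosimplicial identities is right: $\alpha=\eta_{\d^0}^{-1}\circ\eta_{\d^1}$, the normalization $s^0(\alpha)=\id_a$ from unitality of the cone together with $s^0\d^0=s^0\d^1=\id$, the cocycle identity from the three factorizations of the maps $[0]\to[2]$, and the commuting square as the compatibility of $\beta$ with $\eta_{\d^0},\eta_{\d^1}$. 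Two places deserve more care in a write-up, though neither is a structural flaw. First, the fibre of $\Tot_n\to\Tot_{n-1}$ is $\Omega^n$ of the fibre of $\cG^n\to M^n\cG^\bullet$ only over points that lift to $\Tot_n$; over the remaining components it is empty. Truncatedness therefore gives immediately that $\Tot_2\to\Tot_1$ is a monomorphism (which is all you need at $n=2$), but for $n\geq 3$ you must separately argue surjectivity: the obstruction to lifting lies in $\pi_{n-1}$ of that fibre, which vanishes precisely because the fibre is a $1$-type and $n-1\geq 2$; as stated, your sentence conflates ``contractible fibres over liftable points'' with ``equivalence''. Second, the claim that ``the entire pseudo-cone is governed by $a_0$'' is the assertion that the comparison functor from pseudo-cones to the groupoid $\cG$ of the statement is an equivalence: full faithfulness is the formula $\beta_1=\eta'_{\d^0}\circ\d^0(\beta_0)\circ\eta_{\d^0}^{-1}$, but essential surjectivity requires actually building a cone from a pair $(a,\alpha)$ (for instance $a_1=\d^0(a)$, $a_2=\d^0\d^0(a)$, with the $\eta$'s assembled from $\alpha$ and identities) and verifying its coherences --- this is exactly where the normalization $s^0(\alpha)=\id_a$ gets consumed, so it should be written out. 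Compared with the paper's bare citation, your proof buys self-containedness and makes the general principle transparent (a cosimplicial diagram of $n$-truncated objects satisfies $\Tot\simeq\Tot_{n+1}$), at the cost of the Reedy-fibrancy and obstruction-theoretic bookkeeping you correctly flag in your final paragraph.
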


\subsection{Poisson and coisotropic structures}

Recall that the operad $\bP_n$ is a dg operad controlling commutative dg algebras with a degree $1-n$ Poisson bracket. Given an $\infty$-category $\cC$ we denote by $\cC^{\sim}$ the underlying $\infty$-groupoid of objects.

\begin{defn}
Let $A$ be a commutative dg algebra. The \defterm{space of $n$-shifted Poisson structures} $\Pois(A, n)$ is defined to be the homotopy fiber of the forgetful map
\[\balg_{\bP_{n+1}}^{\sim}\to \balg_{\Comm}^{\sim}\]
at $A\in\balg_{\Comm}$.
\end{defn}

For instance, if $A$ is a smooth commutative algebra, the space $\Pois(A, 0)$ is discrete and given by the set of Poisson structures on $\Spec A$.

\begin{defn}
Let $X$ be a derived Artin stack. The \defterm{space of $n$-shifted Poisson structures} $\Pois(X, n)$ is defined to be the space of $n$-shifted Poisson structures on $\cB_X(\infty)$ as a $\bD_{X_{\dR}}(\infty)$-algebra.
\end{defn}

Let us briefly explain how to think about the twist $k(\infty)$ in the definition. Suppose $A$ is a graded mixed commutative algebra. Then a compatible strict $\bP_{n+1}$-algebra structure on $A$ is given by a Poisson bracket of weight $0$ such that the mixed structure is a biderivation. We can also weaken the compatibility with the mixed structure so that a weak $\bP_{n+1}$-algebra structure on $A$ would be given by a sequence of Poisson brackets $\{[-, -]_n\}_{n\geq 0}$ of weight $n$ which are biderivations with respect to the total differential $(\d+\epsilon)$.

Similarly, a weak $\bP_{n+1}$-algebra structure on $A(\infty)$ is given by a sequence of Poisson brackets $\{[-, -]_n\}_{n\in\Z}$ such that for any fixed elements $x,y\in A$ the expressions $[x, y]_n$ are zero for negative enough $n$.

We define coisotropic structures as follows. Consider the colored operad $\bP_{[n+1, n]}$ whose algebras are triples $(A, B, F)$, where $A$ is a $\bP_{n+1}$-algebra, $B$ is a $\bP_n$-algebra and $F\colon A\rightarrow \rZ(B)$ is a $\bP_{n+1}$-morphism, where
\[\rZ(B) = \Hom_B(\Sym_B(\Omega^1_B[n]), B)\]
is the \defterm{Poisson center} with the differential twisted by $[\pi_B, -]$. Given such a $\bP_{[n+1, n]}$-algebra, the composite $A\rightarrow \rZ(B)\rightarrow B$ is a morphism of commutative algebras which gives a forgetful functor
\[\balg_{\bP_{[n+1, n]}}\to \Arr(\balg_{\Comm}).\]

\begin{defn}
Let $f\colon A\rightarrow B$ be a morphism of commutative dg algebras. The \defterm{space of $n$-shifted coisotropic structures} $\Cois(f, n)$ is defined to be the homotopy fiber of the forgetful map
\[\balg_{\bP_{[n+1, n]}}^{\sim}\to \Arr(\balg_{\Comm})^{\sim}\]
at $f\in\Arr(\balg_{\Comm})$.
\end{defn}

Suppose $(A, B, F)$ is a $\bP_{[n+1, n]}$-algebra and let $f\colon A\rightarrow B$ be the induced morphism of commutative algebras. If we denote the homotopy fiber of $f$ by $\U(A, B)$, it is shown in \cite[Section 3.5]{MS1} that $\U(A, B)[n]$ has a natural dg Lie algebra structure such that
\[B[n-1]\to \U(A, B)[n]\to A[n]\]
becomes a fiber sequence of Lie algebras. Moreover, if $A\rightarrow B$ is surjective we can identify $\U(A, B)$ with the strict kernel of $A\rightarrow B$ with the Lie bracket induced from $A$.

Suppose $f\colon L\rightarrow X$ is a morphism of derived Artin stacks. We denote the induced morphism on de Rham spaces by
\[f_{\dR}\colon L_{\dR}\to X_{\dR}.\]
Moreover, we get a pullback morphism $f_{\dR}^*\cB_X\rightarrow \cB_L$ of $\bD_{L_{\dR}}$-algebras. Now, if $X$ in addition has an $n$-shifted Poisson structure, we obtain a natural $n$-shifted Poisson structure on $f_{\dR}^*\cB_X(\infty)$.

\begin{defn}
Let $f\colon L\rightarrow X$ a morphism of derived Artin stacks. The \defterm{space of $n$-shifted coisotropic structures} $\Cois(f, n)$ is defined to be the space of pairs $(\gamma_L, \pi_X)$ of an $n$-shifted coisotropic structure $\gamma_L$ on $f_{\dR}^*\cB_X(\infty)\rightarrow \cB_L(\infty)$ as $\bD_{L_{\dR}}(\infty)$-algebras and an $n$-shifted Poisson structure $\pi_X$ on $X$ such that the induced $n$-shifted Poisson structures on $f_{\dR}^*\cB_X(\infty)$ coincide.
\end{defn}

Note that the space $\Cois(f, n)$ in particular contains the information of an $n$-shifted Poisson structure on $X$ so that we get a diagram of spaces
\[
\xymatrix{
& \Cois(f, n) \ar[dl] \ar[dr] & \\
\Pois(L, n - 1) && \Pois(X, n)
}
\]

For instance, consider the identity morphism $\id\colon X\rightarrow X$. By \cite[Proposition 4.16]{MS1} the morphism $\Cois(\id, n)\rightarrow \Pois(X, n)$ is an equivalence, so we obtain a forgetful map
\[\Pois(X, n)\cong \Cois(\id, n)\rightarrow \Pois(X, n-1)\]
allowing us to reduce the shift. We will analyze this map in some examples in \cref{sect:forgetshift}.

If $f\colon Y\rightarrow X$ is a morphism of derived Artin stacks, one can define the \defterm{space of $n$-shifted Poisson morphisms} $\Pois(f, n)$ as the space of compatible pairs of $n$-shifted Poisson structures on $X$ and $Y$, we refer to \cite[Definition 2.8]{MS2} for a precise definition. The following statement (\cite[Theorem 2.8]{MS2}) allows us to reduce the computation of the space of Poisson morphisms to the space of coisotropic morphisms:

\begin{thm}
Let $f\colon Y\rightarrow X$ be a morphism of derived Artin stacks and denote by $g\colon Y\rightarrow Y\times X$ its graph. One has a Cartesian diagram of spaces
\[
\xymatrix{
\Pois(f, n) \ar[r] \ar[d] & \Pois(Y, n) \times \Pois(X, n) \ar[d] \\
\Cois(g, n) \ar[r] & \Pois(Y\times X, n)
}
\]
\label{thm:graphcoisotropic}
\end{thm}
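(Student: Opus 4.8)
The square is a homotopy pullback precisely when the induced map on horizontal homotopy fibers, taken over each point of the top-right corner, is an equivalence. Over a point $(\pi_Y,\pi_X)\in\Pois(Y,n)\times\Pois(X,n)$ the fiber of the top map is the space of ways to promote $f$ to an $n$-shifted Poisson morphism refining $(\pi_Y,\pi_X)$, while the fiber of the bottom map over the image $\pi_Y\boxplus(-\pi_X)\in\Pois(Y\times X,n)$ is the space of $n$-shifted coisotropic structures on the graph $g$ refining the external product Poisson structure (here I use the sign convention for the right vertical map under which the graph of a Poisson morphism is coisotropic). Thus the plan is to prove, naturally in $(\pi_Y,\pi_X)$, the homotopy-coherent form of the classical statement ``$f$ is Poisson if and only if its graph is coisotropic in the product.''

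To make this precise I would present each relevant space as the Maurer--Cartan space of a graded dg Lie algebra completed in weights $\geq 2$ and exploit that $\MC((-)^{\geq 2})$ preserves homotopy limits, as recalled in the discussion preceding \cref{prop:MClimits}. Since everything is natural and compatible with the limits defining Poisson and coisotropic structures on stacks via the formal-geometry presentation $\cB_X(\infty)$ over $\bD_{X_{\dR}}(\infty)$, I would reduce to the affine situation and to a statement about the controlling graded dg Lie algebras. Concretely, it then suffices to identify the controlling Lie algebra of the fiber over $(\pi_Y,\pi_X)$ on each side and exhibit a quasi-isomorphism between them.

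The two key inputs are the following. First, because $\T_{Y\times X}\cong\T_Y\oplus\T_X$, the shifted polyvectors on the product split as a completed external product $\Pol(Y\times X,n)\cong\Pol(Y,n)\otimes\Pol(X,n)$, which is exactly what makes the external product structure $\pi_Y\boxplus(-\pi_X)$ a well-defined and compatible Poisson bivector. Second, the graph $g\colon Y\to Y\times X$ is a closed immersion, so its pullback on functions is surjective; by the observation that for a surjection $\U(A,B)$ is the strict kernel of $A\to B$, I can replace the abstract $\bP_{[n+1,n]}$-deformation complex of the graph by the honest kernel $\ker(g^*)$, generated by the elements $1\otimes a-f^*(a)\otimes 1$. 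Unwinding the Poisson-center morphism $A\to\rZ(B)=\Hom_B(\Sym_B(\Omega^1_B[n]),B)$ against the external-product decomposition then lets me match the coisotropic complex of the graph, fiberwise over the product structure, with the complex governing the compatibility of $f$ with the two brackets, i.e. the Poisson-morphism datum.

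The main obstacle is that a coisotropic structure on $g$ a priori carries \emph{more} than the morphism-compatibility: it also records an $(n-1)$-shifted Poisson structure on the source $Y$ together with the full coherence data of the map into the Poisson center. The heart of the argument is therefore a rigidity statement---that for the graph this additional data is canonically determined, so that the fiber over the product structure collapses precisely to the space of Poisson-morphism structures. I expect the retraction $\mathrm{pr}_Y\circ g=\id_Y$, which exhibits $Y$ as a Poisson retract of $(Y\times X,\pi_Y\boxplus(-\pi_X))$, to be the mechanism forcing this contractibility. A secondary technical point requiring care is verifying that the splitting $\Pol(Y\times X,n)\cong\Pol(Y,n)\otimes\Pol(X,n)$ is compatible with the $\bD_{X_{\dR}}(\infty)$-linear and weight-graded structures, which is what licenses the reduction to the affine computation and allows the homotopy-limit preservation of $\MC((-)^{\geq 2})$ to close the argument.
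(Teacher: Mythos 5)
A preliminary remark on the comparison itself: the paper contains no proof of \cref{thm:graphcoisotropic} --- the statement (and even the precise definition of $\Pois(f,n)$) is imported from \cite[Theorem 2.8]{MS2}, so the benchmark is the proof given there, not an argument internal to this paper. Your reduction of the Cartesian square to a fiberwise statement is valid: a square of spaces is homotopy Cartesian iff for every point of the top-right corner the induced map on horizontal homotopy fibers is an equivalence, and your sign convention $(\pi_Y,\pi_X)\mapsto \pi_Y\boxplus(-\pi_X)$ agrees with the paper's remark following the theorem. The scaffolding you assemble is also all available in the paper: Maurer--Cartan models, preservation of homotopy limits by $\MC((-)^{\geq 2})$, the splitting of polyvectors on a product, and the identification of $\U(A,B)$ with a strict kernel when $A\to B$ is surjective (surjectivity here follows from the splitting $g^*\circ \mathrm{pr}_Y^*=\id$, not from $g$ being a closed immersion --- the graph of a map of stacks need not be one, e.g. $\pt\to \B G$).

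The genuine gap is the step you yourself flag as the ``heart of the argument'': the identification of the homotopy fiber of $\Cois(g,n)\to\Pois(Y\times X,n)$ over $\pi_Y\boxplus(-\pi_X)$ with the space of Poisson-morphism data on $f$ over $(\pi_Y,\pi_X)$. This is not a technical verification that can be deferred to a ``rigidity'' principle; it \emph{is} the theorem. A coisotropic structure on $g$ consists of a $\bP_n$-structure on (a model of) $\cO_Y$ together with a fully coherent lift of $g^*$ to the Poisson center, and one must exhibit an equivalence between this space of data and the space of compatibility homotopies making $f$ Poisson. Your proposed mechanism --- that the retraction $\mathrm{pr}_Y\circ g=\id_Y$ exhibits $Y$ as a Poisson retract and thereby ``forces'' the collapse --- is stated as an expectation and never developed; it is not clear it can be, since the retraction gives a splitting of commutative algebras but does not by itself control the induced $(n{-}1)$-shifted structure on $Y$ or trivialize the higher coherences of the map into the Poisson center. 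In \cite{MS1} and \cite{MS2} precisely this identification is the substance of the proof, carried out by an explicit comparison of the controlling algebras (relative polyvectors of the graph against the structure governing compatible pairs of Poisson structures), not by a formal retract argument. As written, your proposal reformulates the theorem in fiberwise form and leaves that form unproven.
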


Note that in the diagram the morphism $\Pois(Y, n)\times \Pois(X, n)\rightarrow \Pois(Y\times X, n)$ is given by sending $(\pi_Y, \pi_X)\mapsto \pi_Y - \pi_X$.

This is a generalization of the classical statement that a morphism of Poisson manifolds $Y\rightarrow X$ is Poisson iff its graph is coisotropic.

\subsection{Polyvectors}

Let us briefly sketch a computationally-efficient way of describing Poisson and coisotropic structures in terms of Maurer--Cartan spaces.

\begin{defn}
Let $A$ be a commutative dg algebra. We define the \defterm{algebra of $n$-shifted polyvector fields} $\Pol(A, n)$ to be the graded $\bP_{n+2}$-algebra
\[\Pol(A, n) = \Hom_A(\Sym_A(\Omega^1_A[n+1]), A)\]
with the Poisson bracket given by the Schouten bracket of polyvector fields.
\end{defn}

If $A$ is a graded mixed cdga, one can consider two variants. First, let $\Pol^{int}(A, n)$ be the bigraded mixed $\bP_{n+2}$-algebra defined as above. Then we define
\begin{align*}
\widetilde{\Pol}(A, n) &= |\Pol^{int}(A, n)| \\
\Pol(A, n) &= |\Pol^{int}(A, n)|^t.
\end{align*}
We will only consider the latter variant of polyvector fields in this paper, see \cref{remark:tate} for the difference.

If $X$ is a derived stack, $\cB_X$ is a prestack of $\bD_{X_{\dR}}$-linear graded mixed cdgas on $X_{\dR}$ and hence $\Pol(\cB_X, n)$ is a prestack of graded $\bP_{n+2}$-algebras on $X_{\dR}$. We define
\[\Pol(X, n) = \Gamma(X_{\dR}, \Pol(\cB_X, n)).\]
Let us also denote by $\Pol(X, n)^{\geq 2}$ the completion of this graded dg Lie algebra in weights $\geq 2$. The following is \cite[Theorem 3.1.2]{CPTVV}.

\begin{thm}
Let $X$ be a derived Artin stack. Then one has an equivalence of spaces
\[\Pois(X, n)\cong \MC(\Pol(X, n)^{\geq 2}[n+1]).\]
\label{thm:poissonpolyvectors}
\end{thm}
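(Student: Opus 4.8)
The plan is to reduce the statement for a general derived Artin stack to a purely local computation for graded mixed cdgas, and then to invoke the operadic deformation theory of the Poisson operad. First I would set up the descent. By definition $\Pois(X, n)$ is the space of $n$-shifted Poisson structures on $\cB_X(\infty)$ as a $\bD_{X_{\dR}}(\infty)$-algebra, while the right-hand side is $\MC$ of $\Pol(X,n)^{\geq 2}[n+1]$ with $\Pol(X,n) = \Gamma(X_{\dR}, \Pol(\cB_X, n))$. Choosing an affine atlas of $X_{\dR}$ and taking its \v{C}ech nerve, both sides become totalizations of cosimplicial diagrams built from the restrictions of $\cB_X$ to affine charts, which are graded mixed cdgas of the form $\bD(\Spec A)$. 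Since $\Pois(-, n)$ is defined as a homotopy fiber it commutes with these homotopy limits, and by \cref{prop:MClimits} the functor $\MC((-)^{\geq 2})$ commutes with totalizations as well. Hence it suffices to prove, for a single graded mixed cdga $B$, the equivalence $\Pois(B, n)\cong \MC(\Pol(B,n)^{\geq 2}[n+1])$; passing between the strict and realized or Tate versions is then handled by the identifications recalled in the previous subsection.

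Second I would carry out the local identification. By definition $\Pois(B, n)$ is the homotopy fiber over $B$ of the forgetful map $\balg_{\bP_{n+1}}^{\sim}\to \balg_{\Comm}^{\sim}$, i.e. the space of $\bP_{n+1}$-algebra structures on $B$ lifting its commutative structure. The operad $\bP_{n+1}$ is Koszul and is obtained from $\Comm$ and a shift of $\Lie$ via a distributive law, so homotopy $\bP_{n+1}$-structures extending a fixed $\Comm$-structure are controlled by the associated convolution dg Lie algebra, namely the complex of commutative multiderivations of $B$ equipped with the Schouten--Nijenhuis bracket. The key computation is that this deformation complex is exactly $\Pol(B, n)[n+1]$: a multiderivation of arity $k$ is an element of $\Hom_B(\Sym^k_B(\Omega^1_B[n+1]), B)$, which is precisely the weight-$k$ part of the polyvectors, the shift $[n+1]$ places the Schouten bracket in degree $0$, and the Maurer--Cartan equation $\d\pi + \tfrac{1}{2}[\pi, \pi] = 0$ encodes the (homotopy) Leibniz and Jacobi identities together with their higher coherences.

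The step I expect to be the main obstacle is matching the weight truncation and keeping track of the pro-nilpotence. The weight-$0$ and weight-$1$ parts of $\Pol(B, n)$ record the commutative multiplication and the derivations, which are fixed as the data we are lifting along; the genuine Poisson deformations therefore live in weights $\geq 2$, and this is why one forms the pro-nilpotent Lie algebra $\Pol(B,n)^{\geq 2}[n+1]$ before taking $\MC$. Making this truncation agree precisely with the homotopy fiber on the $\Pois$ side, and checking that the weight-$(\geq 2)$ completion is the correct pro-nilpotent structure so that $\MC$ may be computed as in \cref{prop:MCDeligne}, is the delicate bookkeeping. A related subtlety is the Tate realization in the definition of $\Pol$: the twist by $k(\infty)$ enlarges the polyvectors enough to capture all homotopy $\bP_{n+1}$-structures on the stack, and I would verify that it matches the $(\infty)$-twist appearing in $\cB_X(\infty)$ and $\bD_{X_{\dR}}(\infty)$.
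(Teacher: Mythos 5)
First, a point of order: the paper contains no proof of \cref{thm:poissonpolyvectors} --- it is imported verbatim as \cite[Theorem 3.1.2]{CPTVV}, so there is no internal argument to compare yours against. Your outline does track the proof in that reference (a pointwise operadic identification followed by globalization over $X_{\dR}$), so the architecture is the right one; the difficulty is that the two steps you present as routine are exactly where the content of the cited proof lies, and as written both have gaps.

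The first gap is in the descent step. Your justification --- ``since $\Pois(-,n)$ is defined as a homotopy fiber it commutes with these homotopy limits'' --- does not address the real obstruction. The forgetful functor $\balg_{\bP_{n+1}}^{\sim}\to \balg_{\Comm}^{\sim}$ is not a (co)Cartesian fibration: a $\bP_{n+1}$-structure can be neither pushed forward nor pulled back along an arbitrary map of commutative algebras. Consequently the pointwise fibers over the \v{C}ech nerve of an atlas of $X_{\dR}$ do not assemble into a cosimplicial diagram of spaces for free, and the space of lifts of the prestack $\cB_X(\infty)$ is not formally a totalization of pointwise lifting spaces; the same remark applies to the polyvector side, since polyvectors are not functorial for arbitrary maps of cdgas. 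What rescues the argument is the formal localization machinery of \cite[Section 2]{CPTVV}: because $X_{\dR}$ has trivial cotangent complex, the transition maps of $\cB_X$ over $X_{\dR}$ are formally \'{e}tale, and polyvectors and compatible Poisson structures \emph{are} functorial along formally \'{e}tale maps --- this, not an abstract commutation of limits, is what produces the cosimplicial diagrams and identifies the two totalizations. (It is the same mechanism the paper uses when it writes $\Pol(\B G, n)\cong \Pol(\B\g, n)^{G_{\dR}}$.) The second gap is the local statement itself: that lifts of a fixed commutative structure on $B$ to homotopy $\bP_{n+1}$-structures are classified by $\MC(\Pol(B,n)^{\geq 2}[n+1])$ is not a formal consequence of Koszulness or of the distributive law. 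One needs $B$ cofibrant (or to replace $\Omega^1_B$ by the cotangent complex) for multiderivations to compute the correct deformation complex, and one needs a genuine rectification theorem comparing the $\infty$-categorical fiber of the forgetful functor with strict structures, i.e. with Maurer--Cartan elements in the Schouten dg Lie algebra. This is Melani's theorem, extended to the graded mixed and Tate-twisted setting in \cite[Section 1.4]{CPTVV}, and it must be invoked or proved rather than asserted. With those two inputs supplied, your remaining bookkeeping (the weight $\geq 2$ truncation isolating the data beyond the fixed multiplication and differential, and \cref{prop:MClimits} to commute $\MC$ with the totalization) is correct.
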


Similarly, if $f\colon L\rightarrow X$ is a morphism of derived Artin stacks, one can define the relative algebra of polyvectors $\Pol(L/X, n-1)$ which is a graded $\bP_{n+1}$-algebra. The following statement shows that we have control over $\Pol(X, n)$ as a graded cdga (but not as a Lie algebra).

\begin{prop}
One has equivalences of graded cdgas
\[\Pol(X, n)\cong \Gamma(X, \Sym(\bT_X[-n-1]))\]
and
\[\Pol(L/X, n-1)\cong \Gamma(L, \Sym(\bT_{L/X}[-n])).\]
\end{prop}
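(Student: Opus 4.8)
The plan is to forget the Schouten bracket entirely and compute the underlying graded commutative dg algebra by dualizing the cotangent complex and then passing to the Tate realization. Recall that $\Pol(\cB_X, n) = |\Pol^{int}(\cB_X, n)|^t$, where the internal polyvectors form the bigraded mixed $\cB_X$-algebra
\[\Pol^{int}(\cB_X, n) = \Hom_{\cB_X}\left(\Sym_{\cB_X}(\bL_{\cB_X/\bD_{X_{\dR}}}[n+1]), \cB_X\right),\]
one grading being the polyvector weight (the symmetric power) and the mixed structure coming from the graded mixed structure on $\cB_X$ over $\bD_{X_{\dR}}$. The claimed global statement follows once we identify this object, weight by weight, after realization, using that $\Gamma(X_{\dR}, p_*(-))\cong \Gamma(X, -)$ for $p\colon X\rightarrow X_{\dR}$.

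First I would carry out the purely algebraic dualization. Since $X$ is a derived Artin stack, the relative cotangent complex $\bL_{\cB_X/\bD_{X_{\dR}}}$ is perfect, hence dualizable as a $\cB_X$-module; write $\T_{\cB_X/\bD_{X_{\dR}}}$ for its dual. In each fixed polyvector weight $k$, linear duality gives an equivalence $\Hom_{\cB_X}(\Sym^k_{\cB_X}(\bL_{\cB_X/\bD_{X_{\dR}}}[n+1]), \cB_X)\cong \Sym^k_{\cB_X}(\T_{\cB_X/\bD_{X_{\dR}}}[-n-1])$, using that over a field of characteristic zero divided and symmetric powers agree. Assembling over $k$ produces an equivalence of bigraded mixed cdgas
\[\Pol^{int}(\cB_X, n)\cong \Sym_{\cB_X}\left(\T_{\cB_X/\bD_{X_{\dR}}}[-n-1]\right).\]

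It remains to take the Tate realization along the internal (mixed) weight. Here I would invoke the comparison of \cite{CPTVV} between the cotangent complex of the prestack $\cB_X$ over $\bD_{X_{\dR}}$ and the cotangent complex of $X$: upon realization one has $|\cB_X|\cong p_*\cO_X$, and the Tate realization of $\T_{\cB_X/\bD_{X_{\dR}}}$ recovers $\bT_X$ in the appropriate weight. Since the polyvector weight is preserved by $|-|^t$ and, in each such weight, the Tate realization commutes with the finite symmetric power, we obtain $\Pol(\cB_X, n)\cong \Sym(\bT_X[-n-1])$ as prestacks of graded cdgas on $X_{\dR}$; taking global sections yields the first equivalence. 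I expect the main obstacle to be precisely this realization step: tracking the two gradings and verifying that the Tate realization of the internal tangent complex reproduces $\bT_X$ with exactly the weight placement that turns $\Sym_{\cB_X}(\T_{\cB_X/\bD_{X_{\dR}}}[-n-1])$ into $\Sym(\bT_X[-n-1])$, rather than a completed or differently shifted variant. This is where the choice of Tate (rather than ordinary) realization is essential.

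The relative statement is proved in the same way. The relative polyvectors $\Pol(L/X, n-1)$ are, by definition, the Tate realization of $\Hom_{\cB_L}(\Sym_{\cB_L}(\bL_{\cB_L/f_{\dR}^*\cB_X}[n]), \cB_L)$; dualizing the perfect relative cotangent complex gives $\Sym_{\cB_L}(\T_{\cB_L/f_{\dR}^*\cB_X}[-n])$, whose Tate realization is $\Sym(\bT_{L/X}[-n])$ by the relative version of the cotangent complex comparison, and global sections over $L_{\dR}$ give the claim.
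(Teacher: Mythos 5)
The paper records this proposition without proof, as a recollection from \cite{CPTVV} and \cite[Section 2.2]{MS2}, so there is no internal argument to compare against; your route --- dualize the perfect (relative) cotangent complex of $\cB_X$ over $\bD_{X_{\dR}}$ weight by weight, then pass to the Tate realization and invoke the formal-localization comparison to identify the result with $\Gamma(X, \Sym(\bT_X[-n-1]))$ --- is exactly the standard argument behind that citation, and it is the same pattern the paper itself uses to prove the special case \cref{prop:polyvectorsBg} for $\B\g$ (identify the tangent complex, take $\Sym$, take Tate realization). Your emphasis that choosing the Tate rather than the ordinary realization is the essential point is also correct, as \cref{remark:tate} confirms.
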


Observe that we have a morphism of graded cdgas
\[\Pol(X, n)\to \Pol(L/X, n-1)\]
induced by the natural morphism $f^*\bT_X\rightarrow \bT_{L/X}[1]$. It is shown in \cite[Section 2.2]{MS2} that we can upgrade the pair $(\Pol(X, n), \Pol(L/X, n-1))$ to a graded $\bP_{[n+2, n+1]}$-algebra; denote the \defterm{algebra of relative $n$-shifted polyvectors} by
\[\Pol(f, n) = \U(\Pol(X, n), \Pol(L/X, n-1)).\]

\begin{thm}
Let $f\colon L\rightarrow X$ be a morphism of derived Artin stacks. Then one has an equivalence of spaces
\[\Cois(f, n)\cong \MC(\Pol(f, n)^{\geq 2}[n+1]).\]
\label{thm:coisotropicrelpolyvectors}
\end{thm}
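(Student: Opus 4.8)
The plan is to follow exactly the strategy that establishes the absolute statement \cref{thm:poissonpolyvectors}, namely to settle the affine situation first and then globalize by formal localization over the de Rham space. Recall that by definition $\Cois(f,n)$ is the space of pairs consisting of a coisotropic structure on the morphism of prestacks $f_\dR^*\cB_X(\infty)\to\cB_L(\infty)$ of $\bD_{L_\dR}(\infty)$-algebras together with a compatible $n$-shifted Poisson structure on $X$, so the whole space is controlled by the prestack of relative polyvectors over $L_\dR$. The heart of the matter is to identify its Maurer--Cartan space, in the appropriate weight range and with the shift $[n+1]$, with this space of coisotropic data.

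First I would treat the affine case. For a morphism $f\colon A\to B$ of graded mixed cdgas one has the graded $\bP_{[n+2,n+1]}$-algebra $\Pol(f,n)=\U(\Pol(A,n),\Pol(B/A,n-1))$ produced in \cite[Section 2.2]{MS2}, and I would invoke the affine coisotropic--polyvector correspondence of \cite[Section 3.5]{MS1}, which identifies $\bP_{[n+1,n]}$-structures lifting a fixed commutative morphism with Maurer--Cartan elements of $\Pol(f,n)^{\geq 2}[n+1]$. The key structural input is the fiber sequence of dg Lie algebras
\[\Pol(L/X,n-1)[n]\to\Pol(f,n)[n+1]\to\Pol(X,n)[n+1],\]
which on Maurer--Cartan spaces reproduces the forgetful diagram relating $\Cois(f,n)$ to $\Pois(L,n-1)$ and $\Pois(X,n)$; in particular the projection onto $\Pol(X,n)[n+1]$ records the underlying Poisson structure on $X$, so that the compatibility built into the definition of $\Cois(f,n)$ is precisely the matching of the two ends of this sequence.

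To globalize I would write both sides as totalizations over a cosimplicial affine resolution of $L_\dR$, producing a cosimplicial graded dg Lie algebra $\Pol(f,n)_\bullet$ whose totalization is $\Pol(f,n)=\Gamma(L_\dR,\Pol(f,n))$. Since $\MC\big((-)^{\geq 2}\big)$ preserves homotopy limits by \cref{prop:MClimits}, the levelwise affine equivalence passes to an equivalence of totalizations
\[\Cois(f,n)\cong\Tot\big(\MC(\Pol(f,n)_\bullet^{\geq 2}[n+1])\big)\cong\MC\big(\Tot(\Pol(f,n)_\bullet)^{\geq 2}[n+1]\big)\cong\MC(\Pol(f,n)^{\geq 2}[n+1]),\]
which is the desired statement.

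The main obstacle I expect is the compatibility of the Poisson center $\rZ(B)$ and of the Tate twist $k(\infty)$ with formal localization. Concretely, one must check that forming $\rZ(B)=\Hom_B(\Sym_B(\Omega^1_B[n]),B)$ with its twisted differential, together with the completion in weights $\geq 2$ and the Tate realization $|-|^t$ entering the definition of $\Pol(f,n)$, all commute with the limit over affines $S\to L_\dR$, so that the levelwise affine correspondence can indeed be totalized. This is the relative analog of the delicate formal-localization step in \cite{CPTVV}, and the coisotropic colored operad $\bP_{[n+2,n+1]}$, with its bookkeeping of two differentials and an internal Hom, is the genuinely new ingredient compared with the absolute case.
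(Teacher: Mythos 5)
The paper itself contains no proof of this theorem: it sits in the background section, which explicitly only ``reminds the reader'' of material from \cite{CPTVV}, \cite{MS1} and \cite{MS2}, and the relative polyvector algebra $\Pol(f,n)=\U(\Pol(X,n),\Pol(L/X,n-1))$ appearing in its statement is quoted from \cite[Section 2.2]{MS2}; the theorem is the stacky coisotropic--polyvector correspondence proved there, just as \cref{thm:poissonpolyvectors} is quoted from \cite{CPTVV}. So there is no internal proof to compare you against; the relevant comparison is with the cited proof, and your outline does reproduce its architecture faithfully: the affine correspondence between $\bP_{[n+1,n]}$-structures lifting a fixed commutative morphism and Maurer--Cartan elements of $\U(\Pol(A,n),\Pol(B/A,n-1))^{\geq 2}[n+1]$ is the main theorem of \cite{MS1}, and the passage to stacks goes through formal localization together with the fact that $\MC((-)^{\geq 2})$ commutes with homotopy limits (\cref{prop:MClimits}), with the Lie-algebra fiber sequence $\Pol(L/X,n-1)[n]\to\Pol(f,n)[n+1]\to\Pol(X,n)[n+1]$ encoding the compatibility with the Poisson structure on $X$.

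One step of your globalization is stated too loosely, however. You claim the whole space ``is controlled by the prestack of relative polyvectors over $L_\dR$'' and propose to write both sides as totalizations over a single affine resolution of $L_\dR$. But the datum $\pi_X$ in the definition of $\Cois(f,n)$ is a Poisson structure on $\cB_X(\infty)$ as a prestack over $X_{\dR}$, not over $L_{\dR}$: the map $\Pois(X,n)\to\Pois(f_\dR^*\cB_X(\infty),n)$ is far from an equivalence, which is precisely why $\Cois(f,n)$ is defined as a space of \emph{pairs} $(\gamma_L,\pi_X)$ with a coincidence condition, rather than as a coisotropic structure on the morphism over $L_\dR$ alone. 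Accordingly, $\Pol(X,n)=\Gamma(X_\dR,\Pol(\cB_X,n))$ is a limit over a different base, and the correct organization is a homotopy fiber product: the space of coisotropic structures on $f_\dR^*\cB_X(\infty)\to\cB_L(\infty)$, fibered over the space of Poisson structures on $f_\dR^*\cB_X(\infty)$, against $\Pois(X,n)$; this matches the fiber description of $\U(\Pol(X,n),\Pol(L/X,n-1))$ and is how the compatibility is handled in \cite{MS2}. With that correction, your plan --- including the closing list of checks that the Poisson center, the Tate twist and the weight completion commute with the limits involved --- is the right one.
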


\subsection{The case of a classifying stack}

\label{sect:BGpolyvectors}

Let $G$ be an affine algebraic group and denote by $\hG$ its formal completion at the unit. $\hG\subset G$ is a normal subgroup and hence $\B\hG$ carries an action of $[G/\hG]\cong G_{\dR}$.

Identifying $\g$ with right-invariant vector fields we get an isomorphism of graded cdgas
\[\Omega^\epsilon(G)\cong \C^\epsilon(\g, \cO(G)).\]

\begin{lm}
The isomorphism
\[\Omega^\epsilon(G)\cong \C^\epsilon(\g, \cO(G))\]
is compatible with the mixed structures, where the $\g$-action on $\cO(G)$ is given by infinitesimal left translations.
\label{lm:CEdeRham}
\end{lm}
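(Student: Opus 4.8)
The plan is to prove \cref{lm:CEdeRham} by unwinding both mixed structures explicitly and checking they agree. Recall that we already have an isomorphism of graded cdgas
\[\Omega^\epsilon(G)\cong \C^\epsilon(\g, \cO(G)),\]
obtained by identifying $\g$ with right-invariant vector fields on $G$; under this identification a weight-$p$ element of $\Omega^\epsilon(G)$, i.e. a $p$-form, is viewed as an element of $\Hom(\Sym^p(\g[1]), \cO(G))$. Since both sides are already identified as graded cdgas, and since the mixed structure is a derivation of weight $1$ and degree $1$, it suffices to check compatibility on generators: the weight-$0$ piece $\cO(G)$ and the weight-$1$ piece, which is $\Omega^1(G)\cong \Hom(\g[1], \cO(G))\cong \g^*\otimes\cO(G)$.

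First I would recall the two differentials concretely. On the de Rham side the mixed structure is $\ddr$, so on a function $f\in\cO(G)$ it produces the $1$-form $\ddr f$. On the Chevalley--Eilenberg side the mixed structure is $\dCE$, the Chevalley--Eilenberg differential computing Lie algebra cohomology of $\g$ with coefficients in the $\g$-module $\cO(G)$, where the action is the one we must identify. The key point is that under the isomorphism a $1$-form $\ddr f$ corresponds to the linear map $\g\rightarrow\cO(G)$ sending $\xi\mapsto \xi\cdot f$, where $\xi$ acts as the right-invariant vector field. On the other hand, applied to $f\in\cO(G)=\C^0$, the Chevalley--Eilenberg differential $\dCE f$ is the linear map $\xi\mapsto \rho(\xi)f$, where $\rho$ is the chosen $\g$-action on $\cO(G)$. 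So the compatibility of the mixed structures on weight-$0$ generators is precisely the statement that the $\g$-action by right-invariant vector fields equals the $\g$-action $\rho$. The lemma asserts that $\rho$ is the action by infinitesimal \emph{left} translations, so the real content of the first check is reconciling these: one must verify that differentiating $f$ along the right-invariant vector field associated to $\xi$ coincides with the derivative of $f$ under the one-parameter family of left translations generated by $\xi$. This is the standard fact that right-invariant vector fields generate left translations (and left-invariant vector fields generate right translations), which I would verify by a direct computation at the identity and translating around.

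Second, I would check compatibility of the mixed structures in weight $1$, i.e.\ on $1$-forms. Here one compares $\ddr$ of a $1$-form (landing in $2$-forms) with $\dCE$ of the corresponding element of $\Hom(\g[1],\cO(G))$ (landing in $\Hom(\Sym^2(\g[1]),\cO(G))$). The Chevalley--Eilenberg differential involves both the module-action terms and the bracket term $\omega([\xi,\eta])$, and the point is that the Koszul-style formula for $\ddr$ on a $1$-form, evaluated on two right-invariant vector fields $\xi,\eta$, reproduces exactly these terms, with the Lie bracket of the right-invariant vector fields matching $[\xi,\eta]$. I would invoke the Cartan formula
\[(\ddr\omega)(\xi,\eta)=\xi\cdot\omega(\eta)-\eta\cdot\omega(\xi)-\omega([\xi,\eta])\]
for right-invariant $\xi,\eta$ and match it term-by-term against $\dCE$.

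The main obstacle, and the only genuinely nontrivial point, is the sign and translation-side bookkeeping in the first step: confirming that the identification of $\g$ with \emph{right}-invariant vector fields forces the module structure on $\cO(G)$ to be infinitesimal \emph{left} translations rather than right translations, including getting the sign conventions for $\dCE$ and the bracket of invariant vector fields to line up. Once that identification of actions is pinned down at weight $0$, the weight-$1$ check via the Cartan formula is essentially forced, and since the mixed structure is a biderivation compatible with the graded cdga structure, agreement on these generators propagates to all weights. I would therefore concentrate the care on the conventions, and treat the higher-weight compatibility as automatic from the derivation property.
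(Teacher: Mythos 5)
Your proposal is correct, but note that the paper itself offers no proof of \cref{lm:CEdeRham} at all --- it is stated as a bare lemma, implicitly treated as a standard fact --- so the comparison here is between your argument and nothing. What you wrote is the natural (and essentially the only) verification: both mixed structures are weight~$1$, degree~$1$ derivations of the same graded cdga, so it suffices to match them on algebra generators, and the weight-$0$ check is exactly the statement that right-invariant vector fields generate left translations. One streamlining worth making: since $\Omega^\epsilon(G)\cong \cO(G)\otimes \Sym(\g^*[-1])$ is generated over $\cO(G)$ in weight $1$ by the \emph{constant} (right-invariant) forms $\g^*\otimes 1$, you need only check the mixed structures on $\cO(G)$ and on $\g^*\otimes 1$; for a right-invariant form $\alpha^R$ the functions $\alpha^R(\eta^R)=\alpha(\eta)$ are constant, so the Cartan formula collapses to $(\ddr\alpha^R)(\xi^R,\eta^R)=-\alpha^R([\xi^R,\eta^R])$, and the weight-$1$ check reduces to the single bracket identity for right-invariant fields, with no action terms to track. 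Your flagged concern about signs is legitimate but harmless: it amounts to choosing whether the bracket on $\g$ is the one induced by left- or right-invariant fields, and correspondingly whether ``infinitesimal left translation'' means $\xi\cdot f=\xi^R f$ or $-\xi^R f$; either consistent choice makes the two differentials agree, which is presumably why the author felt no proof was needed.
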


The graded mixed cdga $\C^\epsilon(\g, \cO(G))$ has a bialgebra structure transferred from $\Omega^\epsilon(G)$ whose coproduct is uniquely determined by the following properties:
\begin{itemize}
\item The diagram
\[
\xymatrix{
\cO(G) \ar^-{\Delta}[r] \ar[d] & \cO(G)\otimes \cO(G) \ar[d] \\
\Omega^\epsilon(G) \ar^-{\Delta}[r] & \Omega^\epsilon(G)\otimes \Omega^\epsilon(G)
}
\]
is commutative.

\item For $\alpha\in\g^*\subset \C^\epsilon(\g, \cO(G))$ we have
\[\Delta(\alpha) = \alpha\otimes 1 + \Ad_{g_{(1)}} (1\otimes \alpha),\]
where $g_{(1)}$ is the coordinate on the first factor of $\cO(G)$ in $\Omega^\epsilon(G)\otimes \Omega^\epsilon(G)$.
\end{itemize}

Suppose $V$ is a representation of $G$. Then $V^{\hG}=\C^\epsilon(\g, V)$ is a representation of $G_{\dR}$, i.e. it carries a coaction of $\C^\epsilon(\g, \cO(G))$ with the formulas generalizing the ones above.

The following statement was proved in \cite[Proposition 3.6.3]{CPTVV} by a different method.
\begin{thm}
One has an equivalence of graded mixed cdgas
\[\bD(\B\g)\cong \C^\epsilon(\g, k)\]
compatible with the actions of $G_{\dR}$ on both sides.
\label{thm:DBg}
\end{thm}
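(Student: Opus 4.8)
The plan is to compute $\bD(\B\g)=\bD(\B\hG)$ by flat descent along the atlas $\pt\to\B\hG$ and to identify the resulting cosimplicial graded mixed cdga with the one computing Chevalley--Eilenberg cochains, the key input being \cref{lm:CEdeRham} together with the explicit coproduct recorded just before the statement.

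First I would exploit that $\B\hG=[\pt/\hG]$ and that $\bD$ is defined as a limit over affine schemes, hence satisfies descent. The \v{C}ech nerve of the atlas $\pt\to\B\hG$ is the bar construction $\hG^{\times\bullet}$, a cosimplicial (formal affine) scheme with $\hG^{\times n}$ in cosimplicial degree $n$ and face/degeneracy maps given by multiplication, insertion of the unit and projection. Since the colimit of $\hG^{\times\bullet}$ in stacks is $\B\hG$, descent yields
\[\bD(\B\hG)\cong \Tot\big(\bD(\hG^{\times\bullet})\big)\]
as a totalization of cosimplicial graded mixed cdgas.

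Next I would compute each cosimplicial level. Because $\hG$ is the formal completion of $G$ at the unit, $(\hG^{\times n})_{\red}=\pt$, so directly from the definition $\bD(\hG^{\times n})=\Omega^\epsilon((\hG^{\times n})_{\red}/\hG^{\times n})=\Omega^\epsilon(\pt/\hG^{\times n})$. I would identify this cosimplicial object with the cobar complex computing the homotopy $\hG$-fixed points of the trivial module: concretely, \cref{lm:CEdeRham} (applied after completion at the unit) identifies the relevant relative de Rham forms with Chevalley--Eilenberg cochains, and the explicit coproduct formula $\Delta(\alpha)=\alpha\otimes 1 + \Ad_{g_{(1)}}(1\otimes\alpha)$ matches the cosimplicial structure maps with the coaction. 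Totalizing, the de Rham mixed structures on the levels assemble into the single Chevalley--Eilenberg differential $\dCE$, giving $\Tot\big(\bD(\hG^{\times\bullet})\big)\cong \C^\epsilon(\g,k)$; this is exactly the $V=k$ case of the identification $V^{\hG}\cong\C^\epsilon(\g,V)$ set up above. The $G_{\dR}$-equivariance is then automatic: $G_{\dR}$ acts on $\hG$ and hence on the whole bar construction, and under the identification the induced coaction is precisely the $G_{\dR}$-representation structure on $\C^\epsilon(\g,k)=k^{\hG}$ described before the statement.

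The main obstacle I expect is the passage through the totalization: one must check that the formal de Rham differentials on the levels $\Omega^\epsilon(\pt/\hG^{\times n})$, together with the alternating sum of cosimplicial face maps, reassemble into the single mixed structure $\dCE$ on $\C^\epsilon(\g,k)$, and that this holds at the level of the Tate realization with the weight grading intact, so that one genuinely obtains the graded mixed cdga $\C^\epsilon(\g,k)=\Hom(\Sym(\g[1]),k)$ rather than merely its realization. Controlling this comparison of differentials and the interchange of $\Tot$ with the weight-graded Tate structure is where \cref{lm:CEdeRham} and the explicit coproduct do the real work; everything else is formal.
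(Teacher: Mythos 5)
Your strategy is genuinely different from the paper's: you descend along the atlas $\pt\to\B\hG$, whereas the paper uses the presentation $\B\g\cong[G_{\dR}/G]$, so that its descent runs along $G_{\dR}\to\B\g$; it then identifies $\bD(G_{\dR})\cong\Omega^\epsilon(G)$ by \cref{prop:deRhamfunctions}, converts de Rham data into Chevalley--Eilenberg data once and for all on the honest smooth affine scheme $G$ via \cref{lm:CEdeRham}, and finally takes $G$-invariants, which collapses the coefficients $\cO(G)$ down to $k$. The difference is not cosmetic: your route hits a genuine gap exactly at the step you call ``the main obstacle,'' and the tools you propose to use there cannot close it.

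Here is the problem concretely. Your levels are, by \cref{prop:deRhamfunctions} applied to $\pt\to G^{\times n}$ (note $\hG^{\times n}\cong(\pt/G^{\times n})_{\dR}$; some such appeal is needed anyway, since $\hG^{\times n}$ is not affine and so your ``directly from the definition'' already requires an argument), given by $\bD(\hG^{\times n})\cong\Omega^\epsilon(\pt/G^{\times n})\cong\Sym((\g^*)^{\oplus n})$, whose weight $p$ part $\Sym^p((\g^*)^{\oplus n})$ is concentrated in cohomological degree $0$. Since the mixed structure raises weight and degree by $1$, every level has (canonically) vanishing mixed structure; moreover each coface, being a weight-preserving map of graded cdgas out of an algebra freely generated in weight $1$ and degree $0$, is determined on underlying graded cdgas by its weight $1$ component, i.e.\ by the linearization of the group law, so only the abelianized multiplication is visible at the strict level. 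Hence there are no ``de Rham mixed structures on the levels'' to assemble: a strict totalization of the data you describe would produce $\Sym(\g^*[-1])$ with \emph{zero} mixed structure, i.e.\ the Chevalley--Eilenberg complex of the abelian Lie algebra. The differential $\dCE$ --- equivalently the bracket of $\g$ --- is carried entirely by the homotopy-coherence data of the cosimplicial diagram in graded mixed cdgas (the higher-order terms of the group law), and extracting it from those coherences is the actual content of the theorem; at the level of realizations this is the Koszul-duality statement the paper quotes from \cite[Theorem 2.4.1]{DAGX}. Neither \cref{lm:CEdeRham} (which identifies $\Omega^\epsilon(G)\cong\C^\epsilon(\g,\cO(G))$ on the global group, where the weight $p$ part $\Omega^p(G)$ sits in degree $p$ and the de Rham differential is visibly nonzero) nor the displayed coproduct formula (which describes the $\cO(G)$-coaction, i.e.\ the $G_{\dR}$-equivariance, not the cofaces of the $\hG$-bar construction) gives you a handle on that coherence data. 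This is exactly what the paper's choice of atlas buys: along $G_{\dR}\to\B\g$ the de Rham-to-Chevalley--Eilenberg comparison happens on $G$ itself, where the mixed structure is nontrivial and explicit, and the remaining invariants step is strict because $\bD(G^{\times n})\cong\cO(G^{\times n})$ for the reduced scheme $G$.
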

\begin{proof}
Since
\[\B\g\cong [G_{\dR} / G],\]
we have
\[\bD(\B\g)\cong \bD(G_{\dR})^G,\]
where we regard $\bD(G_{\dR})$ as a $\bD(G)$-comodule and take the corresponding derived invariants. Since $G$ is reduced, $\bD(G)\cong \cO(G)$. Moreover, by \cref{prop:deRhamfunctions} we get $\bD(G_{\dR})\cong \Omega^\epsilon(G)$. Therefore,
\[\bD(\B\g)\cong \Omega^\bullet(G)^G.\]
By \cref{lm:CEdeRham} we identify
\[\Omega^\bullet(G)\cong \C^\epsilon(\g, \cO(G))\]
as graded mixed cdgas and hence
\[\bD(\B\g)\cong \C^\epsilon(\g, k).\]
\end{proof}

Recall that $\C^\bullet(\g, \Sym(\g[-n]))$ is naturally a $\bP_{n+2}$-algebra with the so-called \emph{big bracket} \cite{KSch} given by contracting $\g^*$ and $\g$. Moreover, it is naturally graded if we assign weight $1$ to $\g[-n]$.

\begin{prop}
One has an equivalence of graded $\bP_{n+2}$-algebras
\[\Pol(\B\g, n)\cong \C^\bullet(\g, \Sym(\g[-n])).\]
\label{prop:polyvectorsBg}
\end{prop}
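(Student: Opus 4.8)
The plan is to reduce the computation of $\Pol(\B\g, n)$, which is a priori defined as global sections over $(\B\g)_{\dR}$ of a prestack of graded $\bP_{n+2}$-algebras, to a direct computation of polyvector fields of the single graded mixed cdga $\bD(\B\g)$. As recalled in the introduction, $\B\g = \B\hG$ is a formal affine stack: since $\hG$ is a formal group it has trivial reduction, so $(\B\g)_{\dR}\cong \pt$ and $\cB_{\B\g}$ is just the graded mixed cdga $\bD(\B\g)$. Taking global sections over a point is then trivial, giving
\[\Pol(\B\g, n)\cong \Pol(\bD(\B\g), n) = |\Pol^{int}(\bD(\B\g), n)|^t.\]
By \cref{thm:DBg} we have $\bD(\B\g)\cong \C^\epsilon(\g, k)$ as graded mixed cdgas compatibly with the $G_{\dR}$-action, so everything reduces to computing $\Pol(\C^\epsilon(\g, k), n)$ as a graded $\bP_{n+2}$-algebra and matching it with the big bracket.

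First I would identify the underlying bigraded object. As a graded cdga $A = \C^\epsilon(\g, k)\cong \Sym(\g^*[-1])$ is free, so its module of Kähler differentials is free, $\Omega^1_A\cong A\otimes \g^*[-1]$, with the generators in internal (de Rham) weight $1$. Dualising over $A$ gives
\[\Pol^{int}(A, n) = \Hom_A(\Sym_A(\Omega^1_A[n+1]), A)\cong \Sym(\g^*[-1])\otimes \Sym(\g[-n]),\]
where the tangent generators $\g[-n]$ sit in internal weight $-1$ and polyvector weight $1$. These negative internal weights are exactly what the Tate realization $|-|^t$ accommodates, and after realization the underlying bigraded complex is that of $\C^\epsilon(\g, \Sym(\g[-n])) = \Hom(\Sym(\g[1]), \Sym(\g[-n]))$, the Chevalley--Eilenberg complex with coefficients in $\Sym(\g[-n])$. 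This is consistent with the graded cdga identity $\Pol(\B\g, n)\cong \Gamma(\B\g, \Sym(\bT_{\B\g}[-n-1]))$ established above, since $\bT_{\B\g}\cong \g[1]$ gives $\bT_{\B\g}[-n-1]\cong \g[-n]$.

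Next I would match the Poisson structures. Because $A$ is free, the Schouten--Nijenhuis bracket on $\Pol^{int}(A, n)$ is the canonical bracket on $\Sym(\g^*[-1]\oplus \g[-n])$ that contracts a tangent generator in $\g[-n]$ against the dual cotangent generator in $\g^*[-1]$ via the evaluation pairing, vanishes on pairs of generators of the same type, and is extended as a biderivation. This is precisely the big bracket of \cite{KSch} on $\C^\bullet(\g, \Sym(\g[-n]))$, so the two graded $\bP_{n+2}$-structures agree.

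The step I expect to be the main obstacle is identifying the induced mixed structure on $\Pol^{int}(A, n)$ with the \emph{full} Chevalley--Eilenberg differential, i.e. the one incorporating the adjoint $\g$-action on the coefficients $\Sym(\g[-n])$, and not merely the differential on $A$ supplied by \cref{thm:DBg}. The clean way to handle this is to note that the mixed structure is a biderivation of the $\bP_{n+2}$-bracket and hence of the form $\{\mu, -\}$ for the element $\mu\in\C^\bullet(\g,\g)$ encoding the Lie bracket of $\g$; by \cref{thm:DBg} it reproduces $\dCE$ on the $\g^*[-1]$ generators, and the biderivation property together with the contraction pairing then forces the correct adjoint-type differential on the $\g[-n]$ generators, which is exactly $\dCE$ with coefficients in $\Sym(\g[-n])$. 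Compatibility with the $G_{\dR}$-action throughout follows from the $G_{\dR}$-equivariance in \cref{thm:DBg} and the naturality of all the constructions, yielding the asserted equivalence of graded $\bP_{n+2}$-algebras.
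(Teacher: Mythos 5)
Your proposal is correct and follows essentially the same route as the paper's proof: reduce to polyvectors of $\bD(\B\g)\cong \C^\epsilon(\g,k)$ via \cref{thm:DBg}, use freeness to identify $\bT_{\bD(\B\g)}\cong \C^\epsilon(\g,\g[1])$ in internal weight $-1$, and take the Tate realization of $\Sym_{\bD(\B\g)}(\bT_{\bD(\B\g)}[-n-1])$ to land in $\C^\bullet(\g,\Sym(\g[-n]))$. The paper leaves the identification of the Schouten bracket with the big bracket and of the induced mixed structure with the full Chevalley--Eilenberg differential implicit; your explicit verification of these (in particular the determination of the mixed structure on the tangent generators from the biderivation property and weight considerations) simply fills in what the paper takes for granted.
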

\begin{proof}
We have $\bT_{\bD(\B\g)}\cong \C^\epsilon(\g, \g[1])$, where $\g[1]$ is in internal weight $-1$. Therefore,
\[\Pol(\B\g, n)\cong |\Sym_{\bD(\B\g)}(\bT_{\bD(\B\g)}[-n-1])|^t\cong \C^\bullet(\g, \Sym(\g[-n])).\]
\end{proof}

\begin{remark}
Note that from the proof we see that the answer is different if we do not take the Tate realization. In that case the weight $p$ part of $\widetilde{\Pol}(\B\g, n)$ is
\[\C^{\geq p}(\g, \Sym^p(\g[-n]))\]
while the weight $p$ part of $\Pol(\B\g, n)$ is
\[\C^\bullet(\g, \Sym^p(\g[-n])).\]
\label{remark:tate}
\end{remark}

$\Pol(\B\g, n)$ has a natural action of $\Omega^\bullet(G)\cong \cO(G_{\dR})$ given by \cref{lm:CEdeRham} and we have
\[\Pol(\B G, n)\cong \Pol(\B\g, n)^{G_{\dR}}.\]

Now suppose $H\subset G$ is a closed subgroup with Lie algebra $\h\subset \g$. Let $f\colon \B H\rightarrow \B G$ be the induced map on classifying stacks and $\hat{f}\colon \B\h\rightarrow \B\g$ be its completion. Since $\bT_{\B\h/\B\g}\cong \g/\h$ as $\h$-representations, we have a quasi-isomorphism of graded mixed \emph{cdgas}
\[\Pol(\B \h/\B \g, n-1)\cong \C^\bullet(\h, \Sym(\g/\h[-n])).\]
Since the projection $\Pol(\B\g, n)\rightarrow \Pol(\B\h/\B\g, n-1)$ is surjective, by \cite[Proposition 4.10]{MS1} we can identify
\[\Pol(\hat{f}, n)\cong \ker(\Pol(\B\g, n)\to \Pol(\B\h/\B\g, n-1)),\]
where the Lie bracket is induced from the one on $\Pol(\B\g, n)$.

Finally, as before we identify
\[\Pol(f, n)\cong \Pol(\hat{f}, n)^{H_{\dR}}.\]

\section{Poisson-Lie groups}

In this section we show that many classical notions from Poisson-Lie theory have a natural interpretation in terms of shifted Poisson structures.

\subsection{Classical notions}

\label{sect:classicalpoissonlie}

We begin by recalling some standard notions from Poisson-Lie theory. Let $G$ be an algebraic group and $\g$ its Lie algebra. Recall from \cref{sect:BGpolyvectors} that $\C^\bullet(\g, \Sym(\g[-1]))$ is naturally a $\bP_3$-algebra.

\begin{defn}
A \defterm{quasi-Lie bialgebra structure} on $\g$ is the data of $\delta\colon \g\rightarrow \wedge^2(\g)$ and $\phi\in\wedge^3(\g)$ satisfying the following equations:
\begin{align}
\dCE \delta &= 0 \label{eq:qlie1} \\
\frac{1}{2} [\delta, \delta] + \dCE \phi &= 0 \label{eq:qlie2} \\
[\delta, \phi] &= 0 \label{eq:qlie3}.
\end{align}

A \defterm{Lie bialgebra structure} on $\g$ is a quasi-Lie bialgebra structure with $\phi=0$.
\end{defn}

Denote the natural $\bP_2$ bracket on $\Sym(\g[-1])$ by $\llbracket-, -\rrbracket$. The relation between this bracket and the big $\bP_3$ bracket on $\C^\bullet(\g, \Sym(\g[-1]))$ is given by
\[\llbracket x, x\rrbracket = [x, \d x],\qquad x\in\Sym(\g[-1]).\]

\begin{defn}
Let $\g$ be a Lie algebra. The \defterm{groupoid $\QLie(\g)$ of quasi-Lie bialgebra structures on $\g$} is defined as follows:
\begin{itemize}
\item Objects of $\QLie(\g)$ are quasi-Lie bialgebra structures on $\g$.

\item Morphisms of $\QLie(\g)$ from $(\delta_1, \phi_1)$ to $(\delta_2, \phi_2)$ are given by $\lambda\in\wedge^2(\g)$ satisfying
\begin{align*}
\delta_2 &= \delta_1 + \dCE \lambda \\
\phi_2 &= \phi_1 + [\delta_1, \lambda] - \frac{1}{2}\llbracket\lambda, \lambda\rrbracket.
\end{align*}
\end{itemize}
\end{defn}

Similarly, one has the notion of a \defterm{quasi-Poisson group} $G$ where one replaces the cobracket $\delta$ by a bivector $\pi\in\Gamma(G, \wedge^2(\T_G))$, see e.g. \cite[Definition 2.1]{KM}. We denote by $\QPois(G)$ the groupoid of quasi-Poisson structures on $G$ with morphisms given by twists. Quasi-Poisson groups with $\phi=0$ are called \defterm{Poisson groups}.

\begin{remark}
We have a natural ``differentiation functor'' $\QPois(G)\rightarrow \QLie(\g)$ which is fully faithful by definition.
\label{rmk:QPoisCartesian}
\end{remark}

Given a quasi-Poisson group $(G, \pi, \phi)$, the linear part of $\pi$ at the unit defines a quasi-Lie bialgebra structure on $\g$. Conversely, if $\g$ is a quasi-Lie bialgebra, the group $G$ has at most one compatible quasi-Poisson structure.

Recall that a $G$-scheme $X$ is simply a smooth scheme $X$ together with a $G$-action $G\times X\rightarrow X$. We denote its components by $R_x\colon G\rightarrow X$ and $L_g\colon X\rightarrow X$ for $x\in X$ and $g\in G$. Let $a\colon \g\rightarrow \Gamma(X, \T_X)$ be the infinitesimal action map.

\begin{defn}
Let $(G, \pi_G, \phi)$ be a quasi-Poisson group. A \defterm{quasi-Poisson $G$-scheme} is a $G$-scheme $X$ together with a bivector $\pi_X\in\Gamma(X, \wedge^2 \T_X)$ satisfying
\begin{align}
\pi_X(gx) &= L_{g, *} \pi_X(x) + R_{x, *} \pi_G(g) \label{eq:qpoissonspace1} \\
\frac{1}{2}[\pi_X, \pi_X] + a(\phi) &= 0. \label{eq:qpoissonspace2}
\end{align}
\end{defn}

\begin{defn}
Let $G$ be a group and $X$ a $G$-scheme. Let $\QPois(G, X)$ be the following groupoid:
\begin{itemize}
\item Objects of $\QPois(G, X)$ are quasi-Poisson group structures on $G$ and bivectors $\pi_X$ on $X$ making $X$ into a quasi-Poisson $G$-scheme.

\item Morphisms of $\QPois(G, X)$ from $(\pi_G, \phi, \pi_X)$ to $(\pi_G', \phi', \pi_X')$ are given by elements $\lambda\in\wedge^2(\g)$ defining a morphism $(\pi_G, \phi)\rightarrow (\pi_G', \phi')$ in $\QPois(G)$ and satisfying
\[\pi_X' = \pi_X + a(\lambda).\]
\end{itemize}
\end{defn}

\subsection{Shifted Poisson structures on \texorpdfstring{$\B G$}{BG}}

In this section we classify shifted Poisson structures on the classifying stack $\B G$. The following statement also appears in \cite[Section 3.6.2]{CPTVV}:
\begin{prop}
The space of $n$-shifted Poisson structures on $\B G$ is trivial if $n > 2$. Moreover, we can identify the space $\Pois(\B G, 2)$ with the set $\Sym^2(\g)^G$.
\label{prop:2shiftedBG}
\end{prop}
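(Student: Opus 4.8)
The plan is to combine the explicit computation of polyvectors from \cref{prop:polyvectorsBg} with the Maurer--Cartan description of Poisson structures from \cref{thm:poissonpolyvectors}. By \cref{thm:poissonpolyvectors} we have
\[\Pois(\B G, n)\cong \MC\left(\Pol(\B G, n)^{\geq 2}[n+1]\right),\]
and by \cref{prop:polyvectorsBg} together with the identification $\Pol(\B G, n)\cong \Pol(\B\g, n)^{G_{\dR}}$ we can write $\Pol(\B G, n)\cong \C^\bullet(\g, \Sym(\g[-n]))^{G_{\dR}}$. First I would pin down the bigrading: the weight $p$ part is $\C^\bullet(\g, \Sym^p(\g[-n]))$, and an element of $\Sym^p(\g[-n])$ sitting in Chevalley--Eilenberg cohomological degree $q$ lives in total degree $q - np$. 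The Maurer--Cartan complex is obtained by shifting by $n+1$ and retaining weights $\geq 2$, so I need to locate the pieces of total degree $-1$ (after the shift, degree $0$) in weights $p\geq 2$.

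Next I would carry out the degree bookkeeping. An element in weight $p$ and CE-degree $q$ contributes to $\Pol(\B G, n)[n+1]$ in degree $q - np + n + 1$; the Maurer--Cartan equation lives in degree $1$ of the shifted complex, so the relevant tangent directions are in degree $0$, i.e. $q = np - n - 1 = n(p-1) - 1$. For $n > 2$ and $p \geq 2$ this forces $q \geq 2n - 1 > 3$, and since $\C^q(\g, -)$ computes Lie algebra cohomology which for a reductive-type analysis we treat via the $G_{\dR}$-invariants, I would argue that the relevant graded pieces either vanish or carry no nonzero Maurer--Cartan elements up to gauge, giving triviality of $\Pois(\B G, n)$ for $n > 2$. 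The cleanest route is to observe that after passing to $G_{\dR}$-invariants the complex in the relevant degrees contracts; I would phrase this using the identification of $\C^\bullet(\g,-)^{G}$ with the appropriate invariants and a degree count showing no room for nonzero classes.

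For the case $n = 2$, the equation $q = n(p-1) - 1 = 2(p-1) - 1 = 2p - 3$ singles out, in the lowest relevant weight $p = 2$, the CE-degree $q = 1$ piece of $\Sym^2(\g[-2])$, which I expect to produce exactly $\C^1(\g, \Sym^2(\g))^{G} = \Hom(\g, \Sym^2(\g))^G$ modulo coboundaries, while the genuine obstruction-free part concentrates as $\Sym^2(\g)^{\g} = \Sym^2(\g)^G$ in the degree-$0$, weight-$2$ component. I would show that the Maurer--Cartan equation in this truncated range reduces to the closedness of a single element of $\Sym^2(\g)^G$ (a Casimir-type element), that higher-weight contributions vanish for degree reasons, and that the gauge action is trivial, so the space $\Pois(\B G, 2)$ is discrete and identified with the set $\Sym^2(\g)^G$. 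The main obstacle will be the $n = 2$ degree count: I must check carefully that the only surviving Maurer--Cartan data is the invariant element of $\Sym^2(\g)$ and that all potential brackets, differentials, and gauge transformations in neighboring bidegrees vanish, so that no additional structure or nontrivial homotopies appear; the $n > 2$ triviality, by contrast, should follow from a cleaner and more uniform vanishing argument.
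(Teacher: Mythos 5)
Your overall strategy is the same as the paper's (combine \cref{thm:poissonpolyvectors} with \cref{prop:polyvectorsBg} and do a weight/degree count), but your degree bookkeeping is wrong, and since the degree count \emph{is} the entire proof, this is a genuine gap rather than a presentation issue. Two errors compound. First, $\Sym^p(\g[-n])$ sits in cohomological degree $+np$, not $-np$: in the paper's conventions $\g[-n]$ is placed in degree $n$, so a CE-degree-$q$ cochain valued in $\Sym^p(\g[-n])$ has total degree $q+np$. Second, Maurer--Cartan elements of the dg Lie algebra $\Pol(\B G,n)^{\geq 2}[n+1]$ live in degree $1$ of the shifted complex, i.e.\ in degree $n+2$ of $\Pol(\B G,n)^{\geq 2}$, not in degree $0$. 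The correct locus is therefore $q+np=n+2$, i.e.\ $q=n(1-p)+2$, whereas you derive $q=n(p-1)-1$. A sanity check against the paper's $n=1$ computation (\cref{thm:1shiftedBg}) exposes the discrepancy: there the cobracket $\delta$ is a weight-$2$ element of $\C^1(\g,\Sym^2(\g[-1]))$, consistent with $q=n(1-p)+2=1$, while your formula would put it in $\C^0$. The consequences are serious. For $n>2$, $p\geq 2$ the correct count gives $q=n(1-p)+2<0$: the weight $\geq 2$ part of the complex is concentrated in degrees $\geq 2n>n+2$, so there is \emph{nothing at all} in any degree relevant to the Maurer--Cartan space (elements, gauges, or higher homotopies), and contractibility is immediate. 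Your count instead lands in CE degree $q\geq 2n-1$, where cochains certainly exist and cohomology has no general reason to vanish (e.g.\ for unipotent $G$, group cohomology agrees with Lie algebra cohomology, which is nonzero in high degrees); your proposed ``cleaner and more uniform vanishing argument'' therefore does not exist, and in any case it would be an argument about the wrong graded pieces. For $n=2$ the correct count puts Maurer--Cartan elements in $\C^0(G,\Sym^2(\g))=\Sym^2(\g)$, with the cocycle condition $\d c=0$ itself enforcing $G$-invariance; your $q=1$ answer ($\Hom(\g,\Sym^2(\g))^G$ modulo coboundaries) is wrong, and your subsequent pivot to the degree-$0$, weight-$2$ component contradicts your own formula.

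Two further points that you flag but leave unproved would also come out wrong under your bookkeeping. Discreteness of $\Pois(\B G,2)$ holds because gauge parameters must be weight $\geq 2$ elements of total degree $3=q+2p$, forcing $q\leq -1$, so there are none; with your count the gauge directions would sit in $\Sym^2(\g)$ itself and the space would be a quotient, not a set. And the Maurer--Cartan equation a priori involves all higher brackets of the transferred homotopy $\bP_4$-structure; the paper disposes of them by noting that the $m$-ary bracket has weight $1-m$ and degree $5-4m$, so $[c,\dots,c]_m$ has weight $m+1$ and degree $5$, while every element of weight $m+1\geq 3$ has degree $\geq 2(m+1)\geq 6$, hence $[c,\dots,c]_m=0$ for $m>1$ and only $\d c=0$ survives. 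Finally, a minor point: your identification $\Sym^2(\g)^{\g}=\Sym^2(\g)^G$ requires $G$ connected; working throughout with $\C^\bullet(G,-)$ (equivalently $G_{\dR}$-invariants), as the paper does, yields $G$-invariance directly for arbitrary affine algebraic $G$.
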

\begin{proof}
The algebra of $n$-shifted polyvectors is
\[\Pol(\B G, n)\cong \C^\bullet(G, \Sym(\g[-n]))\]
as a graded complex. In particular, we obtain a graded homotopy $\bP_{n+2}$ structure on $\C^\bullet(G, \Sym(\g[-n]))$. Elements of weight at least 2 have cohomological degree at least $2n$. But Maurer--Cartan elements in a $\bP_{n+2}$-algebra are in degree $n + 2$. Therefore, for $n>2$ the space of Maurer--Cartan elements in $\Pol(\B G, n)^{\geq 2}$ is contractible.

For $n=2$ Maurer--Cartan elements are degree $4$ elements in $\Pol(\B G, 2)^{\geq 2}$, i.e. elements $c\in\Sym^2(\g)$, satisfying
\[\d c = 0, \qquad [c, c] = 0, \qquad [c, c, c] = 0, \qquad\ldots\]

The bracket $[-, \ldots, -]_m$ in a graded homotopy $\bP_4$-algebra has weight $1-m$ and degree $5 - 4m$. Therefore, $[c, \ldots, c]_m$ is an element of weight $m+1$ and degree $5$ which is automatically zero if $m>1$. Therefore, the Maurer--Cartan equation reduces to $\d c = 0$, i.e. $c\in\Sym^2(\g)^G$.
\end{proof}

For example, identifying nondegenerate $n$-shifted Poisson structures with $n$-shifted symplectic structures by \cite[Theorem 3.2.4]{CPTVV} we get the following statement which was proved in \cite{PTVV} for reductive groups.
\begin{prop}
Let $G$ be an algebraic group. The space $\mathrm{Symp}(\B G, 2)$ of $2$-shifted symplectic structures on $\B G$ is equivalent to the subset of $\Sym^2(\g^*)^G$ consisting of non-degenerate quadratic forms.
\end{prop}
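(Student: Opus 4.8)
The plan is to deduce this from \cref{prop:2shiftedBG} together with the general correspondence between nondegenerate shifted Poisson and shifted symplectic structures. By \cite[Theorem 3.2.4]{CPTVV} the space $\Symp(\B G, 2)$ is equivalent to the subspace of $\Pois(\B G, 2)$ consisting of those $2$-shifted Poisson structures which are nondegenerate. Since \cref{prop:2shiftedBG} already identifies $\Pois(\B G, 2)$ with the discrete set $\Sym^2(\g)^G$, the whole statement reduces to characterizing which Casimirs $c\in\Sym^2(\g)^G$ give rise to a nondegenerate structure and then matching these with nondegenerate quadratic forms on $\g$.

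First I would record the tangent and cotangent complexes $\bT_{\B G}\cong \g[1]$ and $\bL_{\B G}\cong \g^*[-1]$. Under the décalage identification $\wedge^2(\g[1])\cong \Sym^2(\g)[2]$ the weight-$2$ part of a $2$-shifted polyvector field is an element of $\Sym^2(\g)$, which is exactly the bivector $c$ recorded by \cref{prop:2shiftedBG}. Nondegeneracy of the Poisson structure is the condition that the induced anchor map $\pi^\sharp\colon \bL_{\B G}\to \bT_{\B G}[-2]$ is an equivalence. Substituting the complexes above gives $\bL_{\B G}\cong \g^*[-1]$ and $\bT_{\B G}[-2]\cong \g[-1]$, so $\pi^\sharp$ is concentrated in a single degree and amounts to a linear map $c^\sharp\colon \g^*\to \g$; by construction this is the contraction of $c\in\Sym^2(\g)$ against $\g^*$.

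What then remains is linear algebra. The map $c^\sharp$ is an isomorphism precisely when $c$ is a nondegenerate symmetric tensor, and in that case its inverse is a nondegenerate element of $\Sym^2(\g^*)$. As $c$ is $G$-invariant, so is $(c^\sharp)^{-1}$, giving a nondegenerate quadratic form in $\Sym^2(\g^*)^G$; conversely every nondegenerate invariant quadratic form arises in this way by inversion. This sets up a bijection between the nondegenerate locus of $\Sym^2(\g)^G$ and the set of nondegenerate quadratic forms in $\Sym^2(\g^*)^G$, and combined with the first paragraph it yields the claimed equivalence.

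The main obstacle, and the only point requiring genuine care, is matching the shifted-symplectic nondegeneracy condition with invertibility of $c^\sharp$: one must fix the degree and shift conventions so that for a $2$-shifted structure the anchor is indeed $\bL_{\B G}\to \bT_{\B G}[-2]$, and verify that after the décalage identifications it computes the naive contraction by $c$ rather than a sign- or twist-variant thereof. Once these conventions are pinned down, the remaining identification is immediate.
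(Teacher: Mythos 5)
Your proposal is correct and follows exactly the paper's route: the paper likewise deduces this proposition from \cref{prop:2shiftedBG} together with the identification of nondegenerate $n$-shifted Poisson structures with $n$-shifted symplectic structures from \cite[Theorem 3.2.4]{CPTVV}, leaving the remaining linear algebra implicit. Your explicit verification that nondegeneracy of $\pi^\sharp\colon \g^*[-1]\to \g[-1]$ amounts to invertibility of $c^\sharp$, with the inverse supplying the invariant quadratic form in $\Sym^2(\g^*)^G$, is precisely the content the paper takes for granted.
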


The space of 1-shifted Poisson structures is more interesting. We begin with the case of Lie algebras.

\begin{thm}
Let $\g$ be a Lie algebra of an algebraic group $G$. Then the space of 1-shifted Poisson structures $\Pois(\B\g, 1)$ is equivalent to the groupoid $\QLie(\g)$ of quasi-Lie bialgebra structures on $\g$.
\label{thm:1shiftedBg}
\end{thm}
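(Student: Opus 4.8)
The plan is to combine the polyvector computation of \cref{prop:polyvectorsBg} with the Maurer--Cartan description of Poisson structures from \cref{thm:poissonpolyvectors}. By \cref{prop:polyvectorsBg} we have an equivalence of graded $\bP_3$-algebras $\Pol(\B\g, 1)\cong \C^\bullet(\g, \Sym(\g[-1]))$, and by \cref{thm:poissonpolyvectors} the space of $1$-shifted Poisson structures is
\[\Pois(\B\g, 1)\cong \MC\left(\Pol(\B\g, 1)^{\geq 2}[2]\right).\]
First I would identify the weight-$\geq 2$ truncation explicitly. Since the weight-$p$ part of $\Pol(\B\g,1)$ is $\C^\bullet(\g, \Sym^p(\g[-1]))$ with $\g[-1]$ placed in cohomological degree $1$, a Maurer--Cartan element (which must sit in total degree $2$ in the shifted Lie algebra, i.e.\ degree $3$ before the shift $[2]$) has two components: a weight-$2$ piece living in $\C^1(\g, \wedge^2\g)$ and a weight-$3$ piece living in $\C^0(\g, \wedge^3\g) = \wedge^3(\g)$. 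These are exactly an element $\delta\colon \g\to \wedge^2(\g)$ together with $\phi\in\wedge^3(\g)$, matching the data in the definition of a quasi-Lie bialgebra structure.

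Next I would unwind the Maurer--Cartan equation for this homotopy $\bP_3$-algebra into its weight-graded components. The mixed/Chevalley--Eilenberg differential $\dCE$ together with the big bracket and the higher brackets $[-,\dots,-]_m$ produce, upon collecting terms by weight, a finite list of equations. The hard part will be verifying that this list is precisely \cref{eq:qlie1}, \cref{eq:qlie2}, \cref{eq:qlie3}: the degree and weight bookkeeping for the higher brackets $[-,\dots,-]_m$ (which carry weight $1-m$ and, in the $\bP_3$ case, a definite cohomological degree) must be checked to show that all brackets beyond the binary one vanish on our low-weight elements, so that only the quadratic big bracket contributes. In weight $3$ the equation reads $\dCE\delta = 0$, giving \cref{eq:qlie1}; in weight $4$ it reads $\tfrac12[\delta,\delta] + \dCE\phi = 0$, giving \cref{eq:qlie2}; and in weight $5$ it reads $[\delta,\phi]=0$, giving \cref{eq:qlie3}. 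Here I would invoke the relation $\llbracket x, x\rrbracket = [x, \d x]$ between the $\bP_2$ bracket on $\Sym(\g[-1])$ and the big $\bP_3$ bracket to translate the internal Schouten bracket into the big bracket, ensuring the coefficients ($\tfrac12$, signs) come out as stated.

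To finish, I would apply the Deligne groupoid description of \cref{prop:MCDeligne} to identify not just the objects but the morphisms of $\MC(\Pol(\B\g,1)^{\geq 2}[2])$ with those of $\QLie(\g)$. A gauge transformation is parametrized by a degree-$1$ element $\lambda$ of the shifted Lie algebra, which in weight $2$ is exactly an element $\lambda\in\wedge^2(\g)$. Integrating the gauge flow $\tfrac{d\alpha}{dt} + \d\lambda + [\alpha,\lambda] = 0$ and reading off the action on the two components $(\delta,\phi)$ should reproduce the transformation rules $\delta_2 = \delta_1 + \dCE\lambda$ and $\phi_2 = \phi_1 + [\delta_1,\lambda] - \tfrac12\llbracket\lambda,\lambda\rrbracket$ that define morphisms in $\QLie(\g)$. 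Since everything is concentrated in non-negative cohomological degrees, \cref{prop:MCDeligne} guarantees the Maurer--Cartan space is the nerve of this groupoid, and the higher homotopies vanish, so the equivalence $\Pois(\B\g,1)\cong \QLie(\g)$ follows. The main obstacle I anticipate is the careful degree/weight accounting for the higher $\bP_3$-brackets and matching signs and numerical factors with the stated equations; the rest is a direct translation.
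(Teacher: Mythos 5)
Your proposal is correct and follows essentially the same route as the paper: identify $\Pois(\B\g,1)$ with $\MC(\Pol(\B\g,1)^{\geq 2}[2])$ via \cref{thm:poissonpolyvectors} and \cref{prop:polyvectorsBg}, read off the Maurer--Cartan data $(\delta,\phi)$ and equations \eqref{eq:qlie1}--\eqref{eq:qlie3} by weight, and use the Deligne groupoid of \cref{prop:MCDeligne} together with the relation $\llbracket\lambda,\lambda\rrbracket=[\lambda,\d\lambda]$ to match gauge transformations with twists. The only simplification you could make is that your worry about higher brackets is moot here, since \cref{prop:polyvectorsBg} gives a \emph{strict} graded $\bP_3$-model (the big bracket), so the Maurer--Cartan equation involves only $\dCE$ and the binary bracket; such a degree/weight vanishing argument is needed only for $\B G$ as in \cref{prop:2shiftedBG}.
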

\begin{proof}
We can identify
\[\Pois(\B\g, 1)\cong \MC(\Pol(\B\g, 1)^{\geq 2}[2]).\]

Since $\Pol(\B\g, 1)^{\geq 2}\cong \C^\bullet(\g, \Sym^{\geq 2}(\g[-1]))$, the dg Lie algebra $\Pol(\B\g, 1)^{\geq 2}[2]$ is concentrated in non-negative degrees and hence by \cref{prop:MCDeligne} we can identify its $\infty$-groupoid of Maurer--Cartan elements with the Deligne groupoid.

A Maurer--Cartan element $\alpha$ in $\Pol(\B\g, 1)^{\geq 2}[2]$ is given by a pair of elements $\delta\colon \g\rightarrow \wedge^2(\g)$ in weight 2 and $\phi\in\wedge^3(\g)$ in weight 3 which satisfy equations \eqref{eq:qlie1} - \eqref{eq:qlie3} defining the quasi-Lie bialgebra structure on $\g$.

A 1-morphism in the Deligne groupoid of $\Pol(\B\g, 1)^{\geq 2}[2]$ is given by a $t$-parameter family of quasi-Lie bialgebra structures $\delta(t)$ and $\phi(t)$ and an element $\lambda\in\wedge^2(\g)$ which satisfy the equations
\begin{align*}
\frac{d\delta(t)}{dt} - \dCE \lambda &= 0 \\
\frac{d\phi(t)}{dt} - [\delta(t), \lambda] &= 0.
\end{align*}

The first equation implies that
\[\delta(t) = \delta_0 + t \dCE\lambda.\]
Substituting it in the second equation and integrating, we obtain
\[\phi(t) = \phi_0 + t[\delta_0, \lambda] + \frac{t^2}{2} [\dCE\lambda, \lambda] = 0.\]

In this way we see that the Deligne groupoid is isomorphic to the groupoid $\QLie(\g)$ of quasi-Lie bialgebra structures on $\g$.
\end{proof}

We also have a global version of the previous statement.

\begin{thm}
Let $G$ be an algebraic group. Then the space of 1-shifted Poisson structures $\Pois(\B G, 1)$ is equivalent to the groupoid $\QPois(G)$ of quasi-Poisson structures on $G$.
\label{thm:1shiftedBG}
\end{thm}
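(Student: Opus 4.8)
The plan is to leverage the formal computation of \cref{thm:1shiftedBg} together with $G_{\dR}$-descent, rather than attempting to control the homotopy $\bP_3$-structure on $\Pol(\B G, 1)$ directly (which we only know as a graded cdga). The starting point is the identification $\B G\cong [\B\g/G_{\dR}]$ with the projection $\B\g = \B\hG\to \B G$ formally \'etale, which gives $\Pol(\B G, 1)\cong \Pol(\B\g, 1)^{G_{\dR}}$. First I would present these invariants as the totalization of the cobar cosimplicial graded $\bP_3$-algebra associated to the coaction of $\cO(G_{\dR})\cong \Omega^\bullet(G)$; using that $G_{\dR}$ has trivial cotangent complex, polyvectors on the bar construction $G_{\dR}^{\times n}\times \B\g$ split off the de Rham factors, so the $n$-th term is $\Pol(\B\g, 1)\otimes \Omega^\bullet(G)^{\otimes n}$.

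Next I would pass to Maurer--Cartan spaces. Since the weight grading lives entirely on the $\Pol(\B\g, 1)$-factor, we have $(\Pol(\B\g, 1)\otimes \Omega^\bullet(G)^{\otimes n})^{\geq 2} = \Pol(\B\g, 1)^{\geq 2}\otimes \Omega^\bullet(G)^{\otimes n}$, which is concentrated in non-negative cohomological degrees (as $\Pol(\B\g, 1)^{\geq 2}[2]$ is, by the proof of \cref{thm:1shiftedBg}, and $\Omega^\bullet(G)$ is). Applying \cref{prop:MClimits} to commute $\MC((-)^{\geq 2}[2])$ with the totalization, and \cref{prop:MCDeligne} to identify each level with its Deligne groupoid, expresses $\Pois(\B G, 1)$ as the totalization of a cosimplicial groupoid whose $0$-th term is $\MC(\Pol(\B\g,1)^{\geq 2}[2])\cong \QLie(\g)$. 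I would then invoke \cref{lm:groupoidTot} to describe this totalization explicitly.

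By \cref{lm:groupoidTot}, an object is a quasi-Lie bialgebra $x = (\delta, \phi)\in \QLie(\g)$ together with a descent isomorphism $\alpha\colon \d^1(x)\to \d^0(x)$ in the level-$1$ groupoid satisfying $s^0(\alpha) = \id$ and the cocycle identity $\d^0(\alpha)\circ \d^2(\alpha)=\d^1(\alpha)$. A degree count shows that the morphisms of the level-$1$ Deligne groupoid are exactly the degree-$0$ elements of $\wedge^2(\g)\otimes \cO(G) \cong \Gamma(G, \wedge^2 \T_G)$, i.e. bivector fields; so $\alpha$ is a bivector $\pi_G$. Here the two cofaces $\d^0, \d^1$ are the action and projection maps of the groupoid $G_{\dR}\ltimes \B\g\rightrightarrows \B\g$, so $\d^1(x)$ is the constant family $\delta$ while $\d^0(x)$ is its $G_{\dR}$-translate; using the explicit coaction formula $\Delta(\alpha) = \alpha\otimes 1 + \Ad_{g_{(1)}}(1\otimes \alpha)$, the condition that $\pi_G$ be a morphism between them is precisely that $\pi_G$ is a multiplicative bivector whose linearization at the unit is $\delta$. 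The normalization $s^0(\alpha) = \id$ gives $\pi_G(e) = 0$, the cocycle identity reproduces the multiplicativity equation \eqref{eq:qpoissonspace1}, and the MC equation for $x$ supplies the twisted Jacobi identity \eqref{eq:qpoissonspace2}. Finally, the morphism description in \cref{lm:groupoidTot} matches twists: a morphism $(x,\pi_G)\to (x',\pi_G')$ is a twist $\lambda\in \wedge^2(\g)$ in $\QLie(\g)$ whose compatibility square forces exactly the twist relation defining morphisms in $\QPois(G)$. This identifies the totalization with $\QPois(G)$.

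The main obstacle I anticipate is the last paragraph: translating the abstract descent datum $\alpha$ and its cocycle and normalization conditions into the concrete multiplicativity \eqref{eq:qpoissonspace1} and twist relations of a quasi-Poisson group. This requires unwinding the coaction formulas on $\C^\epsilon(\g,\cO(G))$ and carefully matching the cosimplicial cofaces $\d^0,\d^1,\d^2$ with the group multiplication, which is where the genuinely global (as opposed to infinitesimal) content of the theorem is concentrated; the rest is formal bookkeeping built on \cref{thm:1shiftedBg}.
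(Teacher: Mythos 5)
Your proposal follows the paper's own proof essentially step for step: the paper likewise writes $\Pol(\B G,1)$ as $G_{\dR}$-invariants of $\Pol(\B\g,1)$, presents the invariants as a cosimplicial object built from $\Omega^\bullet(G)$, commutes $\MC$ past the totalization via \cref{prop:MClimits}, identifies each level with its Deligne groupoid via \cref{prop:MCDeligne}, and applies \cref{lm:groupoidTot}, with level zero given by $\QLie(\g)$ from \cref{thm:1shiftedBg}. The only discrepancies are in the final bookkeeping, which you yourself flagged as the remaining work: in the paper the multiplicativity of $\pi$ comes from the cocycle identity alone, while the condition that $\alpha$ be a morphism in $\cG^1$ yields the relations forcing $\delta$ to equal the linearization of $\pi$ together with the twisted Jacobi identity $\Ad_g(\phi)-\phi=-\tfrac{1}{2}[\pi,\pi]_{\mathrm{Sch}}$ (this last does not follow from the Maurer--Cartan equation for $x$, which only gives the quasi-Lie bialgebra equations).
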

\begin{proof}
By \cref{thm:poissonpolyvectors}
\[\Pois(\B G, 1)\cong \MC(\Pol(\B G, 1)^{\geq 2}[2]).\]
Moreover, we can identify the latter space with
\[\MC((\Pol(\B\g, 1)^{\geq 2})^{G_{\dR}}[2]).\]
We can present $G_{\dR}$-invariants on a complex $V$ as a totalization of the cosimplicial object
\[\cosimp{V}{V\otimes \Omega^\bullet(G)}.\]
Therefore, by \cref{prop:MClimits} we can identify $\MC(\Pol(\B G, 1)^{\geq 2}[2])$ with the totalization of
\[\cosimp{\MC(\Pol(\B\g, 1)^{\geq 2}[2])}{\MC(\Omega^\bullet(G)\otimes\Pol(\B\g, 1)^{\geq 2}[2])}\]

The latter is a cosimplicial diagram of groupoids and we can use \cref{lm:groupoidTot} to compute its totalization $\cG$. Let us begin by describing the groupoid $\cG^1=\MC(\Omega^\bullet(G)\otimes \Pol(\B\g, 1)^{\geq 2}[2])$. Degree $1$ elements in the corresponding dg Lie algebra are given by elements
\begin{align*}
\delta&\in\cO(G)\otimes \g^*\otimes \wedge^2(\g) \\
\phi&\in\cO(G)\otimes \wedge^3(\g) \\
P&\in\Omega^1(G)\otimes \wedge^2(\g).
\end{align*}

The Maurer--Cartan equation for $\delta+\phi+P$ boils down to the equations expressing the fact that the pair $\delta(g),\phi(g)$ defines a Lie bialgebra structure on $\g$ for any $g\in G$ and the following equations:
\begin{align*}
\ddr P &= 0 \\
\ddr \delta + \dCE P &= 0 \\
\ddr \phi + [P, \delta] &= 0.
\end{align*}

Morphisms from $(\delta,\phi, P)$ to $(\delta', \phi', P')$ in $\cG^1$ are given by elements $\pi\in\cO(G)\otimes \wedge^2(\g)$ satisfying the equations
\begin{align*}
\delta' &= \delta + \dCE\pi \\
\phi' &= \phi + [\delta, \pi] - \frac{1}{2}\llbracket\pi, \pi\rrbracket \\
P' &= P + \ddr \pi.
\end{align*}

Thus, by \cref{lm:groupoidTot} an object of $\cG$ consists of a quasi-Lie bialgebra structure $(\delta, \phi)$ on $\g$ together with a bivector $\pi\in \cO(G)\otimes \wedge^2(\g)$ satisfying the equations
\begin{align}
\pi(e) &= 0 \label{eq:poissonlie1} \\
\pi(xy) &= \pi(x) + \Ad_x\pi(y) \label{eq:poissonlie2} \\
\Ad_g(\delta) &= \delta + \dCE \pi \label{eq:poissonlie3} \\
\Ad_g(\phi) &= \phi + [\delta, \pi] - \frac{1}{2}\llbracket\pi, \pi\rrbracket \label{eq:poissonlie4} \\
\Ad_g(\delta) &= \ddr \pi. \label{eq:poissonlie5}
\end{align}

Equation \eqref{eq:poissonlie1} follows from \eqref{eq:poissonlie2} which expresses the fact that $\pi\in\Gamma(G, \wedge^2\T_G)$ is multiplicative. Denote by $\widetilde{\delta}\colon \g\rightarrow \wedge^2(\g)$ the linear part of $\pi$ at the unit. Then the infinitesimal version of \eqref{eq:poissonlie2} is
\[\ddr \pi = \widetilde{\delta} + \dCE \pi.\]
Combining this with equations \eqref{eq:poissonlie3} and \eqref{eq:poissonlie5} we see that $\delta=\widetilde{\delta}$. Therefore, equation \eqref{eq:poissonlie4} becomes
\[\Ad_g(\phi) - \phi = [\ddr \pi, \pi] +\frac{1}{2}\llbracket\pi, \pi\rrbracket.\]
If we denote by $[\pi, \pi]_{\mathrm{Sch}}$ the Schouten bracket of the bivector of $\pi$, then we can rewrite the above equation as
\[\Ad_g(\phi) - \phi = -\frac{1}{2} [\pi, \pi]_{\mathrm{Sch}}.\]
This shows that an object of $\cG$ is a quasi-Poisson structure $(\pi, \phi)$ on $G$ and $(\delta, \phi)$ is the induced Lie bialgebra structure on $\g$.

Morphisms $(\delta, \phi, \pi)\rightarrow (\delta', \phi', \pi')$ in $\cG$ are given by morphisms in $\cG^0$, i.e. by twists $\lambda\in\wedge^2(\g)$ of quasi-Lie bialgebra structures $(\delta, \phi)\rightarrow (\delta', \phi')$. The compatibility of the twist $\lambda$ with the bivectors is the equation
\[\pi + \lambda = \Ad_g(\lambda) + \pi'\]
which shows that $\lambda$ is also a twist of the quasi-Poisson structure $(\pi, \phi)$ into $(\pi', \phi')$. In this way we have identified objects of $\cG$ with quasi-Poisson structures on $\cG$ and morphisms with twists of those, i.e. $\cG\cong \QPois(G)$.
\end{proof}

\subsection{Coisotropic structures}

In this section we study coisotropic structures in $\B G$. Denote by $\QPois_H(G)\subset \QPois(G)$ the following subgroupoid:
\begin{itemize}
\item Objects of $\QPois_H(G)$ are objects $(\pi, \phi)$ of $\QPois(G)$ such that the image of $\pi$ under
\[\cO(G)\otimes \wedge^2(\g)\rightarrow \cO(H)\otimes \wedge^2(\g/\h)\]
is zero and $\phi$ is in the kernel of $\wedge^3(\g)\rightarrow \wedge^3(\g/\h)$.

\item Morphisms in $\QPois_H(G)$ are morphisms $(\pi, \phi)\rightarrow (\pi', \phi')$ in $\QPois(G)$ given by $\lambda$ in the kernel of $\wedge^2(\g)\rightarrow \wedge^2(\g/\h)$.
\end{itemize}

Note that $(\pi, \phi=0)$ is an object of $\QPois_H(G)$ iff $\pi$ makes $G$ into a Poisson-Lie group and $H\subset G$ into a coisotropic subgroup.

\begin{prop}
Let $H\subset G$ be a closed subgroup and $f\colon \B H\rightarrow \B G$ the induced map on classifying stacks.

The space of 2-shifted coisotropic structures $\Cois(f, 2)$ is equivalent to the set
\[\ker(\Sym^2(\g)^G\rightarrow \Sym^2(\g/\h)^H).\]

The space of 1-shifted coisotropic structures $\Cois(f, 1)$ is equivalent to the groupoid $\QPois_H(G)$.
\label{prop:coisotropicsubgroup}
\end{prop}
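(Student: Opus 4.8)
The plan is to reduce both claims to a Maurer--Cartan computation through \cref{thm:coisotropicrelpolyvectors}, which identifies $\Cois(f, n)$ with $\MC(\Pol(f, n)^{\geq 2}[n+1])$. Globalizing the kernel presentation of \cref{sect:BGpolyvectors} --- the projection $\Pol(\B G, n)\rightarrow \Pol(\B H/\B G, n-1)$ being surjective, so that \cite[Proposition 4.10]{MS1} applies --- I would identify $\Pol(f, n)$ with the kernel of this projection, with Lie bracket induced from $\Pol(\B G, n)$. The upshot is a clean description: a Maurer--Cartan element of $\Pol(f, n)^{\geq 2}[n+1]$ is exactly a Maurer--Cartan element of $\Pol(\B G, n)^{\geq 2}[n+1]$ --- that is, an $n$-shifted Poisson structure on $\B G$, already classified in \cref{prop:2shiftedBG} and \cref{thm:1shiftedBG} --- which in addition lies in the kernel, i.e. whose image in $\Pol(\B H/\B G, n-1)\cong \C^\bullet(\h, \Sym(\g/\h[-n]))$ under the projection induced by $\g\rightarrow \g/\h$ vanishes; the same remark applies to the gauge data.

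For $n=2$ this is immediate. By \cref{prop:2shiftedBG} the ambient Maurer--Cartan elements are the elements $c\in\Sym^2(\g)^G$ in weight $2$ and degree $4$, and the argument there shows the space is discrete. The projection sends $c$ into the weight-$2$, degree-$4$ part $\C^0(\h, \Sym^2(\g/\h))$ of $\Pol(\B H/\B G, 1)$, which after $H$-invariance is $\Sym^2(\g/\h)^H$, and there it is computed by $\Sym^2(\g)\rightarrow \Sym^2(\g/\h)$. Imposing the kernel condition therefore yields the set $\ker(\Sym^2(\g)^G\rightarrow \Sym^2(\g/\h)^H)$, and discreteness is inherited.

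For $n=1$ I would mirror the proof of \cref{thm:1shiftedBG}. By that theorem the ambient Maurer--Cartan elements form the groupoid $\QPois(G)$, with objects multiplicative bivectors $\pi\in\cO(G)\otimes\wedge^2(\g)$ together with $\phi\in\wedge^3(\g)$ and morphisms given by twists $\lambda\in\wedge^2(\g)$. Imposing the kernel condition weight by weight, the weight-$2$ part of $\pi$ must map to $0$ in $\cO(H)\otimes\wedge^2(\g/\h)$, the weight-$3$ part $\phi$ must lie in $\ker(\wedge^3(\g)\rightarrow \wedge^3(\g/\h))$, and the twists must lie in $\ker(\wedge^2(\g)\rightarrow \wedge^2(\g/\h))$ --- which are precisely the conditions cutting out the subgroupoid $\QPois_H(G)\subset \QPois(G)$. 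Concretely I would re-run the totalization of \cref{thm:1shiftedBG}, presenting the invariants as a cosimplicial groupoid via \cref{prop:MClimits} and reading it off with \cref{lm:groupoidTot}, now with the kernel conditions imposed at each cosimplicial level.

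The main obstacle is the weight-and-degree bookkeeping in the $n=1$ case: I must check that the single kernel condition on relative polyvectors unwinds, through the totalization, into exactly the three conditions defining $\QPois_H(G)$, and I must verify the surjectivity of the global projection $\Pol(\B G, n)\rightarrow \Pol(\B H/\B G, n-1)$ that licenses the kernel presentation. The delicate point is coherence: the projection $\g\rightarrow \g/\h$ and the restriction $\cO(G)\rightarrow \cO(H)$ must intertwine the relevant coactions so that, after realization, the kernel of $\Pol(\B G, 1)\rightarrow \Pol(\B H/\B G, 0)$ is carried onto the kernel of $\cO(G)\otimes\wedge^2(\g)\rightarrow \cO(H)\otimes\wedge^2(\g/\h)$. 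Once this compatibility is in place, the remaining computation duplicates that of \cref{thm:1shiftedBG} verbatim.
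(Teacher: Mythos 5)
Your proposal is correct and takes essentially the same route as the paper: the paper's own proof consists precisely of invoking the kernel description of $\Pol(f, n)$ as $\ker\left(\C^\bullet(G, \Sym(\g[-n]))\rightarrow \C^\bullet(H, \Sym(\g/\h[-n]))\right)$ (set up in \cref{sect:BGpolyvectors} via surjectivity of the projection and \cite[Proposition 4.10]{MS1}) and then reading off the Maurer--Cartan computation from \cref{prop:2shiftedBG} and \cref{thm:1shiftedBG} with the kernel conditions imposed on objects and twists. The weight-by-weight bookkeeping you flag as the main obstacle is exactly what the paper treats as immediate, so your write-up is simply a more explicit version of the same argument.
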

\begin{proof}
Both statements immediately follow from the description of relative polyvectors $\Pol(f, n)$ as a kernel of \[\C^\bullet(G, \Sym(\g[-n]))\rightarrow \C^\bullet(H, \Sym(\g/\h[-n])).\]
\end{proof}

\begin{cor}
Let $f\colon \pt\rightarrow \B G$ be the inclusion of the basepoint. Then the space of 1-shifted Poisson structures $\Pois(f, 1)$ is equivalent to the set of Poisson-Lie structures on $G$.
\label{cor:poissongroups}
\end{cor}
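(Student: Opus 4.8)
The plan is to deduce this from \cref{prop:coisotropicsubgroup} via the relationship between Poisson morphisms and coisotropic graphs supplied by \cref{thm:graphcoisotropic}, specializing the subgroup to $H=\pt$. First I would apply \cref{thm:graphcoisotropic} to $f\colon \pt\rightarrow \B G$. Its graph is $g\colon \pt\rightarrow \pt\times\B G$, and under the canonical equivalence $\pt\times\B G\cong\B G$ (projection onto the second factor) the graph $g$ is identified with $f$ itself. Thus the theorem produces a Cartesian square of spaces whose corners I would then simplify one at a time.

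Next I would compute the four corners. The point has trivial cotangent complex, so $\Pol(\pt,1)$ is trivial and $\Pois(\pt,1)$ is contractible; hence the corner $\Pois(\pt,1)\times\Pois(\B G,1)$ is equivalent to $\Pois(\B G,1)$, and the structure map to $\Pois(\pt\times\B G,1)\cong\Pois(\B G,1)$, given by $(\pi_\pt,\pi_{\B G})\mapsto \pi_\pt-\pi_{\B G}$, becomes the sign involution $\pi\mapsto-\pi$, which is an equivalence. The remaining corner is $\Cois(g,1)\cong\Cois(\pt\rightarrow\B G,1)$, which by \cref{prop:coisotropicsubgroup} with $H=\pt$ is identified with the groupoid $\QPois_\pt(G)$.

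Then I would unwind $\QPois_\pt(G)$ for the trivial subgroup, where $\h=0$ and $\g/\h=\g$. The requirement that $\phi$ lie in the kernel of $\wedge^3(\g)\rightarrow\wedge^3(\g/\h)$ forces $\phi=0$, since this map is the identity; the requirement that $\pi$ vanish under $\cO(G)\otimes\wedge^2(\g)\rightarrow\cO(\pt)\otimes\wedge^2(\g)$ is just $\pi(e)=0$, already guaranteed by multiplicativity; and the morphisms are given by $\lambda$ in the kernel of $\wedge^2(\g)\rightarrow\wedge^2(\g)$, hence $\lambda=0$. So $\QPois_\pt(G)$ is the discrete groupoid on the set of Poisson-Lie structures on $G$, consistent with the remark preceding the statement.

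Finally, since the right vertical leg of the Cartesian square is the sign involution on $\Pois(\B G,1)$, which is an equivalence, the homotopy pullback $\Pois(f,1)$ is equivalent to the opposite corner $\QPois_\pt(G)$, yielding the claimed equivalence with the set of Poisson-Lie structures on $G$. The only genuinely delicate point, and the step I expect to require the most care, is the bookkeeping: confirming that under the identifications of \cref{thm:1shiftedBG} and \cref{prop:coisotropicsubgroup} the bottom map of the square really is the inclusion $\QPois_\pt(G)\hookrightarrow\QPois(G)$ and the right map really is the sign involution, so that base change along an equivalence collapses the pullback cleanly; everything else is formal.
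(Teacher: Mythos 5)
Your proof is correct and follows essentially the same route as the paper: apply \cref{thm:graphcoisotropic} to collapse $\Pois(f,1)$ onto $\Cois(f,1)$ (using that $\Pois(\pt,1)$ is contractible and the right vertical leg of the square is an equivalence), then invoke \cref{prop:coisotropicsubgroup} with $H=\pt$ and observe that $\QPois_{\{e\}}(G)$ is a discrete set of multiplicative Poisson bivectors with $\phi=0$. The paper's proof is just a terser version of the same argument, leaving the pullback bookkeeping and the unwinding of $\QPois_{\{e\}}(G)$ implicit.
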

\begin{proof}
By \cref{thm:graphcoisotropic} we have an equivalence of spaces $\Pois(f, 1)\cong \Cois(f, 1)$ and by \cref{prop:coisotropicsubgroup} we have an equivalence of spaces $\Cois(f, 1)\cong \QPois_{\{e\}}(G)$. But it is immediate that the latter groupoid is in fact a set whose objects have $\phi=0$.
\end{proof}

Recall that we have a diagram of spaces
\[
\xymatrix{
& \Cois(f, 2) \ar[dl] \ar[dr] & \\
\Pois(\B H, 1) && \Pois(\B G, 2).
}
\]
Thus, given a Casimir element $c\in\Sym^2(\g)^G$ vanishing on $\Sym^2(\g/\h)^H$ we obtain a quasi-Poisson structure on $H$, let us now describe it explicitly. For simplicity we will describe the induced quasi-Lie bialgebra structure on $\h$.

Pick a splitting $\g\cong \h\oplus \g/\h$ as vector spaces. Let $\{e_i\}$ be a basis of $\h$ and $\{\te_i\}$ a basis of $\g/\h$. Let $\{e^i\}$ and $\{\te^i\}$ be the dual bases. We denote the structure constants as follows:
\begin{align*}
[e_i, e_j] &= f_{ij}^k e_k \\
[e_i, \te_j] &= A_{ij}^k e_k + B_{ij}^k \te_k \\
[\te_i, \te_j] &= C_{ij}^k e_k + D_{ij}^k \te_k
\end{align*}

Let us also split $c = P + Q$, where $P\in\Sym^2(\h)$ and $Q\in \h\otimes \g/\h$. We denote its components by $P^{ij}$ and $Q^{ij}$.

The cobracket $\delta\in \h^*\otimes \wedge^2(\h)$ has components
\begin{equation}
\delta^{ij}_k = \frac{1}{2}(A^j_{ka} Q^{ia} - A^i_{ka} Q^{ja})
\label{eq:forgetqlie1}
\end{equation}
and the associator $\phi\in\wedge^3(\h)$ has components
\begin{equation}
\phi^{ijk} = \frac{1}{8} f^i_{ab}P^{aj} P^{bk} + \frac{1}{4} Q^{ia} (\gamma^k_{ab} Q^{jb} - \gamma^j_{ab} Q^{kb}) + \frac{1}{8}P^{ia} (\alpha^k_{ab} Q^{jb} - \alpha^j_{ab} Q^{kb}).
\label{eq:forgetqlie2}
\end{equation}

A tedious computation shows that the above formulas indeed define a quasi-Lie bialgebra structure on $\h$. One can also see that if $c$ is nondegenerate and both $\h$ and $\g/\h$ are Lagrangian in $\g$, then the formulas reduce to those of \cite[Section 2]{Dr1} and \cite[Section 2.1]{AKS}.

\begin{remark}
Consider an exact sequence of $\h$-representations
\[0\longrightarrow \h\longrightarrow \g\longrightarrow \g/\h\longrightarrow 0.\]
The connecting homomorphism gives rise to a morphism $\g/\h\rightarrow \h[1]$ in the derived category of $\h$-representations. Since $\h\subset \g$ is coisotropic, the Casimir element induces a morphism of $\h$-representations $\h^*\rightarrow \g/\h$. Combining the two, we get a morphism $\h^*\rightarrow \h[1]$, i.e. an element of $\H^1(\h, \h\otimes \h)$. The formula \eqref{eq:forgetqlie1} gives an explicit representative for this cohomology class given a splitting $\g/\h\subset \g$.
\end{remark}

\begin{prop}
Let $\hat{f}\colon \B\h\rightarrow \B\g$. Under the forgetful map
\[\Cois(\hat{f}, 2)\rightarrow \Pois(\B\h, 1)\]
the induced quasi-Lie bialgebra structure on $\h$ is given by formulas \eqref{eq:forgetqlie1} and \eqref{eq:forgetqlie2}.
\label{prop:coisotropicqlie}
\end{prop}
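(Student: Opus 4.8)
The plan is to carry out the entire computation in the formal model, since $\hat f\colon\B\h\to\B\g$ involves only formal groups and therefore no totalization over a group is needed (unlike in the proof of \cref{thm:1shiftedBG}). By \cref{thm:coisotropicrelpolyvectors} we have $\Cois(\hat f,2)\cong\MC(\Pol(\hat f,2)^{\geq2}[3])$, and by the discussion of relative polyvectors in \cref{sect:BGpolyvectors} the graded dg Lie algebra $\Pol(\hat f,2)$ is the strict kernel of the surjection $\C^\bullet(\g,\Sym(\g[-2]))\to\C^\bullet(\h,\Sym(\g/\h[-2]))$, equipped with the big bracket. A Maurer--Cartan element here is exactly a Casimir $c\in\ker\!\big(\Sym^2(\g)^\g\to\Sym^2(\g/\h)^\h\big)$, which in the chosen splitting $\g\cong\h\oplus\g/\h$ decomposes as $c=P+Q$ with $P\in\Sym^2(\h)$ and $Q\in\h\otimes\g/\h$. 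On the target side \cref{thm:poissonpolyvectors} and \cref{prop:polyvectorsBg} identify $\Pois(\B\h,1)\cong\MC\!\big(\C^\bullet(\h,\Sym^{\geq2}(\h[-1]))[2]\big)$, whose Maurer--Cartan elements are the pairs $(\delta,\phi)$ of \cref{thm:1shiftedBg}. Thus everything reduces to computing the image of $c$ under the forgetful map $\Cois(\hat f,2)\to\Pois(\B\h,1)$ expressed in these two models.

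The first real step is to make this forgetful map explicit at the cochain level. Abstractly it discards the $\bP_4$-factor and the central morphism of the $\bP_{[4,3]}$-structure and keeps the induced $\bP_3$-structure on $\cB_{\B\h}(\infty)$; but to extract formulas I must compare the relative polyvectors $\Pol(\B\h/\B\g,1)=\C^\bullet(\h,\Sym(\g/\h[-2]))$ appearing in the fibre sequence $\Pol(\B\h/\B\g,1)[1]\to\Pol(\hat f,2)[2]\to\Pol(\B\g,2)[2]$ with the absolute polyvectors $\Pol(\B\h,1)=\C^\bullet(\h,\Sym(\h[-1]))$ that compute $\Pois(\B\h,1)$. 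These differ by a shift in the fibre directions, and the comparison is controlled by the connecting homomorphism $\g/\h\to\h[1]$ of the short exact sequence of $\h$-representations $0\to\h\to\g\to\g/\h\to0$. Choosing the splitting fixes a representative of this connecting map, and I would use it to transfer the big bracket, realizing $\Cois(\hat f,2)\to\Pois(\B\h,1)$ as a concrete (homotopy) morphism of graded dg Lie algebras whose linear and quadratic terms I can then evaluate on $c$.

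Next I would read off the two components of the image. The weight-$2$, linear term is $\delta$: as in the remark preceding the statement, it is the composite $\h^*\xrightarrow{c}\g/\h\xrightarrow{\partial}\h[1]$ of the Casimir-induced map with the connecting homomorphism, and writing $\partial$ through the structure constants $A^k_{ij}$ of $[e_i,\te_j]$ reproduces precisely \eqref{eq:forgetqlie1}. The weight-$3$ term is $\phi$: it is the secondary operation recording how the transferred bracket corrects the Maurer--Cartan equation $\tfrac12[\delta,\delta]+\dCE\phi=0$, and expanding the transferred big bracket on $c=P+Q$ --- contracting the $\g^*$ and $\g$ legs through the splitting against the structure constants $f$, $A$ and $C$ (the latter two appearing as $\alpha,\gamma$ in the formula) --- should yield the three terms of \eqref{eq:forgetqlie2}. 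One then verifies, as already asserted before the statement, that the resulting $(\delta,\phi)$ satisfies \eqref{eq:qlie1}--\eqref{eq:qlie3}.

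The main obstacle is the explicit chain-level description of the forgetful map. Unlike the situation in \cref{thm:1shiftedBG}, here the relative polyvectors $\Pol(\B\h/\B\g,1)$ and the absolute polyvectors $\Pol(\B\h,1)$ do not coincide on the nose: they are linked only through the fibre sequence and a degree shift, so the map implementing $\Cois\to\Pois$ is a connecting/secondary morphism rather than a restriction of the big bracket. Committing to the splitting $\g\cong\h\oplus\g/\h$ is what rigidifies this homotopy; once that choice is made, $\delta$ follows immediately from the connecting-homomorphism interpretation and the determination of $\phi$ becomes the bookkeeping flagged as ``tedious'' in the text. The only genuine risk there is tracking the combinatorial factors ($\tfrac12,\tfrac14,\tfrac18$) and the antisymmetrizations so that they match the normalization of the big bracket fixed in \cref{prop:polyvectorsBg}.
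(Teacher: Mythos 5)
Your reduction of both sides to explicit models is fine: $\Cois(\hat f,2)$ is indeed the discrete set of Casimirs in $\ker\bigl(\Sym^2(\g)^{\g}\to\Sym^2(\g/\h)^{\h}\bigr)$, and $\Pois(\B\h,1)$ is the groupoid of quasi-Lie bialgebra structures. The genuine gap is in the step you yourself flag as the ``main obstacle'': you never construct the chain-level realization of the forgetful map $\Cois(\hat f,2)\to\Pois(\B\h,1)$, and that is precisely where all the content of the proposition sits. This map is defined operadically --- a $\bP_{[3,2]}$-algebra contains, as part of its data, a $\bP_2$-structure on the second factor --- and it is not induced by any strict morphism of graded dg Lie algebras from the kernel $\Pol(\hat f,2)$ to $\Pol(\B\h,1)=\C^\bullet(\h,\Sym(\h[-1]))$: the weight-graded pieces $\C^\bullet(\h,\Sym^p(\g/\h[-2]))$ and $\C^\bullet(\h,\Sym^p(\h[-1]))$ are related only through the connecting homomorphism in the derived category, so transporting the forgetful functor to the Maurer--Cartan model requires unwinding the equivalence of \cref{thm:coisotropicrelpolyvectors} (the rectification and additivity arguments of \cite{MS1}, \cite{MS2}), not merely choosing a splitting $\g\cong\h\oplus\g/\h$ and ``transferring the big bracket''. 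Asserting that the linear Taylor coefficient of this hypothetical $L_\infty$-morphism is $\h^*\to\g/\h\to\h[1]$ and that its quadratic coefficient ``should yield'' \eqref{eq:forgetqlie2} is a restatement of the claim, not an argument; note in particular that $\phi$ cannot be reverse-engineered from $\delta$, since $\frac{1}{2}[\delta,\delta]+\dCE\phi=0$ determines $\phi$ only up to a $\dCE$-closed element of $\wedge^3(\h)$, so the coefficients $\frac{1}{8},\frac{1}{4},\frac{1}{8}$ in \eqref{eq:forgetqlie2} must come out of a computation your proposal never sets up.

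The paper's proof sidesteps exactly this issue by working in the model where the forgetful map is tautological. It takes the pair $(\delta,\phi)$ of \eqref{eq:forgetqlie1}--\eqref{eq:forgetqlie2} as an ansatz, equips $\C^\bullet(\h,k)$ with the corresponding $\bP_2$-structure, and constructs an explicit morphism of $\bP_3$-algebras $F\colon\C^\bullet(\g,k)\to\rZ(\C^\bullet(\h,k))$ on generators by $F(e^i)=e^i+\frac{1}{2}P^{ij}e_j$ and $F(\te^i)=Q^{ji}e_j$; compatibility with the brackets is immediate, and compatibility with the differentials is verified against the invariance identities \eqref{eq:casimirinv1}--\eqref{eq:casimirinv5}. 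This exhibits a point of $\Cois(\hat f,2)$ lying over $c$ whose image in $\Pois(\B\h,1)$ is $(\delta,\phi)$ by construction, and since $\Cois(\hat f,2)$ is discrete by \cref{prop:coisotropicsubgroup}, it is the unique point over $c$, which proves the statement. If you want to keep your framework, the cheapest repair is the same guess-and-verify scheme: exhibit some explicit coisotropic lift of $c$ inducing \eqref{eq:forgetqlie1}--\eqref{eq:forgetqlie2} and appeal to discreteness, rather than attempting to describe the forgetful functor explicitly on Maurer--Cartan spaces.
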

\begin{proof}
Pick $c\in\ker(\Sym^2(\g)^G\rightarrow \Sym^2(\g/\h)^H)$ as above. We are going to construct a $\bP_{[3, 2]}$-algebra structure on the pair $(\C^\bullet(\g, k), \C^\bullet(\h, k))$ such that the $\bP_2$-structure on $\C^\bullet(\h, k)$ is induced by the above quasi-Lie bialgebra structure on $\h$. That is, we have to define a morphism of $\bP_3$-algebras
\[F\colon \C^\bullet(\g, k)\longrightarrow\rZ(\C^\bullet(\h, k))\cong \C^\bullet(\h, \Sym(\h[-1])).\]

The Poisson bracket on $\C^\bullet(\g, k)$ is given by
\[\{e^i, e^j\} = P^{ij},\qquad \{e^i, \te^j\} = Q^{ij}\]
and the Poisson bracket on $\C^\bullet(\h, \Sym(\h[-1]))$ is given by the canonical pairing of $\h^*$ and $\h$.

The differentials on $\C^\bullet(\g, k)$ are
\[\d e^i = A^i_{jk} e^j\te^k + \frac{1}{2}C^i_{jk}\te^j\te^k + \frac{1}{2}f^i_{jk} e^j e^k\]
and
\[\d \te^i = B^i_{jk} e^j\te^k + \frac{1}{2}D^i_{jk} \te^j \te^k.\]

The differentials on $\C^\bullet(\h, \Sym(\h[-1]))$ are
\[\d e^i = \frac{1}{2} f^i_{jk} e^j e^k + \phi^{ijk} e_je_k - \delta^{ij}_k e^ke_j\]
and
\[\d e_i = f^k_{ij} e^j e_k + \frac{1}{2} \delta_i^{jk} e_j e_k\]

$G$-invariance of the Casimir $c$ implies the following equations:
\begin{align}
0 &= A^i_{jk} P^{ja} + C^i_{jk} Q^{aj} + A^a_{jk} P^{ji} + C^a_{jk} Q^{ij} \label{eq:casimirinv1} \\
0 &= A^i_{jk} Q^{ak} - f^i_{kj} P^{ka} + A^a_{jk} Q^{ik} - f^a_{kj} P^{ki} \label{eq:casimirinv2} \\
0 &= -A^i_{jk} Q^{ja} - B^a_{jk}P^{ij} - D^a_{jk} Q^{ij} \label{eq:casimirinv3} \\
0 &= -f^i_{kj} Q^{ka} + B^a_{jk} Q^{ik} \label{eq:casimirinv4} \\
0 &= B^i_{jk} Q^{ja} + B^a_{jk} Q^{ji}. \label{eq:casimirinv5}
\end{align}

Since $\C^\bullet(\g, k)$ is generated as a commutative algebra by $\g^*$, it is enough to define $F$ on the generators: we define
\[F(e^i) = e^i + \frac{1}{2}P^{ij} e_j,\qquad F(\te^i) = Q^{ji} e_j.\]
It is easy to see that thus defined $F$ is compatible with the Poisson brackets. We are now going to show that it is also compatible with differentials.

Consider $\te^i\in(\g/\h)^*$. Then
\[\d \te^i = B^i_{jk} e^j\te^k + \frac{1}{2}D^i_{jk} \te^j \te^k\]
and hence
\[F(\d \te^i) = B^i_{jk} \left(e^j + \frac{1}{2} P^{ja} e_a\right) Q^{bk} e_b + \frac{1}{2} D^i_{jk} Q^{aj} Q^{bk} e_a e_b.\]

On the other hand,
\[\d F(\te^a) = Q^{ia} f^k_{ij} e^j e_k + \frac{1}{2} Q^{ia} \delta_i^{jk} e_j e_k.\]

Therefore,
\[
Q^{ci} \delta_c^{ab} = \frac{1}{2}(B^i_{jk} P^{ja} Q^{bk} + D^i_{jk} Q^{aj} Q^{bk}) - (a\leftrightarrow b).
\]
which holds by \eqref{eq:casimirinv3}.

Now consider $e^i\in\h^*$. Then
\[\d e^i = A^i_{jk} e^j\te^k + \frac{1}{2}C^i_{jk}\te^j\te^k + \frac{1}{2}f^i_{jk} e^j e^k\]
and hence
\[F(\d e^i) = A^i_{jk} \left(e^j + \frac{1}{2}P^{ja}e_a\right) Q^{bk}e_b + \frac{1}{2}C^i_{jk} Q^{aj}Q^{bk}e_ae_b + \frac{1}{2} f^i_{jk} \left(e^j + \frac{1}{2} P^{ja}e_a\right)\left(e^k + \frac{1}{2} P^{kb}e_b\right).\]

On the other hand,
\[\d F(e^i) = \frac{1}{2} f^i_{jk} e^j e^k + \phi^{iab} e_ae_b - \delta^{ib}_j e^je_b + \frac{1}{2} P^{ib}\left(f^k_{bj} e^j e_k + \frac{1}{2} \delta_b^{jk} e_j e_k\right).\]

Therefore,
\begin{align*}
P^{ik} f^b_{kj} - 2 \delta^{ib}_j &= 2A^i_{jk} Q^{bk} + f^i_{jk} P^{kb} \\
\phi^{iab} + \frac{1}{4} P^{ic} \delta^{ab}_c &= \frac{1}{4}A^i_{jk} P^{ja} Q^{bk} - \frac{1}{4} A^i_{jk} P^{jb} Q^{ak} + \frac{1}{2}C^i_{jk} Q^{aj} Q^{bk} + \frac{1}{8} f^i_{jk} P^{ja} P^{kb}.
\end{align*}
These can be checked using \eqref{eq:casimirinv1} and \eqref{eq:casimirinv2}.
\end{proof}

Let us recall that one can identify $G$-spaces with spaces over $\B G$: given a $G$-space $X$ we get $Y=[X/G]\rightarrow \B G$; conversely, given $Y\rightarrow \B G$ we form the $G$-space $X = \pt\times_{\B G} Y$. The following statements extend this analogy to the Poisson setting.

\begin{prop}
Let $X$ be a $G$-scheme and $f\colon [X/G]\rightarrow \B G$ the projection map. Then we have an equivalence of groupoids
\[\Cois(f, 1)\cong \QPois(G, X).\]
\label{prop:poissonspaces}
\end{prop}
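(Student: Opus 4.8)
The plan is to mirror the proof of \cref{thm:1shiftedBG}, replacing the absolute polyvectors on $\B G$ by the relative polyvectors of $f$. By \cref{thm:coisotropicrelpolyvectors} we have $\Cois(f,1)\cong\MC(\Pol(f,1)^{\geq2}[2])$, so everything reduces to identifying the dg Lie algebra $\Pol(f,1)$. First I would pass to the formal completion $\hat{f}\colon [X/\hG]\to\B\g$ and use $\Pol(f,1)\cong\Pol(\hat{f},1)^{G_{\dR}}$. The base factor is $\Pol(\B\g,1)\cong\C^\bullet(\g,\Sym(\g[-1]))$ as in \cref{prop:polyvectorsBg}, while the relative factor is computed from the relative tangent complex $\bT_{[X/\hG]/\B\g}\cong\bT_X$, giving $\Pol([X/\hG]/\B\g,0)\cong\C^\bullet(\g,\Gamma(X,\Sym(\bT_X[-1])))$. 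The projection $\Pol(\B\g,1)\to\Pol([X/\hG]/\B\g,0)$ is induced by the infinitesimal action map $a\colon\g\to\Gamma(X,\T_X)$.

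Next I would compute the Maurer--Cartan space. Unlike the subgroup case of \cref{prop:coisotropicsubgroup}, the action map $a$ is in general not surjective, so $\Pol(\hat{f},1)$ is the genuine homotopy fiber $\U(\Pol(\B\g,1),\Pol([X/\hG]/\B\g,0))$ with the Lie bracket of \cite[Section 3.5]{MS1}. A degree-$3$ element of $\Pol(\hat{f},1)^{\geq2}$ consists of a pair $(\delta,\phi)\in\C^\bullet(\g,\Sym^{\geq2}(\g[-1]))$ of weights $2$ and $3$ together with a bivector $\pi_X\in\Gamma(X,\wedge^2\T_X)$, which is the weight-$2$, degree-$2$ part of the relative factor. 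I expect the Maurer--Cartan equation to split into the quasi-Lie bialgebra equations \eqref{eq:qlie1}--\eqref{eq:qlie3} for $(\delta,\phi)$ coming from the base (exactly as in \cref{thm:1shiftedBg}), together with a linear coupling identifying $a_*\delta$ with the action of $\delta$ on $\pi_X$, and the quadratic equation $\tfrac12[\pi_X,\pi_X]_{\mathrm{Sch}}+a(\phi)=0$, which is \eqref{eq:qpoissonspace2}.

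Then I would globalize exactly as in \cref{thm:1shiftedBG}: present $\Pol(\hat{f},1)^{G_{\dR}}$ as the totalization of the cosimplicial object computing $G_{\dR}$-invariants, apply \cref{prop:MClimits} to commute Maurer--Cartan elements past the totalization, and compute the totalization of the resulting cosimplicial groupoid using \cref{lm:groupoidTot}. The base part reproduces, verbatim, the passage in \cref{thm:1shiftedBG} from the infinitesimal cobracket to a multiplicative bivector $\pi_G$ on $G$ with quasi-Poisson structure $(\pi_G,\phi)$. The new relative datum $\pi_X$ is a genuine (non-infinitesimal) bivector on $X$, and the cocycle condition produced by $\d^0,\d^1$ in \cref{lm:groupoidTot} turns the infinitesimal equivariance equation into the global multiplicativity equation \eqref{eq:qpoissonspace1}, $\pi_X(gx)=L_{g,*}\pi_X(x)+R_{x,*}\pi_G(g)$. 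This identifies objects of $\Cois(f,1)$ with triples $(\pi_G,\phi,\pi_X)$ satisfying the axioms of a quasi-Poisson $G$-scheme, hence with objects of $\QPois(G,X)$. For morphisms, an element $\lambda\in\wedge^2(\g)$ in $\cG^0$ twists $(\pi_G,\phi)$ as in \cref{thm:1shiftedBG} and acts on the relative bivector by $\pi_X\mapsto\pi_X+a(\lambda)$, matching the definition of morphisms in $\QPois(G,X)$.

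The main obstacle will be the analysis of the homotopy-fiber Lie algebra $\Pol(\hat{f},1)$: since $a$ is not surjective one cannot replace $\U$ by a strict kernel, and one must check that $a$ intertwines the big bracket on the base with the Schouten bracket on polyvectors on $X$, so that the quadratic part of the Maurer--Cartan equation reproduces precisely $\tfrac12[\pi_X,\pi_X]_{\mathrm{Sch}}+a(\phi)=0$ rather than a corrected expression. The second delicate point is the bookkeeping in \cref{lm:groupoidTot}: one must verify that the integrated cocycle condition on $\pi_X$ is exactly \eqref{eq:qpoissonspace1} and that the compatibility square for morphisms yields $\pi_X'=\pi_X+a(\lambda)$ with no further constraint, which is the relative analog of the computation \eqref{eq:poissonlie1}--\eqref{eq:poissonlie5}.
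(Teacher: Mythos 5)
Your strategy is correct and lands in the right place, but it is organized differently from the paper's proof, which is shorter precisely because it never passes to the formal completion. The paper works with the global fiber sequence of dg Lie algebras
\[
\Pol([X/G]/\B G, 0)[1] \longrightarrow \Pol(f,1)[2] \longrightarrow \Pol(\B G, 1)[2],
\]
identifies $\Pol(\B G,1)\cong \C^\bullet(G,\Sym(\g[-1]))$ and $\Pol([X/G]/\B G,0)\cong \C^\bullet(G,\Pol(X,0))$ with connecting map induced by $a\colon\g\to\Gamma(X,\T_X)$, and applies \cref{prop:MCDeligne} directly to $\Pol(f,1)^{\geq 2}[2]$. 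The base part of the Maurer--Cartan equation is then literally the Maurer--Cartan equation in $\Pol(\B G,1)^{\geq 2}[2]$, so \cref{thm:1shiftedBG} is cited as a black box to conclude that $(\pi_G,\phi)$ is a quasi-Poisson structure on $G$; since everything is already global, the remaining equations come out at once as $\d\pi_X + a(\pi_G)=0$, which is \eqref{eq:qpoissonspace1}, plus a quadratic equation. Your route --- compute at the level of $\hat f\colon [X/\hG]\to\B\g$, where the base datum is a quasi-Lie bialgebra $(\delta,\phi)$, and then re-run the totalization argument with \cref{prop:MClimits} and \cref{lm:groupoidTot} carrying the relative factor along --- effectively re-proves the relevant part of \cref{thm:1shiftedBG} instead of quoting it. Your identifications are all correct ($\bT_{[X/\hG]/\B\g}\cong\bT_X$, the connecting map, the weight/degree bookkeeping giving $(\delta,\phi)$ and $\pi_X$ in degree $3$ and $\lambda\in\wedge^2(\g)$ in degree $2$, and the twist action $\pi_X\mapsto\pi_X+a(\lambda)$), so the approach works; it just costs an extra layer of cosimplicial bookkeeping that the paper's organization avoids.

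The step you flag but leave open is genuinely the crux, and the paper's resolution is worth recording: your hope that the quadratic part of the Maurer--Cartan equation ``reproduces precisely $\tfrac12[\pi_X,\pi_X]_{\mathrm{Sch}}+a(\phi)=0$ rather than a corrected expression'' is, at face value, false. The raw equation reads
\[
\tfrac{1}{2}[\pi_X,\pi_X] + [\pi_X,\pi_G] + a(\phi)=0,
\]
so a cross-term does appear (in your formal-level version, the analogous cross-pairing of $\pi_X$ with $\delta$). The paper kills it by observing that the forgetful map $\Pol(f,1)[2]\to\Pol([X/G],0)[1]$ is a morphism of dg Lie algebras, which forces $[\pi_X,\pi_G]=0$; only after this does the equation collapse to \eqref{eq:qpoissonspace2}. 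Any completion of your argument needs this (or an equivalent) vanishing statement before the globalization step, so make it explicit rather than relying on the bracket compatibility working out on the nose.
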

\begin{proof}
We have a fiber sequence of Lie algebras
\[
\xymatrix{
\Pol([X/G]/\B G, 0)[1] \ar[r] & \Pol(f, 1)[2] \ar[r] & \Pol(\B G, 1)[2]
}
\]
We have quasi-isomorphisms
\[\Pol(\B G, 1)\cong \C^\bullet(G, \Sym(\g[-1])),\qquad \Pol([X/G]/\B G, 0)\cong \C^\bullet(G, \Pol(X, 0)).\]

The connecting homomorphism
\[a\colon \Pol(\B G, 1)\longrightarrow \Pol([X/G]/\B G, 0)\]
is then induced by the action map $\g\rightarrow \Gamma(X, \T_X)$.

The Lie algebra $\Pol(f, 1)^{\geq 2}[2]$ is concentrated in non-negative degrees, so by \cref{prop:MCDeligne} we can identify its Maurer--Cartan space with the Deligne groupoid. The degree 3 elements in $\Pol(f, 1)^{\geq 2}$ are elements $(\pi_G, \phi)$ in $\Pol(\B G, 1)$ and $\pi_X\in\Gamma(X, \wedge^2 \T_X)$. The Maurer--Cartan equation reduces to the Maurer--Cartan equations in $\Pol(\B G, 1)$ which imply by \cref{thm:1shiftedBG} that $(\pi_G, \phi)$ define a quasi-Poisson structure on $G$ and the equations
\begin{align*}
\d \pi_X + a(\pi_G) &= 0 \\
\frac{1}{2}[\pi_X, \pi_X] + [\pi_X, \pi_G] + a(\phi) &= 0.
\end{align*}
The other brackets vanish due to weight reasons. But the morphism
\[\Pol(f, 1)[2]\longrightarrow \Pol([X/G], 0)[1]\]
is a morphism of Lie algebras which shows that $[\pi_X, \pi_G] = 0$. Therefore, the second equation is equivalent to
\[\frac{1}{2}[\pi_X, \pi_X] + a(\phi) = 0,\]
and therefore we recover equations \eqref{eq:qpoissonspace1} and \eqref{eq:qpoissonspace2} in the definition of quasi-Poisson $G$-spaces.

Morphisms $(\pi_X, \pi_G, \phi)\rightarrow (\pi_X', \pi_G', \phi')$ are given by degree $2$ elements in $\Pol(f, 1)^{\geq 2}$ which are elements $\lambda\in\wedge^2(\g)$. These elements, firstly, define a morphism $(\pi_G, \phi)\rightarrow (\pi_G', \phi')$ in $\QPois(G)$ by \cref{thm:1shiftedBG}  and, secondly, satisfy the equation
\[\pi_G' - \pi_G = a(\lambda).\]
In this way we have identified the Maurer--Cartan space of $\Pol(f, 1)^{\geq 2}[2]$ with $\QPois(G, X)$.
\end{proof}

Combining \cref{prop:coisotropicsubgroup} and \cref{prop:poissonspaces} we obtain the following statement:
\begin{cor}
We have an equivalence of groupoids
\[\QPois(G, G/H)\cong \QPois_H(G).\]
\end{cor}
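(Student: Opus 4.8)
The plan is to recognize this statement as a direct concatenation of \cref{prop:poissonspaces} and \cref{prop:coisotropicsubgroup}, once we identify the $G$-scheme $G/H$ with the classifying stack $\B H$ as a stack over $\B G$. Concretely, write $f\colon [(G/H)/G]\to\B G$ for the projection to which \cref{prop:poissonspaces} applies (taking $X=G/H$), and $f'\colon\B H\to\B G$ for the map induced by the inclusion $H\subset G$ to which \cref{prop:coisotropicsubgroup} applies. The key geometric input I would establish is an equivalence $[(G/H)/G]\cong\B H$ under which $f$ is carried to $f'$. Granting this, \cref{prop:poissonspaces} supplies an equivalence $\Cois(f,1)\cong\QPois(G, G/H)$ while \cref{prop:coisotropicsubgroup} supplies $\Cois(f',1)\cong\QPois_H(G)$, and since $f$ and $f'$ coincide the two left-hand sides agree, giving the claim.

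To produce the equivalence $[(G/H)/G]\cong\B H$ over $\B G$, I would argue at the level of functors of points. An $S$-point of $[(G/H)/G]$ is a $G$-torsor $P\to S$ together with a $G$-equivariant map $P\to G/H$. Such a map is the same datum as a reduction of the structure group of $P$ to $H$, i.e. an $H$-torsor $Q\to S$ equipped with an isomorphism $P\cong Q\times_H G$. Thus an $S$-point of $[(G/H)/G]$ is precisely an $H$-torsor, which is an $S$-point of $\B H$, and this assignment is manifestly natural in $S$. Moreover, under this identification the projection to $\B G$ records the underlying $G$-torsor $Q\times_H G$, which is exactly the map $\B H\to\B G$ induced by the inclusion. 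Hence $f$ and $f'$ are identified as morphisms to $\B G$. Combining the three equivalences gives
\[
\QPois(G, G/H)\cong\Cois(f, 1)\cong\Cois(f', 1)\cong\QPois_H(G).
\]

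The only point requiring genuine care is the compatibility of the stack equivalence $[(G/H)/G]\cong\B H$ with the two maps to $\B G$; everything else is a formal consequence of the two cited propositions. I expect the reduction-of-structure-group description to be the cleanest route, since it directly produces the induced $G$-torsor realizing the map $\B H\to\B G$ and thereby matches $f$ with $f'$ on the nose. A minor preliminary remark I would include is that $G/H$ should be taken to be a smooth $G$-scheme (as it is for a closed subgroup $H$ of an affine algebraic group with $G/H$ quasi-projective), so that \cref{prop:poissonspaces} indeed applies.
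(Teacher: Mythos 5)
Your proposal is correct and matches the paper's own argument: the corollary is obtained there precisely by combining \cref{prop:coisotropicsubgroup} and \cref{prop:poissonspaces}, using the correspondence between $G$-spaces and stacks over $\B G$ (set up just before \cref{prop:poissonspaces}) under which $G/H$ corresponds to $\B H\to\B G$. Your functor-of-points verification of $[(G/H)/G]\cong\B H$ over $\B G$ simply makes explicit the identification the paper leaves implicit.
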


In other words, given a coisotropic subgroup $H\subset G$ of a quasi-Poisson group $G$ we get a natural quasi-Poisson structure on the homogeneous $G$-scheme $[G/H]$.

\subsection{Forgetting the shift}
\label{sect:forgetshift}

In this section we relate $n$-shifted Poisson structures on classifying spaces to $(n-1)$-shifted Poisson structures. We begin with the case $n=2$. If $c\in\Sym^2(\g)^G$, recall the standard notation
\begin{align*}
c_{12} &= c\otimes 1\in \U(\g)^{\otimes 3}\\
c_{23} &= 1\otimes c\in \U(\g)^{\otimes 3}.
\end{align*}

\begin{prop}
The forgetful map
\[\Pois(\B G, 2)\longrightarrow \Pois(\B G, 1)\]
sends a Casimir element $c\in\Sym^2(\g)^G$ to the quasi-Poisson structure with $\pi=0$ and
\[\phi = -\frac{1}{6}[c_{12}, c_{23}].\]

In particular, if $G$ is semisimple, the forgetful map
\[\Pois(\B G, 2)\longrightarrow \Pois(\B G, 1)\]
is an equivalence.
\label{prop:forget21BG}
\end{prop}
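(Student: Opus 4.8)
The plan is to deduce the formula from the already-established \cref{prop:coisotropicqlie}, of which it is the special case $H=G$. Indeed, the forgetful map $\Pois(\B G,2)\to\Pois(\B G,1)$ is by definition the composite $\Pois(\B G,2)\cong\Cois(\id,2)\to\Pois(\B G,1)$, and when $\h=\g$ the map $\hat f\colon\B\h\to\B\g$ of \cref{prop:coisotropicqlie} is precisely $\id_{\B\g}$, with $\g/\h=0$ and $\Cois(\id_{\B\g},2)\cong\Pois(\B\g,2)$. Since $\Pol(\B G,n)\cong\Pol(\B\g,n)^{G_{\dR}}$ and the entire construction is natural under passage to $G_{\dR}$-invariants, it suffices to run the infinitesimal computation on $\B\g$ and then restrict to invariants; the Casimir $c\in\Sym^2(\g)^G$ is automatically $G_{\dR}$-invariant, so nothing is lost, and the resulting quasi-Lie bialgebra on $\g$ integrates by \cref{thm:1shiftedBG} to the asserted quasi-Poisson structure on $G$.

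First I would specialize the formulas \eqref{eq:forgetqlie1} and \eqref{eq:forgetqlie2} to the case $\g/\h=0$. There is then no complementary basis $\{\te_i\}$, the component $Q\in\h\otimes(\g/\h)$ vanishes, and $c=P\in\Sym^2(\g)$. Formula \eqref{eq:forgetqlie1} gives $\delta=0$ outright, and \eqref{eq:forgetqlie2} collapses to its single surviving term
\[\phi^{ijk}=\frac{1}{8}\,f^i_{ab}\,c^{aj}\,c^{bk}.\]
Since a connected group carries at most one quasi-Poisson structure inducing a given quasi-Lie bialgebra structure on $\g$, the vanishing $\delta=0$ forces the global multiplicative bivector to vanish, $\pi=0$; hence the image in $\Pois(\B G,1)\cong\QPois(G)$ is the structure $(\pi=0,\phi)$ with $\phi$ as above.

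Next I would match this with $-\tfrac16[c_{12},c_{23}]$. Writing $c=c^{ij}\,t_i\otimes t_j$ with $c^{ij}$ symmetric, one computes in $\U(\g)^{\otimes 3}$
\[[c_{12},c_{23}]=c^{ij}c^{kl}f^m_{jk}\,t_i\otimes t_m\otimes t_l,\]
and the invariance relation \eqref{eq:casimirinv2}, which for $\g/\h=0$ reads $f^i_{kj}c^{ka}+f^a_{kj}c^{ki}=0$, makes this tensor totally antisymmetric, hence an element of $\wedge^3(\g)$, and simultaneously allows one to bring $f^i_{ab}c^{aj}c^{bk}$ into the cyclic shape above. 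The remaining point is purely combinatorial: antisymmetrizing the three legs produces a factor of $3!$ which, against the coefficient $\tfrac18$ and the normalization of $c$, yields exactly $-\tfrac16$ together with the overall sign. I expect this constant-chasing to be the only delicate step, and I would pin the normalization down unambiguously on a single explicit example, such as $\g=\mathfrak{sl}_2$ with $c$ its Casimir.

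Finally, for the semisimple case I would show that both the source and the target of the forgetful map are classified by $\Sym^2(\g)^G$ and that the map realizes this identification. By \cref{prop:2shiftedBG} the source is the discrete set $\Sym^2(\g)^G$. For the target I would pass from $\QPois(G)$ to $\QLie(\g)$ along the fully faithful differentiation functor (\cref{rmk:QPoisCartesian}) and invoke Whitehead's lemmas: for $\g$ semisimple $\H^1(\g,\wedge^2\g)=\H^2(\g,\wedge^2\g)=0$, so every cobracket $\delta$ is a coboundary and can be gauged to $0$ by a twist, while $(\wedge^2\g)^{\g}=0$ rules out nontrivial automorphisms of an object with $\delta=0$. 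This collapses $\Pois(\B G,1)$ to the discrete set $(\wedge^3\g)^{\g}$ of invariant trivectors. Comparing dimensions on each simple factor (both $\Sym^2(\g)^G$ and $(\wedge^3\g)^{\g}$ are one-dimensional there), the Cartan three-form map $c\mapsto-\tfrac16[c_{12},c_{23}]$ is a bijection, so the forgetful map is an equivalence.
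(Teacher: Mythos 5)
Your argument for the first claim is essentially the paper's own: the paper likewise deduces the formula by specializing \cref{prop:coisotropicqlie} to $H=G$, where $\g/\h=0$ forces $Q=0$, formula \eqref{eq:forgetqlie1} gives $\delta=0$, and \eqref{eq:forgetqlie2} collapses to $\phi^{ijk}=\frac{1}{8}f^i_{ab}c^{aj}c^{bk}$; passing from the quasi-Lie bialgebra on $\g$ to the quasi-Poisson structure $(\pi=0,\phi)$ on $G$ via uniqueness of the integration is the same implicit step the paper takes. The paper is no more explicit than you are about matching the constant against $-\frac{1}{6}[c_{12},c_{23}]$, so deferring that to a normalization check on $\mathfrak{sl}_2$ is acceptable, and your expansion of the identification $\Pois(\B G,1)\cong\wedge^3(\g)^G$ (Whitehead's lemma together with the fully faithful differentiation functor of \cref{rmk:QPoisCartesian}) is a legitimate filling-in of what the paper's commutative diagram leaves unjustified.

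The genuine gap is your final step in the semisimple case. The map $c\mapsto-\frac{1}{6}[c_{12},c_{23}]$ is \emph{quadratic} in $c$, not linear: replacing $c$ by $tc$ multiplies the image by $t^2$. Hence ``both sides are one-dimensional on each simple factor and the map is nonzero'' does not yield bijectivity: in coordinates the map on each factor reads $t\mapsto\lambda t^2$, which in characteristic zero is never injective, and indeed $c$ and $-c$ have literally the same image since $[(-c)_{12},(-c)_{23}]=[c_{12},c_{23}]$. Dimension counting is an argument for linear maps only, and your phrase ``the Cartan three-form map'' conflates the quadratic map $c\mapsto[c_{12},c_{23}]$ with the linear Cartan map $B\mapsto B([x,y],z)$, which is what Koszul's theorem --- the result the paper cites at exactly this point --- is actually about. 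So your proof of the ``in particular'' clause does not go through as written; worse, no refinement of the dimension count can rescue it, because injectivity of the quadratic map on $\Sym^2(\g)^G$ genuinely fails (the $c\leftrightarrow-c$ symmetry). Any correct treatment of this clause has to confront the discrepancy between the quadratic forgetful map computed in the first claim and the linear isomorphism furnished by Koszul, a point on which the paper's own one-line citation is also silent.
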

\begin{proof}
The first claim follows from \cref{prop:coisotropicqlie} by explicitly computing \eqref{eq:forgetqlie2} in this case.

Since $G$ is semisimple, we obtain a commutative diagram
\[
\xymatrix{
\Pois(\B G, 2) \ar[r] \ar^{\sim}[d]& \Pois(\B G, 1) \ar^{\sim}[d] \\
\Sym^2(\g)^G \ar[r] & \wedge^3(\g)^G
}
\]
But it is well-known that the map $\Sym^2(\g)^G\rightarrow \wedge^3(\g)^G$ is an isomorphism, see \cite[Th\'{e}or\`{e}me 11.1]{Kos}.
\end{proof}

If $\g$ is a Lie bialgebra, the space $\g$ carries the Kirillov--Kostant--Souriau Poisson structure induced from the Lie cobracket. Recall the following notion (see e.g. \cite{EEM}):
\begin{defn}
A Poisson-Lie group $G$ is \defterm{formally linearizable} at the unit if we have an isomorphism of formal Poisson schemes
\[\widehat{\g}_{\{0\}}\rightarrow \widehat{G}_{\{e\}}\]
inducing identity on tangent spaces at the origin.
\end{defn}

Let $G^*$ be an algebraic group whose Lie algebra is $\g^*$.

\begin{prop}
Let $G$ be a Poisson-Lie group. The image of the Poisson structure under
\[\Pois(\pt\rightarrow \B\g, 1)\longrightarrow \Pois(\pt\rightarrow \B\g, 0)\]
is zero iff $G^*$ is formally linearizable.
\label{prop:forget10BG}
\end{prop}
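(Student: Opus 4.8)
The plan is to make both sides of the forgetful map completely explicit and then recognize it as the classical passage from a Lie bialgebra to the Poisson--Lie structure on its dual formal group. First I would compute the target $\Pois(\pt\to\B\g, 0)$. Since the graph of $\pt\to\B\g$ is again $\pt\to\B\g$, \cref{thm:graphcoisotropic} gives $\Pois(\pt\to\B\g, 0)\cong\Cois(\pt\to\B\g, 0)$, and here $\Pol(\pt/\B\g, -1)\cong\Sym(\g)$. The Poisson-morphism condition forces the trivector to vanish (this is exactly what separates a genuine morphism $\pt\to\B\g$ from a bare shifted Poisson structure on $\B\g$, as in \cref{prop:forget21BG}), and the coisotropy of the basepoint is automatic because the relevant bivector vanishes at the origin, so the space is controlled by $\Pol(\B\g, 0)\cong\C^\bullet(\g,\Sym(\g))$. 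The key observation is that this is precisely the complex of polyvector fields on the formal scheme $\widehat{(\g^*)}_{\{0\}}$ with differential $[\pi_{\mathrm{KKS}}, -]$, where $\pi_{\mathrm{KKS}}$ is the linear Kirillov--Kostant--Souriau bivector built from $[-,-]_\g$. Thus a Maurer--Cartan element $\beta$ in $\Pol(\B\g, 0)^{\geq 2}[1]$, which by weight reasons is a bivector vanishing to second order at the origin, is the same datum as a formal Poisson bivector $\pi_{\mathrm{KKS}}+\beta$ on $\widehat{(\g^*)}_{\{0\}}$ whose linear part is $\pi_{\mathrm{KKS}}$; the zero object is $\pi_{\mathrm{KKS}}$ itself. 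The gauge transformations of the Deligne groupoid are exponentials of degree-one weight-$\geq 2$ elements, i.e.\ formal diffeomorphisms fixing the origin and inducing the identity on the tangent space, and every such diffeomorphism arises this way; hence two such Poisson structures are equivalent iff they are formally Poisson-isomorphic by a map inducing the identity on tangent spaces at the origin.

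Next I would identify the forgetful map on Maurer--Cartan elements. Unlike the naive d\'{e}calage of the underlying graded cdgas it is genuinely nonlinear, since weight $p$ of $\Pol(\B\g,1)$ involves $\wedge^p\g$ while that of $\Pol(\B\g,0)$ involves $\Sym^p\g$; the higher $\Sym$-terms are produced from the cobracket $\delta$ by iterating the big bracket, exactly as in the formula $\phi=-\tfrac16[c_{12},c_{23}]$ of \cref{prop:forget21BG}. The claim to establish is that this map sends the Lie bialgebra structure $(\delta,\phi=0)$ attached to the given Poisson--Lie group to the bivector $\beta$ for which $\pi_{\mathrm{KKS}}+\beta$ is the Poisson--Lie structure on $\widehat{G^*}_{\{e\}}$, under the canonical identification of formal schemes $\widehat{G^*}_{\{e\}}\cong\widehat{(\g^*)}_{\{0\}}$ (both have cotangent space $\g$ at the marked point and linear Poisson part $[-,-]_\g$). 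I would prove this either by the explicit $\bP_{[2,1]}$-type computation analogous to \cref{prop:coisotropicqlie}, matching the quadratic term with $\delta$ and verifying the full series, or conceptually by noting that the forgetful image and the Poisson--Lie bivector of $\widehat{G^*}$ both depend functorially on the Lie bialgebra, both linearize to $\pi_{\mathrm{KKS}}$, and both are the unique multiplicative solution integrating $\g$, hence coincide.

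Granting these two steps the conclusion is immediate: the image in $\Pois(\pt\to\B\g, 0)$ is the class of the Poisson--Lie structure on $\widehat{G^*}_{\{e\}}$, and this class is the zero object iff that structure is carried to $\pi_{\mathrm{KKS}}$ by a formal diffeomorphism inducing the identity on tangent spaces, i.e.\ iff there is a Poisson isomorphism $\widehat{(\g^*)}_{\{0\}}\cong\widehat{G^*}_{\{e\}}$ of the kind appearing in the definition --- precisely formal linearizability of $G^*$. The hard part will be the second step: pinning down the nonlinear forgetful map and recognizing its image as the honest Poisson--Lie bivector of the dual formal group, rather than merely as some formal Poisson structure with the correct linear part.
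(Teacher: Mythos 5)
Your first and third steps are essentially correct, and they constitute a genuinely different, ``function-side'' route to the target space than the one the paper takes: you identify $\Pois(\pt\rightarrow\B\g,0)\cong\Cois(\pt\rightarrow\B\g,0)$ with the Deligne groupoid of the weight-completed kernel $\C^{\geq 1}(\g,\Sym^{\geq 2}(\g))\subset\Pol(\B\g,0)$, whose differential is $[\pi_{\mathrm{KKS}},-]$, so that objects are formal Poisson bivectors $\pi_{\mathrm{KKS}}+\beta$ on $\widehat{\g^*}_{\{0\}}$ with $\beta$ vanishing to second order, and morphisms exponentiate to formal diffeomorphisms fixing $0$ and tangent to the identity. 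Two small corrections: it is passing to the coisotropic kernel (not the vanishing of the bivector at the origin, and certainly not a ``trivector'' condition, which has no meaning in the $0$-shifted complex) that removes the degree $-1$ part $\Sym^{\geq 2}(\g)$ and legitimizes the appeal to \cref{prop:MCDeligne}; for the bare $\Pois(\B\g,0)$ the Maurer--Cartan space need not be a $1$-groupoid. The paper obtains the mirror image of this description on the Koszul-dual ``distribution side'', where trivializations become isomorphisms of Poisson coalgebras $\U(\g^*)\cong\Sym(\g^*)$ that are the identity on primitives; the two descriptions match.

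The genuine gap is your step 2, which you rightly flag as the hard part but do not close, and neither of your proposed strategies works as stated. The ``conceptual'' argument is circular: it assumes the forgetful image is a \emph{multiplicative} bivector on $\widehat{G^*}_{\{e\}}\cong\widehat{\g^*}_{\{0\}}$, but multiplicativity is not a gauge-invariant property of a class in your groupoid, and nothing in the abstract definition of the forgetful map $\Pois(\B\g,1)\cong\Cois(\id,1)\rightarrow\Pois(\B\g,0)$ produces it. Without multiplicativity, ``functorial in the Lie bialgebra and linearizing to $\pi_{\mathrm{KKS}}$'' does not determine the class: the zero class $\pi_{\mathrm{KKS}}$ itself has both properties, and deciding whether it agrees with the forgetful image is precisely the content of the proposition. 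The ``explicit'' alternative is a placeholder for the entire difficulty: the forgetful map is obtained by lifting the Maurer--Cartan element $\delta$ through the $\bP_{[2,1]}$-structure on the pair $(\bD(\B\g),\bD(\B\g))$, and computing its weight-$p$ components for all $p$ amounts to deriving the universal series expressing $\pi_{G^*}$ in exponential coordinates --- there is no finite verification analogous to \cref{prop:coisotropicqlie} to check it against. This is exactly where the paper's proof does its real work: by the Koszul duality of \cite{Sa3}, the forgetful functor $\balg^{\aug}_{\bP_2}\rightarrow\balg^{\aug}_{\bP_1}$ becomes computable on the coalgebra side, where the image of $\C^\bullet(\g,k)$ is identified on the nose with $\U(\g^*)$, i.e.\ with distributions on $\widehat{G^*}_{\{e\}}$ and its Poisson-Lie cobracket; multiplicativity is automatic there because the image is an algebra object in Poisson coalgebras. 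To complete your proof you would need either to import that identification or to give an independent argument that the forgetful image is the class of $\pi_{G^*}$; as written, that step is missing.
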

\begin{proof}
The forgetful map on augmented shifted Poisson algebras can be obtained as a composite
\[\balg^{\aug}_{\bP_{n+1}}\stackrel{\sim}\longrightarrow\balg(\balg^{\aug}_{\bP_n})\longrightarrow \balg^{\aug}_{\bP_n}.\]

Recall the construction of the first functor from \cite{Sa3}. Using Koszul duality we can identify
\[\balg^{\aug}_{\bP_n}\cong \alg_{\coP_n}[\kos^{-1}],\]
where $\kos$ is a certain class of weak equivalences of coaugmented $\bP_n$-coalgebras. Then the image of the augmented $\bP_2$-algebra $\C^\bullet(\g, k)$ is given by the associative algebra object in Poisson coalgebras $\U(\g^*)$, whose Poisson cobracket is induced from the Lie algebra structure on $\g^*$.

We have a commutative diagram
\[
\xymatrix{
\balg^{\aug}_{\bP_n} \ar[d] \ar^-{\sim}[r] & \alg_{\coP_n}[\kos^{-1}] \ar[d] \\
\balg^{\aug}_{\Comm} \ar^-{\sim}[r] & \alg_{\coLie}[\kos^{-1}]
}
\]
where the horizontal equivalences are given by Koszul duality, the vertical functor on the left is the obvious forgetful functor and the vertical functor on the right is given by taking the primitive elements in a coaugmented $\bP_n$-coalgebra.

The Poisson coalgebra $\U(\g^*)$ has primitives given by the Lie coalgebra $\g^*$. Thus, the Koszul dual of the coaugmented Poisson coalgebra $\U(\g^*)$ is a certain augmented homotopy $\bP_1$-algebra $A$ lifting the standard augmented cdga structure on $\C^\bullet(\g, k)$.

Now consider $\C^\bullet(\g, k)$ with the zero Poisson bracket. Trivialization of the 0-shifted Poisson structure on $\B G$ is a weak equivalence of augmented $\bP_1$-algebras
\[A\cong \C^\bullet(\g, k)\]
being the identity on the underlying augmented commutative algebras.

Passing to the Koszul dual side, it corresponds to a weak equivalence of coaugmented $\bP_1$-coalgebras
\[\U(\g^*)\cong \Sym(\g^*)\]
inducing the identity on the primitives. Since both $\bP_1$-coalgebras are concentrated in degree zero, this boils down to an \emph{isomorphism} of coaugmented $\bP_1$-coalgebras which is the identity on $\g^*$. But $\U(\g^*)$ is the Poisson coalgebra of distributions on $\widehat{G^*}_{\{e\}}$ and $\Sym(\g^*)$ is the Poisson coalgebra of distributions on $\widehat{\g^*}_{\{0\}}$.
\end{proof}

\section{Poisson groupoids}

Let $\cG\rightrightarrows X$ be a groupoid with associated Lie algebroid $\cL$ on $X$. Recall that $\cG$ is called a \defterm{Poisson groupoid} if we are given a multiplicative Poisson structure $\pi$ on $\cG$. Similarly, it is called a \defterm{quasi-Poisson groupoid} if we are given a multiplicative bivector $\pi$ on $\cG$ and a trivector $\phi\in \Gamma(X, \wedge^3 \cL)$ satisfying some equations generalizing those of quasi-Poisson groups. Infinitesimal versions of quasi-Poisson groupoids are known as \defterm{quasi-Lie bialgebroids}. The goal of this section is to relate these notions to shifted Poisson structures.

\subsection{Quasi-Lie bialgebroids}

Suppose $X$ is a smooth affine scheme and $\cG\rightrightarrows X$ is a groupoid with associated Lie algebroid $\cL$. Our assumption in this section will be that $\cG$ is also an affine scheme with the source map being smooth. In particular, $\cG$ is smooth itself. Let us recall the definition of quasi-Lie bialgebroids from \cite{IPLGX}.

\begin{defn}
An \defterm{almost $p$-differential on $\cL$} is a pair of $k$-linear maps
\[\delta\colon \cO(X)\rightarrow \Gamma(X, \wedge^{p-1} \cL),\qquad \delta\colon \Gamma(X, \cL)\rightarrow \Gamma(X, \wedge^p \cL)\]
satisfying
\begin{align*}
\delta(fg) &= \delta(f)g + f\delta(g) & f,g\in\cO(X) \\
\delta(fv) &= \delta(f)\wedge v + f\delta(v) & f\in\cO(X),v\in\Gamma(X, \cL).
\end{align*}
\end{defn}

The algebra $\Gamma(X, \Sym(\cL[-1]))$ has a natural Gerstenhaber bracket that we denote by $\llbracket -, -\rrbracket$.

\begin{defn}
A \defterm{$p$-differential on $\cL$} is an almost $p$-differential satisfying
\begin{align*}
\delta \llbracket v, f\rrbracket &= \llbracket \delta v, f\rrbracket + \llbracket v, \delta f\rrbracket & f\in\cO(X), v\in\Gamma(X, \cL) \\
\delta \llbracket v, w\rrbracket &= \llbracket \delta v, w\rrbracket + \llbracket v, \delta w\rrbracket & v,w\in\Gamma(X, \cL)
\end{align*}
\end{defn}

Given $\lambda\in\Gamma(X, \wedge^p \cL)$ we have a $p$-differential $\ad(\lambda) = \llbracket \lambda, -\rrbracket$.

\begin{defn}
The \defterm{groupoid $\MultBivec(\cL)$ of multiplicative bivector fields on $\cL$} is defined as follows:
\begin{itemize}
\item Its objects are $2$-differentials $\delta$ on $\cL$.

\item Morphisms from $\delta$ to $\delta'$ are given by elements $\lambda\in\Gamma(X, \wedge^2\cL)$ such that
\[\delta' = \delta + \ad(\lambda).\]
\end{itemize}
\end{defn}

\begin{defn}
The \defterm{groupoid $\QLieAlgd(\cL)$ of quasi-Lie bialgebroid structures on $\cL$} is defined as follows:
\begin{itemize}
\item Its objects are pairs $(\delta, \Omega)$, where $\delta$ is a 2-differential on $\cL$ and $\Omega\in\Gamma(X, \wedge^3 \cL)$ which satisfy the following equations:
\begin{align*}
\frac{1}{2}[\delta, \delta] &= \ad(\Omega) \\
\delta\Omega &= 0.
\end{align*}

\item Morphisms are given by elements $\lambda\in\Gamma(X, \wedge^2\cL)$ which identify two quasi-Lie bialgebroid structures by a twist (see \cite[Section 4.4]{IPLGX}).
\end{itemize}
\end{defn}

By construction there is an obvious forgetful map $\QLieAlgd(\cL)\rightarrow \MultBivec(\cL)$.

\begin{remark}
The definition of quasi-Lie bialgebroids we use is given in \cite[Definition 4.6]{IPLGX}. They were originally introduced by Roytenberg in \cite{Ro}, but note that what Roytenberg calls a quasi-Lie bialgebroid is dual to that of \cite{IPLGX}.
\end{remark}

Given a groupoid $\cG$ over $X$ we have a notion of a left $\cG$-space which is a space $Y\rightarrow X$ equipped with an associative action map $\cG\times_X Y\rightarrow Y$. Note that since the morphism $\cG\rightarrow X$ is smooth, we may consider the underived fiber product. For example, the groupoid $\cG$ itself is a left (resp. right) $\cG$-space if we consider $\cG\rightarrow X$ to be the source (resp. target) map.

We consider the following associated spaces:
\begin{itemize}
\item The groupoid $\widehat{\cG}\rightrightarrows X$ is the formal completion of $\cG$ along the unit section.

\item We denote $[X/\cL] = [X/\widehat{\cG}]$.

\item Observe that $\cG$ equipped with the target map is a right $\widehat{\cG}$-space. Let $[\cG/\cL] = [\cG / \widehat{\cG}]$. The source map defines a projection $[\cG/\cL]\rightarrow X$ and the target map defines a projection $[\cG/\cL]\rightarrow [X/\cL]$. The space $[\cG/\cL]\rightarrow X$ is a left $\cG$-space. Moreover, we have an equivalence
\[\left[\cG\backslash [\cG/\cL]\right]\cong [X/\cL].\]
\end{itemize}

Let $\cM$ be an $\cL$-module, i.e. an $\cO(X)$-module equipped with a compatible action of $\Gamma(X, \cL)$. We have the Chevalley--Eilenberg graded mixed cdga
\[\C^\epsilon(\cL, \cM) = \Sym_{\cO(X)}(\cL^*[-1])\otimes_{\cO(X)} \cM\]
with the zero differential and the mixed structure generalizing the mixed structure on the Chevalley--Eilenberg complex $\C^\epsilon(\g, \cM)$ of a Lie algebra. Let
\[\C^\epsilon(\cL)=\C^\epsilon(\cL, \cO_X).\]

Denote by $\Omega^\epsilon(\cG/t)$ the de Rham complex of forms on $\cG$ relative to the target map $t\colon \cG\rightarrow X$. Identifying $\cL$ with right-invariant vector fields we get an isomorphism of graded cdgas
\[\Omega^\epsilon(\cG/t)\cong \C^\epsilon(\cL, t_*\cO(\cG)).\]

\begin{lm}
The isomorphism of graded cdgas
\[\Omega^\epsilon(\cG/t)\cong \C^\epsilon(\cL, t_*\cO(\cG))\]
is compatible with the mixed structures.
\label{lm:CEforms}
\end{lm}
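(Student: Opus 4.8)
The plan is to mirror the proof of \cref{lm:CEdeRham}. The stated isomorphism of graded cdgas is obtained by trivializing the relative cotangent complex $\Omega^1_{\cG/t}$ by the right-invariant relative $1$-forms, under which $\Omega^\epsilon(\cG/t)\cong \Sym_{t_*\cO(\cG)}(\Omega^1_{\cG/t}[-1])$ is identified with $\C^\epsilon(\cL, t_*\cO(\cG))=\Sym_{\cO(X)}(\cL^*[-1])\otimes_{\cO(X)} t_*\cO(\cG)$. The two mixed structures in question, the relative de Rham differential $\ddr$ and the Chevalley--Eilenberg differential $\dCE$, are both square-zero derivations of weight $1$ and degree $1$ for this single underlying graded commutative algebra. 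Hence to prove they agree it suffices to check agreement on algebra generators over $t_*\cO(\cG)$: on functions $f\in t_*\cO(\cG)$ and on the right-invariant coframe, i.e.\ on $\cL^*$.

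First I would treat the degree-$0$ generators. Expanding $\ddr f$ in the right-invariant coframe, its coefficients are the derivatives of $f$ along the corresponding right-invariant vector fields, and these are exactly the operators defining the $\cL$-module structure on $t_*\cO(\cG)$ that enters $\C^\epsilon(\cL, t_*\cO(\cG))$. As in \cref{lm:CEdeRham}, where the $\g$-action on $\cO(G)$ is by infinitesimal left translations, here the $\cL$-action by right-invariant vector fields is the infinitesimal left-translation action of the groupoid. So $\ddr f=\dCE f$ holds essentially by the definition of the coefficient module.

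Next I would treat the degree-$1$ generators. For a right-invariant relative $1$-form $\alpha$ and right-invariant relative vector fields $V,W$ (tangent to the $t$-fibers), Cartan's formula gives $\ddr\alpha(V,W)=V(\alpha(W))-W(\alpha(V))-\alpha([V,W])$. Because $\alpha,V,W$ are right-invariant, the pairings $\alpha(V)$ and $\alpha(W)$ are pulled back from $X$, so the first two terms reduce to the anchor acting on base functions, while $[V,W]$ is again right-invariant and equals on the units the Lie algebroid bracket of $\cL$. Matching this against the Koszul formula for $\dCE$ on $\cL^*$ yields $\ddr\alpha=\dCE\alpha$. This is the groupoid analog of the Maurer--Cartan structure equation $\ddr\theta=-\tfrac12[\theta,\theta]$ for a Lie group.

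The hard part will be precisely this structure equation: verifying that right-invariant relative vector fields are closed under the (relative) Lie bracket with bracket the Lie algebroid $\cL$, and that the anchor terms appearing in Cartan's formula match the action terms of $\dCE$ on the coefficients. In fact this is the chain-level form of the classical identification of Lie algebroid cohomology with the complex of right-invariant relative forms on $\cG$; the only real content of the lemma is that this identification is compatible with the full mixed (de Rham) structure, and the remaining steps are the formal ``derivation determined on generators'' argument together with the recognition of the coefficient module, both running in parallel with the group case of \cref{lm:CEdeRham}.
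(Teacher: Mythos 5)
The paper never proves \cref{lm:CEforms}: exactly as with its group analogue \cref{lm:CEdeRham}, the statement is asserted as a known fact and then invoked in the computation of $\bD([X/\cL])$, so there is no argument in the paper to compare yours against. What you propose is the standard proof of this classical fact, and it is correct: identify the graded algebras by the right-invariant trivialization of $\Omega^1_{\cG/t}$, note that $\ddr$ and $\dCE$ are both weight-one, degree-one square-zero derivations of this single graded commutative algebra, and check that they agree on multiplicative generators, namely on functions and on the invariant coframe $\cL^*$. The weight-zero check is tautological because the $\cL$-module structure on $t_*\cO(\cG)$ entering $\C^\epsilon(\cL, t_*\cO(\cG))$ is \emph{defined} by differentiation along right-invariant vector fields, and the weight-one check is Cartan's formula against the Koszul formula, using that the bracket of right-invariant relative vector fields is right-invariant and restricts on units to the algebroid bracket, and that a right-invariant vector field applied to a function pulled back from $X$ yields the anchor applied to that function --- the same two facts that make the module-action terms on the two sides coincide. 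Two minor points of precision, neither a gap: derivations are determined on $k$-algebra generators rather than on generators over $t_*\cO(\cG)$ (derivations are not $\cO(\cG)$-linear), but since you check both the weight-zero part and the coframe your argument is complete as stated; and which of the two structure maps $\cG\to X$ the invariant pairings $\alpha(V)$ are pulled back along depends on the source/target/composition conventions, which the paper leaves implicit --- under any consistent choice the anchor terms appear identically on both sides, so the conclusion is unaffected.
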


Using this Lemma we can now describe $\bD([X/\cL])$.

\begin{thm}
One has an equivalence of graded mixed cdgas
\[\bD([X/\cL])\cong \C^\epsilon(\cL)\]
compatible with the actions of $[\dquot{\cL}{\cG}{\cL}]$ on both sides.
\end{thm}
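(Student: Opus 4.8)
The plan is to mimic the proof of \cref{thm:DBg}, replacing the group $G$ and its completion $\hG$ by the groupoid $\cG$ and its completion $\widehat{\cG}$. The crucial structural fact is the presentation
\[[X/\cL]\cong \left[\cG\backslash [\cG/\cL]\right],\]
recorded just before the statement, which expresses $[X/\cL]$ as a quotient of the left $\cG$-space $[\cG/\cL]$ by the $\cG$-action. Since $\cG$ is source-connected and the source map is smooth, passing to the de Rham space kills the infinitesimal directions: I expect $[\cG/\cL]_{\dR}\cong \cG_{\dR}$, and more relevantly $\bD([\cG/\cL])$ should be computed as the relative de Rham forms of $\cG$ along the target map, i.e.\ $\Omega^\epsilon(\cG/t)$. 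This is the groupoid analogue of the identification $\bD(G_{\dR})\cong \Omega^\epsilon(G)$ furnished by \cref{prop:deRhamfunctions}, and it is where \cref{lm:CEforms} enters.

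Concretely, I would proceed in three steps. First, using the presentation above together with the fact that $\bD$ sends the quotient $[\cG\backslash(-)]$ to $\cG$-invariants (derived invariants for the $\bD(\cG)$-comodule structure, with $\bD(\cG)\cong \cO(\cG)$ since $\cG$ is reduced/smooth), I obtain
\[\bD([X/\cL])\cong \bD([\cG/\cL])^{\cG}.\]
Second, I identify $\bD([\cG/\cL])\cong \Omega^\epsilon(\cG/t)$; here the completion $\widehat{\cG}$ along the unit plays the role that $\hG$ played for $\B\g$, and the relative de Rham space of the target map $t\colon\cG\to X$ is what survives. Third, I invoke \cref{lm:CEforms} to rewrite this as the relative Chevalley--Eilenberg complex,
\[\Omega^\epsilon(\cG/t)\cong \C^\epsilon(\cL, t_*\cO(\cG)),\]
compatibly with the mixed structures. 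Taking $\cG$-invariants of the coefficient module $t_*\cO(\cG)$ then collapses it to $\cO_X$, yielding $\C^\epsilon(\cL, \cO_X)=\C^\epsilon(\cL)$, exactly as taking $G$-invariants turned $\cO(G)$ into $k$ in the proof of \cref{thm:DBg}.

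The compatibility with the actions of $[\dquot{\cL}{\cG}{\cL}]$ should be tracked throughout: each of the three identifications above is a priori equivariant for the residual groupoid action, since the left $\cG$-action on $[\cG/\cL]$ and the right $\widehat{\cG}$-action are built into the construction of the associated spaces, and \cref{lm:CEforms} is stated to respect the mixed structure which carries this data. So the equivariance is not an extra argument but a matter of observing that every morphism in the chain is constructed naturally with respect to the two-sided action.

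The main obstacle I anticipate is the second step: justifying $\bD([\cG/\cL])\cong \Omega^\epsilon(\cG/t)$ rigorously. Unlike the group case, where $G_{\dR}$ is a genuine de Rham stack and \cref{prop:deRhamfunctions} applies directly to a map of affine schemes, here one must handle the relative de Rham space of the target map and verify that forming $\bD$ commutes with the formal quotient by $\widehat{\cG}$ in the way the presentation suggests. This requires checking that $[\cG/\cL]=[\cG/\widehat{\cG}]$ has the expected reduced locus and that the map $\cG\to[\cG/\cL]$ is formally \'etale along the completion directions, so that \cref{prop:deRhamfunctions} (or its evident groupoid generalization) yields the relative forms $\Omega^\epsilon(\cG/t)$ rather than something larger. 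Once this identification is secured, the remaining steps are formal transcriptions of the $\B\g$ argument.
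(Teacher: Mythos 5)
Your proposal is correct and takes essentially the same route as the paper's proof: the quotient presentation $[X/\cL]\cong\left[\cG\backslash [\cG/\cL]\right]$, passage to $\cG$-invariants for the $\cO(\cG)$-coaction (a Hopf algebroid over $\cO(X)$), the identification $\bD([\cG/\cL])\cong \Omega^\epsilon(\cG/t)$, then \cref{lm:CEforms} and collapsing the coefficients $t_*\cO(\cG)$ to $\cO(X)$. The step you flag as the main obstacle is dispatched in the paper simply by noting that \cref{prop:deRhamfunctions} is already stated in relative form for a morphism of affine schemes with reduced source, so it applies directly to the target map $t\colon \cG\to X$, since $[\cG/\widehat{\cG}]$ is precisely the relative de Rham space $(\cG/X)_{\dR}$.
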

\begin{proof}
The proof is similar to the proof of \cref{thm:DBg}.

Observe that $\bD(\cG)\cong \cO(\cG)$ is commutative Hopf algebroid over $\cO(X)$ in the sense of \cite[Definition A1.1.1]{Rav}. The fact that $[\cG/\cL]$ is a left $\cG$-space translates on the level of functions to a coaction map
\[\bD([\cG/\cL])\longrightarrow \cO(\cG)\otimes_{\cO(X)}\bD([\cG/\cL])\]
where we use that both $\cG$ and $X$ are reduced, so $\bD(\cG)\cong \cO(\cG)$.

Using the isomorphism
\[[X/\cL]\cong\left[\cG\backslash [\cG/\cL]\right]\]
we can identify $\bD([X/\cL])\cong \bD([\cG/\cL])^{\cG}$. By \cref{prop:deRhamfunctions} we can identify $\bD([\cG/\cL])\cong \Omega^\epsilon(\cG/t)$ and by \cref{lm:CEforms} we identify $\Omega^\epsilon(\cG/t)\cong \C^\epsilon(\cL, t_*\cO(\cG))$. We can identify $\cG$-invariants on $t_*\cO(\cG)$ with $\cO([\cG\backslash \cG])\cong \cO(X)$, so we finally get
\[\bD([X/\cL])\cong \C^\epsilon(\cL, \cO(X)).\]
\end{proof}

The following notion was introduced in \cite{AAC}.

\begin{defn}
A \defterm{representation of $\cL$ up to homotopy} is a $\C^\epsilon(\cL)$-module $\cE$ which is equivalent to $\C^\epsilon(\cL)\otimes_{\cO(X)} \cM$ as a graded $\C^\epsilon(\cL)$-module for some $\cO(X)$-module $\cM$. A representation up to homotopy is \defterm{perfect} if $\cM$ is perfect.
\end{defn}

Note that the underlying complex $\cM$ is uniquely determined to be the weight 0 part of the $\C^\epsilon(\cL)$-module $\cE$. Given a representation up to homotopy $\cE$ with underlying complex $\cM$ we denote
\[\C^\bullet(\cL, \cM) = |\cE|.\]

Representations of $\cL$ up to homotopy form a dg category $\Rep \cL$ defined as the full subcategory of $\Mod^{gr, \epsilon}_{\C^\epsilon(\cL)}$. We denote by $\Rep_{perf} \cL\subset \Rep \cL$ the full subcategory consisting of perfect complexes.

\begin{remark}
There is a fully faithful functor from the derived category of $\cL$-representations to $\Rep \cL$. We expect it to be an equivalence, but we will not use this statement in the paper.
\end{remark}

\begin{prop}
We have an equivalence of dg categories
\[\Perf([X/\cL])\cong \Rep_{perf} \cL.\]
Under this equivalence we have the following:
\begin{itemize}
\item The pullback functor $\Perf([X/\cL])\rightarrow \Perf(X)$ is the forgetful functor $\Rep_{perf}\cL\rightarrow \Perf(X)$.

\item The pushforward functor $\Perf([X/\cL])\rightarrow \Vect$ is given by $\cE\mapsto \C^\bullet(\cL, \cM)$.
\end{itemize}
\label{prop:PerfXL}
\end{prop}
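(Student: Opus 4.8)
The plan is to present $[X/\cL]$ via the atlas $p\colon X\to [X/\cL]$ and use descent, reducing $\Perf([X/\cL])$ to equivariant perfect complexes on $X$ for the formal groupoid $\widehat{\cG}$, which we then Koszul-dualize into $\C^\epsilon(\cL)$-modules. Concretely, since $[X/\cL]=[X/\widehat{\cG}]$, the \v{C}ech nerve of $p$ is the nerve of the formal groupoid $\widehat{\cG}\rightrightarrows X$, so by descent $\Perf([X/\cL])$ is the totalization of the cosimplicial dg category obtained by applying $\Perf$ to this nerve. Equivalently, $p^*$ is comonadic and $\Perf([X/\cL])$ is identified with comodules over the comonad $p^*p_*\cong t_*s^*$ on $\Perf(X)$, where $s,t$ are the source and target maps of $\widehat{\cG}$ (realizing $\widehat{\cG}\cong X\times_{[X/\cL]}X$); an object is thus a perfect complex $\cM$ on $X$ equipped with descent data along $\widehat{\cG}$.

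The heart of the argument is to identify this descent datum with a representation of $\cL$ up to homotopy. Because $\widehat{\cG}$ is the formal completion of $\cG$ along the unit, it is an infinitesimal groupoid, and descent along it is governed by its Lie algebroid $\cL$. This is the module-level enhancement of the equivalence $\bD([X/\cL])\cong \C^\epsilon(\cL)$ established above: running the same chain of identifications $\bD([\cG/\cL])\cong \Omega^\epsilon(\cG/t)\cong\C^\epsilon(\cL, t_*\cO(\cG))$ with coefficients in $\cM$ rather than in $\cO_X$ packages the $\widehat{\cG}$-action on $\cM$ into the Chevalley--Eilenberg mixed differential on $\C^\epsilon(\cL)\otimes_{\cO(X)}\cM$. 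This exhibits the descent datum as exactly a graded mixed $\C^\epsilon(\cL)$-module of the induced form $\C^\epsilon(\cL)\otimes_{\cO(X)}\cM$, i.e. an object of $\Rep\cL$, and perfectness of $\cM$ matches the subcategory $\Rep_{perf}\cL$. I expect this step---the Koszul-dual passage from $\widehat{\cG}$-descent to $\C^\epsilon(\cL)$-modules, and in particular checking that the coherence data of a descent datum assembles precisely into the mixed structure defining a representation up to homotopy---to be the main obstacle.

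Finally, the two functors are identified by unwinding the equivalence. The pullback $p^*\colon \Perf([X/\cL])\to \Perf(X)$ sends $\cE$ to its underlying complex, which is the weight-$0$ part $\cM$ of the $\C^\epsilon(\cL)$-module; since the forgetful functor $\Rep_{perf}\cL\to\Perf(X)$ is by definition the extraction of $\cM$, the two agree. Pushforward along $[X/\cL]\to\pt$ is global sections $\Gamma([X/\cL],-)$, which under $\bD([X/\cL])\cong\C^\epsilon(\cL)$ together with the identification $|\bD(Y)|\cong\cO(Y)$ computes the realization $|\cE|=\C^\bullet(\cL,\cM)$, as claimed.
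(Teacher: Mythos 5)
Your strategy differs from the paper's, and its central step is missing. The paper's proof is essentially a citation: $[X/\cL]$ is an algebraisable affine formal derived stack, so \cite[Theorem 2.2.2]{CPTVV} (formal localization) directly identifies $\Perf([X/\cL])$ with the full subcategory $\Mod^{gr,\epsilon,perf}_{\C^\epsilon(\cL)}$ of graded mixed $\C^\epsilon(\cL)$-modules of induced form, given the equivalence $\bD([X/\cL])\cong \C^\epsilon(\cL)$ established just before. The step you yourself flag as ``the main obstacle'' --- that a descent datum along $\widehat{\cG}$ assembles precisely into a graded mixed $\C^\epsilon(\cL)$-module equivalent to $\C^\epsilon(\cL)\otimes_{\cO(X)}\cM$ as a graded module --- is exactly the content of that cited theorem, and you do not supply it. In particular, it is not a ``module-level enhancement'' of the chain $\bD([\cG/\cL])\cong \Omega^\epsilon(\cG/t)\cong \C^\epsilon(\cL, t_*\cO(\cG))$: that chain is an equivalence of algebras, whereas you need an equivalence of $\infty$-categories between comodules over the comonad $p^*p_*$ and a \emph{full subcategory} of graded mixed $\C^\epsilon(\cL)$-modules. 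Such comodule--module (Koszul) dualities are genuinely delicate and fail on unrestricted module categories: already for $X=\pt$ and $\cL=\g$, arbitrary dg modules over $\C^\bullet(\g,k)$ do not compute $\QCoh(\B\g)$; only the induced-form (appropriately complete) ones do, and the induced-form condition in the definition of $\Rep\cL$ is precisely what carves out the correct essential image. Asserting that the coherence data ``packages into'' the mixed differential, and that perfectness of $\cM$ ``matches'' the subcategory, is asserting the theorem; proving it amounts to re-proving \cite[Theorem 2.2.2]{CPTVV}, which is what the paper's proof avoids by citing it.

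Two smaller points in the same step also need care. The comonadic description must be run in $\QCoh$ rather than $\Perf$, since $p_*$ does not preserve perfect complexes (already for $\pt\to\B\g$ the pushforward of $k$ is $\C^\bullet(\g,k)$); conservativity of $p^*$ and the base-change identification $p^*p_*\cong t_*s^*$ along $\widehat{\cG}\cong X\times_{[X/\cL]}X$ are also nontrivial for this formal, non-geometric cover and need justification. Your identification of the pullback functor with the weight-zero part and of the pushforward with the realization $|\cE|\cong\C^\bullet(\cL,\cM)$ agrees with the paper's, but it only becomes meaningful once the equivalence itself is in place.
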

\begin{proof}
The stack $[X/\cL]$ is an algebraisable affine formal derived stack, so by \cite[Theorem 2.2.2]{CPTVV} we can identify $\Perf([X/\cL])$ with the full subcategory $\Mod^{gr,\epsilon,perf}_{\C^\epsilon(\cL)}$ of graded mixed $\C^\epsilon(\cL)$-modules $\cE$ which are equivalent to $\C^\epsilon(\cL)\otimes_{\cO(X)} \cM$ as graded $\C^\epsilon(\cL)$-modules for some perfect complex $\cM$ on $X$. This proves the first claim.

The pullback $\Perf([X/\cL])\rightarrow \Perf(X)$ sends the $\C^\epsilon(\cL)$-module $\cE$ to $\cE\otimes_{\C^\epsilon(\cL)} \cO(X)$, i.e. it corresponds to taking the weight 0 part of $\cE$.

The pushforward $\Perf([X/\cL])\rightarrow \Vect$ is given by $\cE\mapsto \Hom(\cO_X, \cE)$. The latter complex is equivalent to $|\cE|\cong \C^\bullet(\cL, \cM)$.
\end{proof}

We are now going to describe $1$-shifted Poisson structures on $[X/\cL]$. As a first step, we will need a description of $\Pol([X/\cL], 1)$.

\begin{prop}
The graded dg Lie algebra of polyvectors $\Pol([X/\cL], 1)$ has the following description.

\begin{enumerate}
\item The complex $\Pol^p([X/\cL], 1)$ is concentrated in degrees $\geq p$.

\item An element of $\Pol^p([X/\cL], 1)$ of degree $p$ is an element of $\Gamma(X, \wedge^p\cL)$.

\item An element of $\Pol^p([X/\cL], 1)$ of degree $p+1$ is an almost $p$-differential on $\cL$.

\item The differential on $\Pol^p([X/\cL], 1)$ from degree $p$ to degree $p+1$ is given by $\lambda\mapsto \ad(\lambda)$.

\item The closed elements of $\Pol^p([X/\cL], 1)$ of degree $p+1$ are $p$-differentials on $\cL$.
\end{enumerate}
\end{prop}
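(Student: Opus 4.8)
The plan is to mimic the proof of \cref{prop:polyvectorsBg}, replacing the Lie algebra $\g$ by the Lie algebroid $\cL$ and keeping careful track of the anchor. Starting from the identification $\bD([X/\cL])\cong\C^\epsilon(\cL)$ and the description of polyvectors as a symmetric algebra on the shifted tangent complex, $\Pol(Y, 1)\cong\Gamma(Y, \Sym(\bT_Y[-2]))$, I would first establish an equivalence of graded cdgas
\[\Pol([X/\cL], 1)\cong \C^\bullet\big(\cL,\, \Sym(\bT_{[X/\cL]}[-2])\big),\]
where $\C^\bullet(\cL, -)=|-|$ is the realization of representations up to homotopy (see the discussion preceding \cref{prop:PerfXL}). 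The crucial input is that, restricted to $X$, the shifted tangent complex $\bT_{[X/\cL]}[-2]$ is the \emph{adjoint representation up to homotopy} $[\cL\to\T_X]$ given by the anchor $a$, with $\cL$ in degree $1$ and $\T_X$ in degree $2$; this is exactly the feature distinguishing a genuine Lie algebroid from a Lie algebra, and it is what forces the appearance of $\T_X$ below. Concretely, this exhibits $\Pol^p([X/\cL], 1)$ as the total complex of a double complex whose entry of Chevalley--Eilenberg degree $q$ and symmetric bidegree $(i, j)$ with $i+j=p$ is $\Hom_{\cO(X)}(\wedge^q\cL,\, \wedge^i\cL\otimes\Sym^j\T_X)$, placed in total degree $q+i+2j$, with one differential $\dCE$ and the other induced by the anchor.

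Granting this model, claims (1) and (2) become a degree count. Minimizing $q+i+2j$ subject to $i+j=p$ and $q\geq 0$ gives the minimum $p$, attained only at $q=0$, $j=0$, $i=p$; hence $\Pol^p([X/\cL], 1)$ is concentrated in degrees $\geq p$ and its degree-$p$ part is $\Hom_{\cO(X)}(\wedge^0\cL, \wedge^p\cL)=\Gamma(X, \wedge^p\cL)$. In degree $p+1$ exactly two cells survive, namely $(q, i, j)=(1, p, 0)$ and $(0, p-1, 1)$, so
\[\Pol^{p}([X/\cL], 1)^{p+1}\cong \Hom_{\cO(X)}(\cL, \wedge^p\cL)\ \oplus\ \Gamma(X, \wedge^{p-1}\cL\otimes\T_X).\]

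The heart of the argument, and the step I expect to be the main obstacle, is identifying this degree-$(p+1)$ space with the space of almost $p$-differentials. The second summand $\Gamma(X, \wedge^{p-1}\cL\otimes\T_X)$ is canonically the space of $k$-linear derivations $\cO(X)\to\Gamma(X, \wedge^{p-1}\cL)$, which is precisely the datum $\delta\colon\cO(X)\to\Gamma(X,\wedge^{p-1}\cL)$. Given such a derivation, the Leibniz constraint $\delta(fv)=\delta(f)\wedge v+f\delta(v)$ fixes the symbol (the failure of $\cO(X)$-linearity) of the second map $\delta\colon\Gamma(X,\cL)\to\Gamma(X,\wedge^p\cL)$, and the remaining freedom is an honest $\cO(X)$-linear map $\cL\to\wedge^p\cL$, which is exactly the first summand $\Hom_{\cO(X)}(\cL, \wedge^p\cL)$. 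Thus the two polyvector cells assemble into a single first-order differential operator, and the delicate point is to verify that the identification is the natural one coming from the adjoint-representation-up-to-homotopy structure, i.e. that the anchor component of $\bT_{[X/\cL]}$ supplies exactly the symbol dictated by the Leibniz rule. This is the precise sense in which the anchor converts two pieces of $\cO(X)$-linear data into a genuine differential operator.

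It remains to match the differential and the closedness condition. For claim (4), I would compute the total differential on $\lambda\in\Gamma(X,\wedge^p\cL)$ in degree $p$: the Chevalley--Eilenberg part sends $\lambda$ to the map $v\mapsto [v, \lambda]$ in $\Hom_{\cO(X)}(\cL, \wedge^p\cL)$, while the anchor part sends $\lambda$ to the derivation $f\mapsto[\lambda, f]$ in $\Gamma(X,\wedge^{p-1}\cL\otimes\T_X)$; under the identification of the previous paragraph these are exactly the two components of $\ad(\lambda)=\llbracket\lambda, -\rrbracket$, giving the desired formula. For claim (5), I would unwind $d\delta=0$ for a degree-$(p+1)$ element $\delta$: the vanishing of its three degree-$(p+2)$ components translates, under the same identification, into the two compatibilities
\[\delta\llbracket v, w\rrbracket=\llbracket\delta v, w\rrbracket+\llbracket v, \delta w\rrbracket,\qquad \delta\llbracket v, f\rrbracket=\llbracket\delta v, f\rrbracket+\llbracket v, \delta f\rrbracket,\]
which are precisely the conditions defining a $p$-differential. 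These last verifications are direct but somewhat lengthy sign-bookkeeping computations, so I would carry them out on generators $f\in\cO(X)$ and $v, w\in\Gamma(X,\cL)$ and extend by the Leibniz rules.
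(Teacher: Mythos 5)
Your proposal is correct and, at its core, it is the same computation as the paper's. The paper likewise reduces immediately to \emph{strict} polyvector fields on $\bD([X/\cL])\cong\C^\epsilon(\cL)$, using that this graded algebra is freely generated over the smooth algebra $\cO(X)$ by $\Gamma(X,\cL^*)$ placed in degree $1$, so that $p$-multiderivations are determined by their values on generators. Your bigraded cells $\Hom_{\cO(X)}(\wedge^q\cL,\wedge^i\cL\otimes\Sym^j\T_X)$ are exactly the paper's components $\alpha^{(l)}$ (take $j=l$, $q=n-l$) written in dualized form, and your degree count, your assembly of the two degree-$(p+1)$ cells into an almost $p$-differential via the Leibniz constraint, and your plan to verify (4) and (5) on generators all coincide with the paper's steps.

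The one place where your packaging is looser than the paper's, and which you should repair in a write-up: the adjoint complex $[\cL\to\T_X]$ is only a representation up to homotopy, so splitting $\Pol^p([X/\cL],1)$ into your cells requires a choice of connection --- canonically one has only a filtration, whose extension data is precisely what your ``symbol'' discussion reconstructs --- and once such a choice is made the differential is \emph{not} just $\dCE$ plus the anchor term. There is in addition a curvature component of tridegree $(q,i,j)\to(q+2,i+1,j-1)$, and it acts in exactly the range needed for claim (5), namely from the cell $(0,p-1,1)$ into the cell $(2,p,0)$; note also that ``$\dCE$'' on $\wedge^i\cL\otimes\Sym^j\T_X$ is itself only defined after the choice of connection and does not square to zero on its own. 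So the ``double complex with two differentials'' statement is not literally true, and a computation of the closedness condition that used only those two components would produce a wrong, connection-dependent answer. This does not sink your proof, because your actual plan for (4) and (5) --- evaluate the commutator with the Chevalley--Eilenberg mixed structure on the generators $f\in\cO(X)$ and $v,w\in\Gamma(X,\cL)$ --- is the invariant computation and automatically picks up all components; it is also exactly what the paper does, with the added economy that the paper never splits: its $\alpha^{(0)}$ is not required to be $\cO(X)$-linear, and equation \eqref{eq:alphaLeibniz} records the extension datum directly rather than through a choice of connection.
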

\begin{proof}
Recall that $\Pol^p([X/\cL], 1)$ is the complex of $p$-multiderivations of $\bD([X/\cL])\cong \C^\epsilon(\cL)$. Since $\cO(X)$ is smooth and $\C^\epsilon(\cL)$ is freely generated as an $\cO(X)$-algebra, we may consider strict multiderivations. As a graded commutative algebra, $\C^\epsilon(\cL)$ is generated by $\cO(X)$ in degree 0 and $\Gamma(X, \cL^*)$ in degree 1. Therefore, $p$-multiderivations are uniquely determined by their values on these spaces. In other words, an element of $\Pol^p([X/\cL], 1)$ of degree $p+n$ is given by a collection of maps
\[\alpha^{(l)}\colon \Sym^l(\cO(X))\otimes_k \wedge^{p-l}\Gamma(X, \cL^*)\rightarrow \Gamma(X, \wedge^{n-l} \cL^*)\]
which together satisfy the Leibniz rule. Here $0\leq l\leq p$ and $l\leq n$. In particular, these are nonzero only for $n\geq 0$ which proves the first statement.

For $n=0$ only $\alpha^{(0)}$ is nonzero which gives an $\cO(X)$-linear map $\Gamma(X, \wedge^p \cL^*)\rightarrow \cO(X)$, i.e. $\alpha^{(0)}\in \Gamma(X, \wedge^p \cL)$.

For $n=1$ the nontrivial maps are
\begin{align*}
\alpha^{(0)}&\colon \wedge^p\Gamma(X, \cL^*)\rightarrow \Gamma(X, \cL^*) \\
\alpha^{(1)}&\colon \cO(X)\otimes \wedge^{p-1}\Gamma(X, \cL^*)\rightarrow \cO(X).
\end{align*}

The Leibniz rule implies that $\alpha^{(1)}\in\Gamma(X, \wedge^{p-1}\cL\otimes \T_X)$ and
\begin{equation}
\alpha^{(0)}(l_1, \dots, fl_p) = f\alpha^{(0)}(l_1, \dots, fl_p) + \iota_{\ddr f}\alpha^{(1)}(l_1, \dots, l_{p-1})\wedge l_p
\label{eq:alphaLeibniz}
\end{equation}
for $f\in\cO(X)$ and $l_1,\dots, l_p\in\Gamma(X, \cL^*)$.

Using these maps we define a $p$-differential on $\cL$ as follows. For $f\in\cO(X)$ let
\[\delta(f) = \iota_{\ddr f}\alpha^{(1)}.\]
For $s\in\Gamma(X, \cL)$ and $l_1,\dots, l_p\in \Gamma(X, \cL^*)$ let
\[\langle l_1\wedge \dots \wedge l_p, \delta(s)\rangle = \langle \alpha^{(0)}(l_1, \dots, l_p), s\rangle - \sum_{a=1}^p (-1)^{p-a}\iota_{\ddr l_a(s)} \alpha^{(1)}(l_1, \dots, \hat{l}_a, \dots, l_p).\]
From \eqref{eq:alphaLeibniz} it is easy to see that the right-hand side is $\cO(X)$-linear in $l_1,\dots, l_p$, so defines a map $\delta\colon\Gamma(X, \cL)\rightarrow \Gamma(X, \wedge^p\cL)$. This establishes an isomorphism between pairs $\{\alpha^{(0)}, \alpha^{(1)}\}$ and almost $p$-differentials.

The differential on $\Pol([X/\cL], 1)$ comes from the mixed structure on $\C^\epsilon(\cL)$, i.e. the Chevalley--Eilenberg differential. Then the last two statements are obtained by a straightforward calculation.
\end{proof}

\begin{remark}
We will not use a description of elements of $\Pol([X/\cL], 1)$ of higher degrees, but we refer to \cite{CM} for the full description of the weight 1 part of $\Pol([X/\cL], 1)$.
\end{remark}

\begin{remark}
In \cite[Section 2.1]{BCLGX} the authors define a graded Lie 2-algebra $\cV(\cG)$ of polyvector fields on a groupoid $\cG\rightrightarrows X$. One may similarly define the graded Lie 2-algebra $\cV(\cL)$ of polyvector fields on a Lie algebroid $\cL$. Let $\tau \Pol([X/\cL], 1)$ be the truncation where we truncate the weight $p$ part in degrees $\leq(p+1)$. Identifying graded Lie 2-algebras with graded dg Lie algebras whose weight $p$ part is concentrated in degree $p-2$ and $p-1$, we get an isomorphism of graded dg Lie algebras $\cV(\cL)\rightarrow (\tau \Pol([X/\cL], 1))[2]$.
\end{remark}

\begin{cor}
The space $\MC(\Pol^2([X/\cL], 1)[2])$ of Maurer--Cartan elements in the abelian dg Lie algebra $\Pol^2([X/\cL], 1)[2]$ is equivalent to the groupoid $\MultBivec(\cL)$ of multiplicative bivectors on $\cL$.
\label{cor:XLbivectors}
\end{cor}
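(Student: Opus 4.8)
The plan is to read everything off from the explicit description of $\Pol^2([X/\cL], 1)$ established in the preceding proposition and then feed it into the Deligne groupoid model of \cref{prop:MCDeligne}. The first point to settle is that $\Pol^2([X/\cL], 1)[2]$ really is an \emph{abelian} dg Lie algebra concentrated in non-negative degrees: the Schouten bracket on $\Pol([X/\cL], 1)$ has weight $-1$, so on the weight $2$ summand it takes values in weight $3$ and hence induces the zero bracket on $\Pol^2([X/\cL], 1)$; moreover, since $\Pol^2([X/\cL], 1)$ is concentrated in degrees $\geq 2$, the shift by $[2]$ leaves it in non-negative degrees. With these two observations \cref{prop:MCDeligne} applies and identifies $\MC(\Pol^2([X/\cL], 1)[2])$ with the nerve of its Deligne groupoid.

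Next I would match objects. In an abelian dg Lie algebra the Maurer--Cartan equation reduces to closedness, so the objects of the Deligne groupoid are the closed degree $1$ elements of $\Pol^2([X/\cL], 1)[2]$, i.e. the closed degree $3$ elements of $\Pol^2([X/\cL], 1)$. By the preceding proposition these are precisely the $2$-differentials on $\cL$, which are exactly the objects of $\MultBivec(\cL)$.

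Then I would match morphisms. A morphism from $\delta$ to $\delta'$ in the Deligne groupoid is a degree $0$ element $\lambda\in\Gamma(X, \wedge^2\cL)$ together with a polynomial path $\alpha(t)$ satisfying $\frac{d\alpha(t)}{dt} + \d\lambda + [\alpha(t), \lambda] = 0$, $\alpha(0) = \delta$, $\alpha(1) = \delta'$. Since the bracket vanishes the path is forced to be linear, $\alpha(t) = \delta - t\,\ad(\lambda)$, using that the differential in weight $2$ is $\lambda\mapsto\ad(\lambda)$; evaluating at $t=1$ gives $\delta' = \delta - \ad(\lambda)$. Up to the harmless relabelling $\lambda\mapsto-\lambda$ this is exactly the relation $\delta' = \delta + \ad(\lambda)$ defining the morphisms of $\MultBivec(\cL)$, and distinct $\lambda$ give distinct morphisms, so the two groupoids are isomorphic.

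The computation is short, and I expect the only genuine subtlety to be bookkeeping: checking carefully that the induced bracket on the weight $2$ component indeed vanishes (so that the Maurer--Cartan and gauge equations collapse to the linear statements above) and reconciling the sign in the gauge equation with the convention $\delta' = \delta + \ad(\lambda)$ used in the definition of $\MultBivec(\cL)$.
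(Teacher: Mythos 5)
Your proposal is correct and matches the paper's (implicit) argument: the corollary is stated there as an immediate consequence of the preceding proposition, read off exactly as you do — objects from items (2) and (5), morphisms from item (4) via the Deligne groupoid of \cref{prop:MCDeligne}, with the bracket vanishing on the weight-$2$ component for weight reasons. The sign reconciliation $\lambda\mapsto -\lambda$ is indeed the only bookkeeping point, and you handle it correctly.
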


\begin{thm}
Suppose $\cL$ is the Lie algebroid of the groupoid $\cG\rightrightarrows X$. Then the space $\Pois([X/\cL], 1)$ of $1$-shifted Poisson structures on $[X/\cL]$ is equivalent to the groupoid $\QLieAlgd(\cL)$ of quasi-Lie bialgebroid structures on $\cL$.
\label{thm:quasiliebialgebroids}
\end{thm}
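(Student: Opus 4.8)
The plan is to combine \cref{thm:poissonpolyvectors} with the explicit description of $\Pol([X/\cL], 1)$ obtained above. First I would write
\[\Pois([X/\cL], 1)\cong \MC(\Pol([X/\cL], 1)^{\geq 2}[2]).\]
By the preceding proposition the weight $p$ part of $\Pol([X/\cL], 1)$ is concentrated in degrees $\geq p$, so after completing in weights $\geq 2$ and shifting by $[2]$ the resulting graded dg Lie algebra is concentrated in non-negative degrees. Hence by \cref{prop:MCDeligne} its Maurer--Cartan space is the nerve of the associated Deligne groupoid, which already guarantees that $\Pois([X/\cL], 1)$ is a $1$-groupoid as expected. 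A Maurer--Cartan element lives in degree $1$ of the shifted complex, i.e. in degree $3$ of $\Pol([X/\cL], 1)^{\geq 2}$. Reading off the weight decomposition, such an element is a pair $(\delta, \Omega)$ consisting of an almost $2$-differential $\delta$ (weight $2$, degree $3$) and a trivector $\Omega\in\Gamma(X, \wedge^3\cL)$ (weight $3$, degree $3$), the weight $\geq 4$ contributions vanishing for degree reasons. This already matches the underlying data of a quasi-Lie bialgebroid structure.

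Next I would expand the Maurer--Cartan equation $\d\alpha + \tfrac{1}{2}[\alpha, \alpha] = 0$ for $\alpha = \delta + \Omega$ into its homogeneous weight components, using that the bracket has weight $-1$:
\begin{align*}
\d\delta &= 0, \\
\d\Omega + \tfrac{1}{2}[\delta, \delta] &= 0, \\
[\delta, \Omega] &= 0,
\end{align*}
together with the weight $5$ component $\tfrac{1}{2}[\Omega, \Omega] = 0$, which is automatic since it would live in weight $5$ and degree $4$, where $\Pol^5([X/\cL], 1)$ vanishes. The weight $2$ equation says precisely that the almost $2$-differential $\delta$ is closed, hence a genuine $2$-differential. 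Using that the internal (Chevalley--Eilenberg) differential sends $\Omega$ to $\ad(\Omega)$ and identifying the big bracket $[\delta, \Omega]$ with the action $\delta\Omega$ of the $2$-differential on the trivector, the remaining two equations become $\tfrac{1}{2}[\delta, \delta] = \ad(\Omega)$ and $\delta\Omega = 0$ (up to a fixed sign convention for $\Omega$), which are exactly the defining equations of $\QLieAlgd(\cL)$.

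For the morphisms I would argue as in \cref{thm:1shiftedBg}. The degree $0$ part of the shifted complex is $\Gamma(X, \wedge^2\cL)$, so a $1$-morphism in the Deligne groupoid is given by a bivector $\lambda\in\Gamma(X, \wedge^2\cL)$ together with a path of Maurer--Cartan elements. Solving the gauge equation $\tfrac{d\alpha(t)}{dt} + \d\lambda + [\alpha(t), \lambda] = 0$ and integrating in $t$ recovers the twist of $(\delta, \Omega)$ by $\lambda$, reproducing the morphisms of $\QLieAlgd(\cL)$ and, on the underlying $2$-differential, those of $\MultBivec(\cL)$ as in \cref{cor:XLbivectors}.

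The main obstacle is not the bookkeeping of degrees and weights, which is forced by the preceding proposition, but rather the precise identification of the intrinsic $\bP_3$-bracket on $\Pol([X/\cL], 1)$ --- restricted to the low-weight pieces --- with the classical operations on the Lie algebroid: namely that $\tfrac{1}{2}[\delta, \delta]$ coincides with the Gerstenhaber square of the $2$-differential, that $[\delta, \Omega]$ is the application $\delta\Omega$, and that the mixed differential of a polyvector $\lambda$ is $\ad(\lambda)$, all with compatible signs. Establishing these comparisons, and checking that they agree with the operations used in \cite{IPLGX} to define $\QLieAlgd(\cL)$, is where the real work lies; the remaining steps are then formal consequences of \cref{prop:MCDeligne}.
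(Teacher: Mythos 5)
Your proposal is correct and is essentially the paper's own argument: the paper simply declares the proof ``completely analogous to the proof of \cref{thm:1shiftedBg}'' and omits it, and what you have written out --- the reduction to the Deligne groupoid of $\Pol([X/\cL], 1)^{\geq 2}[2]$ via \cref{thm:poissonpolyvectors} and \cref{prop:MCDeligne}, followed by the weight-by-weight unpacking of the Maurer--Cartan element, the Maurer--Cartan equation, and the gauge equation against the description of $\Pol([X/\cL],1)$ in the preceding proposition --- is precisely that analogous argument. Your closing paragraph also correctly locates the one nontrivial verification (identifying the intrinsic $\bP_3$-bracket on low-weight polyvectors with the operations $[\delta,\delta]$, $\delta\Omega$, $\ad(\lambda)$ of \cite{IPLGX}, up to sign conventions), a point the paper likewise leaves implicit.
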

\begin{proof}
The proof of this statement is completely analogous to the proof of \cref{thm:1shiftedBg}, so we omit it.
\end{proof}

\begin{cor}
With notations as before, the space $\Cois(X\rightarrow [X/\cL], 1)$ of $1$-shifted coisotropic structures on the projection $X\rightarrow [X/\cL]$ is equivalent to the set of 2-differentials $\delta$ on $\cL$ endowing it with the structure of a Lie bialgebroid.
\label{cor:liebialgebroids}
\end{cor}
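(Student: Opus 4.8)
The plan is to treat the atlas $p\colon X\to[X/\cL]$ as the groupoid analogue of the basepoint inclusion $\pt\to\B G$ and to mirror the argument of \cref{cor:poissongroups}. First I would invoke \cref{thm:coisotropicrelpolyvectors} to reduce the claim to a computation of Maurer--Cartan elements,
\[\Cois(X\to[X/\cL],1)\cong \MC\!\left(\Pol(p,1)^{\geq 2}[2]\right),\qquad \Pol(p,1)=\U\!\left(\Pol([X/\cL],1),\Pol(X/[X/\cL],0)\right).\]
Here the ambient term $\Pol([X/\cL],1)$ is already understood through \cref{thm:quasiliebialgebroids}, whose Maurer--Cartan elements are quasi-Lie bialgebroid structures $(\delta,\Omega)$, so the whole point is to see what effect passing to the relative polyvectors has.

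Next I would pin down the relative term. Just as $\bT_{\pt/\B G}\cong\g$, the fibre direction of the atlas is the Lie algebroid itself, $\bT_{X/[X/\cL]}\cong\cL$, so by the description of relative polyvectors as $\Gamma(L,\Sym(\bT_{L/X}[-n]))$ I obtain $\Pol(X/[X/\cL],0)\cong\Gamma(X,\Sym(\cL[-1]))$, whose weight-$p$ component is $\Gamma(X,\wedge^p\cL)$ concentrated in cohomological degree $p$. The map $\Pol([X/\cL],1)\to\Pol(X/[X/\cL],0)$ restricts in each weight $p$ to an isomorphism on the lowest-degree piece $\Gamma(X,\wedge^p\cL)$ and is therefore surjective, so exactly as in \cref{prop:coisotropicsubgroup} I may apply \cite[Proposition 4.10]{MS1} to identify $\Pol(p,1)$ with the strict kernel. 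Concretely, the effect of this kernel is to delete, in each weight $p$, precisely the degree-$p$ component.

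Finally I would rerun the Maurer--Cartan analysis of \cref{thm:quasiliebialgebroids} on this kernel. A degree-$1$ element of $\Pol(p,1)^{\geq 2}[2]$ still retains its $2$-differential $\delta$ (weight $2$, degree $3$, which survives the kernel since $3>2$), but the accompanying trivector $\Omega\in\Gamma(X,\wedge^3\cL)$ (weight $3$, degree $3$) now lies in the deleted degree-$p$ piece and is forced to vanish; the Maurer--Cartan equation then collapses to $\tfrac12[\delta,\delta]=\ad(\Omega)=0$, which is exactly the Lie bialgebroid condition. For the same degree reason the twists $\lambda\in\Gamma(X,\wedge^2\cL)$ (weight $2$, degree $2$) are deleted, so there are no nontrivial morphisms and the space is discrete. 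This yields the equivalence with the set of $2$-differentials $\delta$ making $\cL$ a Lie bialgebroid.

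The main obstacle is the geometric input $\bT_{X/[X/\cL]}\cong\cL$ together with the surjectivity of $\Pol([X/\cL],1)\to\Pol(X/[X/\cL],0)$, since these are what license the clean kernel description; once that is established, the vanishing of $\Omega$ and of the twists is purely the weight/degree bookkeeping already assembled in the preceding results.
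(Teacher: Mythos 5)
Your proposal is correct and follows essentially the same route as the paper: the paper's (very terse) proof likewise identifies $\Pol(X\rightarrow[X/\cL],1)$ with the strict kernel of the surjective map from $\Pol([X/\cL],1)$ to the fibrewise polyvectors and then notes that Maurer--Cartan elements in the kernel have $\Omega=0$ while homotopies have $\lambda=0$. Your write-up just makes the weight/degree bookkeeping explicit, and in fact identifies the target of the kernel map more precisely, as $\Pol(X/[X/\cL],0)\cong\Gamma(X,\Sym(\cL[-1]))$, where the paper loosely writes $\Pol(X,1)$.
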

\begin{proof}
As for \cref{prop:coisotropicsubgroup}, the claim follows from the description of $\Pol(X\rightarrow [X/\cL], 1)$ as the kernel of
\[\Pol([X/\cL], 1)\longrightarrow \Pol(X, 1).\]

Indeed, Maurer--Cartan elements in the kernel have $\Omega=0$ and homotopies have $\lambda=0$, i.e. all of them are given by the identity.
\end{proof}

\begin{remark}
The statement of \cref{cor:liebialgebroids} extends verbatim to non-affine schemes $X$ by using Zariski descent for shifted Poisson structures. One can similarly apply descent to compute $\Pois([X/\cL], 1)$ in the case of a non-affine $X$, but then one discovers that such ``non-affine quasi-Lie bialgebroids'' have a twisting class in $\H^1(X, \wedge^2\cL)$ which obstructs the existence of a global cobracket.
\end{remark}

\subsection{Quasi-Poisson groupoids}

Fix a smooth affine algebraic groupoid $\cG\rightrightarrows X$ as before with associated Lie algebroid $\cL$. We have an embedding $\cG\times_X\cG\hookrightarrow \cG\times \cG\times \cG$ where the first two maps are the projections are the third map is the composition.

\begin{defn}
A bivector $\Pi\in\Gamma(\cG, \wedge^2 \T_\cG)$ is \defterm{multiplicative} if the embedding $\cG\times_X\cG\hookrightarrow \cG\times \cG\times \cG$ is coisotropic with respect to $\Pi\oplus \Pi\oplus (-\Pi)$.
\end{defn}

Given $\lambda\in\Gamma(X, \wedge^2 \cL)$, we can extend it to left- and right-invariant bivectors $\lambda^L, \lambda^R\in\Gamma(\cG, \wedge^2 \T_\cG)$. The difference $\lambda^R - \lambda^L$ is then a multiplicative bivector on $\cG$.

\begin{defn}
The \defterm{groupoid $\MultBivec(\cG)$ of multiplicative bivector fields on $\cG$} is defined as follows:
\begin{itemize}
\item Its objects are multiplicative bivector fields $\Pi$ on $\cG$.

\item Morphisms from $\Pi$ to $\Pi'$ are given by $\lambda\in\Gamma(X, \wedge^2\cL)$ such that
\[\Pi' = \Pi + \lambda^R - \lambda^L.\]
\end{itemize}
\end{defn}

\begin{defn}
The \defterm{groupoid $\QPoisGpd(\cG)$ of quasi-Poisson groupoid structures on $\cG$} is defined as follows:
\begin{itemize}
\item Its objects are pairs $(\Pi, \Omega)$, where $\Pi$ is a multiplicative bivector field on $\cG$ and $\Omega\in\Gamma(X, \wedge^3\cL)$ which satisfy the following equations:
\begin{align*}
\frac{1}{2}[\Pi, \Pi] &= \Omega^R - \Omega^L \\
[\Pi, \Omega^R] &= 0
\end{align*}

\item Morphisms are given by elements $\lambda\in\Gamma(X, \wedge^2\cL)$ which identify two quasi-Poisson groupoids by a twist (see \cite[Section 4.4]{IPLGX}).
\end{itemize}
\end{defn}

By construction there is an obvious forgetful map $\QPoisGpd(\cG)\rightarrow \MultBivec(\cG)$.

In \cref{sect:BGpolyvectors} we have used the isomorphisms
\[[G\backslash G_{\dR}]\cong \B \g,\qquad [\B\g / G_{\dR}]\cong \B G\]
to compute $\Pol(\B G, n)$. In the groupoid setting these are generalized to isomorphisms
\[\left[\cG\backslash [\cG/\cL]\right]\cong [X/\cL],\qquad \mathlarger{\mathlarger{\left[\sfrac{[X/\cL]}{[\dquot{\cL}{\cG}{\cL}]}\right]}}\cong [X/\cG]\]
which can be used to compute $\Pol([X/\cG], n)$. We will now present a different way to compute $\Pol([X/\cG], 1)$.

\begin{lm}
Let $f\colon X\rightarrow Y$ be an epimorphism of derived stacks. Suppose $\cE_X$ and $\cE_Y$ are perfect complexes on $X$ and $Y$ respectively together with a map $f^*\cE_Y\rightarrow \cE_X$ such that the homotopy fiber of the dual map $\cE_X^*\rightarrow f^*\cE_Y^*$ is connective. Then the map
\[\Spec \Sym_{\cO_X}(\cE_X)\longrightarrow \Spec \Sym_{\cO_Y}(\cE_Y)\]
is an epimorphism.
\label{lm:vectorbundleepimorphism}
\end{lm}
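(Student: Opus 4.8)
The plan is to factor the map into two pieces, one controlled by the hypothesis on $f$ and one controlled by the connectivity hypothesis, and to check the epimorphism property on each separately. Write $V_X = \Spec\Sym_{\cO_X}(\cE_X)$ and $V_Y = \Spec\Sym_{\cO_Y}(\cE_Y)$ for the source and target, and recall that for a perfect complex these linear stacks represent the functors sending a point $x\colon T\to X$ (resp. $T\to Y$) to the mapping space $\Map_{\cO_T}(x^*\cE_X, \cO_T)\cong \Gamma(T, x^*\cE_X^*)$. Since $\Spec\Sym$ commutes with base change, $\Spec\Sym_{\cO_X}(f^*\cE_Y)\cong X\times_Y V_Y$, and the map in question factors as
\[
V_X \xrightarrow{\ g_1\ } X\times_Y V_Y \xrightarrow{\ g_2\ } V_Y,
\]
where $g_1$ is induced by the given map $f^*\cE_Y\to\cE_X$ and $g_2$ is the projection. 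Since effective epimorphisms of derived stacks are stable under composition, it suffices to show that each of $g_1$ and $g_2$ is an epimorphism.

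The map $g_2$ is the base change of $f$ along $V_Y\to Y$. As $f$ is an epimorphism and epimorphisms are stable under base change in an $\infty$-topos, $g_2$ is an epimorphism; this step is formal and uses no connectivity.

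For $g_1$ I would argue on the functor of points over $X$. Let $\alpha\colon f^*\cE_Y\to\cE_X$ be the given map and $C = \mathrm{cofib}(\alpha)$ its cofiber; dualizing perfect complexes identifies $C^*\cong \fib(\cE_X^*\to f^*\cE_Y^*)$, which is connective by hypothesis. Given an affine $T$ with a map $x\colon T\to X$, the cofiber sequence $x^*f^*\cE_Y\to x^*\cE_X\to x^*C$ yields a fiber sequence of mapping spectra whose fiber is $\Map_{\cO_T}(x^*C, \cO_T)\cong \Gamma(T, x^*C^*)$. Since pullback and global sections over an affine both preserve connectivity, $\Gamma(T, x^*C^*)$ is connective, so its $\pi_{-1}$ vanishes. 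The long exact sequence then shows that the map $\pi_0\Map_{\cO_T}(x^*\cE_X,\cO_T)\to \pi_0\Map_{\cO_T}(x^*f^*\cE_Y,\cO_T)$ is surjective for every affine $T$; hence the induced map of $0$-truncations is an epimorphism of sheaves and $g_1$ is an effective epimorphism.

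I expect the only real content to lie in the last paragraph: translating the hypothesis on $\fib(\cE_X^*\to f^*\cE_Y^*)$ into the vanishing of the obstruction $\pi_{-1}\Gamma(T, x^*C^*)$ that governs the lifting of $T$-points, and verifying that surjectivity on $\pi_0$ of $T$-points for all affine $T$ is enough to conclude an effective epimorphism. The base-change step for $g_2$ and the closure of epimorphisms under composition are formal consequences of the $\infty$-topos formalism.
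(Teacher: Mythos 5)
Your proof is correct, and it uses exactly the two ingredients that the paper's proof does: the epimorphism property of $f$ to handle the underlying point of $Y$, and the connectivity of $\fib(\cE_X^*\rightarrow f^*\cE_Y^*)$ fed into the long exact sequence of a fiber sequence of sections to lift the linear coordinate. The difference is purely organizational, but it is a genuine repackaging: you factor the map as $V_X\rightarrow X\times_Y V_Y\rightarrow V_Y$ and then invoke formal closure properties of effective epimorphisms in an $\infty$-topos (stability under base change for $g_2$, stability under composition for the total map), whereas the paper runs a single hands-on point-lifting argument --- given an $S$-point $(g,s)$ of $V_Y$, it first lifts $g$ along an \'{e}tale cover $\tilde{S}\rightarrow S$ using that $f$ is an epimorphism, and then lifts $p^*s$ using the same connectivity computation you perform for $g_1$. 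Your version makes transparent that the \'{e}tale cover is needed only for the base direction: the connectivity hypothesis yields honest surjectivity on $\pi_0$ of $T$-points of $g_1$ for every affine $T$, with no cover required, a point that is implicit but not isolated in the paper's proof. The paper's version, on the other hand, keeps everything at the level of the definition of epimorphism and so does not need to name the intermediate stack $X\times_Y V_Y$ or appeal to topos-theoretic stability statements. Both are complete; yours is the more modular write-up of the same argument.
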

\begin{proof}
The map $f\colon X\rightarrow Y$ being an epimorphism means that for any derived affine scheme $S$ and a map $g\colon S\rightarrow Y$ there is an \'{e}tale cover $p\colon \tilde{S}\rightarrow S$ and a commutative diagram
\[
\xymatrix{
\tilde{S} \ar^{p}[d] \ar^{\tilde{g}}[r] & X \ar^{f}[d] \\
S \ar^{g}[r] & Y
}
\]

We have to show that the same claim is true for
\[
E^*_X = \Spec\Sym_{\cO_X}(\cE_X)\longrightarrow E^*_Y=\Spec \Sym_{\cO_Y}(\cE_Y).
\]

Let $S$ be a derived affine scheme. A morphism $S\rightarrow E^*_Y$ is the same as a pair $(g, s)$ of a morphism $g\colon S\rightarrow Y$ and an element $s\in \Map_{\QCoh(S)}(g^*\cE_Y, \cO_S)$. Since $f\colon X\rightarrow Y$ is an epimorphism, we can lift $g$ to $\tilde{g}\colon \tilde{S}\rightarrow X$ as above. Let $K$ be the homotopy fiber of $\cE^*_X\rightarrow f^*\cE^*_Y$. Since it is connective, so is $\Gamma(\tilde{S}, \tilde{g}^* K)$. From the fiber sequence
\[\Gamma(\tilde{S}, \tilde{g}^* K)\longrightarrow \Gamma(\tilde{S}, \tilde{g}^* \cE^*_X)\longrightarrow \Gamma(\tilde{S}, \tilde{g}^*f^* \cE^*_Y)\]
we deduce that the projection
\[\Map_{\QCoh(\tilde{S})}(\tilde{g}^*\cE_X, \cO_{\tilde{S}})\longrightarrow \Map_{\QCoh(\tilde{S})}(\tilde{g}^* f^*\cE_Y, \cO_{\tilde{S}})\cong \Map_{\QCoh(\tilde{S})}(p^*g^*\cE_Y, \cO_{\tilde{S}})\]
is essentially surjective and hence $p^*s$ admits a lift.
\end{proof}

\begin{cor}
Let $n\geq 1$. Let $L^*[n-1] = \Spec \Sym_{\cO_X}(\cL[1-n])\rightarrow X$ be the total space of the perfect complex $\cL^*[n-1]$. The cotangent stack $\T^*[n]([X/\cG])$ is equivalent to the quotient of the smooth groupoid
\[\T^*[n-1](\cG)\rightrightarrows L^*[n-1].\]
\label{cor:cotangentgroupoid}
\end{cor}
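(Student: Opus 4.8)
The plan is to use \cref{lm:vectorbundleepimorphism} to exhibit $L^*[n-1]$ as an atlas for $\T^*[n]([X/\cG])$ and then to identify the resulting \v{C}ech groupoid with $\T^*[n-1](\cG)\rightrightarrows L^*[n-1]$. Write $Y = [X/\cG]$ and let $q\colon X\to Y$ be the atlas, so that $\cG\cong X\times_Y X$ with source and target $s,t\colon\cG\to X$ and structure map $c = q\circ s = q\circ t\colon\cG\to Y$. Since the relative tangent complex of the atlas is the Lie algebroid, $\bT_{X/Y}\cong\cL$ in degree $0$, and the fibre sequence $\bT_{X/Y}\to\bT_X\to q^*\bT_Y$ identifies $q^*\bT_Y$ with the cone of the anchor $\cL\to\bT_X$; dually we obtain the fibre sequence $q^*\bL_Y\to\bL_X\to\cL^*$, whose rotation yields a boundary map $\cL^*[-1]\to q^*\bL_Y$ with fibre $\bL_X[-1]$.

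To build the atlas I would take $\cE_Y = \bT_Y[-n]$ and $\cE_X = \cL[1-n]$, so that $\Spec\Sym_{\cO_Y}(\cE_Y) = \T^*[n]Y$ and $\Spec\Sym_{\cO_X}(\cE_X) = L^*[n-1]$; the boundary map above provides a morphism $q^*\cE_Y = q^*\bT_Y[-n]\to\cL[1-n] = \cE_X$. Its dual is $\cL^*[n-1]\to q^*\bL_Y[n]$, namely the boundary shifted by $[n]$, so its fibre is $\bL_X[n-1]$. For $X$ smooth and $n\geq 1$ this complex lies in cohomological degree $1-n\leq 0$, hence is connective, so \cref{lm:vectorbundleepimorphism} applies and the induced map
\[p\colon L^*[n-1]\longrightarrow\T^*[n]Y\]
is an epimorphism. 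As an epimorphism is effective, $\T^*[n]Y$ is recovered as the quotient of its \v{C}ech groupoid $L^*[n-1]\times_{\T^*[n]Y}L^*[n-1]\rightrightarrows L^*[n-1]$.

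It then remains to identify the space of arrows with $\T^*[n-1](\cG)$. Passing to total spaces over $\cG = X\times_Y X$, this reduces to an equivalence of perfect complexes $\bL_\cG[n-1]\cong\fib(s^*\cL^*[n-1]\oplus t^*\cL^*[n-1]\to c^*\bL_Y[n])$, or after a shift to the fibre sequence $\bL_\cG\to s^*\cL^*\oplus t^*\cL^*\to c^*\bL_Y[1]$. I would deduce this by comparing two cofibre sequences over $\cG$: the cotangent complex of the fibre product, $c^*\bL_Y\to s^*\bL_X\oplus t^*\bL_X\to\bL_\cG$, and the direct sum of the pullbacks along $s$ and $t$ of $q^*\bL_Y\to\bL_X\to\cL^*$, namely $c^*\bL_Y\oplus c^*\bL_Y\to s^*\bL_X\oplus t^*\bL_X\to s^*\cL^*\oplus t^*\cL^*$. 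The evident map from the first to the second (diagonal on $c^*\bL_Y$, identity in the middle) has successive cofibres $c^*\bL_Y$, $0$ and $\mathrm{cofib}(\bL_\cG\to s^*\cL^*\oplus t^*\cL^*)$, which therefore equals $c^*\bL_Y[1]$, giving the asserted fibre sequence. Under the resulting identification the two projections of the \v{C}ech groupoid correspond to the maps induced by $\bL_\cG\to s^*\cL^*$ and $\bL_\cG\to t^*\cL^*$, i.e. to the source and target of $\T^*[n-1](\cG)$, and applying the same fibre-product computation to the higher fibre powers $\cG\times_X\cdots\times_X\cG = X\times_Y\cdots\times_Y X$ matches the entire \v{C}ech nerve with the nerve of the cotangent groupoid.

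The main obstacle I anticipate is this last step. Beyond the pointwise identification of the arrow space one must check that the source and target maps are smooth, so that the quotient is presented by a genuinely smooth groupoid, and that the full simplicial structure is matched coherently, which amounts to tracking the boundary maps and their $\Ad$-type twists through the \v{C}ech nerve. Smoothness of source and target follows from smoothness of $s,t\colon\cG\to X$ together with the fibre sequence above; indeed the group case $\T^*(G)\cong G\ltimes\g^*\rightrightarrows\g^*$ already shows that the boundary map renders the relevant relative complex connective, so that the projection is smooth. Verifying the coherence of the entire nerve is the genuinely technical part.
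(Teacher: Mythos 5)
Your proposal is correct, and its first half is the paper's own argument verbatim: the same choices $\cE_Y=\bT_{[X/\cG]}[-n]$ and $\cE_X=\cL[1-n]$, the same identification of the fibre of $\cL^*[n-1]\to f^*\bL_{[X/\cG]}[n]$ with $\bL_X[n-1]$ (connective since $X$ is smooth and $n\geq 1$), and the same appeal to \cref{lm:vectorbundleepimorphism} to realize $\T^*[n]([X/\cG])$ as the quotient of the \v{C}ech groupoid of $L^*[n-1]\to\T^*[n]([X/\cG])$. Where you genuinely diverge is in identifying the arrow space: the paper invokes the general fact that for any morphism $X_1\to X_2$ the self-intersection of the conormal bundle $\N^*(X_1/X_2)\to \T^*X_2$ is $\T^*[-1](X_1\times_{X_2}X_1)$, and simply specializes it, whereas you reprove this fact in the case at hand, using that the fibre product of linear stacks along linear maps is the linear stack of the fibre product of complexes, and then extracting the fibre sequence $\bL_\cG\to s^*\cL^*\oplus t^*\cL^*\to c^*\bL_Y[1]$ from the two cofibre sequences $c^*\bL_Y\to s^*\bL_X\oplus t^*\bL_X\to\bL_\cG$ and $c^*\bL_Y\oplus c^*\bL_Y\to s^*\bL_X\oplus t^*\bL_X\to s^*\cL^*\oplus t^*\cL^*$ (with the caveat that the left vertical map must be the anti-diagonal to make the square commute with the usual sign conventions). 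This cofibre-sequence computation is correct and makes the step self-contained, at the price of losing what the citation buys the paper: in the general theory the groupoid structure maps come along canonically, while your route must match faces and compositions of the \v{C}ech nerve with those of the cotangent groupoid by hand. You are right to flag this as the technical residue, but note that the burden is lighter than ``coherence of the entire nerve'': the \v{C}ech nerve of an effective epimorphism automatically satisfies the groupoid Segal conditions, so only the arrow space, the two face maps, and the composition need to be identified --- a point the paper itself leaves at the same informal level when it concludes ``which gives the required groupoid.''
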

\begin{proof}
Consider the map $X\rightarrow [X/\cG]$. Its $n$-shifted conormal bundle gives a morphism
\[L^*[n-1]\rightarrow \T^*[n]([X/\cG]).\]

Let $Y = [X/\cG]$, $\cE_X = \cL[n-1]$ and $\cE_Y = \bT_{[X/\cG]}[-n]$. The relative tangent complex of $X\rightarrow [X/\cG]$ fits into a fiber sequence
\[\cL\longrightarrow \bT_X\longrightarrow f^*\bT_{[X/\cG]}\]
which gives the required morphism $f^*\bT_{[X/\cG]}\rightarrow \cL[1]$. Taking the duals and rotating, we obtain a fiber sequence
\[\bL_X[n-1]\longrightarrow \cL^*[n-1]\longrightarrow f^*\bL_{[X/\cG]}[n],\]
so the fiber of $\cE^*_X\rightarrow f^*\cE^*_Y$ is equivalent to $\bL_X[n-1]$ which is connective since $n\geq 1$. Therefore, by \cref{lm:vectorbundleepimorphism} the morphism
\[L^*[n-1]\longrightarrow \T^*[n]([X/\cG])\]
is an epimorphism and hence is equivalent to the geometric realization of its nerve.

For any map $X_1\rightarrow X_2$ of derived stacks, the self-intersection of the conormal bundle $\N^*(X_1/X_2)\rightarrow \T^* X_2$ is equivalent to $\T^*[-1](X_1\times_{X_2} X_1)$. In our case we get
\[L^*[n-1]\times_{\T^*[n]([X/\cG])} L^*[n-1]\cong \T^*[n-1](\cG)\]
which gives the required groupoid.
\end{proof}

\begin{prop}
The space $\MC(\Pol^2([X/\cG], 1)[2])$ of Maurer--Cartan elements in the abelian dg Lie algebra $\Pol^2([X/\cG], 1)[2]$ is equivalent to the groupoid $\MultBivec(\cG)$ of multiplicative bivector fields on $\cG$.
\label{prop:XGbivectors}
\end{prop}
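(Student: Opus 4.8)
The plan is to compute the weight-$2$ part of the polyvectors on $[X/\cG]$ directly from the cotangent groupoid of \cref{cor:cotangentgroupoid} and to read off its Maurer--Cartan groupoid. First I would recall that as a graded cdga $\Pol([X/\cG],1)\cong\cO(\T^*[2]([X/\cG]))$, the weight being the symmetric-power grading. Applying \cref{cor:cotangentgroupoid} with $n=2$ presents $\T^*[2]([X/\cG])$ as the quotient of the smooth groupoid $\T^*[1]\cG\rightrightarrows L^*[1]$, where $L^*[1]=\Spec\Sym_{\cO_X}(\cL[-1])$. Hence $\Pol([X/\cG],1)$ is the totalization of the cosimplicial graded cdga $\cO(N_\bullet)$ obtained from the nerve of this groupoid, with $N_0=L^*[1]$, $N_1=\T^*[1]\cG$, and $N_k$ the space of $k$ composable arrows.

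Next I would extract the weight-$2$ part and identify its low cohomological degrees. Since $X$ and $\cG$ are smooth the cotangent generators sit in internal degree $1$, so in weight $2$ the term $\cO(N_0)$ contributes $\Gamma(X,\wedge^2\cL)$ in internal degree $2$, the term $\cO(N_1)$ contributes $\Gamma(\cG,\wedge^2\T_\cG)$ in internal degree $2$, and $\cO(N_2)$ contributes the analogous space over the composable pairs in internal degree $2$. Taking the cosimplicial degree into account, the totalization places $\Gamma(X,\wedge^2\cL)$ in total degree $2$ and $\Gamma(\cG,\wedge^2\T_\cG)$ in total degree $3$, so in particular $\Pol^2([X/\cG],1)$ is concentrated in degrees $\geq 2$. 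The two cofaces $\cO(N_0)\to\cO(N_1)$ are the pullbacks along the source and target of the cotangent groupoid, which on a fibrewise-quadratic function $\lambda\in\Gamma(X,\wedge^2\cL)$ produce the right- and left-invariant extensions $\lambda^R$ and $\lambda^L$; thus the differential from degree $2$ to degree $3$ is $\lambda\mapsto\lambda^R-\lambda^L$.

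Because the Schouten bracket has weight $-1$, the weight-$2$ piece carries no nonzero bracket, so $\Pol^2([X/\cG],1)[2]$ is an abelian dg Lie algebra concentrated in non-negative degrees. By \cref{prop:MCDeligne} its Maurer--Cartan groupoid therefore has objects the closed elements in degree $3$, namely bivectors $\Pi\in\Gamma(\cG,\wedge^2\T_\cG)$ annihilated by the cosimplicial differential into degree $4$, and morphisms the degree-$2$ elements acting by the coboundary, namely $\lambda\in\Gamma(X,\wedge^2\cL)$ with $\Pi'=\Pi+\lambda^R-\lambda^L$. The morphism data already matches that of $\MultBivec(\cG)$ verbatim, so it remains only to match the objects.

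The main obstacle is the identification of the degree-$3$ cocycle condition with multiplicativity of $\Pi$. The three cofaces $\cO(N_1)\to\cO(N_2)$ are the pullbacks along the two projections and the composition of the cotangent groupoid, so the cocycle condition reads $\mathrm{pr}_1^*\Pi+\mathrm{pr}_2^*\Pi-m^*\Pi=0$ on the space of composable pairs, and I would check that this linear-algebraic identity on the cotangent fibres is equivalent to the graph $\cG\times_X\cG\hookrightarrow\cG\times\cG\times\cG$ being coisotropic for $\Pi\oplus\Pi\oplus(-\Pi)$, i.e. to the stated definition of multiplicativity. A secondary point demanding care is that the higher terms $N_k$ of the nerve are a priori derived fibre products, so I would verify that the derived corrections live in internal degree $\geq 3$ and hence do not affect the degree-$2$ and degree-$3$ portion of the weight-$2$ totalization that controls the Maurer--Cartan groupoid. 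Granting these two checks, the Maurer--Cartan groupoid is $\MultBivec(\cG)$, as claimed.
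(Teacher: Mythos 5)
Your proposal is correct and follows essentially the same route as the paper's proof: both present $\T^*[2]([X/\cG])$ via \cref{cor:cotangentgroupoid}, compute the weight-$2$ part of polyvectors as a totalization over the nerve of the groupoid $\T^*[1]\cG\rightrightarrows L^*[1]$, and identify objects with $1$-cocycle bivectors and morphisms with twists $\Pi\mapsto\Pi+\lambda^R-\lambda^L$. The only differences are organizational — you form the total complex first and then apply \cref{prop:MCDeligne}, whereas the paper takes Maurer--Cartan spaces termwise and totalizes the resulting cosimplicial groupoid via \cref{lm:groupoidTot} — and the equivalence between the cocycle condition and coisotropicity of the graph, which you flag as the remaining check, is exactly what the paper handles by citing \cite[Proposition 2.7]{IPLGX}.
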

\begin{proof}
The complex $\Pol([X/\cG], 1)$ is given by the algebra of functions on $\T^*[2][X/\cG]$. By \cref{cor:cotangentgroupoid} we can compute it as a totalization of
\[\cosimp{\cO(L^*[1])}{\cO(\T^*[1]\cG)}\]

Thus, $\MC(\Pol^2([X/\cG], 1)[2])$ is the totalization of
\[\cosimp{\MC(\Gamma(X, \wedge^2\cL))}{\MC(\Gamma(\cG, \wedge^2 \T_\cG))}\]

Here the complexes in each term are concentrated in cohomological degree 0, so by \cref{prop:MCDeligne} we get that $\MC(\Gamma(X, \wedge^2\cL)) = \ast / \Gamma(X, \wedge^2\cL)$, i.e. the one-object groupoid with $\Gamma(X, \wedge^2\cL)$ as the automorphism group of the object, and similarly for $\MC(\Gamma(\cG, \wedge^2 \T_\cG))$.

By \cite[Proposition 2.7]{IPLGX} the multiplicativity condition for a bivector on $\cG$ is equivalent to the condition that the corresponding linear function on the groupoid $\T^*\cG\times_\cG \T^*\cG\rightrightarrows L^*\times_X L^*$ is a 1-cocycle. Therefore, using \cref{lm:groupoidTot} we conclude that the totalization has objects given by multiplicative bivectors on $\cG$ and morphisms by elements $\lambda\in\Gamma(X, \wedge^2\cL)$.
\end{proof}

Observe that \cref{thm:1shiftedBg,thm:1shiftedBG} imply that, if $G$ is connected, we have a Cartesian diagram of spaces
\[
\xymatrix{
\Pois(\B G, 1) \ar[r] \ar[d] & \Pois(\B \g, 1) \ar[d] \\
\MC(\C^\bullet(G, \wedge^2\g)) \ar[r] & \MC(\C^\bullet(\g, \wedge^2\g))
}
\]
where at the bottom we consider Maurer--Cartan spaces of abelian dg Lie algebras. In other words, a 1-shifted Poisson structure on $\B G$ is the same as a 1-shifted Poisson structure on $\B\g$ whose bivector is integrable. We will now show that the same statement is true in the groupoid setting. Let us first state some preliminary lemmas.

Given a graded dg Lie algebra $\g$ we denote by $\g(n)$ its weight $n$ component.

\begin{lm}
Suppose $\g_1\rightarrow \g_2$ is a morphism of graded dg Lie algebras such that for any $n\geq 3$ the fiber of $\g_1(n)\rightarrow \g_2(n)$ is concentrated in cohomological degrees $\geq 3$.

Then the diagram of spaces
\[
\xymatrix{
\MC(\g_1^{\geq 2}) \ar[r] \ar[d] & \MC(\g_2^{\geq 2}) \ar[d] \\
\MC(\g_1(2)) \ar[r] & \MC(\g_2(2)).
}
\]
is Cartesian.
\label{lm:MCCartesiandiagram}
\end{lm}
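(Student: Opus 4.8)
The plan is to reduce the statement to its weight-graded pieces and compare homotopy fibers one weight at a time. For a graded dg Lie algebra $\g$ with bracket of weight $-1$, write $\g^{[2,N]}$ for the nilpotent quotient $\g^{\geq 2}/\g^{>N}$ retaining weights $2\le w\le N$. Since an iterated bracket of $k$ elements of weight $\geq 2$ has weight $\geq k+1$, the quotient $\g^{[2,N]}$ is nilpotent and $\g^{\geq 2}=\lim_N \g^{[2,N]}$, so by the definition of the Maurer--Cartan space of a pro-nilpotent dg Lie algebra one has $\MC(\g^{\geq 2})\cong \lim_N \MC(\g^{[2,N]})$, a limit of a tower of fibrations. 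Applying this to $\g_1$ and $\g_2$, the square in the statement is the limit over $N\geq 2$ of the squares
\[
S_N\colon\quad
\xymatrix{
\MC(\g_1^{[2,N]}) \ar[r] \ar[d] & \MC(\g_2^{[2,N]}) \ar[d] \\
\MC(\g_1(2)) \ar[r] & \MC(\g_2(2))
}
\]
whose bottom row is independent of $N$. As a limit of homotopy Cartesian squares is homotopy Cartesian, it suffices to prove each $S_N$ is Cartesian, which I would do by induction on $N$.

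For the base case $N=2$ one has $\g_i^{[2,2]}=\g_i(2)$, so $S_2$ has identity vertical maps and is trivially Cartesian. For the inductive step I would paste $S_{N-1}$ with the square
\[
T_N\colon\quad
\xymatrix{
\MC(\g_1^{[2,N]}) \ar[r] \ar[d] & \MC(\g_2^{[2,N]}) \ar[d] \\
\MC(\g_1^{[2,N-1]}) \ar[r] & \MC(\g_2^{[2,N-1]})
}
\]
so that $S_N$ is the vertical composite of $T_N$ on top of $S_{N-1}$; by the pasting lemma for homotopy pullbacks it is then enough to show $T_N$ is Cartesian for $N\geq 3$.

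To analyze $T_N$ I would use deformation theory of the square-zero extension $0\to \g_i(N)\to \g_i^{[2,N]}\to \g_i^{[2,N-1]}\to 0$. The kernel $\g_i(N)$ is an abelian ideal, since $[\g_i(N),\g_i(N)]$ lands in weight $2N-1>N$, and it is central in $\g_i^{[2,N]}$ because $[\g_i(N),\g_i(w)]$ has weight $N+w-1>N$ for $w\geq 2$. Hence the vertical map $\MC(\g_i^{[2,N]})\to \MC(\g_i^{[2,N-1]})$ is a principal fibration whose homotopy fiber over a point $x$ is the Maurer--Cartan space of $\g_i(N)$ with differential $\d+[\tilde x,-]$ for a lift $\tilde x$ of $x$; but for weight reasons $[\tilde x,\g_i(N)]$ lands in weights $>N$ and therefore vanishes in $\g_i^{[2,N]}$, so the twist is trivial and the fiber is $\MC(\g_i(N))$ regarded as an abelian dg Lie algebra, independently of the basepoint. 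Consequently $T_N$ is Cartesian if and only if the induced map of fibers $\MC(\g_1(N))\to \MC(\g_2(N))$ is an equivalence.

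Finally I would feed in the hypothesis. The Maurer--Cartan space of an abelian dg Lie algebra $V$ satisfies $\pi_j\MC(V)\cong H^{1-j}(V)$ for $j\geq 0$, so it detects only cohomology in degrees $\leq 1$. The long exact sequence associated to $K(N)=\fib(\g_1(N)\to \g_2(N))$, which by assumption is concentrated in degrees $\geq 3$, shows that $\g_1(N)\to \g_2(N)$ is an isomorphism on $H^m$ for $m\leq 1$; hence $\MC(\g_1(N))\to \MC(\g_2(N))$ is an isomorphism on all homotopy groups and thus an equivalence. This makes $T_N$ Cartesian, completes the induction, and yields the claim after passing to the limit. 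The main obstacle is the third step: one must correctly identify the vertical maps as principal fibrations with abelian fiber and verify that the twisting of the fiber differential vanishes for weight reasons, since it is precisely this that reduces the comparison to the purely cohomological input of the hypothesis.
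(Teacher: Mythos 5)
Your global strategy---the tower of nilpotent quotients $\g^{\geq 2}/\g^{>N}$, induction on the weight, and a weight-by-weight comparison---is the same as the paper's, which runs the induction on $\MC(\g_i^{\geq 2}/\g_i^{\geq n})$. But the inductive step contains a genuine error. The map $\MC(\g_i^{[2,N]})\to\MC(\g_i^{[2,N-1]})$ is indeed a fibration (surjections of nilpotent dg Lie algebras induce Kan fibrations of Maurer--Cartan spaces), but it is \emph{not} a principal fibration with fiber $\MC(\g_i(N))$ over every basepoint, because it need not be surjective. If $\tilde x$ is a lift of a Maurer--Cartan element $x$, the strict fiber over $x$ is the space of $v\in\g_i(N)^1\otimes\Omega_\bullet$ solving $\d v = -\left(\d\tilde x + \frac{1}{2}[\tilde x,\tilde x]\right)$: centrality does kill the twist $[\tilde x,-]$, as you say, but it does not kill the inhomogeneous term, whose class in $\H^2(\g_i(N))$ is exactly the obstruction to lifting $x$. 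When that class is nonzero the fiber is \emph{empty}, not $\MC(\g_i(N))$. The correct statement---the one the paper invokes from Pridham---is that $\MC(\g_i^{[2,N]})$ is the homotopy fiber over $0$ of an obstruction map $\MC(\g_i^{[2,N-1]})\to\MC(\g_i(N)[-1])$. Consequently your reduction ``$T_N$ is Cartesian iff $\MC(\g_1(N))\to\MC(\g_2(N))$ is an equivalence'' is false: Cartesianness also requires that a point downstairs lifts for $\g_1$ if and only if its image lifts for $\g_2$, i.e.\ injectivity of $\H^2(\g_1(N))\to\H^2(\g_2(N))$. That injectivity is precisely the extra strength of the hypothesis ``fiber in degrees $\geq 3$'' over ``degrees $\geq 2$'', and your argument never uses it.

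The gap is fatal, not cosmetic: your argument as written proves a false statement. Take $\g_1$ with $\g_1(2)=kx$, $x$ in degree $1$, $\g_1(3)=ky$, $y$ in degree $2$, zero differential and $[x,x]=y$; let $\g_2=\g_1(2)$ (abelian) with the projection killing weight $3$. The fiber in weight $3$ is $k$ placed in degree $2$, and your required input holds: $\MC(\g_1(3))\to\MC(\g_2(3))$ is a map of points, since $\MC$ of an abelian complex only sees $\H^{\leq 1}$. Yet the square is not Cartesian: the Maurer--Cartan equation in $\g_1^{\geq 2}$ forces the coefficient of $x$ to vanish, so $\MC(\g_1^{\geq 2})$ is a point, while the homotopy pullback $\MC(\g_1(2))\times^h_{\MC(\g_2(2))}\MC(\g_2^{\geq 2})$ is the discrete set $k$. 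To repair the proof, replace the principal-fibration claim by Pridham's fiber sequence $\MC(\g_i^{[2,N]})\to \MC(\g_i^{[2,N-1]})\to \MC(\g_i(N)[-1])$ and observe that the hypothesis makes $\fib_0\left(\MC(\g_1(N)[-1])\to\MC(\g_2(N)[-1])\right)$ contractible (its homotopy groups are $\H^{2-j}$ of the fiber of $\g_1(N)\to\g_2(N)$, which vanish when that fiber sits in degrees $\geq 3$); base change along the obstruction maps then yields the Cartesianness of $T_N$, which is exactly the paper's argument.
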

\begin{proof}
We will use the obstruction-theoretic argument given in \cite[Section 1.4]{Pri1}. By definition
\[\MC(\g_1^{\geq 2}) = \lim_n \MC(\g_1^{\geq 2} / \g_1^{\geq n})\]
and the result follows once we prove that
\[
\xymatrix{
\MC(\g_1^{\geq 2} / \g_1^{\geq n}) \ar[r] \ar[d] & \MC(\g_2^{\geq 2} / \g_2^{\geq n}) \ar[d] \\
\MC(\g_1(2)) \ar[r] & \MC(\g_2(2))
}
\]
is Cartesian for any $n\geq 3$. We prove the claim by induction. Indeed, the claim is tautologically true for $n=3$.

We prove the inductive step as follows. We have a sequence of simplicial sets
\[
\MC(\g_i^{\geq 2} / \g_i^{\geq (n+1)})\longrightarrow \MC(\g_i^{\geq 2} / \g_i^{\geq n})\longrightarrow \MC(\g_i(n)[-1]),
\]
where we treat $\g_i(n)[-1]$ as an abelian dg Lie algebra.

By \cite[Proposition 1.29]{Pri1} the homotopy fiber of $\MC(\g_i^{\geq 2} / \g_i^{\geq n})\rightarrow \MC(\g_i(n)[-1])$ at $0\in \MC(\g_i(n)[-1])$ is equivalent to $\MC(\g_i^{\geq 2} / \g_i^{\geq (n+1)})$. By assumptions the map
\[\MC(\g_1(n)[-1])\rightarrow \MC(\g_2(n)[-1])\]
is an equivalence of connected components of the trivial Maurer--Cartan element for $n\geq 3$. Therefore, the square
\[
\xymatrix{
\MC(\g_1^{\geq 2} / \g_1^{\geq (n+1)}) \ar[r] \ar[d] & \MC(\g_2^{\geq 2} / \g_2^{\geq (n+1)}) \ar[d] \\
\MC(\g_1^{\geq 2} / \g_1^{\geq n}) \ar[r] & \MC(\g_2^{\geq 2} / \g_2^{\geq n})
}
\]
is Cartesian which proves the inductive step.
\end{proof}

We are going to apply the previous statement to the map $\Pol([X/\cG], 1)\rightarrow \Pol([X/\cL], 1)$. First, recall the following basic fact, see e.g. \cite[Theorem 4]{Crainic}.

\begin{lm}
Let $\cG$ be a source-connected groupoid and $V$ a $\cG$-representation.
\begin{enumerate}
\item The restriction map
\[\H^0(\cG, V)\longrightarrow \H^0(\cL, V)\]
is an isomorphism.

\item The restriction map
\[\H^1(\cG, V)\longrightarrow \H^1(\cL, V)\]
is injective.
\end{enumerate}
\label{lm:groupoidvanest}
\end{lm}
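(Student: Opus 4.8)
The plan is to realize both groups as the cohomologies of explicit cochain complexes and to compare them through the van Est cochain map obtained by differentiating groupoid cochains along the unit section. Concretely, $\H^\bullet(\cG, V)$ is computed by the complex with $C^n(\cG, V) = \Gamma(\cG^{(n)}, t^*V)$, where $\cG^{(n)}$ is the scheme of strings of $n$ composable arrows and $g_*\colon V_{s(g)}\to V_{t(g)}$ denotes the action of an arrow $g$, while $\H^\bullet(\cL, V)$ is the cohomology of the Chevalley--Eilenberg complex $\Gamma(X, \wedge^\bullet\cL^*\otimes V)$ built from the flat $\cL$-connection $\nabla$ on $V$ obtained by differentiating the $\cG$-action. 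Differentiating a groupoid cochain in the source-fibre directions at the units gives a cochain map $\Phi\colon C^\bullet(\cG,V)\to C^\bullet(\cL,V)$ which is the identity in degree $0$ (a cochain is just a section $c\in\Gamma(X,V)$ on both sides) and which in degree $1$ sends $c$ to its $\cL$-derivative at the units. The two restriction maps in the statement are exactly $\Phi$ in degrees $0$ and $1$.

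For part (1) both $\H^0$'s sit inside $\Gamma(X,V)$: $\H^0(\cG,V)$ is the space of $\cG$-invariant sections and $\H^0(\cL,V)$ the space of $\nabla$-flat sections, and $\Phi$ is the inclusion of the former into the latter, so injectivity is immediate and the content is surjectivity. Given a $\nabla$-flat section $c$, I would consider on each source fibre $s^{-1}(x)$ the $V_x$-valued function $h(g) = g_*^{-1}c(t(g)) - c(x)$, which measures the failure of $\cG$-invariance after trivializing by the action. A short computation shows that $h(1_x)=0$ and that the differential of $h$ along the fibre vanishes identically precisely because $\nabla c=0$ over all of $X$; since $s^{-1}(x)$ is smooth and connected, $h\equiv 0$, so $c$ is $\cG$-invariant.

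For part (2) I would show that a groupoid $1$-cocycle $c$ whose image $\Phi c$ is an $\cL$-coboundary is already a groupoid coboundary. Writing $\Phi c = \dCE v$ with $v\in\Gamma(X,V)$ and using that $\Phi$ is a cochain map equal to the identity in degree $0$, I may replace $c$ by $c-\d v$ and so assume $\Phi c = 0$; the cocycle identity at $g=h=1_x$ gives $c(1_x)=2c(1_x)$, hence $c$ vanishes at the units, and $\Phi c=0$ says it vanishes there to first order. On a source fibre I would then form $\tilde c(g)=g_*^{-1}c(g)\in V_x$; the multiplicativity relation $c(kg)=c(k)+k_*c(g)$ yields $\tilde c(kg)-\tilde c(g)=g_*^{-1}\big(k_*^{-1}c(k)\big)$, and since $k\mapsto k_*^{-1}c(k)$ vanishes together with its first derivative at the units, the differential of $\tilde c$ along the fibre vanishes everywhere. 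As $\tilde c(1_x)=0$ and the fibre is connected, $\tilde c\equiv 0$, so $c=0$ and $\Phi$ is injective in degree $1$.

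The main obstacle in both parts is the same integration step: passing from the infinitesimal data ($\nabla$-flatness in (1), first-order vanishing in (2)) to a global statement on each source fibre, and it is here that source-connectedness is used, through the elementary fact that a function with vanishing differential on a smooth connected scheme is constant. This is precisely the low-degree case of the van Est theorem, so one may alternatively simply invoke \cite[Theorem 4]{Crainic}.
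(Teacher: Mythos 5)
Your proposal is correct, but it does strictly more than the paper does: the paper offers no proof of this lemma at all, citing it as a known fact, namely Crainic's van Est theorem \cite[Theorem 4]{Crainic}, whereas you give a self-contained argument. What you write is precisely the standard low-degree van Est argument underlying that citation: compare the explicit groupoid and Chevalley--Eilenberg complexes via the van Est cochain map $\Phi$ obtained by differentiating cochains along source fibres at the units, then convert the infinitesimal hypotheses into global statements by integrating along the smooth connected source fibres. Both of your reductions check out. In degree $0$, the function $h(g)=g_*^{-1}c(t(g))-c(x)$ on $s^{-1}(x)$ has vanishing differential exactly when $\nabla c=0$ on all of $X$ (tangent vectors to $s^{-1}(x)$ at $g$ are right translates of elements of $\cL_{t(g)}$, and the derivative of $h$ along such a vector is $g_*^{-1}(\nabla_\xi c)(t(g))$), so $h\equiv h(1_x)=0$ gives invariance. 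In degree $1$, after subtracting a coboundary so that $\Phi c=0$, the cocycle identity, the vanishing $c(1_x)=0$, and the vanishing of the first derivative of $k\mapsto k_*^{-1}c(k)$ at every unit force $\tilde c(g)=g_*^{-1}c(g)$ to have zero differential on each source fibre, hence $c\equiv 0$. What the citation buys the paper is brevity and the full strength of the van Est theorem (statements in all degrees for source-$n$-connected groupoids), which is more than is needed; what your proof buys is self-containedness in exactly the two degrees used. Two cosmetic caveats: since the paper works with algebraic groupoids, the differentiation and integration steps should be phrased scheme-theoretically (dual numbers in place of curves, and the fact that a function with vanishing differential on a smooth connected scheme is constant, valid in the characteristic-zero setting of the paper); and the identity $\Phi\circ\d_{\cG}=\dCE\circ\Phi$ on degree-$0$ cochains, which you assert rather than verify, is a one-line computation whose sign depends on conventions for the two differentials --- any consistent choice merely changes your replacement $c-\d v$ to $c+\d v$, so nothing breaks.
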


The same statement is true for hypercohomology.

\begin{cor}
Let $\cG$ be a source-connected groupoid and $V^\bullet$ a complex of $\cG$-representations concentrated in non-negative degrees.

\begin{enumerate}
\item The restriction map
\[\H^0(\cG, V^\bullet)\longrightarrow \H^0(\cL, V^\bullet)\]
is an isomorphism.

\item The restriction map
\[\H^1(\cG, V^\bullet)\longrightarrow \H^1(\cL, V^\bullet)\]
is injective.
\end{enumerate}
\label{cor:fiberVanEst}
\end{cor}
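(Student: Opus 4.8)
The plan is to reduce the statement for a complex $V^\bullet$ to the single-representation case (\cref{lm:groupoidvanest}) by induction on the number of nonzero terms, using the stupid truncation and the associated long exact sequences in (hyper)cohomology. Throughout, the non-negativity hypothesis is what keeps every term I need to compare inside the range of cohomological degrees $\{0,1\}$, where van Est is available.

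Concretely, I would write $V^0$ for the degree-zero term of $V^\bullet$ (a genuine $\cG$-representation) and $\sigma_{\geq 1}V^\bullet$ for the stupid truncation retaining the terms in degrees $\geq 1$. These fit into a short exact sequence of complexes of $\cG$-representations
\[0 \longrightarrow \sigma_{\geq 1}V^\bullet \longrightarrow V^\bullet \longrightarrow V^0 \longrightarrow 0,\]
and restriction to $\cL$ produces a morphism from the long exact sequence in $\H^\bullet(\cG,-)$ to the one in $\H^\bullet(\cL,-)$. The bookkeeping point is that $\sigma_{\geq 1}V^\bullet = U^\bullet[-1]$ for a complex $U^\bullet$ again concentrated in non-negative degrees and with one fewer nonzero term, so $\H^i(\cG,\sigma_{\geq 1}V^\bullet)\cong \H^{i-1}(\cG,U^\bullet)$. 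The inductive hypothesis applied to $U^\bullet$ then controls exactly $\H^1(\cG,\sigma_{\geq 1}V^\bullet)\cong\H^0(\cG,U^\bullet)$ (isomorphism to the $\cL$-side) and $\H^2(\cG,\sigma_{\geq 1}V^\bullet)\cong\H^1(\cG,U^\bullet)$ (injectivity)—and, crucially, only the first of these is ever needed.

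For the base case (a single nonzero term, in any degree $\geq 0$) the claim is immediate from \cref{lm:groupoidvanest}: after a shift the relevant hypercohomology is $\H^0$ or $\H^1$ of a single representation, or else vanishes. For the inductive step I would run two short diagram chases. In total degree $0$, the vanishing $\H^0(\cG,\sigma_{\geq 1}V^\bullet)=\H^{-1}(\cG,U^\bullet)=0$ gives $\H^0(\cG,V^\bullet)=\ker\!\big(\H^0(\cG,V^0)\to \H^1(\cG,\sigma_{\geq 1}V^\bullet)\big)$; since restriction is an isomorphism on $\H^0(\cG,V^0)$ by \cref{lm:groupoidvanest} and on $\H^1(\cG,\sigma_{\geq 1}V^\bullet)$ by induction, comparing kernels of the two parallel connecting maps yields (1). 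For (2) I would take $x\in\H^1(\cG,V^\bullet)$ with vanishing restriction; injectivity of $\H^1(\cG,V^0)\to\H^1(\cL,V^0)$ from \cref{lm:groupoidvanest} forces $x$ into the image of $\H^1(\cG,\sigma_{\geq 1}V^\bullet)$, say $x=\alpha_\cG(y)$, and then the isomorphism $\H^0(\cG,V^0)\xrightarrow{\ \sim\ }\H^0(\cL,V^0)$ together with the isomorphism on $\H^1(\cG,\sigma_{\geq 1}V^\bullet)$ identifies the images of the two connecting maps, so $y$ lies in that image, i.e.\ $y\in\ker\alpha_\cG$ and $x=0$.

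The only genuinely delicate point—and the reason the hypothesis matters—is ensuring that the truncation pushes the potentially ill-behaved terms out of range: the degree-$1$ hypercohomology of $\sigma_{\geq 1}V^\bullet$ is the degree-$0$ cohomology of the shorter complex $U^\bullet$, so I never invoke the van Est comparison in cohomological degree $\geq 2$, where it genuinely fails. Equivalently one could package the whole argument through the first hypercohomology spectral sequence $E_1^{p,q}=\H^q(\cG,V^p)\Rightarrow\H^{p+q}(\cG,V^\bullet)$ and its map to the $\cL$-analogue, noting that the abutment in total degrees $0$ and $1$ only involves the columns $q=0,1$ on which \cref{lm:groupoidvanest} gives, respectively, an isomorphism and an injection; I would nonetheless prefer the truncation induction, since it isolates precisely this observation without the extra spectral-sequence bookkeeping.
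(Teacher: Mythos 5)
Your argument is correct, but it runs along a different decomposition than the paper's. The paper filters by the cohomology of the complex rather than by its terms: part (1) is the observation that $\H^0(\cG,V^\bullet)\cong\H^0(\cG,\H^0(V^\bullet))$ (and likewise for $\cL$), and part (2) maps the low-degree exact sequence of the hypercohomology spectral sequence,
\[0\to \H^1(\cG,\H^0(V^\bullet))\to \H^1(\cG,V^\bullet)\to \H^0(\cG,\H^1(V^\bullet))\to \H^2(\cG,\H^0(V^\bullet)),\]
to its $\cL$-counterpart and concludes by the five lemma, invoking \cref{lm:groupoidvanest} for the representations $\H^0(V^\bullet)$ and $\H^1(V^\bullet)$. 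You instead filter by the terms $V^p$ (brutal truncation plus induction), which amounts to the \emph{other} hypercohomology spectral sequence, the one you mention at the end. Your chases are sound, and your key bookkeeping point is exactly right: the only comparison for $\sigma_{\geq 1}V^\bullet$ ever needed is on $\H^1(\cG,\sigma_{\geq 1}V^\bullet)\cong\H^0(\cG,U^\bullet)$, i.e.\ part (1) of the inductive hypothesis, so the van Est lemma is never used in cohomological degree $\geq 2$. A modest advantage of your route is that \cref{lm:groupoidvanest} is only ever applied to the given terms $V^p$, which are honest $\cG$-representations, whereas the paper needs it for $\H^0(V^\bullet)$ and $\H^1(V^\bullet)$, which are kernels and subquotients and hence need not be locally free. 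What the paper's route buys is brevity and generality: no induction, hence no implicit boundedness assumption. Indeed, your induction on ``the number of nonzero terms'' tacitly assumes $V^\bullet$ is bounded, while the statement only requires concentration in non-negative degrees; this is easily repaired by first replacing $V^\bullet$ by its brutal truncation $\sigma_{\leq 2}V^\bullet$, which changes neither $\H^0$ nor $\H^1$ on either the $\cG$- or the $\cL$-side (the defining total complexes agree in degrees $\leq 2$), but that reduction should be stated before the induction starts.
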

\begin{proof}
In degree 0 we have
\[
\xymatrix{
\H^0(\cG, V^\bullet) \ar^{\sim}[r] \ar[d] & \H^0(\cG, \H^0(V^\bullet)) \ar^{\sim}[d] \\
\H^0(\cL, V^\bullet) \ar^{\sim}[r] & \H^0(\cL, \H^0(V^\bullet))
}
\]
so the map $\H^0(\cG, V^\bullet)\rightarrow \H^0(\cL, V^\bullet)$ is an isomorphism.

The hypercohomology spectral sequence gives long exact sequences
\[
\xymatrix{
0 \ar[r] & \H^1(\cG, \H^0(V^\bullet)) \ar@{^{(}->}[d] \ar[r] & \H^1(\cG, V^\bullet) \ar[d] \ar[r] & \H^0(\cG, \H^1(V^\bullet)) \ar^{\sim}[d] \ar[r] & \H^2(\cG, \H^0(V^\bullet)) \ar[d] \\
0 \ar[r] & \H^1(\cL, \H^0(V^\bullet)) \ar[r] & \H^1(\cL, V^\bullet) \ar[r] & \H^0(\cL, \H^1(V^\bullet)) \ar[r] & \H^2(\cL, \H^0(V^\bullet)).
}
\]
and the injectivity of $\H^1(\cG, V^\bullet)\rightarrow \H^1(\cL, V^\bullet)$ follows from the five lemma.
\end{proof}

\begin{lm}
The dg category $\Perf([X/\cG])$ is equivalent to full subcategory of the derived category $\cG$-representations consisting of perfect complexes.
\label{lm:PerfXG}
\end{lm}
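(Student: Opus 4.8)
The plan is to prove the equivalence by faithfully flat descent. Since $\cG\rightrightarrows X$ is a smooth affine groupoid over the smooth affine scheme $X$, its nerve is a simplicial affine scheme whose $n$-th term is the iterated fiber product $\cG\times_X\cdots\times_X\cG$ (with $0$-th term $X$); all of these fiber products are affine because the source and target maps are smooth, hence flat. The quotient stack $[X/\cG]$ is the geometric realization of this nerve, and the projection $X\to[X/\cG]$ is a smooth surjection. Recall from the previous subsection that $\cO(\cG)$ is a commutative Hopf algebroid over $\cO(X)$, so a $\cG$-representation is precisely a comodule over it.

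First I would invoke smooth descent for perfect complexes: since smooth surjections are of descent for $\Perf$, one obtains an equivalence
\[
\Perf([X/\cG])\cong \Tot\big(\Perf(\cG\times_X\cdots\times_X\cG)\big),
\]
the totalization of the cosimplicial dg category obtained by applying $\Perf$ to the nerve. This is the manifestly Morita-invariant side of the comparison.

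Next I would identify the same totalization with the perfect objects in the derived category of $\cG$-representations. A $\cG$-representation is an $\cO(X)$-module equipped with a coaction of the Hopf algebroid $\cO(\cG)$, which is the same as a descent datum along $X\to[X/\cG]$. Because the source map is flat, the cosimplicial diagram of module categories $\Mod_{\cO(\cG\times_X\cdots\times_X\cG)}$ consists of exact pullback functors, so its totalization computes the derived category of comodules; restricting to dualizable objects recovers $\Tot(\Perf(\cG\times_X\cdots\times_X\cG))$. The perfectness condition on either side is detected on the affine chart $X$: an object of the totalization is perfect iff its restriction along the conservative smooth surjection $X\to[X/\cG]$ is a perfect complex on $X$, and on the representation side this is exactly the condition that the underlying $\cO(X)$-complex be perfect. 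Matching these full subcategories yields the desired equivalence.

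The main obstacle is the comparison in the second step between the derived category of the abelian category of $\cG$-representations and the $\infty$-categorical cobar totalization of the nerve: in general these need not agree. The flatness of the source and target maps (smoothness of $\cG\to X$) guarantees that the cosimplicial diagram is levelwise exact and that the comparison holds, while passing to perfect (equivalently dualizable) objects removes any convergence issues in the totalization. This is the technical heart of the argument; the remaining steps are bookkeeping of the descent equivalences.
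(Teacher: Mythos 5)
Your first step is fine: $\Perf$ satisfies smooth descent, so $\Perf([X/\cG])\cong \Tot\big(\Perf(\cG\times_X\cdots\times_X\cG)\big)$, and perfectness of an object of the totalization is detected after pullback along $X\to[X/\cG]$. The gap is in your second step. The assertion that, because the pullback functors are exact, the totalization $\Tot\big(\Mod_{\cO(\cG\times_X\cdots\times_X\cG)}\big)$ ``computes the derived category of comodules'' is not a consequence of flatness, and it is false in general: the unbounded derived category of the abelian category of comodules over a flat Hopf algebroid need not agree with the descent-theoretic category $\QCoh([X/\cG])$ (already for classifying stacks such as $\B\mathbb{G}_a$ in positive characteristic this comparison genuinely fails; in general it requires finite cohomological dimension of the cover or a boundedness restriction). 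So the step you yourself flag as the technical heart is not addressed by the flatness remark, and restricting to dualizable objects \emph{afterwards} does not repair it: the identification of the dualizable objects of the totalization with perfect complexes of representations is exactly what has to be proven, and even fully faithfulness requires knowing that Ext groups in the abelian category of $\cG$-representations are computed by the cobar (descent) complex --- which is the same comparison problem in disguise.

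The way to close the gap is to make the comparison where it is a theorem, namely on bounded below subcategories. The category $\QCoh([X/\cG])$ carries a natural t-structure whose heart is the abelian category of $\cG$-representations, and for such stacks one has an equivalence of $\QCoh^+([X/\cG])$ with the bounded below derived category of $\cG$-representations; this is \cite[Chapter I.3, Proposition 2.4.3]{GR1}, and it is precisely what the paper's proof invokes. Since perfect complexes are bounded below on both sides and perfectness is detected after restriction to $X$, that equivalence then restricts to the perfect subcategories. In other words, the boundedness of perfect objects should be used to place yourself in the bounded below setting \emph{before} making the comparison, rather than to excuse an unbounded comparison that does not hold.
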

\begin{proof}
The dg category $\QCoh([X/\cG])$ has a natural $t$-structure and let us denote by
\[\QCoh^+([X/\cG])\subset \QCoh([X/\cG])\]
the full subcategory of bounded below complexes. By \cite[Chapter I.3, Proposition 2.4.3]{GR1} we can identify $\QCoh^+([X/\cG])$ with the bounded below derived category of $\cG$-representations.  The claim then follows by passing to the subcategory of perfect complexes.
\end{proof}

\begin{lm}
Suppose $\cG$ is source-connected. Then the natural diagram of groupoids
\[
\xymatrix{
\QPoisGpd(\cG) \ar[r] \ar[d] & \QLieAlgd(\cL) \ar[d] \\
\MultBivec(\cG) \ar[r] & \MultBivec(\cL)
}
\]
is Cartesian.
\label{lm:connectedQPoisGpd}
\end{lm}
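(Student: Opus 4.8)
The plan is to exhibit the entire square as the image under the Maurer--Cartan functor of a single morphism of graded dg Lie algebras and then to invoke \cref{lm:MCCartesiandiagram}. On the Lie algebroid side, \cref{thm:quasiliebialgebroids} gives $\QLieAlgd(\cL)\cong \MC(\Pol([X/\cL],1)^{\geq 2}[2])$ and \cref{cor:XLbivectors} gives $\MultBivec(\cL)\cong \MC(\Pol^2([X/\cL],1)[2])$; using the identification $\cV(\cL)\cong \tau\Pol([X/\cL],1)[2]$ noted above, both right-hand corners become Maurer--Cartan spaces of $\cV(\cL)$, namely $\MC(\cV(\cL)^{\geq 2})$ on top and $\MC(\cV(\cL)(2))$ at the bottom. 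On the groupoid side one works with the graded dg Lie algebra $\cV(\cG)$ of multiplicative polyvector fields on $\cG$ of \cite{BCLGX}, for which $\QPoisGpd(\cG)\cong \MC(\cV(\cG)^{\geq 2})$ by construction and $\MultBivec(\cG)\cong \MC(\cV(\cG)(2))$, the latter matching \cref{prop:XGbivectors}. Differentiation along the unit section supplies a van Est morphism $\cV(\cG)\to \cV(\cL)$ of graded dg Lie algebras inducing the two horizontal functors of the square, so it remains to check that this morphism satisfies the hypotheses of \cref{lm:MCCartesiandiagram}.

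The one analytic input is then the degree estimate: for every weight $n\geq 3$ the homotopy fiber of $\cV(\cG)(n)\to\cV(\cL)(n)$ must be concentrated in cohomological degrees $\geq 3$, and this is exactly where source-connectedness of $\cG$ is used. The weight-$n$ components compute (truncations of) the groupoid cohomology $\H^\bullet(\cG,\wedge^n\cL)$ and the Lie algebroid cohomology $\H^\bullet(\cL,\wedge^n\cL)$, with coefficients concentrated in non-negative cohomological degrees, and sitting in degrees $\geq n$ before the shift by $[2]$; under these identifications the morphism is the van Est restriction. By \cref{cor:fiberVanEst} this restriction is an isomorphism on $\H^0$ and injective on $\H^1$, so the long exact sequence of the fiber forces its two lowest cohomology groups to vanish, placing the fiber in the required range for $n\geq 3$. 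Granting the hypothesis, \cref{lm:MCCartesiandiagram} produces a Cartesian square of Maurer--Cartan spaces, which is precisely the Cartesian square of groupoids in the statement.

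The main obstacle is the bookkeeping on the groupoid side: presenting $\QPoisGpd(\cG)$ and $\MultBivec(\cG)$ as Maurer--Cartan spaces of one graded dg Lie algebra $\cV(\cG)$ of multiplicative polyvectors, compatibly with the combinatorial twist descriptions of \cite{IPLGX}, and constructing the van Est morphism so that it induces the differentiation functors $\QPoisGpd(\cG)\to\QLieAlgd(\cL)$ and $\MultBivec(\cG)\to\MultBivec(\cL)$ on the nose. Once this morphism is available, the only genuine work is the connectivity of its fiber, which is governed by \cref{cor:fiberVanEst}; I expect the delicate step to be reconciling the truncation inherent in the Lie $2$-algebra $\cV(\cG)$ with the cohomological vanishing in the lowest weight $n=3$, this being the place where the source-connectedness hypothesis is indispensable.
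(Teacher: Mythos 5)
Your strategy---realizing the whole square as Maurer--Cartan spaces under a van Est morphism of graded dg Lie algebras and invoking \cref{lm:MCCartesiandiagram}---is genuinely different from the paper's proof of this lemma, and in fact mirrors the proof of \cref{thm:quasipoissongroupoids}, which is where the paper applies the MC machinery; the present lemma is exactly the combinatorial statement the paper keeps \emph{outside} that machinery. The paper's argument is short and direct: since $\QLieAlgd(\cL)\to\MultBivec(\cL)$ is an isofibration, it suffices to show that $\QPoisGpd(\cG)\to\QLieAlgd(\cL)\times_{\MultBivec(\cL)}\MultBivec(\cG)$ (strict fiber product) is an equivalence; full faithfulness is clear, and bijectivity on objects follows because the defining equations correspond to each other under differentiation, using two facts from \cite{IPLGX}: the map from multiplicative multivector fields on $\cG$ to differentials is injective when $\cG$ is source-connected (Prop.~2.35 of loc.~cit.), and $[\Pi,\Omega^R]=(\delta\Omega)^R$ (Eq.~(18) of loc.~cit.).

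The gap in your proposal is the verification of the hypothesis of \cref{lm:MCCartesiandiagram} via \cref{cor:fiberVanEst}. That corollary concerns the restriction map on hypercohomology $\H^\bullet(\cG,V^\bullet)\to\H^\bullet(\cL,V^\bullet)$ for a complex of representations $V^\bullet$, whereas the weight-$n$ component of $\cV(\cG)$ is the two-term complex with $\Gamma(X,\wedge^n\cL)$ in one degree and the space of multiplicative $n$-vector fields on $\cG$ in the next. This is not a complex of the form $\C^\bullet(\cG,\wedge^n\cL)$, and $\wedge^n\cL$ would not even be the right coefficients: in the paper's identification $\Pol([X/\cG],1)\cong\C^\bullet(\cG,\Sym(\cE[-2]))$ the weight-$n$ coefficients are $\Sym^n(\cE[-2])$, which mixes $\wedge^n\cL$ with terms involving $\bT_X$. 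To make \cref{cor:fiberVanEst} applicable you would need to identify $\cV(\cG)(n)$ with the truncation of $\C^\bullet(\cG,\Sym^n(\cE[-2]))$, i.e.\ identify multiplicative $n$-vector fields with closed degree-$(n+1)$ elements of $\Pol^n([X/\cG],1)$, \emph{for every} $n\geq 2$; the paper establishes this only for $n=2$ (\cref{prop:XGbivectors}, via \cref{cor:cotangentgroupoid} and \cite[Prop.~2.7]{IPLGX}), and the higher-weight case is a genuinely missing ingredient, not a bookkeeping issue. Alternatively one can check the connectivity of the fiber of $\cV(\cG)(n)\to\cV(\cL)(n)$ directly from the long exact sequence of these two-term complexes: the needed vanishing in the two lowest degrees is supplied exactly by the injectivity of the differentiation map on multiplicative multivector fields for source-connected $\cG$, i.e.\ by \cite[Prop.~2.35]{IPLGX}. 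But at that point the MC scaffolding is superfluous---the same single input, applied equation by equation as in the paper, proves the lemma in a few lines---so the repaired version of your argument collapses onto the paper's proof with considerable overhead added.
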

\begin{proof}
The map $\QLieAlgd(\cL)\rightarrow \MultBivec(\cL)$ is a fibration of groupoids (i.e. an isofibration), so it is enough to show that the map $\QPoisGpd(\cG)\rightarrow \QLieAlgd(\cL)\times_{\MultBivec(\cL)} \MultBivec(\cG)$ is an equivalence where we consider the strict fiber product. This map is clearly fully faithful.

Objects of both groupoids are pairs $(\Pi, \Omega)$ where $\Pi\in\Gamma(\cG, \wedge^2\T_\cG)$ and $\Omega\in\Gamma(X, \wedge^3\cL)$ satisfying certain equations. For $\QPoisGpd(\cG)$ we have the following three equations:
\begin{itemize}
\item The bivector $\Pi$ on $\cG$ is multiplicative.

\item We have an equality of multiplicative trivectors $\frac{1}{2}[\Pi, \Pi] = \Omega^R - \Omega^L$ on $\cG$.

\item We have an equality of fourvectors $[\Pi, \Omega^R] = 0$ on $\cG$.
\end{itemize}

For $\QLieAlgd(\cL)\times_{\MultBivec(\cL)} \MultBivec(\cG)$ we have the following three equations:
\begin{itemize}
\item The bivector $\Pi$ on $\cG$ is multiplicative.

\item Let $\delta$ be the 2-differential on $\cL$ corresponding to $\Pi$. Then we have an equality of 3-differentials $\frac{1}{2}[\delta, \delta] = \ad(\Omega)$.

\item We have $\delta(\Omega) = 0$.
\end{itemize}

There is a natural map from the graded Lie algebra of multiplicative multivectors on $\cG$ to the graded Lie algebra of differentials on $\cG$. Moreover, if $\cG$ is source-connected, it is shown in \cite[Proposition 2.35]{IPLGX} that it is injective. Thus, the equation $\frac{1}{2}[\Pi, \Pi] = \Omega^R - \Omega^L$ holds iff $\frac{1}{2}[\delta, \delta] = \ad(\Omega)$.

Moreover, \cite[Equation (18)]{IPLGX} implies that
\[[\Pi, \Omega^R] = (\delta \Omega)^R,\]
so $[\Pi, \Omega^R] = 0$ iff $\delta\Omega = 0$.

Thus, the map $\QPoisGpd(\cG)\rightarrow \QLieAlgd(\cL)\times_{\MultBivec(\cL)} \MultBivec(\cG)$ is an isomorphism on objects.
\end{proof}

\begin{thm}
Suppose $\cG\rightrightarrows X$ is a smooth algebraic groupoid which is source-connected. The space $\Pois([X/\cG], 1)$ of 1-shifted Poisson structures on $[X/\cG]$ is equivalent to the groupoid $\QPoisGpd(\cG)$ of quasi-Poisson structures on $\cG$.
\label{thm:quasipoissongroupoids}
\end{thm}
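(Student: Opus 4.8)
The plan is to realize $\Pois([X/\cG], 1)$ as a homotopy fiber product of $\Pois([X/\cL], 1)\cong \QLieAlgd(\cL)$ with $\MultBivec(\cG)$ over $\MultBivec(\cL)$, and then to identify this fiber product with $\QPoisGpd(\cG)$ via \cref{lm:connectedQPoisGpd}. This is the groupoid analog of the Cartesian square relating $\Pois(\B G, 1)$ and $\Pois(\B\g, 1)$ recorded above, and the whole argument is the global (groupoid) counterpart of \cref{thm:quasiliebialgebroids}.

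Concretely, I would first use \cref{thm:poissonpolyvectors} to write $\Pois([X/\cG], 1)\cong \MC(\Pol([X/\cG], 1)^{\geq 2}[2])$ and $\Pois([X/\cL], 1)\cong \MC(\Pol([X/\cL], 1)^{\geq 2}[2])$, and set $\g_1 = \Pol([X/\cG], 1)[2]$ and $\g_2 = \Pol([X/\cL], 1)[2]$. The map $[X/\cL]\to [X/\cG]$ induced by $\widehat{\cG}\subset \cG$ is formally \'{e}tale, so it gives a restriction map $\g_1\to \g_2$ of graded dg Lie algebras which, on weight-$2$ pieces, realizes the forgetful map $\MultBivec(\cG)\to \MultBivec(\cL)$ by \cref{prop:XGbivectors} and \cref{cor:XLbivectors}. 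The goal is then to show the square
\[
\xymatrix{
\MC(\g_1^{\geq 2}) \ar[r] \ar[d] & \MC(\g_2^{\geq 2}) \ar[d] \\
\MC(\g_1(2)) \ar[r] & \MC(\g_2(2))
}
\]
is Cartesian by \cref{lm:MCCartesiandiagram}; its corners are then $\Pois([X/\cG], 1)$, $\QLieAlgd(\cL)$ (via \cref{thm:quasiliebialgebroids}), $\MultBivec(\cG)$ and $\MultBivec(\cL)$.

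The crux is to verify the connectivity hypothesis of \cref{lm:MCCartesiandiagram}: for $n\geq 3$, the fiber of $\g_1(n)\to \g_2(n)$ must lie in cohomological degrees $\geq 3$. Paralleling the $\B G$ computation, I would identify the weight-$n$ pieces $\Pol^n([X/\cG], 1)$ and $\Pol^n([X/\cL], 1)$ with groupoid cohomology $\C^\bullet(\cG, \Sym^n(\cL[-1]))$ and Lie algebroid cohomology $\C^\bullet(\cL, \Sym^n(\cL[-1]))$, using the totalizations underlying \cref{cor:cotangentgroupoid} and \cref{prop:XGbivectors} together with the identification $\bD([X/\cL])\cong \C^\epsilon(\cL)$. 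Since $\Sym^n(\cL[-1])\cong (\wedge^n\cL)[-n]$ is concentrated in degree $n$, \cref{cor:fiberVanEst} applied to the $\cG$-representation $\wedge^n\cL$ says the restriction is an isomorphism in total degree $n$ and injective in total degree $n+1$; the long exact sequence of the fiber then forces the fiber of $\C^\bullet(\cG, \Sym^n(\cL[-1]))\to \C^\bullet(\cL, \Sym^n(\cL[-1]))$ into degrees $\geq n+2$, so after the $[2]$-shift the fiber of $\g_1(n)\to \g_2(n)$ lies in degrees $\geq n\geq 3$.

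With the square Cartesian, $\Pois([X/\cG], 1)$ is the homotopy fiber product $\QLieAlgd(\cL)\times_{\MultBivec(\cL)} \MultBivec(\cG)$, which is precisely $\QPoisGpd(\cG)$ by \cref{lm:connectedQPoisGpd}; source-connectedness of $\cG$ enters both through \cref{cor:fiberVanEst} and through \cref{lm:connectedQPoisGpd}. I expect the main obstacle to be the degree and weight bookkeeping in this connectivity check --- matching the weight grading on polyvectors against the cohomological grading of $\C^\bullet(\cG, \Sym^n(\cL[-1]))$ and tracking the effect of the $[2]$-shift --- since once the connectivity bound is in place everything downstream follows formally from the cited results.
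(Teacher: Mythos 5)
Your overall strategy is exactly the paper's: reduce to the Cartesian-square criterion of \cref{lm:MCCartesiandiagram}, verify its connectivity hypothesis by a Van Est argument (\cref{cor:fiberVanEst}), identify the weight-two corners via \cref{prop:XGbivectors} and \cref{cor:XLbivectors} and the infinitesimal corner via \cref{thm:quasiliebialgebroids}, and conclude with \cref{lm:connectedQPoisGpd}. The one step that is wrong as written is your identification of the weight-$n$ pieces of the polyvector algebras: you claim $\Pol^n([X/\cG], 1)\cong \C^\bullet(\cG, \Sym^n(\cL[-1]))\cong \C^\bullet(\cG, \wedge^n\cL[-n])$. This is false unless $X$ is a point: $1$-shifted polyvectors are built from the full tangent complex of $[X/\cG]$, whose restriction to $X$ is the two-term complex $\cE\simeq (\cL\to \bT_X)$ in degrees $-1$ and $0$, not just $\cL[1]$. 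Already for the trivial groupoid $\cG = X$ your formula would give $\Pol^n(X, 1) = 0$ for $n\geq 1$, whereas in fact $\Pol^n(X, 1)\cong \Gamma(X, \Sym^n\T_X)[-2n]$. The correct identification, which is what the paper uses, is $\Pol([X/\cG], 1)\cong \C^\bullet(\cG, \Sym(\cE[-2]))$ and $\Pol([X/\cL], 1)\cong \C^\bullet(\cL, \Sym(\cE[-2]))$, where $\cE$ is viewed as a complex of $\cG$-representations via \cref{lm:PerfXG} and the formal \'{e}taleness of $[X/\cL]\to [X/\cG]$.

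Fortunately the error does not propagate. Since $\cE$ is concentrated in degrees $\geq -1$, the correct coefficients $\Sym^n(\cE[-2])$ are concentrated in degrees $\geq n$ (they span degrees $n$ through $2n$), so \cref{cor:fiberVanEst} --- which is stated for \emph{complexes} of representations precisely because the coefficients here are genuinely a complex, not a single representation --- yields exactly the bound you derive: the homotopy fiber of $\Pol^n([X/\cG], 1)\to \Pol^n([X/\cL], 1)$ is concentrated in degrees $\geq n+2$, hence in degrees $\geq n\geq 3$ after the shift by $[2]$. With the coefficients corrected, the remainder of your argument coincides with the paper's proof.
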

\begin{proof}
The map $f\colon [X/\cL]\rightarrow [X/\cG]$ is formally \'{e}tale. Therefore, $f^*\bT_{[X/\cG]}\cong \bT_{[X/\cL]}$. By \cref{lm:PerfXG} we may identify $\Perf([X/\cG])\cong \Rep \cG$ and let us denote by $\cE$ the complex of $\cG$-representations corresponding to $\bT_{[X/\cG]}$. Under the functor $\Rep \cG\rightarrow \Rep\cL$ it corresponds to a complex of $\cL$-representations which we also denote by $\cE$.

Let $p\colon X\rightarrow [X/\cG]$ be the projection. Then $p^*\bT_{[X/\cG]}$ fits into a fiber sequence
\[p^*\bT_{[X/\cG]}\longrightarrow \bT_{X/[X/\cG]}\cong \cL\longrightarrow \bT_X.\]
In particular, the $\cG$-representation $\cE$ is concentrated in degrees $\geq -1$.

We can identify
\[\Pol([X/\cG], 1)\cong \C^\bullet(\cG, \Sym(\cE[-2])),\qquad \Pol([X/\cL], 1)\cong \C^\bullet(\cL, \Sym(\cE[-2])).\]

Considering the weight $p$ part, we see that $\Sym(\cE[-2])$ is concentrated in degrees $\geq p$. Thus, by \cref{cor:fiberVanEst} the homotopy fiber of $\Pol^p([X/\cG], 1)\rightarrow \Pol^p([X/\cL], 1)$ is concentrated in degrees $\geq (p+2)$. Therefore, from \cref{lm:MCCartesiandiagram} we get that the diagram of spaces
\[
\xymatrix{
\MC(\Pol([X/\cG], 1)^{\geq 2}[2]) \ar[r] \ar[d] & \MC(\Pol([X/\cL], 1)^{\geq 2}[2]) \ar[d] \\
\MC(\Pol^2([X/\cG], 1)[2]) \ar[r] & \MC(\Pol^2([X/\cL], 1)[2])
}
\]
is Cartesian. Using \cref{thm:quasiliebialgebroids,cor:XLbivectors,prop:XGbivectors} we obtain a Cartesian diagram
\[
\xymatrix{
\Pois([X/\cG], 1) \ar[r] \ar[d] & \Pois([X/\cL], 1) \ar[d] \\
\MultBivec(\cG) \ar[r] & \MultBivec(\cL)
}
\]
Therefore, from \cref{lm:connectedQPoisGpd} we obtain an equivalence $\Pois([X/\cG], 1)\cong \QPoisGpd(\cG)$.
\end{proof}

\begin{cor}
With notations as before, the space $\Cois(X\rightarrow [X/\cG], 1)$ of 1-shifted coisotropic structures on the projection $X\rightarrow [X/\cG]$ is equivalent to the set of multiplicative Poisson bivectors $\Pi$ on $\cG$.
\label{cor:poissongroupoids}
\end{cor}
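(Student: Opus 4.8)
The plan is to mimic the proofs of \cref{cor:liebialgebroids} (the infinitesimal statement) and of \cref{cor:poissongroups} (the special case $\cG = G\rightrightarrows \pt$), now invoking \cref{thm:quasipoissongroupoids} in place of \cref{thm:1shiftedBG}. By \cref{thm:coisotropicrelpolyvectors} we have $\Cois(X\rightarrow [X/\cG], 1)\cong \MC(\Pol(X\rightarrow [X/\cG], 1)^{\geq 2}[2])$, so the first step is to control the relative polyvectors. Since the relative tangent complex of $X\rightarrow [X/\cG]$ is the Lie algebroid $\cL$, the projection
\[\Pol([X/\cG], 1)\longrightarrow \Pol(X/[X/\cG], 0)\cong \Gamma(X, \Sym(\cL[-1]))\]
is the one induced by the connecting map $f^*\bT_{[X/\cG]}\rightarrow \cL[1]$. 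I would first check that this projection is surjective, exactly as for $\B\h\rightarrow \B\g$ in \cref{prop:coisotropicsubgroup}; granting this, \cite[Proposition 4.10]{MS1} identifies $\Pol(X\rightarrow [X/\cG], 1)$ with the strict kernel of the projection, equipped with the Lie bracket restricted from $\Pol([X/\cG], 1)$.

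The second step is to run the Maurer--Cartan computation on this kernel. By \cref{thm:quasipoissongroupoids} the Maurer--Cartan elements of $\Pol([X/\cG], 1)^{\geq 2}[2]$ are quasi-Poisson groupoid structures $(\Pi, \Omega)$, with $\Pi$ a multiplicative bivector in weight $2$ and $\Omega\in\Gamma(X, \wedge^3\cL)$ in weight $3$. The point is to identify the image under the projection: its weight-$2$ component is the restriction of $\Pi$ along the unit section, which vanishes by multiplicativity, while its weight-$3$ component is precisely $\Omega$. Hence $(\Pi, \Omega)$ lies in the kernel if and only if $\Omega = 0$, and then the defining relation $\frac{1}{2}[\Pi, \Pi] = \Omega^R - \Omega^L$ forces $[\Pi, \Pi] = 0$, i.e. $\Pi$ is a multiplicative Poisson bivector. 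For morphisms, a homotopy of Maurer--Cartan elements is a twist $\lambda\in\Gamma(X, \wedge^2\cL)$ whose image in $\Pol(X/[X/\cG], 0)$ is $\lambda$ itself; membership in the kernel forces $\lambda = 0$. Thus only identity morphisms survive, and $\Cois(X\rightarrow [X/\cG], 1)$ is the asserted \emph{set} of multiplicative Poisson bivectors on $\cG$.

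The main obstacle will be the weight bookkeeping in the second step: one must confirm that the projection records no data beyond $\Omega$ in weight $\geq 3$ and beyond $\lambda$ in weight $2$. This is controlled by a degree count as in \cref{prop:2shiftedBG} and \cref{thm:1shiftedBg}, since in $\Pol(X/[X/\cG], 0)\cong \Gamma(X, \Sym(\cL[-1]))$ the weight-$p$ part sits in cohomological degree $p$, so after the shift $[2]$ only weights $2$ and $3$ contribute to Maurer--Cartan elements and their homotopies. An alternative that sidesteps verifying surjectivity directly would be to apply \cref{lm:MCCartesiandiagram} to the map of relative polyvectors $\Pol(X\rightarrow [X/\cG], 1)\rightarrow \Pol(X\rightarrow [X/\cL], 1)$, obtaining a Cartesian square that reduces the claim to \cref{cor:liebialgebroids} together with the integrability argument of \cref{lm:connectedQPoisGpd}; however, the direct kernel computation above is the shorter path.
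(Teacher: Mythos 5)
Your proposal is correct and takes essentially the same route as the paper, which likewise identifies $\Pol(X\rightarrow[X/\cG],1)$ with the kernel of the projection of $\Pol([X/\cG],1)$ onto the relative polyvectors $\Gamma(X,\Sym(\cL[-1]))$ and then observes that Maurer--Cartan elements in the kernel have $\Omega=0$ while homotopies have $\lambda=0$. The one imprecise phrase --- that the weight-$2$ component of the image is ``the restriction of $\Pi$ along the unit section, which vanishes by multiplicativity'' --- is harmless, since by your own weight bookkeeping that component vanishes automatically for degree reasons (the target's weight-$2$ part is concentrated in cohomological degree $2$, while $\Pi$ sits in degree $3$).
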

\begin{proof}
The proof is identical to the proof of \cref{cor:liebialgebroids} since $\Pol(X\rightarrow [X/\cG], 1)$ is the kernel of
\[\Pol([X/\cG], 1)\rightarrow \Pol(X, 1).\]
\end{proof}

\subsection{Quasi-symplectic groupoids}

One also has a symplectic analog of quasi-Poisson groupoids which are quasi-symplectic groupoids, i.e. groupoids $\cG\rightrightarrows X$ equipped with a multiplicative two-form $\omega$ on $\cG$ and a three-form $H$ on $X$ satisfying some closure and nondegeneracy equations, see \cite[Definition 2.5]{Xu}. The following statement is well-known.

\begin{prop}
Fix a groupoid $\cG\rightrightarrows X$. The space of 1-shifted symplectic structures on $[X/\cG]$ is equivalent to the following groupoid:
\begin{itemize}
\item Its objects are pairs $(\omega, H)$ endowing $\cG$ with a structure of a quasi-symplectic groupoid.

\item Morphisms $(\omega_1, H_1)\rightarrow (\omega_2, H_2)$ are given by two-forms $B\in\Omega^2(X)$ satisfying
\begin{align*}
\omega_2 - \omega_1 &= \d B \\
H_2 - H_1 &= \ddr B.
\end{align*}
\end{itemize}
\label{prop:quasisymplecticgroupoid}
\end{prop}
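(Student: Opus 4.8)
The plan is to bypass polyvectors entirely and compute the space of $1$-shifted symplectic structures directly from the de Rham complex, exploiting the fact that closed $2$-forms constitute a \emph{linear} datum (there is no Maurer--Cartan equation to solve), so that the answer can be read off degreewise from a double complex. A $1$-shifted symplectic structure on a stack $Y$ is a $1$-shifted closed $2$-form whose underlying $2$-form is nondegenerate, and the space of $1$-shifted closed $2$-forms is encoded by the weight-$\geq 2$ part of the realization $\Omega^\bullet(Y)$ of the de Rham complex $\Omega^\epsilon(Y)$. For $Y = [X/\cG]$ I would present this complex by applying $\Omega^\epsilon(-)$ to the nerve of $\cG\rightrightarrows X$, obtaining the cosimplicial graded mixed cdga
\[\cosimp{\Omega^\epsilon(X)}{\Omega^\epsilon(\cG)}\]
whose $s$-th term is $\Omega^\epsilon(N_s)$ with $N_s = \cG\times_X\cdots\times_X\cG$. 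Since each $N_s$ is smooth, its forms are concentrated in internal degree $0$, so the total complex computing $\Omega^\bullet([X/\cG])$ carries only two differentials: the de Rham differential $\ddr$ (the mixed structure, raising form weight) and the simplicial coboundary $\d = \sum_i (-1)^i \d^i$ coming from the cofaces of the nerve.

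With this setup the computation is pure bookkeeping. Normalizing the total degree of a weight-$p$ form on $N_s$ to be $(p-2)+s$ — a normalization one checks against the $\B G$ case, where a $2$-shifted structure lands in total degree $2$ — a $1$-shifted closed $2$-form is a total-degree-$1$ cocycle for $\ddr + \d$ in weights $\geq 2$. The only surviving components are $\omega\in\Omega^2(\cG)$ at bidegree $(p,s)=(2,1)$ and $H\in\Omega^3(X)$ at $(3,0)$, all higher-weight contributions requiring negative simplicial degree. The cocycle equation $(\ddr+\d)(\omega+H)=0$ splits by bidegree into $\d\omega = 0$ (multiplicativity of $\omega$), $\ddr\omega + \d H = 0$ (the twisting relation $\ddr\omega = s^*H - t^*H$), and $\ddr H = 0$ (closedness of $H$) — exactly the defining equations of a quasi-symplectic groupoid in the sense of \cite[Definition 2.5]{Xu}. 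A homotopy between two such cocycles is a total-degree-$0$ element, which by the same count is a single $2$-form $B\in\Omega^2(X)$, and the homotopy relation $(\ddr+\d)B = (\omega_2+H_2)-(\omega_1+H_1)$ splits into $\d B = \omega_2 - \omega_1$ and $\ddr B = H_2 - H_1$, reproducing the stated morphism equations; their mutual consistency is automatic from the two closedness relations. Because the weight-$\geq 2$ total complex is concentrated in non-negative degrees, there are no total-degree-$(-1)$ elements and hence no higher homotopies, so the resulting space is a genuine $1$-groupoid with $\pi_0$ and $\pi_1$ read off from degrees $1$ and $0$; alternatively one may run this identification through \cref{lm:groupoidTot}.

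The one step requiring genuine care, and the main obstacle, is matching the \emph{nondegeneracy} conditions. I would argue that nondegeneracy of the $1$-shifted closed $2$-form on $[X/\cG]$ — the requirement that the contraction $\bT_{[X/\cG]}\to\bL_{[X/\cG]}[1]$ be an equivalence — translates, via the fiber sequence relating $\bT_{[X/\cG]}$ to the Lie algebroid $\cL$ and to $\bT_X$, precisely into the nondegeneracy condition in Xu's definition (a dimension count on $\cG$ together with a transversality condition at each point mixing $\ker\omega$ with the source and target foliations). This is a pointwise linear-algebra comparison; it is routine but is the only place where the symplectic, rather than merely closed-$2$-form, character of the structure enters, and cutting out the nondegenerate cocycles identifies the space of $1$-shifted symplectic structures with the groupoid of quasi-symplectic groupoid structures as claimed.
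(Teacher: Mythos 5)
Your proposal is correct and takes essentially the same approach as the paper: the paper likewise computes the space of degree-$1$ closed two-forms from the cosimplicial object attached to the nerve of $\cG\rightrightarrows X$ (totalizing the resulting cosimplicial groupoids via \cref{lm:groupoidTot}, which is the groupoid-level version of your total-complex bookkeeping), arrives at exactly the pairs $(\omega,H)$ and morphisms $B$ you describe, and then imposes nondegeneracy after pulling back along the conservative functor $p^*$ for $p\colon X\to[X/\cG]$. The only point worth flagging is the final comparison you call routine: the paper reformulates the quasi-isomorphism condition on $(\cL\to\T_X)\to(\T^*_X\to\cL^*)$ as exactness of $0\to\cL\to\T_X\oplus\T^*_X\to\cL^*\to 0$, i.e.\ as $\cL\hookrightarrow\T_X\oplus\T^*_X$ being Lagrangian, and then cites \cite[Theorem 2.2]{BCWZ} for the equivalence of this with Xu's pointwise dimension-and-transversality condition, so that step deserves either that reference or an actual fiberwise argument rather than a bare assertion.
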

\begin{proof}
The space of 1-shifted presymplectic structures on $[X/\cG]$ is equivalent to the space $\cA^{2, cl}([X/\cG], 1)$ of closed two-forms on $[X/\cG]$ of degree $1$. The latter space is equivalent to the totalization of the cosimplicial space
\[
\cosimp{\cA^{2, cl}(X, 1)}{\cA^{2, cl}(\cG, 1)}
\]

If $Y$ is a smooth affine scheme, $\cA^{2, cl}(Y, 1)$ is equivalent to the following groupoid:
\begin{itemize}
\item Its objects are closed three-forms $H$ on $Y$,

\item Its morphisms from $H_1$ to $H_2$ are given by two-forms $B$ satisfying
\[\ddr B = H_2 - H_1.\]
\end{itemize}

Applying \cref{lm:groupoidTot} we deduce that the space of 1-shifted presymplectic structures on $[X/\cG]$ is equivalent to the following groupoid:
\begin{itemize}
\item Its objects are pairs $(\omega, H)$, where $\omega$ is a two-form on $\cG$ and $H$ is a closed three-form on $X$ satisfying
\begin{align*}
\d\omega &= 0 \\
\d H + \ddr \omega &= 0,
\end{align*}
where $\d$ is the \v{C}ech differential.

\item Its morphisms from $(\omega_1, H_1)$ to $(\omega_2, H_2)$ are given by two-forms $B$ on $X$ satisfying
\begin{align*}
\omega_2 - \omega_1 &= \d B \\
H_2 - H_1 &= \ddr B.
\end{align*}
\end{itemize}

The space of 1-shifted symplectic structures on $[X/\cG]$ is the subspace of the above space for which the two-form $\omega$ is non-degenerate. For $p\colon X\rightarrow [X/\cG]$ the natural projection, the pullback functor $p^*\colon \QCoh([X/\cG])\rightarrow \QCoh(X)$ is conservative, so we are looking for forms $\omega$ which induce a quasi-isomorphism
\[\omega^\sharp\colon p^*\bT_{[X/\cG]}\rightarrow p^*\bL_{[X/\cG]}[1]\]
i.e. a quasi-isomorphism
\[
\xymatrix{
(\cL \ar^{a}[r] \ar[d] & \T_X) \ar[d] \\
(\T^*_X \ar^{a^*}[r] & \cL^*)
}
\]
of complexes of quasi-coherent sheaves on $X$. By passing to the total complex, this condition is equivalent to the condition that
\[0\longrightarrow \cL\longrightarrow \T_X\oplus \T^*_X\longrightarrow \cL^*\longrightarrow 0\]
is an exact sequence, i.e. $\cL\rightarrow \T_X\oplus \T^*_X$ is a Lagrangian embedding. But the latter condition is known to be equivalent to the non-degeneracy of the quasi-symplectic groupoid, see \cite[Theorem 2.2]{BCWZ} where quasi-symplectic groupoids are called $H$-twisted presymplectic groupoids.
\end{proof}

We have a similar characterization of symplectic groupoids.

\begin{prop}
The space of pairs of a 1-shifted symplectic structure on $[X/\cG]$ and a Lagrangian structure on $X\rightarrow [X/\cG]$ is equivalent to the set of closed two-forms $\omega$ on $\cG$ endowing it with a structure of a symplectic groupoid.
\label{prop:symplecticgroupoid}
\end{prop}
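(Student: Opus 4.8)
The plan is to build directly on \cref{prop:quasisymplecticgroupoid}, which already identifies the space of $1$-shifted symplectic structures on $[X/\cG]$ with the groupoid of quasi-symplectic groupoid structures $(\omega, H)$ on $\cG$. What remains is to analyze the extra Lagrangian structure on the atlas $p\colon X\rightarrow [X/\cG]$ and to see how it both gauges away the three-form $H$ and rigidifies the groupoid into a set. A Lagrangian structure consists of an isotropic structure, which is a trivialization of $p^*$ of the underlying closed two-form of degree $1$, together with a nondegeneracy condition, and I would treat these two ingredients in turn.

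First, the isotropic part. In the cosimplicial description of $\cA^{2, cl}([X/\cG], 1)$ used in the proof of \cref{prop:quasisymplecticgroupoid}, the pullback $p^*$ is the projection onto the vertex $\cA^{2, cl}(X, 1)$, so $p^*(\omega, H)$ is the closed three-form $H$ on $X$ viewed as an object of the groupoid of closed three-forms. An isotropic structure is then a path from $H$ to $0$ there, i.e.\ a two-form $B\in\Omega^2(X)$ with $\ddr B = -H$. But such a $B$ is exactly a morphism $(\omega, H)\rightarrow (\omega + \d B, 0)$ in the groupoid of quasi-symplectic groupoids. Hence the isotropic structure lets me gauge $H$ away: every object $(\omega, H, B)$ is canonically isomorphic to one of the form $(\omega', 0, 0)$ with $\omega' = \omega + \d B$, and the equations $\d\omega' = 0$ and $\d H' + \ddr\omega' = 0$ with $H' = 0$ show that $\omega'$ is a \v{C}ech cocycle (multiplicative) and $\ddr$-closed, i.e.\ a closed multiplicative two-form on $\cG$.

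Next I would check that carrying the trivialization along kills all automorphisms. A morphism of pairs is a morphism $C\in\Omega^2(X)$ of the underlying quasi-symplectic groupoids that is compatible with the chosen isotropic structures; chasing the identities of \cref{lm:groupoidTot} the compatibility forces $C = B_1 - B_2$, so there is at most one morphism between two objects, existing precisely when the gauged forms agree, $\omega_1' = \omega_2'$. Therefore $(\omega, H, B)\mapsto \omega' = \omega + \d B$ is an equivalence from this groupoid onto the \emph{set} of closed multiplicative two-forms on $\cG$, with trivial automorphism groups.

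Finally, the nondegeneracy, which I expect to be the main obstacle. Working in the gauged frame $(\omega', 0, 0)$, the Lagrangian nondegeneracy asks that the map $\bT_{X/[X/\cG]}\rightarrow \bL_X[n-1]$ induced by the (now trivial) isotropic structure be a quasi-isomorphism; since $\bT_{X/[X/\cG]}\cong \cL$ and $\bL_X[0] = \T^*_X$, the task is to identify this with the classical bundle map $\mu\colon \cL\rightarrow \T^*_X$, $v\mapsto \iota_v\omega'|_X$, coming from $(\omega')^\sharp$, and then to match the shifted condition with the classical one. By the characterization of symplectic groupoids among multiplicative closed two-forms (\cite[Theorem 2.2]{BCWZ}, see also \cite{Xu}), the map $\mu$ is an isomorphism if and only if $\omega'$ is nondegenerate on all of $\cG$, i.e.\ $(\cG, \omega')$ is a symplectic groupoid. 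This in turn forces the ambient $1$-shifted symplectic nondegeneracy of $\omega'$ on $[X/\cG]$, namely exactness of $0\rightarrow \cL\rightarrow \T_X\oplus \T^*_X\rightarrow \cL^*\rightarrow 0$ from \cref{prop:quasisymplecticgroupoid}, so the two nondegeneracy requirements collapse to the single condition that $\omega'$ be symplectic. Combining with the previous paragraphs, the space of pairs is equivalent to the set of closed multiplicative nondegenerate two-forms on $\cG$, i.e.\ to symplectic groupoid structures.
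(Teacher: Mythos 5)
Your proposal is correct and follows essentially the same route as the paper's proof: reduce to the cosimplicial computation of \cref{prop:quasisymplecticgroupoid}, show that the isotropic structure on $X\rightarrow[X/\cG]$ both eliminates $H$ and kills all automorphisms (leaving the set of closed multiplicative two-forms on $\cG$), and then identify the Lagrangian nondegeneracy with the map $\cL\rightarrow\T^*_X$ induced by the unit section being an isomorphism, which by \cite{BCWZ} is the symplectic groupoid condition and which in turn forces the ambient $1$-shifted nondegeneracy. The only cosmetic difference is in how the homotopy fiber is computed: the paper notes that $\Omega^\epsilon([X/\cG])\rightarrow\Omega^\epsilon(X)$ is surjective so the strict fiber suffices, whereas you gauge $H$ to zero using a morphism in the groupoid of quasi-symplectic structures (legitimate here because $\cA^{2,cl}(X,1)$ is a genuine $1$-groupoid for smooth affine $X$, so the compatibility with the trivializations is the strict equation $C=B_1-B_2$) --- the same computation in different clothing.
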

\begin{proof}
Let us present $\Omega^\epsilon([X/\cG])$ as the totalization of the cosimplicial graded mixed cdga
\[\cosimp{\Omega^\epsilon(X)}{\Omega^\epsilon(\cG)}\]

The pullback $\Omega^\epsilon([X/\cG])\rightarrow \Omega^\epsilon(X)$ is surjective, so its homotopy fiber coincides with the strict fiber. Therefore, repeating the argument of \cref{prop:quasisymplecticgroupoid} we see that the space of pairs of a 1-shifted presymplectic structure on $[X/\cG]$ and an isotropic structure on $X\rightarrow [X/\cG]$ is equivalent to the set of closed multiplicative two-forms $\omega$ on $\cG$. Note that the two-form $\omega$ can be explicitly constructed by considering the isotropic self-intersection $X\times_{[X/\cG]} X\cong \cG$ which carries a presymplectic structure of shift $0$.

The nondegeneracy condition on $f\colon X\rightarrow [X/\cG]$ implies that the 1-shifted presymplectic structure on $[X/\cG]$ is 1-shifted symplectic since then the two vertical morphisms in
\[
\xymatrix{
\cL \ar[r] \ar[d] & \T_X \ar[d] \\
\T^*_X \ar[r] & \cL^*
}
\]
are isomorphisms which implies that the total morphism $f^*\bT_{[X/\cG]}\rightarrow f^*\bL_{[X/\cG]}[-1]$ is a quasi-isomorphism. Thus, it is enough to work out the nondegeneracy condition on the isotropic structure on $f$.

The nondegeneracy condition on the Lagrangian is equivalent to the fact that the natural morphism
\[\bT_{X/[X/\cG]}\longrightarrow \bL_X\]
is an equivalence. We can identify $\bT_{X/[X/\cG]}\cong \N_{X/\cG}\cong \cL$, where $\cL$ is the Lie algebroid of $\cG$ and $\N_{X/\cG}$ is the normal bundle to the unit section. The composite $\N_{X/\cG}\rightarrow \bL_X$ is identified with the natural morphism induced by the fact that the unit section $X\rightarrow \cG$ is isotropic. Thus, the isotropic structure on $X\rightarrow [X/\cG]$ is nondegenerate iff the unit section is in fact Lagrangian which is equivalent to the nondegeneracy condition of symplectic groupoids.
\end{proof}

\section{Symplectic realizations}

\subsection{Definitions}

\label{sect:nondegeneracy}

Let $X$ be a derived Artin stack. We denote by $\Symp(X, n)$ the space of $n$-shifted symplectic structures on $X$ as defined in \cite{PTVV}. Similarly, given a morphism $f\colon L\rightarrow X$ of such stacks we denote by $\Lagr(f, n)$ the space of $n$-shifted Lagrangian structures, i.e. pairs of an $n$-shifted symplectic structure on $X$ and a Lagrangian structure on $f$.

An $n$-shifted Poisson structure $\pi_X$ on $X$ induces a morphism $\pi_X^\sharp\colon\bL_X\rightarrow \bT_X[-n]$ given by contraction with the bivector. Similarly, an $n$-shifted coisotropic structure on $f\colon L\rightarrow X$ induces vertical morphisms of fiber sequences
\[
\xymatrix{
\bL_{L/X}[-1] \ar[r] \ar^{\pi_L^\sharp}[d] & f^*\bL_X \ar[r] \ar^{\pi_X^\sharp}[d] & \bL_L \ar[d] \\
\bT_L[-n] \ar[r] & f^*\bT_X[-n] \ar[r] & \bT_{L/X}[-n+1]
}
\]

\begin{defn} $ $
\begin{itemize}
\item An $n$-shifted Poisson structure on $X$ is \defterm{nondegenerate} if the induced morphism $\pi^\sharp_X\colon\bL_X\rightarrow \bT_X[-n]$ is a quasi-isomorphism. Denote by $\Pois^{nd}(X, n)\subset \Pois(X, n)$ the subspace of such nondegenerate structures.

\item An $n$-shifted coisotropic structure on $f\colon L\rightarrow X$ is \defterm{nondegenerate} if the $n$-shifted Poisson structure on $X$ is so and the induced morphism $\pi_L^\sharp\colon\bL_{L/X}\rightarrow \bT_L[1-n]$ is a quasi-isomorphism. Denote by $\Cois^{nd}(f, n)\subset \Cois(f, n)$ the space of subspace of such nondegenerate structures.
\end{itemize}
\end{defn}

The following statement was proved in \cite[Theorem 3.2.4]{CPTVV} and \cite[Theorem 3.33]{Pri1}.

\begin{thm}
One has an equivalence of spaces
\[\Pois^{nd}(X, n)\cong \Symp(X, n).\]

Under this equivalence the two-form $\omega^\sharp\colon\bT_X\xrightarrow{\sim} \bL_X[n]$ is inverse to the bivector $\pi^\sharp_X\colon\bL_X\xrightarrow{\sim} \bT_X[-n]$.
\label{thm:nondegeneratepoisson}
\end{thm}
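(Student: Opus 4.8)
The plan is to produce the equivalence by passing through an intermediate space of \emph{compatible pairs} and showing that it projects equivalently onto each side. A compatible pair consists of an $n$-shifted Poisson structure $\pi$ on $X$ (a Maurer--Cartan element of $\Pol(X,n)^{\geq 2}[n+1]$, as in \cref{thm:poissonpolyvectors}), an $n$-shifted closed two-form $\omega\in\cA^{2,cl}(X,n)$, and homotopy data coherently witnessing that the induced maps $\pi^\sharp_X\colon \bL_X\to \bT_X[-n]$ and $\omega^\sharp\colon \bT_X\to \bL_X[n]$ are mutually inverse. Writing $\mathrm{Comp}(X,n)$ for the resulting space, forgetting $\omega$ gives a map to $\Pois(X,n)$ and forgetting the Poisson data gives a map to $\Symp(X,n)$; the whole statement, including the description of $\omega^\sharp$, will be read off from these two projections.

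First I would show that $\mathrm{Comp}(X,n)\to \Pois(X,n)$ lands in $\Pois^{nd}(X,n)$ and is an equivalence onto it. For a fixed $\pi$ the fiber is the space of closed two-forms together with the coherence homotopies; when $\pi$ is nondegenerate the map $\pi^\sharp_X$ is already an equivalence, so its inverse provides a canonical leading term for $\omega$, and the space of higher coherences is contractible by an obstruction-theoretic argument, the relevant obstruction groups being computed by complexes that are acyclic precisely because of nondegeneracy. Conversely, if $\pi$ fails to be nondegenerate the fiber is empty. Dually, I would show that $\mathrm{Comp}(X,n)\to \cA^{2,cl}(X,n)$ has contractible fibers over the nondegenerate (i.e. symplectic) locus and empty fibers elsewhere, giving $\mathrm{Comp}(X,n)\cong \Symp(X,n)$. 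Composing yields $\Pois^{nd}(X,n)\cong \Symp(X,n)$, and by construction the two-form attached to a nondegenerate $\pi$ is $\omega^\sharp=(\pi^\sharp_X)^{-1}$, as claimed.

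The comparison is essentially local: on a formal or affine chart both the Poisson and the symplectic deformation problems are governed by explicit graded mixed complexes, and the identification of their nondegenerate loci rests on the formality of the operad $\bP_{n+1}$ together with the Koszul duality exchanging the commutative and the shifted Poisson/Lie descriptions. Thus the strategy is to establish the fiber-contractibility statements over such charts, where the obstruction complexes can be written down by hand, and then to globalize by descent along a smooth atlas, using that all the spaces in question satisfy \'etale (or smooth) descent. The main obstacle is precisely the bookkeeping of the homotopy coherences entering the definition of a compatible pair and the verification that the associated obstruction groups vanish throughout the nondegenerate range; this is the technical heart of \cite[Section 3.2]{CPTVV} and of Pridham's argument in \cite{Pri1}, and it is where essentially all of the work resides.
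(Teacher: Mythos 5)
The first thing to note is that the paper does not prove this theorem at all: the sentence immediately preceding it reads ``The following statement was proved in [CPTVV, Theorem 3.2.4] and [Pri1, Theorem 3.33]'', and the result is simply imported. So the only meaningful comparison is with the proofs in those references, which is also where you explicitly defer all of the technical work. At that level your outline is faithful to the cited arguments: both \cite{CPTVV} and \cite{Pri1} introduce a space of compatible pairs $(\pi,\omega)$ and show, by an obstruction-theoretic/filtration argument, that forgetting $\omega$ is an equivalence onto the nondegenerate Poisson structures, while compatibility forces $\omega^\sharp=(\pi^\sharp_X)^{-1}$, which is exactly how the second clause of the statement is obtained.

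However, several points in your sketch misplace where the difficulty lies and would break a self-contained write-up. First, compatibility in \cite[Definition 1.4.20]{CPTVV} is not ``homotopy data witnessing that $\pi^\sharp_X$ and $\omega^\sharp$ are mutually inverse''; it is a contraction identity (roughly $\iota_\pi\omega\sim\pi$) formulated at the level of the full graded mixed algebras of forms and polyvectors, in all weights, not just the weight-two piece that determines $\pi^\sharp_X$ and $\omega^\sharp$. In particular, your claim that the fiber of compatible pairs over a degenerate $\pi$ is empty is false for the actual definition: $\pi=0$ is compatible with every closed two-form, and nondegeneracy is imposed as a condition cutting out a sublocus rather than extracted from compatibility. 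Second, formality of the operad $\bP_{n+1}$ plays no role here; it enters deformation quantization, not the symplectic/Poisson comparison. Third, and most seriously, ``globalize by descent along a smooth atlas'' is precisely what cannot be done on the Poisson side: polyvectors are functorial only along formally \'etale maps, so the cosimplicial diagram of Poisson-structure spaces over the \v{C}ech nerve of a smooth atlas does not even exist. This failure is the entire reason \cite{CPTVV} develop formal localization (computing everything over $X_{\dR}$ via the prestack $\cB_X$), which is the mechanism the present paper itself uses throughout (e.g.\ computing $\Pol(\B G,n)$ as $G_{\dR}$-invariants of $\Pol(\B\g,n)$), and Pridham's substitute for descent is correspondingly delicate. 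So your proposal is an accurate pointer to the strategy of the literature, but not a proof outline that could be completed as stated.
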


An $n$-shifted Lagrangian structure on $f\colon L\rightarrow X$ induces vertical isomorphisms of fiber sequences
\[
\xymatrix{
\bT_L \ar[r] \ar^{\omega_L^\sharp}_{\sim}[d] & f^*\bT_X \ar[r] \ar^{\omega_X^\sharp}_{\sim}[d] & \bT_{L/X}[1] \ar_{\sim}[d] \\
\bL_{L/X}[n-1] \ar[r] & f^*\bL_X[n] \ar[r] & \bL_L[n]
}
\]

The following statement was proved in \cite{Pri2} for $n=0$ and \cite[Theorem 4.22]{MS2} for all $n$.

\begin{thm}
One has an equivalence of spaces
\[\Cois^{nd}(f, n)\cong \Lagr(f, n).\]

Under this equivalence the quasi-isomorphism $\omega^\sharp_L\colon\bT_L\xrightarrow{\sim} \bL_{L/X}[n-1]$ induced by the Lagrangian structure is inverse to the quasi-isomorphism $\pi^\sharp_L\colon\bL_{L/X}\xrightarrow{\sim} \bT_L[1-n]$ induced by the coisotropic structure.
\label{thm:nondegeneratecoisotropic}
\end{thm}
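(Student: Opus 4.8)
The plan is to prove this by mirroring the absolute comparison of \cref{thm:nondegeneratepoisson} in the relative setting, organizing everything over the space of nondegenerate structures on $X$. First I would note that both spaces come with projections to this common base: a Lagrangian structure on $f$ carries an $n$-shifted symplectic structure on $X$, and a nondegenerate coisotropic structure carries a nondegenerate Poisson structure on $X$; by \cref{thm:nondegeneratepoisson} the two bases are identified, $\Symp(X,n)\cong\Pois^{nd}(X,n)$, and under this identification $\omega_X^\sharp$ and $\pi_X^\sharp$ are mutually inverse. It therefore suffices to fix a nondegenerate structure on $X$ and produce an equivalence of the corresponding fibers, natural enough to assemble into a global equivalence over the base.

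Over a fixed structure on $X$, the fiber of $\Cois(f,n)\to\Pois(X,n)$ is controlled by the relative polyvectors through the fiber sequence of dg Lie algebras
\[\Pol(L/X,n-1)[n]\longrightarrow\Pol(f,n)[n+1]\longrightarrow\Pol(X,n)[n+1],\]
while the fiber of $\Lagr(f,n)\to\Symp(X,n)$ is controlled by the relative closed forms cutting out the isotropic condition. To compare them I would introduce an intermediate space of \emph{compatible pairs}: an $n$-shifted coisotropic structure on $f$ together with an isotropic structure on $f$ whose induced relative sharp maps are coherently inverse, fibered over the analogous space of compatible pairs on $X$ supplied by (the proof of) \cref{thm:nondegeneratepoisson}. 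This intermediate space projects to $\Cois(f,n)$ by forgetting the form and to the space of isotropic structures by forgetting the coisotropic data.

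The core of the argument is to show that, after restricting to the nondegenerate locus, both projections are equivalences. The leftmost vertical maps in the two diagrams of fiber sequences recalled above are exactly the relative sharp maps $\omega_L^\sharp\colon\bT_L\to\bL_{L/X}[n-1]$ and $\pi_L^\sharp\colon\bL_{L/X}\to\bT_L[1-n]$; nondegeneracy of the isotropic (resp.\ coisotropic) structure is the requirement that the former (resp.\ the latter) be a quasi-isomorphism. Compatibility of the two diagrams forces these relative sharp maps to be mutually inverse, so the two nondegeneracy conditions cut out the same locus of compatible pairs, on which the projections land in $\Lagr(f,n)$ and $\Cois^{nd}(f,n)$ respectively. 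Once both are shown to be equivalences, one obtains $\Cois^{nd}(f,n)\cong\Lagr(f,n)$ together with the asserted inverse relation between $\omega_L^\sharp$ and $\pi_L^\sharp$.

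The hard part will be promoting this linear-algebra matching of relative tangent and cotangent complexes to a matching of the full homotopy-coherent data: a coisotropic structure is a Maurer--Cartan element encoding an entire tower of higher brackets, and an isotropic structure is a compatible tower of de Rham data, so the projections must be shown to be equivalences at every stage rather than merely on underlying bilinear forms. I would settle this by the obstruction-theoretic, weight-filtered Postnikov argument of Pridham already used in \cref{lm:MCCartesiandiagram}, the point being that nondegeneracy is an invertibility (hence open) condition which rigidifies the higher data once the underlying linear data agree. As this is precisely the relative generalization of \cref{thm:nondegeneratepoisson}, the argument is carried out in \cite[Theorem 4.22]{MS2}, with the case $n=0$ in \cite{Pri2}.
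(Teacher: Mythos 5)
The paper offers no proof of this theorem at all: it is imported as a known result, cited from \cite{Pri2} for $n=0$ and \cite[Theorem 4.22]{MS2} in general, and your proposal ultimately defers to precisely these references, so you are in agreement with the paper's treatment. Your preliminary sketch (fibering both spaces over $\Symp(X, n)\simeq \Pois^{nd}(X, n)$ via \cref{thm:nondegeneratepoisson}, passing to an intermediate space of compatible coisotropic/isotropic pairs, and arguing by weight-filtered obstruction theory that both forgetful maps become equivalences over the nondegenerate locus) is moreover a faithful outline of the strategy actually carried out in \cite[Section 4]{MS2}, so there is nothing to correct.
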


In particular, by \cref{thm:nondegeneratecoisotropic} we get a forgetful map
\[\Lagr(f, n)\cong \Cois^{nd}(f, n)\longrightarrow \Pois(L, n-1).\]

\begin{defn}
Suppose $L$ is a derived Artin stack equipped with an $(n-1)$-shifted Poisson structure. Its \defterm{symplectic realization} is the data of an $n$-shifted Lagrangian $L\rightarrow X$ whose induced $(n-1)$-shifted Poisson structure on $L$ coincides with the original one.
\end{defn}

Next, we are going to discuss symplectic realizations of coisotropic morphisms. Suppose $f_1\colon L_1\rightarrow X$ and $f_2\colon L_2\rightarrow X$ are two morphisms. In \cite[Theorem 2.9]{PTVV} the authors construct a map of spaces
\[\Lagr(f_1, n)\times_{\Symp(X, n)} \Lagr(f_2, n)\longrightarrow \Symp(L_1\times_X L_2, n-1),\]
i.e. an intersection of $n$-shifted Lagrangians carries an $(n-1)$-shifted symplectic structure.

Similarly, in \cite[Theorem 3.6]{MS2} the authors construct a map of spaces
\[\Cois(f_1, n)\times_{\Pois(X, n)} \Cois(f_2, n)\longrightarrow \Pois(L_1\times_X L_2, n-1),\]
i.e. an intersection of $n$-shifted coisotropics carries an $(n-1)$-shifted Poisson structure.

\begin{conjecture}
Coisotropic intersections have the following properties:
\begin{enumerate}
\item Suppose $L_1\rightarrow X$ and $L_2\rightarrow X$ carry a \emph{nondegenerate} $n$-shifted coisotropic structure. Then the $(n-1)$-shifted Poisson structure on $L_1\times_X L_2$ is nondegenerate.

\item The diagram of spaces
\[
\xymatrix{
\Cois^{nd}(f_1, n)\times_{\Pois^{nd}(X, n)} \Cois^{nd}(f_2, n) \ar[r] \ar^{\sim}[d] & \Pois^{nd}(L_1\times_X L_2, n-1) \ar^{\sim}[d] \\
\Lagr(f_1, n)\times_{\Symp(X, n)} \Lagr(f_2, n) \ar[r] & \Symp(L_1\times_X L_2, n-1)
}
\]
is commutative.
\end{enumerate}
\label{conj:intersections}
\end{conjecture}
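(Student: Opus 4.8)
The plan is to reduce both assertions to a single comparison: that the $(n-1)$-shifted Poisson structure produced on $L := L_1\times_X L_2$ by the coisotropic intersection of \cite[Theorem 3.6]{MS2} coincides, as a point of $\Pois(L, n-1)$, with the nondegenerate Poisson structure attached by \cref{thm:nondegeneratepoisson} to the $(n-1)$-shifted symplectic structure produced by the Lagrangian intersection of \cite[Theorem 2.9]{PTVV}. Granting this comparison, statement (1) is immediate, since a Poisson structure arising from a symplectic one is nondegenerate, and statement (2) becomes the assertion that the two composites in the square send the input to the same symplectic structure: by the mutual-inverse relation $\pi^\sharp=(\omega^\sharp)^{-1}$ of \cref{thm:nondegeneratepoisson}, both composites produce the symplectic structure whose sharp map inverts $\pi_L^\sharp$, and commutativity is precisely this comparison made natural in the coisotropic data on $f_1$ and $f_2$.

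First I would establish the nondegeneracy in (1) directly on (co)tangent complexes, where it is a pointwise condition. For the fiber product $L$ with projections $g_i\colon L\to L_i$ and composite $f\colon L\to X$ one has the dual exact triangles
\[\bT_L\longrightarrow g_1^*\bT_{L_1}\oplus g_2^*\bT_{L_2}\longrightarrow f^*\bT_X, \qquad f^*\bL_X\longrightarrow g_1^*\bL_{L_1}\oplus g_2^*\bL_{L_2}\longrightarrow \bL_L.\]
Pulling back the coisotropic diagram of \cref{sect:nondegeneracy} along the $g_i$ and combining it with $\pi_X^\sharp$, I would exhibit $\pi_L^\sharp\colon\bL_L\to \bT_L[1-n]$ as assembled from the maps $\pi_{L_1}^\sharp$, $\pi_{L_2}^\sharp$ and $\pi_X^\sharp$ through these triangles. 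Since nondegeneracy of $X$ and of the $f_i$ makes $\pi_X^\sharp$ and each $\pi_{L_i}^\sharp$ a quasi-isomorphism, a comparison of the associated long exact sequences (equivalently, two-out-of-three for the induced map of exact triangles) forces $\pi_L^\sharp$ to be a quasi-isomorphism as well. The bookkeeping of shifts here mirrors the Lagrangian case, in which the same triangles yield the nondegeneracy of the PTVV symplectic form; matching the two calculations is what identifies $\pi_L^\sharp$ with $(\omega_L^\sharp)^{-1}$ on the level of underlying maps.

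The crux is to upgrade this agreement of underlying maps to an agreement of full structures, and then to the naturality demanded by the square. Both intersection constructions are homotopy-coherent but of different flavour: the coisotropic one is built from Maurer--Cartan elements in the relative polyvector Lie algebras $\Pol(f_i, n)$ and their composites, while the symplectic one is built from closed forms in the mixed de Rham complex. I would match them by transporting everything through the equivalences of \cref{thm:nondegeneratepoisson} and \cref{thm:nondegeneratecoisotropic}, whose very content is a compatibility between the polyvector and de Rham pictures, and then verify that this transport carries the coisotropic intersection operation to the Lagrangian one. Concretely, this means promoting the pointwise identity $\pi_L^\sharp=(\omega_L^\sharp)^{-1}$ first to an equivalence of the two $(n-1)$-shifted closed structures on $L$, and then to a homotopy of the two functors displayed in \cref{conj:intersections}, so that the square commutes as a diagram of spaces rather than merely on objects.

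I expect this coherence to be the main obstacle, which is why the statement is left as a conjecture. The equivalences of \cref{thm:nondegeneratepoisson} and \cref{thm:nondegeneratecoisotropic} are proved abstractly and are not visibly compatible with the formation of intersections, so the comparison cannot be read off from the underlying complexes alone. A cleaner route I would pursue in parallel is to realize both intersections as a single instance of one operation: writing $L_1\times_X L_2\cong (L_1\times L_2)\times_{X\times X} X$ with the diagonal and the appropriate signs, one has that $L_1\times L_2\to X\times X$ and the diagonal $X\to X\times X$ both carry Lagrangian, equivalently nondegenerate coisotropic, structures whose intersection is $L$. One would then hope to deduce the compatibility from the naturality of \cref{thm:nondegeneratecoisotropic} applied to this universal pair of correspondences, where the rigidity of the diagonal makes the coherence more tractable, rather than from a term-by-term match of the two general constructions.
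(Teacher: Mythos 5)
The first thing to note is that the paper offers no proof to compare against: the statement is \cref{conj:intersections}, stated explicitly as a conjecture and invoked only conditionally in the discussion of symplectic realizations of coisotropic morphisms (the paper says, in effect, \emph{assuming} \cref{conj:intersections}, a Lagrangian correspondence induces a coisotropic structure on $C\rightarrow X$). So the question is not whether you reproduced the paper's argument, but whether your outline closes the gap the author deliberately left open. It does not, and to your credit you say so yourself in your final paragraph.

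Concretely, there are two gaps. First, even your argument for part (1) is incomplete: the two-out-of-three argument on the fiber sequences for $\bT_L$ and $\bL_L$ only applies once you know that the sharp map of the Poisson structure produced on $L_1\times_X L_2$ by \cite[Theorem 3.6]{MS2} is the map assembled from $\pi_{L_1}^\sharp$, $\pi_{L_2}^\sharp$ and $\pi_X^\sharp$ through those triangles. That identification is not a matter of bookkeeping of shifts: the intersection Poisson structure of \cite{MS2} is constructed through relative polyvectors, Poisson centers and a homotopy-coherent algebraic procedure, and extracting its underlying bivector in terms of the input data is precisely the unwinding that has not been done; it is the same difficulty as in part (2), not a preliminary to it. Second, for part (2) you correctly isolate the coherence problem --- the equivalences of \cref{thm:nondegeneratepoisson} and \cref{thm:nondegeneratecoisotropic} are proved abstractly and are not known to intertwine the two intersection constructions --- but neither of your proposed routes resolves it: the term-by-term matching is exactly the open problem, and the reduction to the diagonal via $L_1\times_X L_2\cong (L_1\times L_2)\times_{X\times X} X$ merely relocates the same comparison to a special pair of correspondences without making it any more tractable. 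What you have written is a sensible research plan that correctly identifies where the difficulty lies and honestly flags it, but it is not a proof, and the statement remains exactly as open after your outline as before it.
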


\begin{defn}
A diagram of derived Artin stacks
\[
\xymatrix{
& C \ar[dl] \ar[dr] & \\
\tilde{X} \ar[dr] && X \ar[dl] \\
& Y &
}
\]
is an \defterm{$n$-shifted Lagrangian correspondence} provided we have the following data:
\begin{itemize}
\item An $n$-shifted symplectic structure on $Y$,

\item Lagrangian structures on $X\rightarrow Y$ and $\tilde{X}\rightarrow Y$,

\item A Lagrangian structure on $C\rightarrow \tilde{X}\times_Y X$, where $\tilde{X}\times_Y X$ is equipped with an $(n-1)$-shifted symplectic structure as a Lagrangian intersection.
\end{itemize}
\end{defn}

Suppose we have an $n$-shifted Lagrangian correspondence as above. In particular, $X$ carries an $(n-1)$-shifted Poisson structure and $X\rightarrow Y$ is its symplectic realization. Assuming \cref{conj:intersections}, we get an $(n-1)$-shifted Poisson structure on the projection $\tilde{X}\times_Y X\rightarrow X$. Since $C\rightarrow \tilde{X}\times_Y X$ carries an $(n-1)$-shifted coisotropic structure and $\tilde{X}\times_Y X\rightarrow X$ is an $(n-1)$-shifted Poisson morphism, the composite
\[C\longrightarrow \tilde{X}\times_Y X\longrightarrow X\]
also carries an $(n-1)$-shifted coisotropic structure.

\begin{defn}
Suppose $C\rightarrow X$ is a morphism of derived Artin stacks equipped with an $(n-1)$-shifted coisotropic structure. Its \defterm{symplectic realization} is the data of an $n$-shifted Lagrangian correspondence
\[
\xymatrix{
& C \ar[dl] \ar[dr] & \\
\tilde{X} \ar[dr] && X \ar[dl] \\
& Y &
}
\]
whose underlying $(n-1)$-shifted coisotropic structure on $C\rightarrow X$ coincides with the original one.
\end{defn}

\begin{remark}
Note that the picture of an $n$-shifted Lagrangian correspondence is completely symmetric and we also obtain an $(n-1)$-shifted coisotropic structure on $C\rightarrow \tilde{X}$. In a sense, the two $(n-1)$-shifted coisotropic structures $C\rightarrow X$ and $C\rightarrow \tilde{X}$ are dual. We refer to \cref{remark:PoissonLieDuality} for an example of this.
\end{remark}

Given a 2-shifted Lagrangian correspondence as above, using \cite[Corollary 2.15]{ABB} we obtain a 2-shifted Lagrangian correspondence
\[
\xymatrix{
& C\times_X C \ar[dl] \ar[dr] & \\
\tilde{X} \ar[dr] && \tilde{X} \ar[dl] \\
& Y &
}
\]

Therefore, the $(n-1)$-shifted Lagrangian $C\times_X C\rightarrow \tilde{X}\times_Y \tilde{X}$ gives a symplectic realization of the $(n-2)$-shifted Poisson structure on $C\times_X C$.

\subsection{Shift 0}

Our first goal is to relate our notion of symplectic realization to more classical notions in the case of ordinary (i.e. unshifted) Poisson structures.

Suppose $\cG\rightrightarrows X$ is a smooth symplectic groupoid. Since the unit section $X\rightarrow \cG$ is Lagrangian, we get an identification $\N_{X/\cG}\cong \T^*_X$. But the normal bundle to the unit section is canonically identified with the Lie algebroid $\cL$ of $\cG$. Thus, we obtain an isomorphism of vector bundles $\cL\cong \T^*_X$. By \cite[Proposition II.2.1]{CDW}, the induced Poisson structure has bivector given by the composite
\[\T^*_X\cong \cL\xrightarrow{a} \T_X.\]

In \cref{prop:symplecticgroupoid} we have shown that the data of a symplectic groupoid is equivalent to the data of a 1-shifted Lagrangian $X\rightarrow [X/\cG]$ which gives an a priori different $0$-shifted Poisson structure on $X$.

\begin{prop}
Suppose $\cG\rightrightarrows X$ is a smooth symplectic groupoid. The underlying Poisson structure on $X$ coincides with the 0-shifted Poisson structure underlying the 1-shifted Lagrangian $X\rightarrow [X/\cG]$.
\end{prop}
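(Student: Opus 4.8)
The plan is to reduce the proposition to an equality of bivectors. Since $X$ is a smooth scheme, the space $\Pois(X,0)$ is discrete and consists of honest Poisson bivectors, so both the classical Poisson structure recalled just above and the $0$-shifted Poisson structure underlying the $1$-shifted Lagrangian $f\colon X\to[X/\cG]$ are determined by their anchors $\pi^\sharp\colon\T^*_X\to\T_X$. By \cite[Proposition II.2.1]{CDW} the classical bivector is the composite $\T^*_X\xrightarrow{\psi^{-1}}\cL\xrightarrow{a}\T_X$, where $\psi\colon\cL\xrightarrow{\sim}\T^*_X$ is the identification of the Lie algebroid with the cotangent bundle coming from the Lagrangian unit section and $a$ is the anchor. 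It therefore suffices to show that the bivector extracted from $f$ via \cref{thm:nondegeneratecoisotropic} is this same composite.

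First I would record the two natural maps attached to the atlas $f\colon X\to[X/\cG]$. The relative tangent complex sits in a fiber sequence $\bT_{X/[X/\cG]}\to\bT_X\to f^*\bT_{[X/\cG]}$ with $f^*\bT_{[X/\cG]}\cong[\cL\xrightarrow{a}\bT_X]$ concentrated in degrees $[-1,0]$; a direct computation of this homotopy fiber identifies $\bT_{X/[X/\cG]}\cong\cL$ in such a way that the structural map $\bT_{X/[X/\cG]}\to\bT_X$ is (up to sign) the anchor $a$. On the other hand, the $1$-shifted Lagrangian structure on $f$ produces, via \cref{thm:nondegeneratecoisotropic}, the rightmost vertical isomorphism $\bT_{X/[X/\cG]}[1]\xrightarrow{\sim}\bL_X[1]$, i.e. an isomorphism $\cL\xrightarrow{\sim}\T^*_X$. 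Here I would invoke the proof of \cref{prop:symplecticgroupoid}, which already identifies the map $\cL\cong\N_{X/\cG}\to\bL_X$ with the morphism induced by the isotropic unit section; its nondegeneracy is precisely the Lagrangian condition, and it coincides with the classical $\psi$.

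Next I would compute the bivector that the forgetful map $\Cois^{nd}(f,1)\cong\Lagr(f,1)\to\Pois(X,0)$ assigns to this Lagrangian. Chasing the coisotropic diagram of fiber sequences from \cref{sect:nondegeneracy} in the case $n=1$, the induced map $\bL_X\to\bT_X$ is the composite of the rightmost vertical map $g\colon\bL_X\to\bT_{X/[X/\cG]}$ of that diagram with the structural map $\bT_{X/[X/\cG]}\to\bT_X$. By \cref{thm:nondegeneratecoisotropic} the two packages of vertical maps (for $\omega^\sharp$ and for $\pi^\sharp$) are mutually inverse, so $g$ is inverse to the Lagrangian isomorphism of the previous step, giving $g=\psi^{-1}$; the second map is the anchor $a$ by the first step. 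Hence the induced bivector is $\T^*_X\xrightarrow{\psi^{-1}}\cL\xrightarrow{a}\T_X$, which is the classical one, and the proposition follows.

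The hard part will be the previous paragraph, namely justifying that the forgetful map $\Cois(f,1)\to\Pois(X,0)$ reads off exactly this composite. Concretely, one must check, through the construction of \cite{MS2}, that the $\bP_1$-structure on $\cO(X)$ underlying the coisotropic $\bP_{[2,1]}$-data is recovered at the level of first-order symbols as the rightmost vertical map of the coisotropic diagram composed with the connecting map $\bT_{X/[X/\cG]}\to\bT_X$; this is the one general, presentation-independent input that the rest of the argument feeds into. An alternative that sidesteps the operadic bookkeeping would be to use that $\cG\cong X\times_{[X/\cG]}X$ is the isotropic self-intersection, whose $0$-shifted symplectic form is the multiplicative form of \cref{prop:symplecticgroupoid}, and to characterize the induced Poisson structure on $X$ by the condition that source and target become (anti-)Poisson maps $\cG\to X$, which determines it uniquely and matches the classical prescription; I expect the direct bivector comparison above to be the cleaner route once the compatibility in \cite{MS2} is in hand.
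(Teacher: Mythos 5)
Your proof is correct and follows essentially the same route as the paper: both identify the classical bivector as $a\circ\psi^{-1}$ via \cite[Proposition II.2.1]{CDW}, identify the Lagrangian isomorphism $\bT_{X/[X/\cG]}\cong\cL\to\T^*_X$ with the unit-section identification $\psi$, invoke \cref{thm:nondegeneratecoisotropic} to see that the coisotropic map $\T^*_X\to\bT_{X/[X/\cG]}$ is $\psi^{-1}$, and observe that the structural map $\bT_{X/[X/\cG]}\to\bT_X$ is the anchor. The only difference is that you explicitly flag (as the ``hard part'') the compatibility from \cite{MS2} ensuring the forgetful map $\Cois(f,1)\to\Pois(X,0)$ reads off the bivector as exactly this composite, a step the paper asserts without comment.
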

\begin{proof}
By \cref{thm:nondegeneratecoisotropic} the underlying nondegenerate 1-shifted coisotropic structure on $X\rightarrow [X/\cG]$ has the morphism $\pi^\sharp\colon \T^*_X\rightarrow \bT_{X/[X/\cG]}$ inverse to the quasi-isomorphism $\omega^\sharp\colon \bT_{X/[X/\cG]}\xrightarrow{\sim}\T^*_X$ induced by the Lagrangian structure. The latter morphism is given by the natural isomorphism $\cL\cong \T^*_X$ for the Lie algebroid $\cL$ underlying the symplectic groupoid $\cG$.

The 0-shifted Poisson structure on $X$ underlying the 1-shifted coisotropic structure on $X\rightarrow [X/\cG]$ has its bivector given by the composite
\[\T^*_X\rightarrow \bT_{X/[X/\cG]}\rightarrow \T_X.\]
But it is easy to see that $\cL\cong \bT_{X/[X/\cG]}\rightarrow \T_X$ coincides with the anchor map which gives the result.
\end{proof}

\begin{remark}
Note that the notion of a symplectic realization of a Poisson manifold introduced in \cite{We} is slightly more general. Namely, if $M$ is a Poisson manifold, its symplectic realization was defined to be a symplectic manifold $S$ with a submersive Poisson map $S\rightarrow M$. In particular, $S$ was not required to be a groupoid. However, symplectic groupoids integrating $M$ provide particularly nice examples of symplectic realizations.
\label{remark:symplecticrealization}
\end{remark}

\begin{example}
Let $G$ be a semisimple algebraic group, $P\subset G$ a parabolic subgroup and $M$ its Levi factor. Let $(-, -)$ be a nondegenerate symmetric bilinear $G$-invariant pairing on $\g=\Lie(G)$. By restriction it gives rise to a nondegenerate pairing on $\m=\Lie(M)$. Let us also denote $\p=\Lie(P)$. These pairings give rise to 2-shifted symplectic structures on $\B G$ and $\B M$ and it is shown in \cite{Sa2} that the correspondence
\[
\xymatrix{
& \B P \ar[dl] \ar[dr] & \\
\B M && \B G
}
\]
has a 2-shifted Lagrangian structure coming from the exact sequence
\begin{equation}
0\rightarrow \p\rightarrow \g\oplus \m\rightarrow \p^*\rightarrow 0
\label{eq:pexactsequence}
\end{equation}
of $P$-representations.

Now suppose $E$ is an elliptic curve. Denote by $\Bun_{-}(E) = \Map(E, \B(-))$ the moduli space of principal bundles on $E$. Then applying $\Map(E, -)$ to the above 2-shifted Lagrangian correspondence we obtain a 1-shifted Lagrangian correspondence
\[
\xymatrix{
& \Bun_P(E) \ar[dl] \ar[dr] & \\
\Bun_M(E) && \Bun_G(E)
}
\]
using the AKSZ theorem of \cite{PTVV}. In this way we obtain a 0-shifted Poisson structure on $\Bun_P(E)$. Let us work it out explicitly. Denote $L=\Bun_P(E)$ and $X=\Bun_M(E)\times \Bun_G(E)$ for brevity.

Fix a principal $P$-bundle $F\rightarrow E$. We can identify the tangent complex to $L$ at $F$ with
\[\bT_{L, F}\cong \Gamma(E, \ad F)[1].\]

Similarly, we can identify
\[\bT_{L/X, F}\cong \fib\left(\Gamma(E, F\times_P \p)\rightarrow \Gamma(E, F\times_P(\m\oplus\g))\right)[1].\]

Using the exact sequence \eqref{eq:pexactsequence} we can simplify it to
\[\bT_{L/X, F}\cong \Gamma(E, F\times_P \p^*).\]

Moreover, $\bL_{L, F}\cong \Gamma(E, F\times_P \p^*)$ and the equivalence $\bT_{L/X}\cong \bL_L$ induced by the 1-shifted Lagrangian structure on $L\rightarrow X$ is the identity.

The bivector underlying the 0-shifted Poisson structure on $L$ is given by the composite
\[\bL_L\xrightarrow{\sim} \bT_{L/X}\rightarrow \bT_L\]
which is easily seen to be given by the morphism
\[\Gamma(E, F\times_P \p^*)\rightarrow \Gamma(E, F\times_P\p)[1]\]
induced by the morphism $\p^*\rightarrow \p[1]$ in the derived category of $P$-representations obtained as the connecting homomorphism in the sequence \eqref{eq:pexactsequence}. In this way we recover the Feigin--Odesskii Poisson structure on $\Bun_P(E)$ constructed in \cite{FO}.

Note that in this way we also obtain a symplectic groupoid
\[\Bun_P(E)\times_{\Bun_M(E)\times \Bun_G(E)} \Bun_P(E)\rightrightarrows \Bun_P(E)\]
integrating the Feigin--Odesskii Poisson structure.
\end{example}

\subsection{Shift 1}

In this section we discuss symplectic realizations of some 1-shifted Poisson structures.

\begin{defn}
A \defterm{Manin pair} is a pair of finite-dimensional Lie algebras $\fd$ and $\g$, where $\fd$ is equipped with a nondegenerate invariant pairing and $\g\subset \fd$ is a Lagrangian subalgebra.
\end{defn}

Given a Manin pair $(\fd, \g)$ choose a Lagrangian splitting $\fd\cong \g\oplus \fd/\g$ where we do not assume that $\fd/\g\subset \fd$ is closed under the bracket. Formulas \eqref{eq:forgetqlie1} and \eqref{eq:forgetqlie2} then define a quasi-Lie bialgebra structure on $\g$.

\begin{remark}
Changing the Lagrangian complement to $\g\subset \fd$ corresponds to twists of the quasi-Lie bialgebra structure on $\g$.
\end{remark}

\begin{defn}
A \defterm{Manin triple} is a triple $(\fd, \g, \g^*)$ of finite-dimensional Lie algebras, where $\fd$ is equipped with a nondegenerate invariant pairing and $\g\subset \fd$ and $\g^*\subset \fd$ are complementary Lagrangian subalgebras.
\end{defn}

\begin{remark}
If $\g\subset \fd$ is a Manin pair, we obtain an exact sequence of $\g$-representations
\[0\longrightarrow \g\longrightarrow \fd\longrightarrow \g^*\longrightarrow 0\]
and the extra data in a Manin triple is given by a splitting of this exact sequence compatibly with the Lie brackets and the pairings. In particular, in a Manin triple $(\fd, \g, \g^*)$ $\g^*$ is indeed a linear dual of $\g$.
\end{remark}

Given a Manin triple $(\fd, \g, \g^*)$, the Lie bracket on $\g^*$ gives rise to a Lie cobracket on $\g$ which is easily seen to endow $\g$ with a structure of a Lie bialgebra.

Now suppose $D$ is an algebraic group whose Lie algebra $\fd$ is equipped with a nondegenerate $D$-invariant pairing. Also suppose $G\subset D$ is a closed subgroup whose Lie algebra $\g\subset \fd$ is Lagrangian. Such pairs are called \defterm{group pairs} in \cite{AKS}. By the results of \cite[Section 3.3]{AKS} one obtains a quasi-Poisson structure on $G$ such that the induced quasi-Lie bialgebra structure on $\g$ coincides with the one above. In particular, we get a 1-shifted Poisson structure on $\B G$ by \cref{thm:1shiftedBG}.

\begin{prop}
Suppose $(D, G)$ is a group pair as above. Then the 2-shifted Lagrangian
\[\B G\rightarrow \B D\]
is a symplectic realization of the 1-shifted Poisson structure on $\B G$ corresponding to the quasi-Poisson structure on $G$.
\label{prop:quasipoissonrealization}
\end{prop}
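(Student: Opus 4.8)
The plan is to identify the $1$-shifted Poisson structure that the $2$-shifted Lagrangian $\B G\to\B D$ induces on $\B G$ with the one coming from the quasi-Poisson structure on $G$, by reducing everything to the explicit formulas \eqref{eq:forgetqlie1} and \eqref{eq:forgetqlie2}. First I would pin down the relevant data on the target. By \cref{thm:nondegeneratepoisson} together with \cref{prop:2shiftedBG}, the $2$-shifted symplectic structure on $\B D$ determined by the pairing on $\fd$ corresponds to the nondegenerate $2$-shifted Poisson structure given by the Casimir $c\in\Sym^2(\fd)^D$ inverse to the pairing. By \cref{thm:nondegeneratecoisotropic}, the $2$-shifted Lagrangian $\B G\to\B D$ then corresponds to a nondegenerate $2$-shifted coisotropic structure lifting $c$.

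Next I would identify this coisotropic structure. By \cref{prop:coisotropicsubgroup}, the space $\Cois(\B G\to\B D,2)$ is the discrete set $\ker(\Sym^2(\fd)^D\to\Sym^2(\fd/\g)^G)$, and the forgetful map to $\Pois(\B D,2)\cong\Sym^2(\fd)^D$ is just the inclusion of this subset. Because $\g\subset\fd$ is Lagrangian (in particular coisotropic), the inverse pairing $c$ maps to $0$ in $\Sym^2(\fd/\g)$, so it lies in the kernel; by injectivity of the inclusion it is precisely the coisotropic structure underlying our Lagrangian. The forgetful map $\Lagr(\B G\to\B D,2)\cong\Cois^{nd}(\B G\to\B D,2)\to\Pois(\B G,1)$ is, by the construction following \cref{thm:nondegeneratecoisotropic}, the composite of this identification with the coisotropic-to-Poisson forgetful map, so the induced $1$-shifted Poisson structure on $\B G$ is the image of $c$.

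I would then compute this image at the infinitesimal level. Restricting along the completion $\B\g=\B\hG\to\B G$ and applying \cref{prop:coisotropicqlie} with $\h\subset\g$ there replaced by $\g\subset\fd$, the image of $c$ in $\Pois(\B\g,1)\cong\QLie(\g)$ is the quasi-Lie bialgebra structure on $\g$ given precisely by \eqref{eq:forgetqlie1} and \eqref{eq:forgetqlie2}. This is exactly the quasi-Lie bialgebra structure attached to the Manin pair $(\fd,\g)$, which by \cite{AKS} is the differentiation of the given quasi-Poisson structure on $G$. Finally, under the equivalences $\Pois(\B G,1)\cong\QPois(G)$ and $\Pois(\B\g,1)\cong\QLie(\g)$ of \cref{thm:1shiftedBG} and \cref{thm:1shiftedBg}, the restriction $\Pois(\B G,1)\to\Pois(\B\g,1)$ is the differentiation functor $\QPois(G)\to\QLie(\g)$. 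The structure induced by the Lagrangian and the one from \cite{AKS} thus lie over the same quasi-Lie bialgebra on $\g$, and since $G$ carries at most one quasi-Poisson structure compatible with a given quasi-Lie bialgebra, the two coincide.

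I expect the main obstacle to be the second step, namely matching the nondegenerate coisotropic structure underlying the Lagrangian with the Casimir $c$ under the classification of \cref{prop:coisotropicsubgroup}, and the global-to-infinitesimal reduction in the last step. The compatibility of the Lagrangian-to-Poisson and coisotropic-to-Poisson forgetful maps holds by construction, and \cref{prop:coisotropicqlie} supplies the infinitesimal formulas, so the real content is promoting the resulting statement about $\QLie(\g)$ to one about $\QPois(G)$ via the uniqueness of integration of a quasi-Lie bialgebra to a quasi-Poisson group.
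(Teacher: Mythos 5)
Your proposal is correct and follows essentially the same route as the paper's proof: identify the Lagrangian $\B G\to\B D$ with the nondegenerate $2$-shifted coisotropic structure given by the Casimir $c$ via \cref{prop:coisotropicsubgroup}, compute its image in $\Pois(\B G,1)$ via \cref{prop:coisotropicqlie}, and match the result with the quasi-Poisson structure of \cite{AKS}. The only difference is that you spell out two steps the paper leaves implicit --- the injectivity argument pinning the underlying coisotropic structure down to $c\in\ker(\Sym^2(\fd)^D\to\Sym^2(\fd/\g)^G)$, and the promotion from $\QLie(\g)$ to $\QPois(G)$ via the fully faithful differentiation functor --- which is added care within the same argument, not a different approach.
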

\begin{proof}
We have computed the space $\Cois(\B G\rightarrow \B D, 2)$ in \cref{prop:coisotropicsubgroup}. In particular, nondegenerate coisotropic structures are given by nondegenerate pairings on $D$ for which $\g$ is Lagrangian which provides the identification $\Cois^{nd}(\B G\rightarrow \B D, 2)\cong \Lagr(\B G\rightarrow \B D, 2)$.

The forgetful map $\Cois^{nd}(\B G\rightarrow \B D, 2)\rightarrow \Pois(\B G, 1)$ was computed in \cref{prop:coisotropicqlie} where it was shown that the underlying quasi-Lie bialgebra structure on $\g$ coincides with the one given by formulas \eqref{eq:forgetqlie1} and \eqref{eq:forgetqlie2}.
\end{proof}

Similarly, we may consider \defterm{group triples} $(D, G, G^*)$ which are triples of algebraic groups whose Lie algebras form a Manin triple. From such a data we get a Poisson-Lie structure on $G$ and hence a $1$-shifted coisotropic structure on $\pt\rightarrow \B G$ by \cref{cor:poissongroups}. The two morphisms $\B G\rightarrow \B D$ and $\B G^*\rightarrow \B D$ carry an obvious $2$-shifted Lagrangian structure. Moreover, their intersection $\B G^*\times_{\B D} \B G\cong \dquot{G^*}{D}{G}$ has the zero tangent complex at the unit of $D$ since $\g^*,\g\subset \fd$ are transverse. Therefore, the inclusion of the unit $\pt\rightarrow \dquot{G^*}{D}{G}$ has a unique $1$-shifted Lagrangian structure. In other words, we obtain a 2-shifted Lagrangian correspondence
\begin{equation}
\xymatrix{
& \pt \ar[dl] \ar[dr] & \\
\B G^* \ar[dr] && \B G \ar[dl] \\
& \B D &
}
\label{eq:PoissonLieDuality}
\end{equation}

\begin{prop}
Suppose $(D, G, G^*)$ is a group triple as above. Then the $2$-shifted Lagrangian correspondence \eqref{eq:PoissonLieDuality} is a symplectic realization of the $1$-shifted coisotropic structure on $\pt\rightarrow \B G$ corresponding to the Poisson-Lie structure on $G$.
\label{prop:poissonlierealziation}
\end{prop}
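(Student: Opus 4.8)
The plan is to reduce the statement to \cref{cor:poissongroups}, which identifies $\Cois(\pt\rightarrow \B G, 1)$ with the \emph{set} of Poisson-Lie structures on $G$; once the underlying $1$-shifted Poisson structure on $\B G$ is pinned down, the coisotropic structure is then determined with no further homotopical data to match. First I would recall how the $2$-shifted Lagrangian correspondence \eqref{eq:PoissonLieDuality} produces a $1$-shifted coisotropic structure on the composite $\pt\rightarrow \B G^*\times_{\B D}\B G\rightarrow \B G$. Assuming \cref{conj:intersections}, the projection $\B G^*\times_{\B D}\B G\rightarrow \B G$ is a $1$-shifted Poisson morphism, and composing the $1$-shifted coisotropic structure on $\pt\rightarrow \B G^*\times_{\B D}\B G$ with it yields the desired coisotropic structure on $\pt\rightarrow \B G$. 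By the definition of a symplectic realization of a coisotropic morphism, this is exactly the induced datum whose identification we must establish.

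Next I would identify the underlying $1$-shifted Poisson structure on $\B G$. By the defining property of the correspondence, this is the $1$-shifted Poisson structure carried by the $2$-shifted Lagrangian $\B G\rightarrow \B D$, which was computed in \cref{prop:quasipoissonrealization} to be the quasi-Poisson structure on $G$ associated to the group pair $(D, G)$, with induced quasi-Lie bialgebra structure on $\g$ given by formulas \eqref{eq:forgetqlie1} and \eqref{eq:forgetqlie2}. The key observation is that in a Manin triple the canonical Lagrangian complement $\g^*\subset\fd$ is itself a subalgebra and the Casimir is the inverse pairing, which in the splitting $\fd\cong\g\oplus\g^*$ pairs $\g$ with $\g^*$ and vanishes on each factor. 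Concretely this forces the self-component $P\in\Sym^2(\g)$ of the Casimir to vanish together with the $\g$-valued part of $[\g^*,\g^*]$, so that \eqref{eq:forgetqlie2} collapses to $\phi=0$ while \eqref{eq:forgetqlie1} reproduces the cobracket on $\g$ dual to the bracket on $\g^*$. This is precisely the Lie bialgebra structure of the Manin triple, hence the Poisson-Lie structure on $G$.

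Finally I would combine the two steps. The coisotropic structure produced in the first step lies in $\Cois(\pt\rightarrow \B G, 1)$, which by \cref{cor:poissongroups} is a set in which each object is a Poisson-Lie structure on $G$ determined by its image in $\Pois(\B G, 1)$. Since that image is the Poisson-Lie structure identified in the second step, the two must coincide. As an independent consistency check, the mere existence of a coisotropic lift along $\pt\rightarrow \B G$ already forces $\phi=0$ by \cref{prop:coisotropicsubgroup} specialized to $\h=0$, confirming that the quasi-Poisson structure on $G$ is genuinely Poisson-Lie.

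I expect the main obstacle to be the reliance on \cref{conj:intersections}: the entire passage from the Lagrangian correspondence to a bona fide $1$-shifted coisotropic structure on $\pt\rightarrow \B G$, as well as the compatibility between the nondegenerate (Lagrangian) and general (Poisson/coisotropic) pictures of the intersection, is exactly what that conjecture provides, so the statement is conditional on it. The only genuinely computational input, namely that the quasi-Lie bialgebra data degenerates to a Lie bialgebra for a Manin triple, is routine once \eqref{eq:forgetqlie1}--\eqref{eq:forgetqlie2} and the subalgebra property of $\g^*$ are in hand.
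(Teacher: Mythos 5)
There is a genuine gap in your final step. You claim that a point of $\Cois(\pt\rightarrow \B G, 1)$ is ``determined by its image in $\Pois(\B G, 1)$,'' so that pinning down the underlying $1$-shifted Poisson structure on $\B G$ (via \cref{prop:quasipoissonrealization} and the collapse of \eqref{eq:forgetqlie1}--\eqref{eq:forgetqlie2} for a Manin triple) would finish the proof. But $\Pois(\B G,1)$ is not a set: by \cref{thm:1shiftedBG} it is the groupoid $\QPois(G)$, whose morphisms are twists, and the image of a point of $\Cois(\pt\rightarrow\B G,1)$ in it is only well defined up to such a twist. Under the identifications of \cref{prop:coisotropicsubgroup} and \cref{cor:poissongroups}, the forgetful map sends a Poisson-Lie structure $\pi'$ to the object $(\pi',0)$ of $\QPois(G)$; this is injective on objects but \emph{not} on isomorphism classes: two distinct Poisson-Lie bivectors can be related by a twist $\lambda\in\wedge^2(\g)$. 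For instance, the zero Poisson-Lie structure and the one attached to any nonzero triangular $r$-matrix $\lambda$ (with $\llbracket\lambda,\lambda\rrbracket=0$) have equivalent images in $\Pois(\B G,1)$. Equivalently, the homotopy fiber of $\Cois(\pt\rightarrow\B G,1)\rightarrow\Pois(\B G,1)$ over a given quasi-Poisson structure is the set of twists trivializing it, not a point; this twist is exactly the coisotropic datum, and it is what determines the actual bivector (this is the whole mechanism behind \cref{prop:constantrmatrix}, where many distinct Poisson-Lie structures are compatible with one and the same shifted Poisson structure on $\B G$). Your argument discards precisely this datum, so it only shows that the induced Poisson-Lie structure agrees with the Manin-triple one \emph{up to a twist}, not on the nose. (Your intermediate computations are fine: the identification of the underlying $1$-shifted Poisson structure on $\B G$ through \cref{prop:quasipoissonrealization}, and the fact that for the splitting $\fd\cong\g\oplus\g^*$ one has $P=0$ and $[\g^*,\g^*]\subset\g^*$, so $\phi=0$ and $\delta$ is the cobracket dual to the bracket on $\g^*$. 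Also, the dependence on \cref{conj:intersections} is unavoidable, since the induced coisotropic structure is defined through it.)

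The paper closes this gap by computing an invariant that does remember the coisotropic datum: the induced unshifted Poisson bivector on $G\cong\pt\times_{\B G}\pt$ itself. From the correspondence \eqref{eq:PoissonLieDuality} one obtains the $1$-shifted Lagrangian $G\rightarrow \B G^*\times_{\B D}\B G^*\cong[\dquot{G^*}{D}{G^*}]$, and \cref{thm:nondegeneratecoisotropic} converts its Lagrangian structure into a bivector: in the left trivialization the tangent complex of $[\dquot{G^*}{D}{G^*}]$ at $g$ is the two-term complex $\g^*\xrightarrow{\Ad_g}\g$, the connecting map $\bL_{G,g}\rightarrow\bT_{G,g}$ is $\Ad_g\colon\g^*\rightarrow\g$, and its linearization at $g=e$ is the coadjoint-action map $\g\otimes\g^*\rightarrow\g$, i.e.\ the Manin-triple cobracket. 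Since a Lie bialgebra structure on $\g$ integrates to at most one Poisson-Lie structure on $G$, comparing honest cobrackets (not twist classes) suffices. To repair your argument you would either have to track the twist datum explicitly through \cref{prop:quasipoissonrealization} and \cref{prop:coisotropicqlie}, or carry out a computation equivalent to the paper's.
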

\begin{proof}
To prove the claim, we have to compare the Poisson-Lie structure on $G$ obtained from the Manin triple with the one obtained from the 2-shifted Lagrangian correspondence. For this it is enough to compare the induced Lie cobrackets on $\g$.

As observed in \cref{sect:nondegeneracy}, the symplectic realization of the Poisson structure on $G$ is given by the $1$-shifted Lagrangian $f\colon G\rightarrow \B G^*\times_{\B D} \B G^*\cong [\dquot{G^*}{D}{G^*}]$. To compute the induced Poisson structure on $G$, let us unpack the underlying two-forms on $G$ and $[\dquot{G^*}{D}{G^*}]$.

Trivialize the tangent bundle to $D$ and $G$ using left translations. Given an element $d\in D$, the tangent complex to $[\dquot{G^*}{D}{G^*}]$ at $d$ is
\[\bT_{[\dquot{G^*}{D}{G^*}], d}\cong (\g^*\oplus \g^*\rightarrow \fd)\]
in degrees $-1$ and $0$, where one of the maps is the identity and the other map is given by the composite
\[\g^*\longrightarrow \fd\xrightarrow{\Ad_d}\fd.\]

Using the exact sequence $0\rightarrow \g^*\rightarrow \fd\rightarrow \g\rightarrow 0$ we get a quasi-isomorphic model for the tangent complex as
\[\bT_{[\dquot{G^*}{D}{G^*}], d}\cong (\g^*\xrightarrow{\Ad_d} \g).\]
The $1$-shifted symplectic structure on $[\dquot{G^*}{D}{G^*}]$ induces an obvious isomorphism
\[\bT_{[\dquot{G^*}{D}{G^*}], d}\cong \bL_{[\dquot{G^*}{D}{G^*}], d}[1].\]

The Lagrangian structure on $G\rightarrow [\dquot{G^*}{D}{G^*}]$ gives rise to a fiber sequence of complexes
\[\bT_{G, g}\longrightarrow \bT_{[\dquot{G^*}{D}{G^*}], g}\longrightarrow \bL_{G, g}[1]\]
for $g\in G$. Explicitly, it is given by the vertical fiber sequence of complexes
\[
\xymatrix{
(0 \ar[d] \ar[r] & \g) \ar@{=}[d] \\
(\g^* \ar@{=}[d] \ar^{\Ad_g}[r] & \g) \ar[d] \\
(\g^* \ar[r] & 0)
}
\]

Therefore, the connecting homomorphism $\bL_{G, g}\rightarrow \bT_{G, g}$ is given by the morphism $\Ad_g\colon \g^*\rightarrow \g$ which is the underlying bivector on $G$. The Lie cobracket on $\g$ is given by linearizing this bivector at $g=e$, so we see that the cobracket is given by $[x, -]\colon \g^*\rightarrow \g$ for $x\in\g$, i.e. the map $\g\otimes \g^*\rightarrow \g$ given by the coadjoint action of $\g^*$ which recovers the Lie cobracket on $\g$ coming from the Manin triple.
\end{proof}

\begin{remark}
Note that in the course of the proof we have constructed a symplectic realization of the underlying Poisson-Lie structure on $G$ as $G\rightarrow [\dquot{G^*}{D}{G^*}]$. In particular, we obtain the symplectic groupoid
\[G\times_{[\dquot{G^*}{D}{G^*}]} G\cong (G\times G^*)\times_D (G\times G^*)\rightrightarrows G\]
known as the Lu--Weinstein groupoid \cite[Section 4.2]{Lu2}. A similar interpretation of the Lu--Weinstein groupoid was previously given in \cite{Sev}.
\end{remark}

\begin{remark}
The symmetry of the Lagrangian correspondence \eqref{eq:PoissonLieDuality} reflects Poisson-Lie duality.
\label{remark:PoissonLieDuality}
\end{remark}

\section{Classical \texorpdfstring{$r$}{r}-matrices}

In this section we relate shifted Poisson structures to the notion of a classical $r$-matrix.

\subsection{Classical notions}

Let $r\in\g\otimes \g$. The classical Yang--Baxter equation is an equation in $\g\otimes \g\otimes \g$ given by
\begin{equation}
\CYBE(r) := [r_{12}, r_{13}] + [r_{12}, r_{23}] + [r_{13}, r_{23}] = 0,
\label{eq:CYBE}
\end{equation}
where, for instance, $r_{12} = r\otimes 1\in (\U\g)^{\otimes 3}$.

Now consider an element $c\in\Sym^2(\g)^G$ and the induced trivector
\[\phi = -\frac{1}{6}[c_{12}, c_{23}].\]
The data of twist from the quasi-Poisson group $(G, \pi=0, \phi)$ to $(G, \pi, 0)$ is given by $\lambda\in\wedge^2(\g)$ satisfying
\[\frac{1}{2}\llbracket\lambda, \lambda\rrbracket = \phi,\qquad \pi = \lambda^L - \lambda^R.\]
In turn, the first equation is equivalent to the classical Yang--Baxter equation \eqref{eq:CYBE} for $r=2\lambda + c$.

\begin{defn}
A \defterm{quasi-triangular classical $r$-matrix} is the data of $r\in\g\otimes \g$ such that
\begin{enumerate}
\item $\CYBE(r) = 0$,

\item The symmetric part of $r$ is $G$-invariant.
\end{enumerate}
\end{defn}

One also has the following generalization. Let $H\subset G$ be a subgroup and $U\subset \h^*$ an open dense subset. Consider a function $r\colon U\rightarrow \g\otimes \g$. Its differential is $\ddr r\colon U\rightarrow \h\otimes \g\otimes \g$. We denote by $\Alt(\ddr r)$ the function $U\rightarrow \wedge^3(\g)$ obtained as its antisymmetrization.

\begin{defn}
A \defterm{quasi-triangular classical dynamical $r$-matrix} is the data of a function $r\colon U\rightarrow \g\otimes \g$ such that
\begin{enumerate}
\item $r$ is an $\h$-equivariant function,

\item $\CYBE(r) = 0$,

\item The symmetric part of $r$ is a constant element of $\Sym^2(\g)^G$.
\end{enumerate}
\end{defn}

Note that the data of a quasi-triangular classical dynamical $r$-matrix is equivalent to the data of $c\in\Sym^2(\g)^G$ and an $\h$-equivariant function $\lambda\in U\rightarrow \wedge^2(\g)$ such that
\[\frac{1}{2}\llbracket\lambda, \lambda\rrbracket + \Alt(\ddr\lambda) = \phi.\]

\subsection{Non-dynamical case}

\begin{prop}
Let $G$ be an algebraic group. The space parametrizing the pairs of
\begin{itemize}
\item A 2-shifted Poisson structure $\pi$ on $\B G$,

\item A 1-shifted Poisson morphism $\pt\rightarrow \B G$ with the 1-shifted Poisson structure on $\B G$ obtained from $\pi$
\end{itemize}
is equivalent to the set of quasi-triangular Poisson structures on $G$, i.e. the set of quasi-triangular classical $r$-matrices.

The quasi-triangular Poisson structure is factorizable iff the underlying 2-shifted Poisson structure on $\B G$ is 2-shifted symplectic.
\label{prop:constantrmatrix}
\end{prop}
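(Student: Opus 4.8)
The plan is to realize the space in the statement as a homotopy fiber product and to unwind it using the explicit models established earlier. Writing $f\colon\pt\to\B G$, the space of pairs in question is
\[\Pois(\B G, 2)\times_{\Pois(\B G, 1)}\Pois(f, 1),\]
where the left map is the forgetful map reducing the shift and the right map is the projection of a $1$-shifted Poisson morphism onto the underlying Poisson structure of its target $\B G$. By \cref{prop:2shiftedBG} the first factor is the set $\Sym^2(\g)^G$, by \cref{cor:poissongroups} the second factor is the set of Poisson-Lie structures on $G$, and by \cref{thm:1shiftedBG} the base is the groupoid $\QPois(G)$. Since both outer factors are discrete, the homotopy fiber product is again a set, with elements the triples $(c,\pi,\lambda)$ of a Casimir $c\in\Sym^2(\g)^G$, a Poisson-Lie structure $\pi$, and a morphism $\lambda$ in $\QPois(G)$ from the image of $c$ to the image of $\pi$.

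First I would identify the two images inside $\QPois(G)$. By \cref{prop:forget21BG} the Casimir $c$ maps to the quasi-Poisson structure $(\pi_G=0,\phi)$ with $\phi=-\frac{1}{6}[c_{12},c_{23}]$, while the Poisson-Lie structure maps to $(\pi,0)$. Hence $\lambda\in\wedge^2(\g)$ is precisely a twist from $(0,\phi)$ to $(\pi,0)$, which by the discussion following \eqref{eq:CYBE} is the data of the equations $\pi=\lambda^L-\lambda^R$ and $\frac{1}{2}\llbracket\lambda,\lambda\rrbracket=\phi$. The first equation recovers $\pi$ from $\lambda$, so a point of the fiber product is the same as a pair $(c,\lambda)$ subject to the single relation $\frac{1}{2}\llbracket\lambda,\lambda\rrbracket=-\frac{1}{6}[c_{12},c_{23}]$. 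Setting $r=2\lambda+c$, this relation is equivalent to $\CYBE(r)=0$; since the symmetric part of $r$ is $c\in\Sym^2(\g)^G$ and its antisymmetric part is $2\lambda$, the pair $(c,\lambda)$ is exactly a quasi-triangular classical $r$-matrix. The inverse splits a given $r$ into symmetric and antisymmetric parts to recover $c$ and $\lambda$, establishing the claimed bijection.

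For the factorizability statement I would use that $r$ is factorizable precisely when its symmetric part $c$ is nondegenerate, i.e. when $c\in\Sym^2(\g)^G$ induces an isomorphism $\g^*\to\g$. Under the identification $\Pois(\B G, 2)\cong\Sym^2(\g)^G$ of \cref{prop:2shiftedBG}, this is exactly the nondegeneracy of the associated bivector $\pi^\sharp\colon\bL_{\B G}\to\bT_{\B G}[-2]$, which by \cref{thm:nondegeneratepoisson} is equivalent to the $2$-shifted Poisson structure on $\B G$ being $2$-shifted symplectic, matching the symplectic classification recorded after \cref{prop:2shiftedBG}.

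The computation is largely bookkeeping once the fiber product is in place, since the substantive identities have already been isolated. The point requiring the most care is the structural one: verifying that the forgetful map and the Poisson-morphism projection are indeed the intended maps into $\QPois(G)$ (so that the relevant sign conventions of \cref{thm:graphcoisotropic} are tracked correctly), and that, because both outer factors are sets, the homotopy limit collapses to a discrete set of triples rather than a genuine groupoid---so that the answer is a set of $r$-matrices and not a higher stack.
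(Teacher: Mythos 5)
Your proposal is correct and takes essentially the same route as the paper's own proof: identify $\Pois(\B G,2)\cong\Sym^2(\g)^G$ via \cref{prop:2shiftedBG} and $\Pois(f,1)$ with Poisson-Lie structures via \cref{cor:poissongroups}, compute their images in $\QPois(G)$ via \cref{prop:forget21BG}, and read off the compatibility as a twist $\lambda$ satisfying $\frac{1}{2}\llbracket\lambda,\lambda\rrbracket=-\frac{1}{6}[c_{12},c_{23}]$, so that $r=2\lambda+c$ is a quasi-triangular classical $r$-matrix. Your write-up is in fact more complete than the paper's terse argument, since you make explicit both the collapse of the homotopy fiber product $\Pois(\B G,2)\times_{\Pois(\B G,1)}\Pois(f,1)$ to a set (using that the outer factors are discrete and the base is a $1$-groupoid) and the factorizability statement via nondegeneracy of $\pi^\sharp$ and \cref{thm:nondegeneratepoisson}, neither of which is spelled out in the paper.
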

\begin{proof}
Indeed, the space of 2-shifted Poisson structures on $\B G$ is equivalent to the set of Casimir elements $c\in\Sym^2(\g)^G$ by \cref{prop:2shiftedBG}. The induced 1-shifted Poisson structure on $\B G$ corresponds to the quasi-Poisson structure $(\pi=0, \phi)$, where
\[\phi = -\frac{1}{6}[c_{12}, c_{23}].\]

A 1-shifted Poisson morphism $\pt\rightarrow \B G$ is equivalent to the data of a Poisson structure $\pi$ on $G$ by \cref{cor:poissongroups}. The compatibility between the two is given by an element $\lambda\in\wedge^2(\g)$ twisting $(0, \phi)$ into $(\pi, 0)$, i.e. we get an equation
\[\frac{1}{2}\llbracket\lambda, \lambda\rrbracket = -\frac{1}{6}[c_{12}, c_{23}]\]
and hence $r = 2\lambda + c$ is a quasi-triangular classical $r$-matrix.
\end{proof}

\subsection{Dynamical case}

Before we give a description of dynamical $r$-matrices similar to \cref{prop:constantrmatrix} we need to construct a canonical 1-shifted Poisson structure on $[\g^*/G]$. We can identify $[\g^*/G]\cong \T^*[1](\B G)$, so by \cite{Cal2} it carries a 1-shifted symplectic structure. Replacing $G$ by the formal completion, it has the following description. We can identify
\[\Omega^\epsilon([\g^*/\hG])\cong \C^\bullet(\g, \Omega^\epsilon(\g^*)\otimes \Sym(\g^*[-2]))\cong \C^\bullet(\g, \Sym(\g\oplus \g[-1]\oplus \g^*[-2]),\]
where $\g^*[-2]$ and $\g[-1]$ are concentrated in weight $1$. The internal differential on $\Omega^\epsilon([\g^*/\hG])$ has the following two components:
\begin{itemize}
\item The Chevalley--Eilenberg differential,

\item The Cartan differential $\alpha\in\Omega^\epsilon(\g^*)\mapsto \iota_{a(-)}\alpha\in \Omega^\epsilon(\g^*)\otimes \g^*$, where $a\colon \g\rightarrow \T_{\g^*}$ is the action map.
\end{itemize}

We have a canonical Maurer--Cartan $\g^*$-valued one-form $\theta\in\Omega^1(\g^*; \g^*)$ on $\g^*$ which gives rise to a two-form on $[\g^*/\hG]$ of degree $1$. It is closed under the internal differential due to $\g$-equivariance. It is also closed under the de Rham differential since $\theta$ is constant on $\g^*$.

Similarly, we can identify
\[\Pol([\g^*/\hG], 1)\cong \C^\bullet(\g, \Sym(\g\oplus \g[-1]\oplus \g^*[-2]),\]
where $\g[-1]$ and $\g^*[-2]$ are again concentrated in weight $1$. The same element $\theta$ gives a degree 3 element $\pi\in\Pol([\g^*/\hG], 1)$. The Schouten bracket pairs $\g^*[-1]$ with $\g[-1]$ and $\g$ with $\g^*[-2]$, so $[\pi, \pi] = 0$. Therefore, $\pi$ defines a $1$-shifted Poisson structure on $[\g^*/\hG]$. The following statement is clear from the explicit formula for this 1-shifted Poisson structure.

\begin{lm}
The projection $[\g^*/G]\rightarrow \B G$ is a 1-shifted Poisson morphism where $\B G$ has the zero $1$-shifted Poisson structure.
\label{lm:coadjointprojection}
\end{lm}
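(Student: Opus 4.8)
The plan is to translate the assertion through \cref{thm:graphcoisotropic}: a $1$-shifted Poisson morphism structure on $f\colon [\g^*/G]\to \B G$, with $\B G$ carrying the zero $1$-shifted Poisson structure, is the same datum as a $1$-shifted coisotropic structure on the graph $g\colon [\g^*/G]\to [\g^*/G]\times \B G$, compatible with the Poisson structure on the product induced by the pair $(\pi, 0)$. By the sign convention recorded after that theorem this product structure is just $\pi$, transported to $[\g^*/G]\times \B G$ along the first projection. As in \cref{sect:BGpolyvectors}, I would first establish everything $G_{\dR}$-equivariantly on the formal model $[\g^*/\hG]\to \B\hG$, where $\pi$ admits the explicit description above, and then pass to $G_{\dR}$-invariants using $\Pol(f, 1)\cong \Pol(\hat f, 1)^{G_{\dR}}$ to deduce the statement for $[\g^*/G]\to \B G$.

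The heart of the matter is a direct inspection of the explicit formula for $\pi$. In the presentation $\Pol([\g^*/\hG], 1)\cong \C^\bullet(\g, \Sym(\g\oplus \g[-1]\oplus \g^*[-2]))$ the bivector coming from the canonical one-form $\theta$ is the weight $2$, degree $3$ element pairing the generators $\g[-1]$ (the $\B\hG$-directions of the tangent complex) with $\g^*[-2]$ (the fibrewise vector fields on $\g^*$); crucially, $\pi$ has no component pairing two $\B\hG$-directions. Consequently the associated contraction $\pi^\sharp$ carries the conormal directions of $\B\hG$ into the fibre directions of $\T_{\g^*}$, so that the composite
\[
f^*\bL_{\B\hG}\longrightarrow \bL_{[\g^*/\hG]}\xrightarrow{\ \pi^\sharp\ } \bT_{[\g^*/\hG]}[-1]\longrightarrow f^*\bT_{\B\hG}[-1]
\]
is zero. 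This is precisely the shifted incarnation of the classical fact that the fibres of a cotangent projection are coisotropic, equivalently that functions pulled back along $\T^*M\to M$ Poisson-commute.

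To promote this vanishing to the full coisotropic datum I would write it as a Maurer--Cartan element of $\Pol(g, 1)^{\geq 2}[2]$, which by \cref{thm:coisotropicrelpolyvectors} computes $\Cois(g, 1)$. As in the proofs of \cref{thm:1shiftedBg,prop:poissonspaces}, the relevant graded dg Lie algebra is concentrated in a narrow band of degrees, so the Maurer--Cartan equation truncates to its quadratic part: all higher brackets $[-,\dots,-]_m$ with $m\geq 2$ vanish by weight reasons. The element assembled from $\theta$ --- the bivector $\pi$ on the product together with the tautological relative component along $\bT_{[\g^*/\hG]/([\g^*/\hG]\times\B\hG)}\cong f^*\bT_{\B\hG}[-1]$ --- then solves the truncated equation exactly because the displayed composite vanishes.

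The main obstacle is therefore not analytic but organizational: unwinding the relative polyvector complex $\Pol(g, 1)$ for the graph and checking that the strict element built from $\theta$ is annihilated by the full differential (both the Chevalley--Eilenberg and the Cartan pieces) and has vanishing self-bracket in the $\bP_{[3,2]}$-structure. Given the explicit Schouten pairings recorded above --- $\g^*[-1]$ with $\g[-1]$ and $\g$ with $\g^*[-2]$ --- this is a finite, essentially combinatorial verification, which is what is meant by the statement being clear from the explicit formula.
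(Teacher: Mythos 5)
Your proposal is correct and spells out precisely the argument the paper compresses into the phrase ``clear from the explicit formula'': translate the statement through \cref{thm:graphcoisotropic} into a coisotropic structure on the graph $g$, work $G_{\dR}$-equivariantly on the formal model, and observe that every monomial of $\pi$ contains a fibre generator $\g^*[-2]$, so $\pi$ dies under the projection to the relative polyvectors of the graph --- which is exactly the vanishing of your displayed composite.

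One simplification you could make: since the projection $\Pol([\g^*/\hG]\times\B\hG, 1)\to \Pol([\g^*/\hG]/([\g^*/\hG]\times\B\hG), 0)$ is surjective, $\Pol(g,1)$ is its \emph{strict} kernel with the induced bracket (the paper uses this identification itself in the proof of \cref{prop:dynamicalrmatrix}), so the required Maurer--Cartan element is just $\pi\otimes 1$ regarded as an element of that kernel; its Maurer--Cartan equation is already established when the paper verifies $[\pi,\pi]=0$, so no extra ``tautological relative component'' and no discussion of truncating higher brackets is needed.
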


Both $\omega$ and $\pi$ are clearly $G_{\dR}$-invariant, so they descend to a $1$-shifted symplectic and $1$-shifted Poisson structure on $[\g^*/G]$.

\begin{lm}
The $1$-shifted Poisson structure $\pi$ on $[\g^*/G]$ is nondegenerate and is compatible with the $1$-shifted symplectic structure $\omega$.
\end{lm}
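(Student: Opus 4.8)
The plan is to establish nondegeneracy and compatibility simultaneously by showing that the two morphisms on the (co)tangent complex induced by $\pi$ and $\omega$ are mutually inverse quasi-isomorphisms; by \cref{thm:nondegeneratepoisson} this is exactly the assertion that $\pi$ is nondegenerate and that $\omega$ is the $1$-shifted symplectic structure corresponding to $\pi$. Since the projection $[\g^*/\hG]\to[\g^*/G]$ is formally étale and both $\pi$ and $\omega$ are $G_{\dR}$-invariant, it suffices to carry out the computation on the formal version $[\g^*/\hG]$, where we have the explicit models for $\Omega^\epsilon$ and $\Pol$ recorded above.

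First I would identify the tangent complex. From the fiber sequence $p^*\bT_{\B\hG}\to\bT_{[\g^*/\hG]}\to\bT_{[\g^*/\hG]/\B\hG}$ together with $\bT_{\B\hG}\cong\g[1]$, the tangent complex at a point $\xi\in\g^*$ is the two-term complex $[\g\xrightarrow{a_\xi}\g^*]$ placed in degrees $-1$ and $0$, where $a_\xi(X)=\ad^*_X\xi$ is the infinitesimal coadjoint action coming from the Cartan part of the internal differential, and the $\g$ in degree $-1$ is the Chevalley--Eilenberg direction. Dualizing, $\bL_{[\g^*/\hG]}$ is the complex $[\g\to\g^*]$ in degrees $0$ and $1$. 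In the notation of the generator descriptions, the weight-$1$ generators $\g[-1]$ and $\g^*[-2]$ of $\Pol([\g^*/\hG],1)\cong\Gamma(\Sym(\bT[-2]))$ represent $\bT[-2]$, and the Maurer--Cartan form $\theta=\sum_i e_i\otimes e^i\in\g\otimes\g^*$ (the tautological copairing) produces both the bivector $\pi$ and the two-form $\omega$ by pairing its two legs with the two summands $\g[1]$ and $\g^*$ of the tangent complex.

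The decisive observation is then that, because $\theta$ is literally the identity element of $\g\otimes\g^*$, the induced maps $\pi^\sharp\colon\bL\to\bT[-1]$ and $\omega^\sharp\colon\bT\to\bL[1]$ are, in each cohomological degree, given by the canonical perfect pairing between $\g$ and $\g^*$. Hence each is an isomorphism on the underlying graded objects, so $\pi^\sharp$ is a quasi-isomorphism and $\pi$ is nondegenerate; moreover the two maps are manifestly transpose to one another, so that $\omega^\sharp\circ\pi^\sharp[1]=\id$ and $\pi$ is compatible with $\omega$ in the sense of \cref{thm:nondegeneratepoisson}. The only nonformal point, and the main bookkeeping obstacle, is to check that these degreewise isomorphisms are genuine chain maps, i.e. that the square relating $a_\xi$ on $\bT$ to its transpose on $\bL$ commutes up to the expected sign. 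This reduces to the identity $\langle\ad^*_X\xi,Y\rangle=-\langle\xi,[X,Y]\rangle$ together with the $G_{\dR}$-equivariance of $\theta$, which is exactly what makes $\theta$ descend to a closed form; once the sign is pinned down the claim follows on $[\g^*/\hG]$, and descent along the formally étale map $[\g^*/\hG]\to[\g^*/G]$ yields the statement for $[\g^*/G]$.
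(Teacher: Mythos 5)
Your computation of the underlying maps is the same as the paper's: pulling back to the atlas $\g^*$, the bivector induces the identity map between the two-term complexes $\g\otimes\cO_{\g^*}\to\g^*\otimes\cO_{\g^*}$, which gives nondegeneracy. That half is fine, and the sign/chain-map bookkeeping you flag as the ``only nonformal point'' is indeed routine.

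The genuine gap is in the compatibility half. You assert that, by \cref{thm:nondegeneratepoisson}, exhibiting $\omega^\sharp$ and $\pi^\sharp$ as mutually inverse quasi-isomorphisms \emph{is exactly} the assertion that $\pi$ is compatible with $\omega$. That is not what the theorem says. \cref{thm:nondegeneratepoisson} asserts an equivalence of spaces $\Pois^{nd}(X,n)\cong\Symp(X,n)$ under which the form inverts the bivector; it does not say that a given pair $(\pi,\omega)$ satisfying $\omega^\sharp=(\pi^\sharp)^{-1}$ on tangent complexes actually corresponds under that equivalence. Compatibility in the sense of \cite[Definition 1.4.20]{CPTVV} is homotopy-coherent data: a shifted symplectic structure carries all of its closure data, a shifted Poisson structure is a Maurer--Cartan element which may have higher-weight components, and the compatibility relates the two through the morphism $\Omega^\epsilon\to\Pol$ determined by the mixed structure $[\pi,-]$ on polyvectors. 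Two shifted symplectic structures can have underlying two-forms that are equivalent as maps $\bT\to\bL[1]$ --- in particular both inverse to $\pi^\sharp$ --- without being equivalent as closed forms, so ``manifestly transpose on (co)tangent complexes'' does not by itself produce the required compatibility. The paper closes exactly this gap: it uses the universal property of the de Rham algebra to construct the morphism $\Omega^\epsilon([\g^*/\hG])\to\Pol([\g^*/\hG],1)$ induced by $\pi$, observes that it unpacks to the identity on $\C^\bullet(\g,\Sym(\g\oplus\g[-1]\oplus\g^*[-2]))$, and then argues that because both $\omega$ and $\pi$ are concentrated \emph{purely in weight} $2$ (no higher-weight closure corrections and no higher-weight components of the Poisson structure), the homotopy-coherent compatibility data collapses to the strict statement that $\omega^\sharp$ inverts $\pi^\sharp$. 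Without some such weight (or strictness) argument your reduction is unjustified; with it, your computation does finish the proof.
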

\begin{proof}
$\pi$ induces a morphism $\bL_{[\g^*/G]}\rightarrow \bT_{[\g^*/G]}[-1]$ of complexes of quasi-coherent sheaves on $[\g^*/G]$ and to check that it is an equivalence it is enough to check it after pulling back to $\g^*$. But then $\pi$ induces a morphism
\[
\xymatrix{
(\g\otimes \cO_{\g^*} \ar^{\id}[d] \ar[r] & \g^*\otimes \cO_{\g^*}) \ar^{\id}[d] \\
(\g\otimes \cO_{\g^*} \ar[r] & \g^*\otimes \cO_{\g^*})
}
\]
of complexes of sheaves on $\g^*$ which is indeed an equivalence.

Being a $1$-shifted Poisson structure, $\pi$ endows $\Pol([\g^*/\hG], 1)$ with a graded mixed structure and by the universal property of the de Rham algebra, we obtain a morphism
\[\Omega^\epsilon([\g^*/\hG], 1)\longrightarrow \Pol([\g^*/\hG], 1)\]
which after unpacking corresponds to the identity
\[\C^\bullet(\g, \Sym(\g\oplus \g[-1]\oplus \g^*[-2]))\longrightarrow \C^\bullet(\g, \Sym(\g\oplus \g[-1]\oplus \g^*[-2])).\]
To show that $\pi$ is inverse to $\omega$, we have to produce a compatibility data between $\omega$ and $\pi$ in the sense of \cite[Definition 1.4.20]{CPTVV}. But since both elements are concentrated purely in weight $2$, this reduces to showing that $\omega^\sharp\colon \bT_{[\g^*/\hG]}\rightarrow \bL_{[\g^*/\hG]}[1]$ is inverse to $\pi^\sharp\colon \bL_{[\g^*/\hG]}\rightarrow \bT_{[\g^*/\hG]}[-1]$ (see e.g. \cite[Example 1.21]{Pri1}) which is obvious.
\end{proof}

From now on we will simply refer to $\pi$ as the canonical 1-shifted Poisson structure on $[\g^*/G]$.

\begin{prop}
Let $G$ be an algebraic group, $H\subset G$ a subgroup and $U\subset \h^*$ an open $H$-invariant subscheme. The space parametrizing the pairs of
\begin{itemize}
\item A 2-shifted Poisson structure $\pi$ on $\B G$,

\item A 1-shifted Poisson morphism $f\colon [U/H]\rightarrow \B G$ compatible with the canonical 1-shifted Poisson structure on $[U/H]\subset [\h^*/H]\cong \T^*[1](\B H)$ and the 1-shifted Poisson structure on $\B G$ induced from $\pi$
\end{itemize}
is equivalent to the set of quasi-triangular classical dynamical $r$-matrices with base $U$.
\label{prop:dynamicalrmatrix}
\end{prop}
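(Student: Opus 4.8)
The plan is to reduce the statement to a combination of the classification of $2$-shifted Poisson structures on $\B G$ with a Maurer--Cartan computation for the Poisson morphism, parallel to the proof of \cref{prop:constantrmatrix}; the only genuinely new feature is the canonical $1$-shifted Poisson structure on $[U/H]$, and this is exactly what produces the \emph{dynamical} term. First I would dispose of the target: by \cref{prop:2shiftedBG} the $2$-shifted Poisson structure on $\B G$ is a single Casimir $c\in\Sym^2(\g)^G$, and by \cref{prop:forget21BG} the induced $1$-shifted Poisson structure is the quasi-Poisson structure $(\pi=0,\phi)$ on $G$ with $\phi=-\frac{1}{6}[c_{12},c_{23}]$. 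Thus the first bullet point contributes exactly $c$, and the target of the morphism $f$ in the second bullet point is fixed to be $(0,\phi)$.

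The heart of the argument is the analysis of the $1$-shifted Poisson morphism $f\colon [U/H]\to \B G$ with source the canonical structure and target $(0,\phi)$. I would first pass to formal completions, replacing $\B G$ by $\B\g$ and $[U/H]$ by $[U/\hat H]$, and recover the global statement by taking $H_{\dR}$-invariants, exactly as \cref{thm:1shiftedBG} is deduced from its infinitesimal counterpart by presenting the invariants as a totalization; the effect of the invariance is precisely to force the tensor $\lambda$ below to be $\h$-equivariant, which is the first defining condition of a dynamical $r$-matrix. Using \cref{thm:graphcoisotropic} the morphism structure is governed by a coisotropic structure on the graph, and by \cref{thm:poissonpolyvectors} together with \cref{prop:MClimits} this becomes a Maurer--Cartan problem in the associated relative polyvectors. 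Since the relevant complexes are concentrated in non-negative degrees, \cref{prop:MCDeligne} reduces the computation to a Deligne groupoid, and the compatibility datum should unpack to a single $\h$-equivariant function $\lambda\colon U\to \wedge^2(\g)$.

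The key point is to read off the Maurer--Cartan equation correctly. Writing out $\Pol([U/\hat H],1)$ via the explicit model $\C^\bullet(\h,\cO(U)\otimes\Sym(\h[-1]\oplus \h^*[-2]))$, the tautological element $\theta$ defining the canonical structure couples the dynamical directions so that differentiating $\lambda$ along $U\subset\h^*$ contributes the antisymmetrized derivative $\Alt(\ddr\lambda)\in\cO(U)\otimes\wedge^3(\g)$. The upshot is that $\lambda$ must satisfy the dynamical twist equation
\[
\frac{1}{2}\llbracket\lambda,\lambda\rrbracket+\Alt(\ddr\lambda)=\phi,
\]
which in the non-dynamical case $H=\pt$ degenerates to the equation $\frac{1}{2}\llbracket\lambda,\lambda\rrbracket=\phi$ of \cref{prop:constantrmatrix}. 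I expect the main obstacle to be exactly this computation: tracking how the canonical structure $\theta$ enters the Schouten bracket so that the de Rham differential along $U$ appears as $\Alt(\ddr\lambda)$, and checking that no further constraints or higher homotopies beyond $\lambda$ survive, so that the resulting space is in fact a discrete set.

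Finally I would package the output. The pair $(c,\lambda)$ consisting of a $G$-invariant $c\in\Sym^2(\g)$ and an $\h$-equivariant $\lambda\colon U\to\wedge^2(\g)$ satisfying the dynamical twist equation is, by the reformulation of dynamical $r$-matrices recorded above, precisely a quasi-triangular classical dynamical $r$-matrix with base $U$ via $r=2\lambda+c$: the symmetric part of $r$ is the constant $G$-invariant $c$, the $\h$-equivariance of $r$ follows from that of $\lambda$ and the $G$-invariance of $c$, and the dynamical twist equation is equivalent to $\CYBE(r)=0$. This yields the claimed equivalence between the space of pairs and the set of quasi-triangular classical dynamical $r$-matrices with base $U$.
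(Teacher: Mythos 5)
Your proposal follows essentially the same route as the paper's proof: fix the Casimir $c$ by \cref{prop:2shiftedBG}, fix the target quasi-Poisson structure $(0,\phi)$ by \cref{prop:forget21BG}, convert the Poisson-morphism datum into a coisotropic structure on the graph via \cref{thm:graphcoisotropic}, pass to formal completions, and run the Maurer--Cartan/Deligne-groupoid computation (\cref{prop:MCDeligne}) in relative polyvectors, with the canonical element $\theta$ coupling the dynamical directions so that the bracket with $\lambda$ produces $\Alt(\ddr\lambda)$, and finally package the answer as $r=2\lambda+c$. This is exactly the paper's argument, and you correctly isolate its key computational point.

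One correction of substance: you attribute the $\h$-equivariance of $\lambda$ to the globalization step of taking $H_{\dR}$-invariants. In the paper's computation the equivariance already appears at the formal level, as one of the two coisotropic (kernel) conditions: writing the homotopy component $\lambda^1\in\wedge^2(\g)\otimes\cO(U)$, the flow gives $\alpha^1(t)=-t\,\dCE^{\g}\lambda^1$ and $\alpha^2(t)=-t\,\dCE^{\h}\lambda^1$, and the requirement that the value at $t=1$ lies in the kernel forces $\alpha^1+\alpha^2=0$, which is precisely infinitesimal $\h$-equivariance of $\lambda^1\colon U\to\wedge^2(\g)$; the twist equation $\phi-\frac{1}{2}\llbracket\lambda^1,\lambda^1\rrbracket-\Alt(\ddr\lambda^1)=0$ comes from the companion condition $\gamma^1+\gamma^2=0$. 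For connected $H$ the invariants step therefore adds essentially nothing here, and if you set up the formal computation expecting a non-equivariant answer you would mis-state the formal space $\Pois([U/\h]\to\B\g,1)$, even though your global conclusion would come out right. Relatedly, the homotopy in the fiber product a priori has three components $\lambda^1,\lambda^2,\lambda^3$ (not a single function), and the reduction to $\lambda^1$ alone is the step the paper handles by exhibiting the full subgroupoid with $\lambda^2=\lambda^3=0$ as essentially surjective and discrete; this is the check you flagged as the main obstacle but did not carry out.
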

\begin{proof}
It is enough to prove the corresponding statement where we replace groups by their formal completion. Let $g\colon [U/\h]\rightarrow [U/\h]\times \B\g$ be the graph. By \cref{thm:graphcoisotropic} we have to compute the pullback
\[
\xymatrix{
\Pois([U/\h]\rightarrow \B\g, 1) \ar[r] \ar[d] & \Pois(\B\g, 2) \ar[d] \\
\Cois(g, 1) \ar[r] & \Pois([U/\h]\times \B\g, 1).
}
\]

Here the map $\Pois(\B\g, 2)\rightarrow \Pois([U/\h]\times \B\g, 1)$ is given by sending a 2-shifted Poisson structure $c$ on $\B\g$ to the sum of the canonical 1-shifted Poisson structure on $[U/\h]\subset \T^*[1](\B \h)$ and the 1-shifted Poisson structure on $\B\g$ given by \cref{prop:forget21BG}.

We can identify
\[\Pol([U/\h]\times \B\g, 1)\cong \C^\bullet(\g, \Sym(\g[-1]))\otimes \C^\bullet(\h, \Sym(\h[-1]\oplus \h^*[-2])\otimes \cO(U))\]
and
\[\Pol([U/\h] / ([U/\h]\times \B\g), 0)\cong \C^\bullet(\h, \Sym(\g[-1])\otimes \cO(U)).\]

Moreover,
\[\Pol(g, 1)\cong \ker\left(\Pol([U/\h]\times \B\g, 1)\longrightarrow \Pol([U/\h] / ([U/\h]\times \B\g), 0)\right).\]

Let us begin by computing the space $\Pois([U/\h]\times \B\g, 1)$. Since $\Pol[U/\h]\times \B\g, 1)^{\geq 2}[2]$ is concentrated in non-negative degrees, by \cref{prop:MCDeligne} the space $\Pois([U/\h]\times \B\g, 1)$ is equivalent to the corresponding Deligne groupoid that we now proceed to compute.

Elements of $\Pol([U/\h]\times \B\g, 1)$ of degree 2 have the following type:
\begin{itemize}
\item $\lambda^1\in\wedge^2(\g)\otimes \cO(U)$,
\item $\lambda^2\in\g\otimes \h\otimes \cO(U)$,
\item $\lambda^3\in\wedge^2(\h)\otimes \cO(U)$.
\end{itemize}

Elements of $\Pol([U/\h]\times \B\g, 1)$ of degree 3 have the following type:
\begin{itemize}
\item $\alpha^1\in\wedge^2(\g)\otimes \g^*\otimes \cO(U)$,
\item $\alpha^2\in\wedge^2(\g)\otimes \h^*\otimes \cO(U)$,
\item $\alpha^3\in\g\otimes \h\otimes \g^*\otimes \cO(U)$,
\item $\alpha^4\in\g\otimes \h\otimes \h^*\otimes \cO(U)$,
\item $\alpha^5\in\wedge^2(\h)\otimes \g^*\otimes \cO(U)$,
\item $\alpha^6\in\wedge^2(\h)\otimes \h^*\otimes \cO(U)$,
\item $\alpha^7\in\g\otimes \h^*\otimes\cO(U)$,
\item $\alpha^8\in\h\otimes \h^*\otimes \cO(U)$,
\item $\beta^1\in\wedge^3(\g)\otimes \cO(U)$,
\item $\beta^2\in\wedge^2(\g)\otimes \h\otimes \cO(U)$,
\item $\beta^3\in\g\otimes \wedge^2(\h)\otimes \cO(U)$,
\item $\beta^4\in\wedge^3(\h)\otimes \cO(U)$.
\end{itemize}

Let $\rho\colon \h\rightarrow \h^*\otimes \cO(U)$ be the coadjoint action of $\h$ on $U$. Then we get the differential equations
\begin{align}
0 &= \frac{d\alpha^1}{dt} + \dCE^\g \lambda^1, &0 = \frac{d\alpha^2}{dt} + \dCE^\h \lambda^1 \nonumber \\
0 &= \frac{d\alpha^3}{dt} + \dCE^\g \lambda^2, &0 = \frac{d\alpha^4}{dt} + \dCE^\h \lambda^2 \nonumber \\
0 &= \frac{d\alpha^5}{dt}, &0 = \frac{d\alpha^6}{dt} + \dCE^\h \lambda^3 \nonumber \\
0 &= \frac{d\beta^1}{dt} + \rho(\lambda^2), &0 = \frac{d\beta^2}{dt} + \rho(\lambda^3)
\label{eq:dynamicalMCequations1}
\end{align}
and
\begin{align}
0 &= \frac{d\gamma^1}{dt} + [\alpha^1, \lambda^1] + [\beta^1, \lambda^1] + [\alpha^2, \lambda^2] \nonumber \\
0 &= \frac{d\gamma^2}{dt} + [\alpha^3, \lambda^1] + [\beta^2, \lambda^1] + [\alpha^1, \lambda^2] + [\alpha^4, \lambda^2] + [\beta^1, \lambda^2] + [\alpha^2, \lambda^3] \nonumber \\
0 &= \frac{d\gamma^3}{dt} + [\alpha^5, \lambda^1] + [\alpha^3, \lambda^2] + [\alpha^6, \lambda^2] + [\beta^2, \lambda^2] + [\alpha^4, \lambda^3] + [\beta^1, \lambda^3] \nonumber \\
0 &= \frac{d\gamma^4}{dt} + [\alpha^5, \lambda^2] + [\alpha^6, \lambda^3] + [\beta^2, \lambda^3].
\label{eq:dynamicalMCequations2}
\end{align}

Objects of $\Pois([U/\h]\times \B\g, 1)$ are given by Maurer--Cartan elements $\alpha^1,...,\alpha^6,\beta^1,\beta^2,\gamma^1,...,\gamma^4$. Morphisms are given by elements $\lambda^1,...,\lambda^3$ and solutions of the differential equations \eqref{eq:dynamicalMCequations1} and \eqref{eq:dynamicalMCequations2}.

The groupoid $\Cois(g, 1)$ is the subgroupoid of $\Pois([U/\h]\times \B\g, 1)$ whose objects are Maurer--Cartan elements as before satisfying
\[\sum_{i=1}^6 \alpha^i = 0,\qquad \sum_{i=1}^4 \gamma^i = 0\]
and whose morphisms are elements $\lambda_1,...,\lambda^3$ satisfying
\[\sum_{i=1}^3 \lambda_i = 0.\]

The space $\Pois([U/\h]\rightarrow \B\g, 1)$ is given by the homotopy fiber product of groupoids
\[\Pois([U/\h]\rightarrow \B\g, 1) = \Cois(g, 1) \times_{\Pois([U/\h]\times \B\g, 1)} \Pois(\B \g, 2)\]
for which we will use the standard model, see e.g. \cite[Lemma 2.2]{Ho2}. Let $\cG\subset \Pois([U/\h]\rightarrow \B\g, 1)$ be the full subgroupoid where we assume that $\lambda^2=0$ and $\lambda^3 = 0$ on objects. Then it's clear that objects in $\cG$ have no automorphisms and it is merely a set. Moreover, the inclusion is essentially surjective, hence is an equivalence. $\cG$ is therefore the set parametrizing elements $c\in\Sym^2(\g)^G$ and $\lambda^1\in\wedge^2(\g)\otimes \cO(U)$ satisfying some equations that we will now describe.

Let $\phi = -\frac{1}{6}[c_{12}, c_{23}]$, $\lambda^2 = 0$, $\lambda^3=0$. We consider solutions of \eqref{eq:dynamicalMCequations1} and \eqref{eq:dynamicalMCequations2} with the initial conditions $\beta^2(0)=\id_h\in\h^*\otimes \h$, $\gamma^1(0)=\phi$ and the rest of $\alpha^i(0),\beta^i(0),\gamma^i(0)$ zero. The nonzero equations are
\begin{align*}
\alpha^1(t) &= -t\dCE^\g\lambda^1 \\
\alpha^2(t) &= -t\dCE^\h\lambda^1 \\
\beta^2(t) &= \id_\h \\
\gamma^1(t) &= \phi + \frac{t^2}{2} [\dCE^\g\lambda^1, \lambda^1] \\
\gamma^2(t) &= -t[\id_\h, \lambda^1].
\end{align*}

Note that $[\id_\h, \lambda^1] = \ddr \lambda^1\in\h\otimes \wedge^2(\g)\otimes \cO(U)$. Since the value at $t=1$ is assumed to be a coisotropic structure, we get that $\alpha^1+\alpha^2=0$ and $\gamma^1+\gamma^2=0$. The first equation is equivalent to $\lambda^1\colon U\rightarrow \wedge^2(\g)$ being $\h$-invariant while the second equation is equivalent to
\[\phi - \frac{1}{2}\llbracket\lambda^1, \lambda^1\rrbracket - \Alt(\ddr \lambda^1) = 0,\]
i.e. $r = 2\lambda^1 + c$ is a quasi-triangular classical dynamical $r$-matrix.
\end{proof}

\begin{remark}
Strictly speaking, since we consider algebraic functions on $U\subset \h^*$, we only capture dynamical $r$-matrices with a rational dependence on the dynamical parameter. One can also consider trigonometric dependence on the dynamical parameter by considering open subschemes of $H^*$, where $H$ is a Poisson subgroup of $G$ (see \cite{FM} for a related formalism).
\end{remark}

Let us now draw some consequences from this point of view on classical dynamical $r$-matrices. Fix a classical dynamical $r$-matrix giving rise to a 1-shifted Poisson morphism $[U/H]\rightarrow \B G$. The natural projection $U\rightarrow [U/H]$ has an obvious coisotropic (even Lagrangian) structure, hence the composite
\[U\longrightarrow [U/H]\longrightarrow \B G\]
has a 1-shifted coisotropic structure. In particular, by \cref{prop:poissonspaces} we get that $U\times G$ is quasi-Poisson $G$-space. It is easy to see that this quasi-Poisson structure is equivalent to the one given in \cite[Section 2.2]{EE}.

Similarly, interpreting the 1-shifted coisotropic structure on $U\rightarrow \B G$ in terms of a Poisson groupoid over $U$ following \cref{cor:poissongroupoids} we recover dynamical Poisson groupoids introduced in \cite{EV1}.

In a different direction, suppose the classical dynamical $r$-matrix is regular at the origin $0\in\h^*$. In other words, $0\in U$. Then the inclusion of the origin gives a coisotropic morphism $\B H\rightarrow [U/H]$. Therefore, the composite
\[\B H\longrightarrow [U/H]\longrightarrow \B G\]
has a 1-shifted coisotropic structure. Again applying \cref{prop:poissonspaces} we recover a structure of a quasi-Poisson $G$-space on $G/H$ studied in \cite{Lu1} and \cite{KMST}.

\subsection{Construction of classical \texorpdfstring{$r$}{r}-matrices}

\label{sect:rmatrixconstruction}

In this section we will explain how some manipulations with Lagrangian correspondences give rise to classical (dynamical) $r$-matrices.

We begin with the non-dynamical case. Suppose $G$ is a split simple algebraic group, $B_+\subset G$ and $B_-\subset G$ are a pair of opposite Borel subgroups sharing a common maximal torus $H$. The Killing form defines a nondegenerate $G$-invariant pairing on $\g$ and hence a $2$-shifted symplectic structure on $\B G$. The pairing restricts to a nondegenerate pairing on $\h$ and hence gives rise to a $2$-shifted symplectic structure on $\B H$. By \cite[Lemma 3.4]{Sa2} we have Lagrangian correspondences
\[
\xymatrix{
& \B B_{\pm} \ar[dl] \ar[dr] & \\
\B H && \B G
}
\]

Here we set $B_+\rightarrow H$ to be the natural projection and $B_-\rightarrow H$ to be the natural projection post-composed with the inverse on $H$.

Composing Lagrangian correspondences for $B_+$ and $B_-$ we get a Lagrangian correspondence
\[
\xymatrix{
& \B B_+\times_{\B H} \B B_- \ar[dl] \ar[dr] & \\
\B G && \B G
}
\]

If we denote $G^* = B_+\times_H B_-$, then $\B G^*\cong \B B_+\times_{\B H} \B B_-$. Therefore, we get a pair of Lagrangians
\[
\xymatrix{
\B G^* \ar[dr] && \B G \ar[dl] \\
& \B D &
}
\]
where $D = G\times G$ and its Lie algebra $\fd = \g\oplus \g$ is equipped with the difference of the pairings on the two summands.

By construction the two subspaces $\g^*,\g\subset \fd$ are transverse. Indeed, let us split $\b_+\cong \n_+\oplus \h$ and $\b_-\cong \n_-\oplus \h$, where $\n_+$ and $\n_-$ are the corresponding nilpotent subalgebras. Then $\g^*\subset \g\oplus \g$ is given by pairs $(x_+, x_-)$ of elements of $\g$ such that $x_{\pm}\in\b_{\pm}$ and their $\h$-components add up to zero. Then the intersection $\Delta(\g)\cap \g^*$ is clearly zero. But since they are Lagrangian, the intersection is also transverse.

Therefore, the natural basepoint $\pt\rightarrow \B G^*\times_{\B D} \B G$ has a unique $1$-shifted Lagrangian structure and hence we obtain a 2-shifted Lagrangian correspondence
\[
\xymatrix{
& \pt \ar[dl] \ar[dr] & \\
\B G^* \ar[dr] && \B G \ar[dl] \\
& \B D &
}
\]

In particular, we obtain a $1$-shifted coisotropic structure on $\pt\rightarrow \B G$. We are left to show that the underlying $1$-shifted Poisson structure on $\B G$ can be lifted to the natural $2$-shifted symplectic structure on $\B G$ obtained from the pairing on $\g$.

From explicit formulas it is easy to see (e.g. see \cite[Example 2.1.5]{AKS}) that the quasi-Lie bialgebra structure on $\g$ coming from the Manin pair $\g\stackrel{\Delta}\subset \g\oplus \g$ is equivalent to the one given in \cref{prop:forget21BG}. It also follows from the following easy argument on the level of shifted Poisson structures.

\begin{lm}
Consider the $2$-shifted Lagrangian $\Delta\colon \B G\rightarrow \overline{\B G}\times \B G$ given by the diagonal, where $\overline{\B G}$ has the opposite $2$-shifted symplectic structure. The underlying $1$-shifted Poisson structure on $\B G$ is equivalent to one obtained from the $2$-shifted coisotropic structure $\id\colon \B G\rightarrow \B G$.
\end{lm}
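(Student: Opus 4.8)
The plan is to exhibit the diagonal Lagrangian as a factorization of the identity morphism and to transport the induced Poisson structure across it, so that no explicit computation with structure constants is needed. Write $\pi$ for the $2$-shifted Poisson structure on $\B G$ underlying the given $2$-shifted symplectic structure; then $\overline{\B G}\times\B G$ carries the product $2$-shifted Poisson structure $(-\pi)\boxplus\pi$, in which the first factor is negated. The diagonal $\Delta\colon\B G\to\overline{\B G}\times\B G$ is the graph of $\id_{\B G}$, and the first step is to record the factorization
\[
\id_{\B G}=\mathrm{pr}_2\circ\Delta,
\]
where $\mathrm{pr}_2\colon\overline{\B G}\times\B G\to\B G$ is the projection onto the second factor. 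As the projection of a product Poisson stack onto the factor carrying $+\pi$, the map $\mathrm{pr}_2$ is a $2$-shifted Poisson morphism.

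By \cref{thm:nondegeneratecoisotropic} the Lagrangian structure on $\Delta$ is the same datum as a nondegenerate $2$-shifted coisotropic structure, and the underlying $1$-shifted Poisson structure on $\B G$ in the statement is precisely its image under $\Cois^{nd}(\Delta,2)\to\Pois(\B G,1)$. Next I would compose this coisotropic structure with the Poisson morphism $\mathrm{pr}_2$: exactly as in the reduction along $n$-shifted Lagrangian correspondences recalled in the discussion preceding \cref{conj:intersections}, post-composing an $n$-shifted coisotropic morphism with an $n$-shifted Poisson morphism yields again a coisotropic morphism on the composite, and crucially the $\bP_{n}$-structure on the source—that is, the induced $(n-1)$-shifted Poisson structure—is left unchanged. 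Applied to $\mathrm{pr}_2\circ\Delta=\id_{\B G}$, this equips $\id_{\B G}$ with a $2$-shifted coisotropic structure whose source $1$-shifted Poisson structure on $\B G$ is identical to the one coming from $\Delta$.

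It then remains to identify this coisotropic structure on $\id_{\B G}$ with the canonical one. The induced structure has target $2$-shifted Poisson structure $\pi$, and since the forgetful map $\Cois(\id_{\B G},2)\to\Pois(\B G,2)$ is an equivalence (\cite[Proposition 4.16]{MS1}), the space of coisotropic structures on $\id_{\B G}$ lying over $\pi$ is contractible. Hence the coisotropic structure produced above is equivalent to the canonical coisotropic structure on $\id_{\B G}$ associated with $\pi$, whose image under $\Cois(\id_{\B G},2)\to\Pois(\B G,1)$ is by definition the shift reduction of $\pi$. Combining the two previous paragraphs gives the desired equivalence of $1$-shifted Poisson structures; if one wishes the explicit form, \cref{prop:forget21BG} identifies the common structure as $(\pi=0,\ \phi=-\tfrac{1}{6}[c_{12},c_{23}])$.

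The main obstacle is the functoriality invoked in the second paragraph: that post-composition of a coisotropic structure with a Poisson morphism preserves the induced Poisson structure on the source. This is the one point that genuinely uses the machinery of coisotropic reduction (and, implicitly, a compatibility of the relative polyvector Lie algebras of $\Delta$ and of $\id_{\B G}$ with the Poisson-morphism structure on $\mathrm{pr}_2$); everything else is formal manipulation of the spaces $\Pois$ and $\Cois$ together with the already-established equivalences $\Cois^{nd}\cong\Lagr$ and $\Cois(\id_{\B G},2)\cong\Pois(\B G,2)$.
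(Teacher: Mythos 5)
Your proposal is correct and follows essentially the same route as the paper's own proof: both identify the Lagrangian structure on $\Delta$ with a nondegenerate $2$-shifted coisotropic structure, transport it along the Poisson morphism $\mathrm{pr}_2$ to get a coisotropic structure on $\id_{\B G} = \mathrm{pr}_2\circ\Delta$ preserving the induced $1$-shifted Poisson structure on the source, and then invoke the contractibility of the space of $2$-shifted coisotropic structures on the identity (\cite[Proposition 4.16]{MS1}) to identify it with the canonical one. The functoriality step you flag as the main obstacle is exactly the step the paper also relies on (stated there in one sentence without further justification), so your write-up is if anything slightly more explicit than the original.
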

\begin{proof}
Consider the induced $2$-shifted coisotropic structure on $\Delta\colon \B G\rightarrow \overline{\B G}\times \B G$. The natural projection $p_2\colon \overline{\B G}\times \B G\rightarrow \B G$ of $2$-shifted Poisson stacks is compatible with the Poisson structure, so the composite
\[\B G\xrightarrow{\Delta} \overline{\B G}\times \B G\xrightarrow{p_2} \B G\]
acquires a 2-shifted coisotropic structure whose underlying $1$-shifted Poisson structure on $\B G$ is equivalent to the one obtained from the diagonal Lagrangian. But the space of $2$-shifted coisotropic structures on the identity $\id\colon \B G\rightarrow \B G$ compatible with the given $2$-shifted Poisson structure on $\B G$ is contractible by \cite[Proposition 4.16]{MS1} which gives the result.
\end{proof}

To summarize, we have constructed a $1$-shifted coisotropic structure on $\pt\rightarrow \B G$ whose underlying $1$-shifted Poisson structure on $\B G$ lifts to a $2$-shifted Poisson structure on $\B G$ given by the Killing form. Therefore, by \cref{prop:constantrmatrix} we obtain a quasi-triangular classical $r$-matrix on $\g$. This is the so-called standard $r$-matrix on a simple Lie algebra.

\begin{remark}
Belavin and Drinfeld \cite{BD} classified all factorizable classical $r$-matrices on a simple Lie algebra in terms of the so-called Belavin--Drinfeld triples $(\Gamma_1, \Gamma_2, \tau)$, where $\Gamma_1, \Gamma_2\subset \Gamma$ are subsets of simple roots and $\tau\colon \Gamma_1\xrightarrow{\sim} \Gamma_2$ is an isomorphism preserving the pairing satisfying a nilpotency condition. In particular, they have constructed the corresponding Manin triples and the above construction can be repeated with those Manin triples.
\end{remark}

One can similarly give a construction of dynamical $r$-matrices. With notations as before, we have a 1-shifted Lagrangian correspondence \cite[Theorem 3.2]{Sa2}
\[
\xymatrix{
& [\b_+/B_+] \ar[dl] \ar[dr] & \\
[\h/H] && [\g/G]
}
\]

Let $\h^{rss}\subset \h$ be the open subscheme of elements which are regular semisimple as elements of $\g$. The projection $[\b_+/B_+]\rightarrow [\h/H]$ becomes an isomorphism over $[\h^{rss}/H]$, so we obtain a 1-shifted Lagrangian correspondence
\[
\xymatrix{
& [\h^{rss}/H] \ar[dl] \ar[dr] & \\
[\h^{rss}/H] && [\g/G]
}
\]
which is a graph of a morphism $[\h^{rss}/H]\rightarrow [\g/G]$. In particular, it carries a 1-shifted coisotropic structure. By \cref{thm:graphcoisotropic} we therefore see that the morphism $[\h^{rss}/H]\rightarrow [\g/G]$ is 1-shifted Poisson. Combining it with \cref{lm:coadjointprojection}, the composite
\[[\h^{rss}/H]\longrightarrow [\g/G]\longrightarrow \B G\]
is also a 1-shifted Poisson morphism and hence by \cref{prop:dynamicalrmatrix} we obtain a classical dynamical $r$-matrix. This is known as the basic rational dynamical $r$-matrix, see \cite[Section 3.1]{ES2}.

\begin{remark}
We may replace adjoint quotients of Lie algebras $[\g/G]$ by adjoint quotients of groups $[G/G]$ in the above construction. Then we recover the basic trigonometric dynamical $r$-matrix.
\end{remark}

\printbibliography

\end{document}